\def\author@andify{%
  \nxandlist {\unskip ,\penalty-1 \space\ignorespaces}%
    {\unskip {} \@@and~}%
    {\unskip \penalty-2 \space \@@and~}%
}
\title{Co-theory of sorted profinite groups for PAC structures}
\author[D. M. HOFFMANN]{Daniel Max Hoffmann$^{\dagger}$}
\thanks{2010 \textit{Mathematics Subject Classification}. 
Primary 03C95;
Secondary 03C45, 03C52.}
\thanks{\textit{Key words and phrases}. pseudo-algebraically closed structures, Galois groups, Kim-independence}
\thanks{$^{\dagger}$SDG. The first author is supported by
the Polish Natonal Agency for Academic Exchange and
 the National Science Centre (Narodowe Centrum Nauki, Poland) grants no. 2016/21/N/ST1/01465,
and 2015/19/B/ST1/01150.}
\address{$^{\dagger}$ Instytut Matematyki\\
Uniwersytet Warszawski\\
Warszawa\\
Poland
\newline \indent {\em and}\newline
\indent \hspace{1mm} Department of Mathematics \\ University of Notre Dame \\ Notre Dame \\ IN \\ USA}
\email{daniel.max.hoffmann@gmail.com}
\urladdr{https://sites.google.com/site/danielmaxhoffmann/}
\author[J. LEE]{Junguk Lee$^{\ast}$}
\thanks{$^{\ast}$The second author is supported by the Narodowe Centrum Nauki grant no. 2016/22/E/ST1/00450 and by KAIST Advanced Institute for Science-X fellowship.}
\address{$^{\ast}$Instytut Matematyczny\\
Uniwersytet Wroc{\l}awski\\
Wroc{\l}aw\\
Poland
\newline \indent {\em Current address}: Department of Mathematical Sciences, KAIST, 291, Daehak-Ro, Yuseong-Gu, Daejeon, 34141, Republic of Korea}
\email{ljwhayo@kaist.ac.kr} 
\urladdr{https://sites.google.com/site/leejunguk0323/}
\DeclareMathOperator{\Sym}{Sym}
\DeclareMathOperator{\fratt}{Fratt}
\DeclareMathOperator{\acl}{acl} \DeclareMathOperator{\dcl}{dcl} 
 \DeclareMathOperator{\aut}{Aut} \DeclareMathOperator{\id}{id}
\DeclareMathOperator{\im}{im}
 \DeclareMathOperator{\gal}{Gal}
 \DeclareMathOperator{\theo}{Th}
 \DeclareMathOperator{\eq}{eq}
\DeclareMathOperator{\tp}{tp}
\DeclareMathOperator{\ddf}{DF}\DeclareMathOperator{\dcf}{DCF}\DeclareMathOperator{\scf}{SCF}
\DeclareMathOperator{\acf}{ACF}
\DeclareMathOperator{\qftp}{qftp}
\DeclareMathOperator{\res}{res}
\DeclareMathOperator{\ST}{ST}
\DeclareMathOperator{\scs}{SCS}
\DeclareMathOperator{\Ms}{M^{\ast\ast}}
\DeclareMathOperator{\Fs}{F^{\ast\ast}}
\def\conj{\operatorname{Conj}}
\def\pr{\operatorname{Pr}}
\def\U{\operatorname{U}}
\newtheorem{theorem}{Theorem}[section]
\newtheorem{prop}[theorem]{Proposition}
\newtheorem{lemma}[theorem]{Lemma}
\newtheorem{cor}[theorem]{Corollary}
\newtheorem{fact}[theorem]{Fact}
\theoremstyle{definition}
\newtheorem{definition}[theorem]{Definition}
\newtheorem{example}[theorem]{Example}
\newtheorem{remark}[theorem]{Remark}
\newtheorem{question}[theorem]{Question}
\newtheorem{notation}[theorem]{Notation}
\theoremstyle{remark}
\newtheorem*{theorem*}{Theorem}
\newtheorem*{cor*}{Corollary}
\theoremstyle{definition}
\theoremstyle{definition}
\theoremstyle{definition}
\theoremstyle{remark}
\providecommand*{\cupdot}{%
  \mathbin{%
    \mathpalette\@cupdot{}%
  }%
}
\newcommand*{\@cupdot}[2]{%
  \ooalign{%
    $\m@th#1\cup$\cr
    \sbox0{$#1\cup$}%
    \dimen@=\ht0 %
    \sbox0{$\m@th#1\cdot$}%
    \advance\dimen@ by -\ht0 %
    \dimen@=.5\dimen@
    \hidewidth\raise\dimen@\box0\hidewidth
  }%
}
\providecommand*{\bigcupdot}{%
  \mathop{%
    \vphantom{\bigcup}%
    \mathpalette\@bigcupdot{}%
  }%
}
\newcommand*{\@bigcupdot}[2]{%
  \ooalign{%
    $\m@th#1\bigcup$\cr
    \sbox0{$#1\bigcup$}%
    \dimen@=\ht0 %
    \advance\dimen@ by -\dp0 %
    \sbox0{\scalebox{2}{$\m@th#1\cdot$}}%
    \advance\dimen@ by -\ht0 %
    \dimen@=.5\dimen@
    \hidewidth\raise\dimen@\box0\hidewidth
  }%
}
\def\Ind#1#2{#1\setbox0=\hbox{$#1x$}\kern\wd0\hbox to 0pt{\hss$#1\mid$\hss}
\lower.9\ht0\hbox to 0pt{\hss$#1\smile$\hss}\kern\wd0}
\def\ind{\mathop{\mathpalette\Ind{}}}
\def\notind#1#2{#1\setbox0=\hbox{$#1x$}\kern\wd0
\hbox to 0pt{\mathchardef\nn=12854\hss$#1\nn$\kern1.4\wd0\hss}
\hbox to 0pt{\hss$#1\mid$\hss}\lower.9\ht0 \hbox to 0pt{\hss$#1\smile$\hss}\kern\wd0}
\def\nind{\mathop{\mathpalette\notind{}}}
\def\x{\bar{x}}
\def\y{\bar{y}}
\def\a{\bar{a}}
\def\p{\bar{p}}
\begin{document}
\newcommand{\ov}{\overline}
\newcommand{\FC}{\mathfrak{C}}

\newcommand{\twoc}[3]{ {#1} \choose {{#2}|{#3}}}
\newcommand{\thrc}[4]{ {#1} \choose {{#2}|{#3}|{#4}}}
\newcommand{\Rr}{{\mathds{R}}}
\newcommand{\Kk}{{\mathds{K}}}

\newcommand{\dlog}{\mathrm{ld}}
\newcommand{\ga}{\mathbb{G}_{\rm{a}}}
\newcommand{\gm}{\mathbb{G}_{\rm{m}}}
\newcommand{\gaf}{\widehat{\mathbb{G}}_{\rm{a}}}
\newcommand{\gmf}{\widehat{\mathbb{G}}_{\rm{m}}}
\newcommand{\gdf}{\mathfrak{g}-\ddf}
\newcommand{\gdcf}{\mathfrak{g}-\dcf}
\newcommand{\fdf}{F-\ddf}
\newcommand{\fdcf}{F-\dcf}
\newcommand{\mw}{\scf_{\text{MW},e}}

\newcommand{\BC}{{\mathbb C}}
\newcommand{\BZ}{{\mathbb Z}}

\newcommand{\CC}{{\mathcal C}}
\newcommand{\CG}{{\mathcal G}}
\newcommand{\CK}{{\mathcal K}}
\newcommand{\CL}{{\mathcal L}}
\newcommand{\CN}{{\mathcal N}}
\newcommand{\CS}{{\mathcal S}}
\newcommand{\CJ}{{\mathcal J}}
\newcommand{\CU}{{\mathcal U}}
\newcommand{\CF}{{\mathcal F}}
\newcommand{\CP}{{\mathcal P}}
\newcommand{\CI}{{\mathcal I}}

\def\Th{\operatorname{Th}}

\begin{abstract}
We achieve several results. 
First, we develop a variant of the theory of absolute Galois groups in the context of many sorted structures. 
Second, we provide a method for coding absolute Galois groups of structures, so they can be interpreted in some monster model with an additional predicate. 
Third, we prove the ``Weak Independence Theorem" for PAC substructures of an ambient structure with nfcp and the property B(3).
Fourth, we describe Kim-dividing in these PAC substructures and show several results related to the SOP$_n$ hierarchy.
Fifth, we characterize the algebraic closure in PAC structures.
\end{abstract}
\maketitle

\tableofcontents
\section{Introduction}
A major aim of this paper is to generalize research on the ``co-logic" of profinite groups initiated in an unpublished work of Cherlin, van den Dries and Macintyre (\cite{cherlindriesmacintyre}) and then continued with successful applications by Chatzidakis (e.g. \cite{zoeIP}, \cite{chatzidakis2002}, \cite{chatzidakis2017}). 
Originally, the ``co-logic" was introduced as a tool to describe subfields (in particular pseudo-algebraically closed subfields) of a one-sorted saturated algebraically (or separably) closed field.
Our modification of the ``co-logic" serves as a tool in studying substructures of an arbitrary, possibly many-sorted, monster model. However the most interesting results are obtained under additional assumptions on the monster model (like stability, nfcp, or the property $B(3)$) and for the class of \emph{pseudo-algebraically closed} substructures.

The notion of a pseudo algebraically closed substructure (PAC substructure, see Definition \ref{def:PAC}) is a natural generalization of the notion of a pseudo-algebraically closed field (PAC field), which occurs in works of 
Ax (\cite{Ax1}, \cite{Ax2}) and Frey (\cite{FREY}) and which arises from studying pseudo-finite fields. 
A field $K$ is PAC if and only if each absolutely irreducible $K$-variety has a $K$-rational point (or equivalently: it is existentially closed in every regular extension).
Because of the so-called ``Elementary Equivalence Theorem" (see \cite[Theorem 20.3.3]{FrJa} and \cite[Theorem 3.2]{JardenKiehne}), PAC fields were extensively studied in the second half of the 20th century as a natural class of fields determined by the properties of their absolute Galois groups. Also model theory recognizes PAC fields as a source of interesting phenomena (\cite{ershov1980}, \cite{ChaPil}, \cite{chahru04}). 
For example,
PAC fields played an important role in the studies on (geometric) simplicity (see the introduction to \cite{manuscript}).
The relation between simplicity of a PAC field and properties of its absolute Galois group
is a well-known result (see \cite[Fact 2.6.7]{kim1} --- a PAC field is simple if and only if its absolute Galois group is small as a profinite group).
However, it turns out that there is a similar link between NSOP$_1$ of a PAC field and more sophisticated properties of its absolute Galois group
(see \cite{chatzidakis2017}, \cite{NickThesis}), and the main goal of the following paper is to generalize this link to the level of arbitrary PAC substructures in the stable context.

PAC substructures were already studied in the case of a strongly minimal ambient monster model (\cite{manuscript}) and also in the case of a stable ambient monster model (\cite{PilPol}). An interesting result is provided in \cite{Polkowska}, where the author proves that theory of \emph{bounded} PAC structures must be simple. Bounded means that the absolute Galois group (automorphisms of the algebraic closure considered in the stable ambient monster model) is a small profinite group. Therefore it was reasonable to suspect that, similarly to PAC fields, PAC structures are controlled by their absolute Galois groups. The main result of \cite{DHL1} is the so-called ``Elementary Equivalence Theorem for Structures" --- a counterpart of the aforementioned ``Elementary Equivalence Theorem" covering the case of PAC structures
(in short: two PAC structures have the same first order theory provided they have isomorphic absolute Galois groups). In the case of fields, the ``Elementary Equivalence Theorem" was elaborated in \cite{cherlindriesmacintyre} to a version involving the ``co-logic" (see \cite[Proposition 33]{cherlindriesmacintyre}), which was helpful in later studies on PAC fields in model theory (especially in the current studies in neostability: \cite{chatzidakis2017} and \cite{NickThesis}).
Therefore we are developing here a version of the ``co-logic" for arbitrary structures, expressing afresh the ``Elementary Equivalence Theorem for Structures" and then use it to show results related to Kim-independence in PAC substructures.
Our generalization of the ``co-logic" is thought to achieve the following goals:
\begin{itemize}
\item to describe absolute Galois groups in a way that they can be interpreted in a monster model (many-sorted case): Section \ref{sec:sorted}
\item  to refine the ``Elementary Equivalence Theorem for structures" and provide a description of types in PAC structures: Section \ref{sec:coelementary}
\item to generalize a recent theorem of Chatzidakis (c.f. \cite[Theorem 3.1]{chatzidakis2017}): Section \ref{sec:zoe.theorem}
\item to achieve the ``Weak Independence Theorem" (Theorem \ref{thm:weak.ind.thm}): Section \ref{sec:wit.nsop}
\item to describe Kim-independence and conditions for NSOP$_n$ in PAC substructures: Section \ref{sec:wit.nsop}
\end{itemize}
The part related to Kim-independence was inspired by \cite{NickThesis}. Let us explain the context of these results. In \cite{chatzidakis2017}, Chatzidakis achieved her Theorem 3.1, which is a beautiful result connecting a notion of independence in a PAC field with its counterpart on the level of the absolute Galois group. Then Chatzidakis considered a notion of independence combining the forking independence in algebraically/separably closed monster field and the forking independence present in the ``co-logic" of the absolute Galois group of a given PAC subfield, and this led to results about NSOP$_n$ for $n>2$.
Ramsey has (in \cite{NickThesis}) a slightly different approach and he combines the notion of independence from the forking independence in algebraically/separably closed monster field and the Kim-independence on the level of the absolute Galois group of a given PAC subfield. By this, he obtains results concerning NSOP$_1$ and NSOP$_2$, and a characterization of the Kim-independence in a PAC field.
All this was achieved using Chatzidakis' theorem (\cite[Theorem 3.1]{chatzidakis2017}), therefore the central part of this paper is a generalization of \cite[Theorem 3.1]{chatzidakis2017} to the case of substructures of a stable monster model which satisfies the property $B(3)$ (\cite{Kim16}, \cite{GKK}), Proposition \ref{thm:Zoe}. After achieving Proposition \ref{thm:Zoe}, we start to assume nfcp (the \emph{no finite cover property}), mainly because our interpretation of absolute Galois groups is given in pairs of structures, and a theory of pairs of structures is more tame if the bigger structure has nfcp.

In Section \ref{sec:wit.nsop}, we provide the so-called ``Weak Independence Theorem", Theorem \ref{thm:weak.ind.thm}, which is the main ingredient in our results related to NSOP$_1$ and the Kim-independence. The Weak Independence Theorem says that if the Independence Theorem (over a model) holds in the ``co-logic", it also holds in a PAC substructure. We hope that Theorem \ref{thm:weak.ind.thm} will serve in a better understanding of the nature of Kim-independence in the ongoing research on neostability. 
A prospective use of Theorem \ref{thm:weak.ind.thm} might involve
fields with operators (to work with a monster model which has nfcp and the property $B(3)$),
$G$-actions (to get control over the absolute Galois group, as in \cite{Hoff4})
and results about the logical structure of profinite groups (as in \cite{zoeIP}, e.g. if a profinite groups enjoys the \emph{Iwasawa Property}, then its theory --- in the language of complete systems --- is stable).

In Section \ref{sec:acl}, we provide a description of the algebraic closure in PAC substructures.
This part is independent from the previous sections and generalizes similar results given in \cite{ChaPil}. In the case of $\omega$-stable monster models, we obtain a precise description of the algebraic closure operator in PAC structures. 

We end the paper with Section \ref{sec:applications}, where we show that the previously obtained results might be applied to the theory of DCF$_0$.

Now, let us provide conditions assumed in this paper.
We fix a theory $T_0$ in a language $\mathcal{L}_0$, and we set $T:=(T_0^{\eq})^m$ which is a theory in language $\mathcal{L}:=(\mathcal{L}_0^{\eq})^m$ (we add imaginary sorts and then do the Morleyisation). Note that $T$ has quantifier elimination and elimination of imaginaries
(even uniform elimination of imaginaries in the sense of point b) from \cite[Lemma 8.4.7]{tentzieg}, which will be used in Subsection  \ref{sec:encoding}).
Moreover
\begin{itemize}
\item if $T_0$ is stable, then $T$ is stable,
\item if $T_0$ has nfcp (\emph{the no finite cover property}, 
Definition 4.1 and Theorem 4.2 in \cite[Chapter II]{ShelahModels}), then $T$ has nfcp
\item if $T_0^{\eq}$ is stable and has the property $B(3)$ (see Definition \ref{def:boundary_property}), then $T$ has the property $B(3)$.
\end{itemize}
Let us enumerate all sorts of $\mathcal{L}$ by $\mathcal{J}:=(S_i)_{i\in I}$.
Moreover, we fix a monster model $\mathfrak{C}\models T$ and assume that $T=\theo(\mathfrak{C})$
(in other words: we assume that $T$ is complete).

\section{Preliminaries}
Here, we provide definitions of several notions important for the rest of this paper. The paper continues studies from \cite{DHL1}, hence instead of copying large parts of the text of \cite{DHL1}, we decided to include only definitions of some basic notions which are used in formulations of forthcoming results.
\begin{definition}
For any subsets $A\subseteq B$ of $\mathfrak{C}$ and tuple $b\in\mathfrak{C}$, we define
$$\CG(B/A):=\aut\big(\dcl(AB)/\dcl(A)\big),\qquad \CG(A):=\aut\big(\acl(A)/\dcl(A)\big),$$
$$\CG(b/A):=\CG(\dcl(Ab)/A),\qquad [B:A]:=|\CG(B/A)|.$$
\end{definition}

\begin{definition}\label{galois.ext.def}
\begin{enumerate}
\item Assume that $A\subseteq B$ are $\mathcal{L}$-substructures of $\mathfrak{C}$. We say that $B$ is \emph{normal over $A$} (or we say that $A\subseteq B$ is a \emph{normal extension}) if $\CG(\mathfrak{C}/A)\cdot B\subseteq B$.
(Note that if $B$ is small and $A\subseteq B$ is normal, then it must be $B\subseteq\acl(A)$.)

\item Assume that $A\subseteq B\subseteq\acl(A)$ are small $\mathcal{L}$-substructures of $\mathfrak{C}$ such that $A=\dcl(A)$, $B=\dcl(B)$ and $B$ is normal over $A$. In this situation we say that $A\subseteq B$ is a \emph{Galois extension}.
\end{enumerate}
\end{definition}

\begin{definition}\label{regular.def}
Let $E\subseteq A$ be small subsets of $\mathfrak{C}$. We say that $E\subseteq A$ is \emph{$\mathcal{L}$-regular} (or just \emph{regular}) if
$$\dcl(A)\cap\acl(E)=\dcl(E).$$
\end{definition}

\begin{definition}\label{def:PAC}
Assume that $M\preceq\mathfrak{C}$ and $P$ is a substructure of $M$. 
We say that \emph{$P$ is PAC in $M$} if for every regular extension $N$ of $P$ in $M$ (i.e. $N\subseteq M$ and $N$ is regular over $P$), the structure $P$ is existentially closed in $N$.
If $P$ is PAC in $\mathfrak{C}$, then we say that $P$ is PAC.
\end{definition}

For a more detailed exposition of the notions of regularity and PAC structures, the reader may consult \cite[Section 3.1]{Hoff3}.

\begin{fact}\label{fact:PAC_elemsubst}
Let $P$ be PAC, and let $P_0\preceq P$. Then $P_0$ is PAC.
\end{fact}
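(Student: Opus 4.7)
The plan is to deduce PAC-ness of $P_0$ from PAC-ness of $P$ by transporting a regular extension of $P_0$ into a regular extension of $P$ via an independent copy, then pulling back the resulting existential witness through $P_0\preceq P$.

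Let $N\subseteq\FC$ be a regular extension of $P_0$, let $\psi(\bar x,\bar a)$ be a quantifier-free $\mathcal{L}$-formula with $\bar a\in P_0$, and let $\bar c\in N$ satisfy $\psi(\bar c,\bar a)$; we want some $\bar c_0\in P_0$ with $\psi(\bar c_0,\bar a)$. Replacing $N$ by $\dcl(P_0\bar c)$ — which remains regular over $P_0$, since any $\mathcal{L}$-substructure of a regular extension containing the base is again regular over the base — we choose a realization $\bar c'\in\FC$ of $\tp(\bar c/P_0)$ with $\bar c'\ind_{P_0}P$, using the extension axiom for forking independence in the (stable) ambient. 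Since $\bar a\in P_0$, also $\psi(\bar c',\bar a)$ holds, and we set $N^{*}:=\dcl(P\bar c')$.

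The crux is to show $N^{*}$ is regular over $P$, i.e.\ $\dcl(P\bar c')\cap\acl(P)=\dcl(P)$. The ingredients are: (i) the conjugated regularity $\dcl(P_0\bar c')\cap\acl(P_0)=\dcl(P_0)$ inherited from $N/P_0$; (ii) the stable-theoretic consequence $\acl(P_0\bar c')\cap\acl(P)=\acl(P_0)$ of $\bar c'\ind_{P_0}P$; and (iii) the closure property $\acl(P_0)\cap P=P_0$ (and $\dcl(P_0)\cap P=P_0$), obtained by counting realizations of algebraic/definable formulas and using $P_0\preceq P$. A putative $d\in\dcl(P\bar c')\cap\acl(P)$ is first shown to lie in $\acl(P_0\bar c')$ by an analysis of its definable presentation over $P\bar c'$ together with $\bar c'\ind_{P_0}P$; then (ii) gives $d\in\acl(P_0)$, and (i) plus (iii) force $d\in\dcl(P_0)\subseteq\dcl(P)$. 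This is the standard ``free amalgamation of regular extensions'' argument. Granted regularity of $N^{*}/P$, the PAC hypothesis on $P$ yields that $P$ is existentially closed in $N^{*}$, so $\exists\bar x\,\psi(\bar x,\bar a)$ — witnessed in $N^{*}$ by $\bar c'$ — has a witness $\bar c_0\in P$; finally $P_0\preceq P$ transfers this to $P_0$.

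The main obstacle is the regularity amalgamation step: while its outline is standard, one must identify the correct form of independence guaranteed by the ambient hypotheses (stability, and possibly nfcp or property $B(3)$ when invoking this fact elsewhere in the paper) and handle imaginaries with care, given that $\mathcal{L}=(\mathcal{L}_0^{\eq})^m$. The closure properties flowing from $P_0\preceq P$ — that algebraic and definable closures of $P_0$ do not grow inside $P$ — are what allow the candidate element $d$ to be pushed all the way into $\dcl(P_0)$ rather than merely into $\dcl(P)$.
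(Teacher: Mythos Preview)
Your approach is essentially identical to the paper's: pick an independent copy $\bar c'\ind_{P_0}P$, observe $\dcl(P\bar c')$ is regular over $P$, apply PAC-ness of $P$, and descend via $P_0\preceq P$. The paper dispatches the regularity step you flag as the ``main obstacle'' by a direct citation (\cite[Lemma 3.39]{Hoff3}), and only stability is needed---the clean reason is that $\tp(\bar c'/P_0)$ is stationary (regularity of $N/P_0$), so its unique nonforking extension $\tp(\bar c'/P)$ is again stationary, i.e.\ $\dcl(P\bar c')$ is regular over $P$; your speculation about needing nfcp or $B(3)$ here is unnecessary.
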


\begin{proof}
Let $N\subseteq \FC$ be a regular extension of $P_0$. Suppose $N\models \exists x\,\varphi(x,e)$ for a quantifier free $\varphi(x,y)$ and $e\in P_0^{|y|}$, say $\FC\models\varphi(n,e)$ for an $n\in N$.
We may find $n'\in\mathfrak{C}$ such that $\mathfrak{C}\models\varphi(n',e)$ and $P\ind_{P_0}n'$ (in $\mathfrak{C}$).	

Since $P\ind_{P_0}n'$, by \cite[Lemma 3.39]{Hoff3}, $\dcl(P,n')$ is a regular extension of $P$.
Because $P$ is PAC, we have that $P\models \exists x\,\varphi(x,e)$, which immediately implies that $P_0\models \exists x\,\varphi(x,e)$.
\end{proof}

The notion of a \emph{sorted isomorphism} of absolute Galois groups was introduced in \cite{DHL1}, and the main theorem of \cite{DHL1} is also proven after using this notion in the proof of \cite[Lemma 5.5]{DHL1}. 
Let us collect here important notions and facts from \cite{DHL1}, which will be used in the rest of this paper.

\begin{notation}
Let $I$ be a set. 
\begin{enumerate}
	\item Let $I^{<\omega}$ be the set of finite tuples of elements in $I$.
	\item For $J,J'\in I^{<\omega}$, we write $J\leqslant J'$ if $J$ is a subtuple of $J'$
	(i.e. if $J=(j_1,\ldots,j_n)$, then any $J=(j_{s_1},\ldots,j_{s_{n'}})$, where $n'\leqslant n$ and $1\leqslant s_1<\ldots<s_{n'}\leqslant n$, is a subtuple of $J'$).
	\item For $J,J'\in I^{<\omega}$, we write $J^\smallfrown J'$ for the concatenation of $J$ and $J'$.
	\item For $J=(j_1,\ldots,j_n)\in I^{<\omega}$, set $|J|=n$.
	\item For $J=(j_1,\ldots,j_n)\in I^{<\omega}$ and a permutation $\sigma\in \Sym(n)$, $\sigma(J)=(j_{\sigma(1)},\ldots,j_{\sigma(n)})$.
	\item For $J=(j_1,\ldots,j_n)\in I^{<\omega}$, we write $S_J=S_{j_1}\times\cdots\times S_{j_n}$.

\item $\aut_{J}(L/K)$ is the image of the restriction map $\CG(L/K)\to\aut(S_J(L)/S_J(K))$, where $K\subseteq L$ is a Galois extension of small substructures of $\FC$.
\end{enumerate}
\end{notation}

\begin{definition}
An element $a\in\FC$ is a \emph{primitive element} of a Galois extension $K\subseteq L$ if $L=\dcl(K,a)$. By $e(L/K)$ we denote the subset of $\FC$ of all primitive elements of the Galois extension $K\subseteq L$.
\end{definition}

\begin{fact}[Primitive Element Theorem]\cite[Proposition 4.3]{DHL1}
If $|\CG(L/K)|<\omega$ for a Galois extension $K\subseteq L$, then $e(L/K)\neq\emptyset$.
\end{fact}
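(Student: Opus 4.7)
The plan is to reduce to the classical Galois-theoretic route: first exhibit a finite tuple $\bar{c}\in L^{<\omega}$ whose pointwise stabilizer in $G := \CG(L/K)$ is trivial, and then collapse that tuple to a single element via the uniform elimination of imaginaries that $T=(T_0^{\eq})^m$ enjoys (and which was recorded in the introduction).

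For the first step I would look at the family of subgroups $\{\stab_G(\bar{d}) : \bar{d}\in L^{<\omega}\}$. This family is closed under finite intersection (just concatenate tuples) and all of its members sit inside the finite group $G$, so it has a minimum element $\stab_G(\bar{c})$, which must coincide with $\bigcap_{\bar{d}\in L^{<\omega}}\stab_G(\bar{d})$. That intersection is precisely the kernel of the natural action of $G$ on $L$; but $G = \aut(L/K)$ acts faithfully on $L$ by its very definition, so this kernel is trivial, and hence $\stab_G(\bar{c})=\{1\}$.

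To deduce $L=\dcl(K,\bar{c})$ from this, I would take an arbitrary $b\in L$ and an arbitrary $\sigma\in\aut(\FC/K\bar{c})$. Normality of $L/K$ (Definition \ref{galois.ext.def}) gives $\sigma(L)\subseteq L$, and together with $K=\dcl(K)$, $L=\dcl(L)$ this makes the restriction $\sigma|_L$ a well-defined element of $G$. Since $\sigma$ fixes $\bar{c}$, we have $\sigma|_L\in\stab_G(\bar{c})=\{1\}$, so $\sigma(b)=b$, whence $b\in\dcl(K,\bar{c})$. The reverse inclusion $\dcl(K,\bar{c})\subseteq L$ is immediate from $K,\bar{c}\subseteq L$ and $L=\dcl(L)$.

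For the second step I would invoke uniform elimination of imaginaries in $T$ to produce a single element $a\in\FC$, in some imaginary sort, with $\dcl(K,a)=\dcl(K,\bar{c})=L$; then $a\in e(L/K)$, as required. The only mildly delicate point in the whole argument is the restriction map $\sigma\mapsto\sigma|_L$, which requires $\sigma(L)\subseteq L$; in this many-sorted setting this is furnished exactly by the definition of normal extension. Beyond that, everything reduces either to a finiteness argument inside the finite group $G$ or to a one-line appeal to elimination of imaginaries, so I do not foresee a deeper obstacle.
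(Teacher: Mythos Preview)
Your argument is correct. The paper does not give its own proof of this Fact; it is quoted as Proposition~4.3 from \cite{DHL1}, so there is nothing in the present paper to compare against directly. Your route (find a finite tuple $\bar c$ with trivial pointwise stabilizer in the finite group $G=\CG(L/K)$, then check $L=\dcl(K,\bar c)$ via normality) is the natural Galois-theoretic argument and goes through exactly as you describe.

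One small remark: your Step~2 is likely unnecessary in this paper's conventions. The paper uses ``primitive element'' for finite tuples as well---see the definition of $\mathcal{PE}(N)$ just below the statement, where primitive elements are taken in $S_J(\FC)$ for $J\in\CJ^{<\omega}$. So the tuple $\bar c$ you produce already lies in $e(L/K)$. Your reduction to a single element via the $\eq$-sorts is of course still valid (a tuple $([d_1]_{E_1},\dots,[d_n]_{E_n})$ from $T_0^{\eq}$ is $\emptyset$-interdefinable with $[(d_1,\dots,d_n)]_{E_1\times\cdots\times E_n}$), but you could stop one step earlier.
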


For a topological group $G$, we define $\CN(G)$ as the family of all open normal subgroups of $G$.
For small substructure $K$ of $\FC$ and $N\in\CN(\CG(K))$, we put 
$$\mathcal{PE}_K(N):=\Big\{J\in \mathcal{J}^{<\omega}\;|\;\big(\exists a\in S_J(\FC)\big)\Big(a\in e\big(\acl(K)^N/K\big)\Big)\Big\},$$
$$\CF_K(N):=\{J\in \mathcal{J}^{<\omega}\;|\;	|\aut_J(\acl(K)^{N}/K)|=|\CG(\acl(K)^N/K)| \}.$$
If there is no confusion, we skip the subscript and write $\mathcal{PE}$ and $\CF$ for $\mathcal{PE}_K$ and $\CF_K$ respectively.

\begin{definition}\label{def:sorted_map1}
Assume that $F$ and $E$ are small substructures of $\mathfrak{C}$ and $\pi:\CG(F)\to\CG(E)$ is a continuous epimorphism. 
We say that $\pi$ is \emph{sorted} if 
for each $N\in\CN(\CG(E))$ we have $\CF_E(N)\subseteq\CF_F(\pi^{-1}[N])$.

We say that $\pi$ is a sorted isomorphism if $\pi$ is an isomorphism of profinite groups such that $\pi$ and $\pi^{-1}$ are sorted.
\end{definition}

The following fact is an improvement of results obtained in \cite[Proposition A.11]{DHL1}.

\begin{fact}\label{fact:regular_to_sorted}
Assume that $E\subseteq F$ is an extension of small substructures of $\FC$ such that $E=\dcl(E)$. 
Let $\pi$ be the restriction map $\pi:\CG(F)\to \CG(E)$.
The following conditions are equivalent.
\begin{enumerate}
\item $E\subseteq F$ is regular.
\item $\pi$ is a sorted epimorphism.
\item $\pi$ is an epimorphism.
\end{enumerate}
Moreover, if any of the above listed condition holds, then
for each $N\in\CN(\CG(E))$, if $a\in e(\acl(E)^N/E)$, then $a\in e(\acl(F)^{\pi^{-1}[N]}/F)$.
\end{fact}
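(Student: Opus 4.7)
The plan is to establish the cycle (2)$\Rightarrow$(3)$\Rightarrow$(1)$\Rightarrow$(2) and to read off the moreover clause from the last implication. (2)$\Rightarrow$(3) is immediate from Definition \ref{def:sorted_map1}. The equivalence (1)$\Leftrightarrow$(3) is implicit in Proposition A.11 of \cite{DHL1}, but can also be established directly: for (3)$\Rightarrow$(1) by contrapositive, any $c\in(\dcl(F)\cap\acl(E))\setminus E$ would lie in $\acl(E)\setminus\dcl(E)$, hence would be moved by some $\sigma\in G(E)$, yet every preimage of $\sigma$ under $\pi$ fixes $c\in\dcl(F)$, contradicting surjectivity; and for (1)$\Rightarrow$(3), regularity together with $E=\dcl(E)$ gives $\dcl(F)\cap\acl(E)=E$, so $\sigma$ and $\id_{\dcl(F)}$ glue on $\dcl(F)\cup\acl(E)$ to a partial elementary map that extends to an element of $G(F)$.

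The main new content is (1)$\Rightarrow$(2) together with the moreover clause. Fix $N\in\CN(G(E))$ and set $K_0:=\acl(E)^N$ and $L_0:=\acl(F)^{\pi^{-1}[N]}$. The structural step is the identification $L_0=\dcl(F,K_0)$. The inclusion $\supseteq$ is immediate since both $F$ and $K_0$ are pointwise fixed by $\pi^{-1}[N]$. For $\subseteq$, I would apply the Galois correspondence in $\acl(F)/\dcl(F)$ to $\dcl(F,K_0)$: its pointwise stabilizer $H$ in $G(F)$ equals $\{\tau\in G(F):\tau|_{K_0}=\id\}$, and by the Galois correspondence in $\acl(E)/E$ this in turn equals $\{\tau\in G(F):\pi(\tau)\in N\}=\pi^{-1}[N]$; therefore $\dcl(F,K_0)=\acl(F)^{\pi^{-1}[N]}=L_0$. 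Using surjectivity of $\pi$ together with normality of $N$ (so that $K_0$ is Galois over $E$ and is preserved setwise), the restriction $\tau\mapsto\tau|_{K_0}$ then yields an isomorphism $\CG(L_0/F)\cong\CG(K_0/E)$.

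With $L_0=\dcl(F,K_0)$ in hand, the remaining conclusions are routine. For sortedness, if $J\in\CF(N)$ and $\tau\in\CG(L_0/F)$ fixes $S_J(L_0)$, then $\tau|_{K_0}\in\CG(K_0/E)$ fixes $S_J(K_0)\subseteq S_J(L_0)$, hence is trivial by the hypothesis $J\in\CF(N)$, so $\tau=\id$ via the isomorphism above; thus $J\in\CF(\pi^{-1}[N])$. For the moreover clause, if $a\in e(K_0/E)$ so that $K_0=\dcl(E,a)$, then $L_0=\dcl(F,K_0)=\dcl(F,a)$ since $E\subseteq F$, i.e.\ $a\in e(L_0/F)$. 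The principal obstacle is the identification $L_0=\dcl(F,K_0)$, and specifically the identification of the pointwise stabilizer of $\dcl(F,K_0)$ in $G(F)$ with $\pi^{-1}[N]$, where the surjectivity of $\pi$ from (3) is crucially used; everything else is Galois-theoretic bookkeeping.
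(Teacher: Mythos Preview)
Your treatment of (2)$\Rightarrow$(3), (3)$\Rightarrow$(1), the identification $L_0=\dcl(F,K_0)$, and the deduction of sortedness and of the moreover clause from it, is clean and correct; in fact it is more explicit than the paper, which simply defers the sortedness and moreover parts to the proof of Proposition~A.11 in \cite{DHL1}.

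The gap is in (1)$\Rightarrow$(3). You write that ``$\sigma$ and $\id_{\dcl(F)}$ glue on $\dcl(F)\cup\acl(E)$ to a partial elementary map'' as if this follows from the intersection being $E$. It does not: two partial elementary maps agreeing on the intersection of their domains need not glue to a partial elementary map. What you need is $\bar a\equiv_{\dcl(F)}\sigma(\bar a)$ for every finite $\bar a\subseteq\acl(E)$, and deducing this from $\bar a\equiv_E\sigma(\bar a)$ is exactly the nontrivial step. Proposition~A.11 of \cite{DHL1} handled it using stability (via stationarity), and the whole purpose of the present Fact is to drop that hypothesis. The paper's substitute is an argument using elimination of imaginaries: for a primitive element $b$ of $\acl(E)^N$ over $E$, the code $c$ of the finite orbit $G(F)\cdot b$ lies in $\dcl(F)$ (by $G(F)$-invariance) and in $\acl(E)$ (since the orbit sits inside $\acl(E)$), hence $c\in\dcl(F)\cap\acl(E)=E$ by regularity; so $G(E)$ fixes $c$ and therefore permutes $G(F)\cdot b$, giving $G(E)\cdot b=G(F)\cdot b$. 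This yields the required lift of each $\sigma\in\CG(\acl(E)^N/E)$ to $G(F)$. Your gluing claim can be justified by the same code argument applied to arbitrary finite tuples $\bar a\subseteq\acl(E)$, but that argument \emph{is} the content you are missing; citing A.11 does not help, since A.11 assumes stability.
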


\begin{proof}
1) $\Rightarrow$ 2)\hspace{3mm} We start with showing that $\pi$ is onto.
It is enough to show that the restriction map induces an epimorphism $\CG(F)$ to $\CG(\acl(E)^N/E)$ for each open normal subgroup $N$ of $\CG(E)$.

Fix an open normal subgroup $N$ of $\CG(E)$.
Let $b$ be a primitive element of $\acl(E)^N$ over $E$.
\
\\
\textbf{Claim}
$\CG(E)\cdot b=\CG(F)\cdot b$
\
\\ Proof of the claim: 
It is clear that $\CG(F)\cdot b\subseteq \CG(E)\cdot b$.
Let $c$ be the code of $\CG(F)\cdot b$. It is algebraic over $E$ so $c$ is contained in $\acl(E)$.
Also, $c$ is contained in the definable closure of $F$.
Therefore, $c\in\dcl(F)\cap\acl(E)=\dcl(E)=E$ (by regularity of $F$ over $E$).
So, we conclude that $\CG(E)\cdot b$ is a subset of $\CG(F)\cdot b$.
Here ends the proof of Claim.

Choose $\sigma\in \CG(\acl(E)^N/E)$ arbitrary and define $b'$ to be $\sigma(b)$. Since $b'\in \CG(E)\cdot b=\CG(F)\cdot b$, there exists $\hat{\sigma}\in \CG(F)$ such that $\hat{\sigma}(b)=b'$. Because $b$ is a primitive element of $\acl(E)^N$ over $E$, we have that $\sigma=\hat{\sigma}|_{\acl(E)^N}$.

To show that $\pi$ is sorted it is enough to follow the proof of \cite[Proposition A.11]{DHL1} (the assumption about stability in the proof of \cite[Proposition A.11]{DHL1} was used to show that $\pi$ is onto, since we already achieved this, we may use the rest of the proof).

\noindent
2) $\Rightarrow$ 3) \hspace{3mm} Obvious. \\
\noindent
3) $\Rightarrow$ 1) \hspace{3mm} It is standard, but let us give the proof.
Suppose that there is an element $a\in \dcl(F)\cap\acl(E)\setminus E$. There exists $\sigma\in \CG(E)$ such that $\sigma(a)\neq a$ and there is also $\hat{\sigma}\in \CG(F)$ such that $\hat{\sigma}|_{\acl(E)}=\sigma$. Then $a\neq \sigma(a)=\hat{\sigma}(a)=a$ (since $a\in \dcl(F)$).

The moreover part also is covered by the proof of \cite[Proposition A.11]{DHL1}
\end{proof}

\begin{cor}\label{cor2033}\cite[Corollary 3.7]{DHL1}
Assume that $T$ is stable. If $E$ and $F$ are $\kappa$-PAC substructures of $\mathfrak{C}$ ($\kappa\geqslant|T|^+$,
please consult \cite[Definition 3.1]{PilPol}
for the notion of \emph{$\kappa$-PAC substructure}),
and for some definably closed $L\subseteq F\cap E$ of size strictly smaller than $\kappa$ there exists a continuous isomorphism $\Phi:\CG(F)\to\CG(E)$ such that
$$\xymatrix{\CG(F) \ar[dr]_{\res} \ar[rr]^{\Phi}& & \CG(E)\ar[dl]^{\res} \\
& \CG(L) &
}$$
, then $E\equiv_L F$.
\end{cor}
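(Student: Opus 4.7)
The plan is to reduce the corollary to the main Elementary Equivalence Theorem for PAC structures from \cite{DHL1}, which delivers $E\equiv_L F$ from a \emph{sorted} isomorphism of absolute Galois groups commuting with the restrictions to $\CG(L)$ (and uses the $\kappa$-PAC hypothesis plus $\kappa\geqslant|T|^+$ to run its back-and-forth over a definably closed base of size $<\kappa$). Since $\varphi$ is only assumed continuous, the real content is to upgrade it to a sorted isomorphism, and this is where the stability of $T$ should enter.

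First I would unpack the commuting diagram: as $\varphi$ is bijective and $\res_E\circ\varphi=\res_F$, the restriction maps $\res:\CG(F)\to\CG(L)$ and $\res:\CG(E)\to\CG(L)$ have the same image; by Fact~\ref{fact:regular_to_sorted}, once this common image is $\CG(L)$, the extensions $L\subseteq E$ and $L\subseteq F$ are regular and the two restrictions are sorted epimorphisms (in the general case one shrinks $\CG(L)$ to this common image). To promote $\varphi$ itself to a sorted map, fix $N\in\CN(\CG(E))$, set $N':=\varphi^{-1}[N]\in\CN(\CG(F))$ and $M:=\res[N]=\res[N']\in\CN(\CG(L))$. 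Given $J\in\CF(N)$, pick a primitive element $a\in e(\acl(E)^N/E)$; by the ``moreover'' clause of Fact~\ref{fact:regular_to_sorted} applied to $L\subseteq E$ one descends $a$, through the Galois correspondence, to a primitive element of $\acl(L)^M/L$, and applied once more to $L\subseteq F$ one lifts it to a primitive element of $\acl(F)^{N'}/F$ of the same tuple of sorts $J$. Stability of $T$ is then invoked to verify that the equality $|\aut_J(\acl(E)^N/E)|=|\CG(\acl(E)^N/E)|$ defining membership in $\CF$ transfers across $\varphi$: the abstract Galois quotients are identified via $\varphi$, and stability controls $|\aut_J(\cdot)|$ in terms of definable orbits of the chosen primitive element, so the orbit count survives. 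A symmetric argument for $\varphi^{-1}$ yields that $\varphi$ is a sorted isomorphism.

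The main obstacle is exactly this last piece of bookkeeping: $\varphi$ is a priori purely group-theoretic, while sortedness is a statement about concrete sorts in $\FC$, so the bridge must be built through the shared $\CG(L)$-data, using stability to ensure that the ``Galois orbit of a primitive element'' recorded by $\CF$ on the $E$-side is mirrored by the corresponding orbit on the $F$-side. Once sortedness of $\varphi$ is in hand, the Elementary Equivalence Theorem for Structures from \cite{DHL1} applies verbatim with the definably closed base $L$, delivering $E\equiv_L F$.
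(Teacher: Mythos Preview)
Your reduction to the ``Elementary Equivalence Theorem for Structures'' (Theorem~\ref{thm:elementary_invariance}) has two structural problems.

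First, the sortedness upgrade does not go through. Fix $N\in\CN(\CG(E))$ and $N':=\varphi^{-1}[N]\in\CN(\CG(F))$. Your plan is to take a primitive element $a\in e(\acl(E)^N/E)$ of sort $J$, descend it to a primitive element over $L$ via the Galois correspondence, and then lift it to $F$ using the moreover clause of Fact~\ref{fact:regular_to_sorted}. But that clause only moves primitive elements \emph{upwards} along a regular extension: from $e(\acl(L)^M/L)$ to $e(\acl(E)^{\pi^{-1}[M]}/E)$, not the reverse. There is no mechanism for ``descending'' $a$ from $E$ to $L$. More seriously, a general $N\in\CN(\CG(E))$ need not arise as $\res^{-1}[M]$ for any $M\in\CN(\CG(L))$: the restriction $\CG(E)\to\CG(L)$ is surjective, so preimages of open normal subgroups of $\CG(L)$ form only a small cofinal family inside $\CN(\CG(E))$. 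For $N$ not of this form, the ``bridge through $\CG(L)$'' simply does not exist, and there is no reason an abstract continuous $\varphi$ should match up $\CF(N)$ with $\CF(N')$. Stability does not help here; it controls orbit sizes, not which sorts host primitive elements of unrelated finite extensions of $E$ and $F$.

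Second, even if $\varphi$ were sorted, Theorem~\ref{thm:elementary_invariance} has the hypothesis ``PAC is a first order property'', which is not assumed in the corollary. The point of Corollary~\ref{cor2033} (which is Corollary~3.7 in \cite{DHL1}) is precisely that the $\kappa$-PAC hypothesis, with $|L|<\kappa$ and $\kappa\geqslant|T|^+$, lets one bypass the sorted machinery entirely: one runs a direct back-and-forth between $E$ and $F$ over $L$ (this is Proposition~3.6 in \cite{DHL1}, invoked repeatedly in the present paper, e.g.\ in the proofs of Theorem~\ref{thm:PAC_type_by_completesystem} and Proposition~\ref{thm:Zoe}). The $\kappa$-saturation built into $\kappa$-PAC gives the room to realise the relevant types at each step, and the commuting diagram with $\CG(L)$ ensures that Galois data over the base is preserved. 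Sortedness is the substitute one needs exactly when this saturation is \emph{not} available, which is why it appears in Theorem~\ref{thm:elementary_invariance} but not here.
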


The following definition is a modification of \cite[Definition 3.3]{PilPol}, which was already used in the main result of \cite{DHL1} (see Theorem \ref{thm:elementary_invariance}).

\begin{definition}\label{def:PAC.first.order}
We say that \emph{PAC is a first order property in $T$} if there exists a set $\Sigma$ of $\mathcal{L}$-sentences such that for any $M\models T$ and $P\subseteq M$
$$P\models \Sigma\qquad\iff\qquad P\text{ is PAC}.$$
\end{definition}

\begin{theorem}[Elementary Equivalence Theorem for Structures]\label{thm:elementary_invariance}\cite[Theorem 5.11]{DHL1}
Assume that $T$ is stable.
Suppose PAC is a first order property.
Assume that
\begin{itemize}
\item $K$, $L$, $M$, $E$, $F$ are small definably closed substructures of $\mathfrak{C}$,
\item $K\subseteq L\subseteq E$, $K\subseteq M\subseteq F$,
\item $F$ and $E$ are PAC,
\item $\phi_0\in\aut(\mathfrak{C}/K)$ is such that $\phi_0[L]=M$,
\item $\Phi:\mathcal{G}(F)\to\mathcal{G}(E)$ is a \textbf{sorted isomorphism} such that
$$\xymatrix{
\mathcal{G}(F)\ar[r]^{\Phi} \ar[d]_{\res} & \mathcal{G}(E) \ar[d]^{\res}\\
\mathcal{G}(M)\ar[r]_{\Phi_0} & \mathcal{G}(L)
}$$
where $\Phi_0(\sigma):=\phi_0^{-1}\circ \sigma\circ\phi_0$.
\end{itemize}
Then $E\equiv_K F$.
\end{theorem}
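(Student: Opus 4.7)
The plan is to reduce the statement to Corollary \ref{cor2033}, which already gives the analogous conclusion for $\kappa$-PAC substructures over a common definably closed base. The reduction proceeds in two moves: first, conjugate by $\Phi_0$ to arrange that the common base equals $L$ on both sides; second, pass to sufficiently saturated elementary extensions and use first-orderness of PAC to upgrade from plain PAC to $\kappa$-PAC.

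For the first move, set $F':=\Phi_0^{-1}(F)$. Since $\Phi_0^{-1}\in\aut(\FC/K)$, it is an $\mathcal{L}$-iso\-mor\-phism $F\to F'$ over $K$, so $F\equiv_K F'$, and $F'$ is PAC (directly from the definition, or via Definition \ref{def:PAC.first.order}). As $\Phi_0^{-1}(M)=L$, we have $L\subseteq F'$, so it suffices to prove $E\equiv_L F'$. The automorphism $\Phi_0$ induces a sorted isomorphism of profinite groups $c_{\Phi_0}:\CG(F')\to\CG(F)$ sending $\tau$ to $\Phi_0\tau\Phi_0^{-1}$, so $\psi:=\varphi\circ c_{\Phi_0}:\CG(F')\to\CG(E)$ is again a sorted isomorphism. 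A short diagram chase using $\varphi_0(\sigma)=\Phi_0^{-1}\sigma\Phi_0$ shows that for every $\tau\in\CG(F')$ one has $\psi(\tau)|_{\acl(L)}=\tau|_{\acl(L)}$; in other words $\psi$ commutes with the restriction maps to $\CG(L)$. This reduces the problem to the case $M=L$, $\Phi_0=\id$.

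For the second move, Corollary \ref{cor2033} requires the substructures to be $\kappa$-PAC with $\kappa\geq|T|^+$, and we only have plain PAC. Fix $\kappa:=(|L|+|T|)^+$, work inside a larger monster, and choose $\kappa^+$-saturated elementary extensions $F'\preceq\widetilde F$ and $E\preceq\widetilde E$. By Definition \ref{def:PAC.first.order} these remain PAC, and by saturation they are $\kappa$-PAC in the sense of Definition 3.1 of \cite{PilPol}. The inclusions $F'\subseteq\widetilde F$ and $E\subseteq\widetilde E$ are elementary, hence regular, so by Fact \ref{fact:regular_to_sorted} the restriction maps $\CG(\widetilde F)\to\CG(F')$ and $\CG(\widetilde E)\to\CG(E)$ are sorted epimorphisms. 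Choosing the saturated extensions carefully (using stability, PAC, and first-orderness) one arranges both restriction maps to be isomorphisms, and then $\psi$ lifts canonically to a sorted isomorphism $\widetilde\psi:\CG(\widetilde F)\to\CG(\widetilde E)$ still compatible with restriction to $\CG(L)$. Applying Corollary \ref{cor2033} gives $\widetilde F\equiv_L\widetilde E$, hence $F'\equiv_L E$, and finally $F\equiv_K E$.

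The main obstacle is this lifting step: producing, for a PAC substructure $P$, a $\kappa$-saturated elementary extension $\widetilde P\succeq P$ whose absolute Galois group agrees with $\CG(P)$ via the restriction map, and doing so simultaneously on both sides so that the sorted isomorphism $\psi$ transports. In the field case this is a classical consequence of the Embedding Lemma for PAC fields; in the present sorted framework one has to combine first-orderness of PAC, stability of $T$, and the sorted refinement of regularity supplied by Fact \ref{fact:regular_to_sorted}, verifying in particular that the open normal subgroups coming from $\CG(P)$ exhaust those of $\CG(\widetilde P)$. Once this is in place, the conclusion is immediate from Corollary \ref{cor2033}.
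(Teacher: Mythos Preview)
First, note that this paper does not prove Theorem~\ref{thm:elementary_invariance}; it is quoted verbatim from \cite{DHL1} (Theorem~5.11 there), so there is no ``paper's own proof'' to compare against. Your reduction by conjugating with $\Phi_0$ is correct and is the natural first move; after it, the problem is exactly the situation of Corollary~\ref{cor2033} except that $F'$ and $E$ are only PAC rather than $\kappa$-PAC.

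The gap is in your second move. You assert that one may choose $\kappa$-saturated elementary extensions $\widetilde F\succeq F'$ and $\widetilde E\succeq E$ so that the restriction maps $\CG(\widetilde F)\to\CG(F')$ and $\CG(\widetilde E)\to\CG(E)$ are \emph{isomorphisms}. This is false in general whenever $\CG(F')$ is not small. Concretely, if $F'$ is an $\omega$-free PAC field, then $F'^{*}/(F'^{*})^2$ is countably infinite; any $\aleph_1$-saturated $\widetilde F\succeq F'$ will have $|\widetilde F^{*}/(\widetilde F^{*})^2|\ge\aleph_1$, hence strictly more quadratic extensions, and the restriction map on absolute Galois groups has nontrivial kernel. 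Your appeal to ``the Embedding Lemma for PAC fields'' does not produce such extensions either: that lemma embeds one field into an extension of another, it does not control the Galois group of a saturated elementary extension. The construction in Lemma~\ref{lemma:reg_to_sat_PAC} does build a $\kappa$-PAC extension with isomorphic Galois group, but it needs $\omega$-stability and, more importantly, the extension it produces is not a priori elementary over the base, so you would be assuming what you want to prove.

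There is also a structural red flag: Corollary~\ref{cor2033} requires only a \emph{continuous} isomorphism, so if your lifting step worked, the boldfaced hypothesis that $\varphi$ be \emph{sorted} would be redundant. In the actual argument in \cite{DHL1} the sortedness is exactly what allows the isomorphism to be transported to the saturated level: one passes to ultrapowers, uses (the DHL1 analogue of) Theorem~\ref{thm:ultraproduct_sorted_completesystem} to identify $\CG(F'^{\mathcal U})$ with the ultrapower of $\CG(F')$, and the sorting data is precisely what makes the ultrapower of $\varphi$ land in the right place---this is the content of the ``crucial Lemma~5.5'' of \cite{DHL1} that the present paper alludes to. Without sortedness there is no reason for $\varphi$ to be compatible with this identification, and your proposed shortcut does not get around this.
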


\begin{cor}\label{final_cor}
Let $T$ be stable.
Suppose PAC is a first order property.
If the restriction map $\res:\CG(F)\to\CG(E)$, where $E\subseteq F$ are PAC structures, is a  sorted isomorphism, then $E\preceq F$. 
\end{cor}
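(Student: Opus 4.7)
My plan is to apply Theorem~\ref{thm:elementary_invariance} with the trivial data $K = L = M = E$, $\Phi_0 = \id_{\FC}$, $\varphi = \res\colon \CG(F) \to \CG(E)$ (the given sorted isomorphism), and $\varphi_0 = \id_{\CG(E)}$. The chain conditions $K \subseteq L \subseteq E$ and $K \subseteq M \subseteq F$ degenerate to $E \subseteq E \subseteq E$ and $E \subseteq E \subseteq F$, both trivially true; the requirement $\Phi_0(L) = M$ reads $\id(E) = E$; and the formula $\varphi_0(\sigma) = \Phi_0^{-1} \circ \sigma \circ \Phi_0$ produces exactly $\id_{\CG(E)}$, matching our choice.

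With $L = M = E$, the right-hand restriction $\CG(E) \to \CG(L)$ in the theorem's square is the identity, so commutativity collapses to $\res = \id \circ \res$, which is immediate. The remaining hypotheses---that $F$ and $E$ are PAC, that $\varphi$ is a sorted isomorphism, the stability of $T$, and PAC being a first order property---are either handed to us by the corollary or are standing assumptions. Theorem~\ref{thm:elementary_invariance} then yields $E \equiv_K F$, i.e.\ $E \equiv_E F$, and combined with $E \subseteq F$ this is precisely $E \preceq F$.

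The only step I would treat with care is the smallness and definable-closedness clause of Theorem~\ref{thm:elementary_invariance}, which requires that $K, L, M, E, F$ be small definably closed substructures of $\FC$. Definable closedness is the working convention for PAC substructures throughout the paper (cf.\ Definition~\ref{galois.ext.def}), so it is implicit. If $E$ or $F$ happened to be large, I would remedy this by fixing a finite $\bar e \in E$, using Fact~\ref{fact:PAC_elemsubst} to descend to small PAC structures $E_0 \preceq E$ containing $\bar e$ and $F_0 \preceq F$ containing $E_0$, and then verifying that the induced restriction $\CG(F_0) \to \CG(E_0)$ is still a sorted isomorphism; this last check---tracking the sorts through the defining families $\CF(N)$ for $N \in \CN(\CG(E_0))$ and the inverse-limit description of the absolute Galois groups---is the one piece of the argument that is not purely formal.
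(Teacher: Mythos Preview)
Your main argument is correct and is exactly the intended derivation: the paper states the corollary without proof, and the obvious instantiation of Theorem~\ref{thm:elementary_invariance} with $K=L=M=E$, $\Phi_0=\id_{\FC}$, $\varphi=\res$, $\varphi_0=\id_{\CG(E)}$ yields $E\equiv_E F$, hence $E\preceq F$.

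Your final paragraph about smallness is unnecessary and introduces an unverified claim. Throughout the paper all substructures of $\FC$ under discussion are small by standing convention (see the setup at the end of the Introduction and the blanket ``small definably closed substructures'' in Section~\ref{sec:sorted}), so there is nothing to repair. The proposed descent to $E_0\preceq E$, $F_0\preceq F$ would in fact require showing that $\res:\CG(F_0)\to\CG(E_0)$ remains a sorted isomorphism, and this does not follow automatically; drop that paragraph entirely.
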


In the case of $T$ being a stable theory,
we see that for a regular extension of PAC structures $E\subseteq F$ the restriction map $\res:\CG(F)\to\CG(E)$ is sorted and if it is a sorted isomorphism, then the embedding $E\subseteq F$ is elementary. We want to develop a first order language for profinite groups (similarly as in \cite{cherlindriesmacintyre}) which will encode ``being a sorted map" and which will distinguish maps corresponding to elementary embeddings. One of our goals is to find an appropriate property in the place of ``?" in the following picture, where $E\subseteq F$ is an extension of PAC structures (in the case of stable $T$):
$$\xymatrix{
E\subseteq F \text{ is regular} \ar@{<~>}[r] & \res:\CG(F)\to\CG(E) \text{ is sorted} \\
E\subseteq F \text{ is an elementary embedding} \ar@{<~>}[r] & ?
}$$

\section{Sorted groups and systems}\label{sec:sorted}
\subsection{Sorted profinite groups}

In this subsection, we equip profinite groups with ``sorting data", i.e. a family of sets of finite tuples of sorts, 
which should recognize where (i.e. on which sorts) primitive elements of finite Galois extensions live
(if the given profinite group is an absolute Galois group). Because we model ``sorted profinite groups" on absolute Galois groups 
which encode presence of primitive elements, let us first note a property which holds in such absolute Galois groups. This property occurs in Definition \ref{def:sorted_profinite_gp} and is related to the ``modular lattice axioms" from Subsection \ref{sec:sorted_systems}.

\begin{remark}
Suppose that $\dcl(K)=K$ is a small substructure of $\FC$.
There exist functions $J^*_{\subset}:\omega\times\mathcal{J}^{<\omega} \to\mathcal{J}^{<\omega}$
and $J^*_{\cap}:(\mathcal{J}^{<\omega})^{\times 2}\to\mathcal{J}^{<\omega}$ satisfying the following points.
\begin{enumerate}
\item If $K\subseteq L$ is a Galois extension and $J\in\mathcal{J}^{<\omega}$ is such that 
 $|\aut_{J}(L/K)|=|\CG(L/K)|\leqslant k$, then for any 
 Galois extension $K\subseteq D$ such that $D\subseteq L$, we have that
 $|\aut_{J^*_{\subset}(k,J)}(D/K)|=|\CG(D/K)|\leqslant k$.
 
\item If $K\subseteq L_1$ and $K\subseteq L_2$ are Galois extensions and $J_1,J_2\in\mathcal{J}^{<\omega}$ are such that 
$|\aut_{J_i}(L_i/K)|=|\CG(L_i/K)|\leqslant k_i$ for $i=1,2$, then
$$|\aut_{J^*_{\cap}(J_1,J_2)}(\dcl(L_1,L_2)/K)|=|\CG(\dcl(L_1,L_2)/K)|\leqslant k_1k_2.$$
\end{enumerate}
\end{remark}

\begin{proof}
Note that 
if $|\aut_{J}(L/K)|=|\CG(L/K)|=n<\omega$, then $L$ has a primitive element in $J^{\smallfrown n}$ (e.g. see the last part of the proof of \cite[Proposition A.10]{DHL1}).

Assume that $L=\dcl(K,a)$ is a finite Galois extension of $K$ with a primitive element $a\in J^{\smallfrown [L:K]}$ and that $[L:K]\leqslant k$.
Let $K\subseteq D$ be any Galois extension such that $D\subseteq L$ and let $J_n'$ be the sort corresponding to the sort of codes for sets of $n$-many elements from $J$. The element $\ulcorner\CG(L/D)\cdot a\urcorner$ is a primitive element of $D$ over $K$ and belongs to some $J_n'$, where $n\leqslant [L:K]$.
Then for $J^*_{\subset}(k,J):=J_1'^{\smallfrown}J_2'^{\smallfrown}\ldots^{\smallfrown} J_{k}'$ we have that $|\CG(D/K)|=|\aut_{J_{\subset}^*(k,J)}(D/K)|$.

Let $L_1=\dcl(a,K)$ and $L_2=\dcl(b,K)$ be Galois extensions of $K$ with primitive elements $a\in J_1^{\smallfrown [L_1:K]}$ and $b\in J_2^{\smallfrown [L_2:K]}$ respectively.
Then $(a,b)\in J_1^{\smallfrown [L_1:K]}\hspace{0mm}^{\smallfrown}J_2^{\smallfrown [L_2:K]}$ is a primitive element of $\dcl(L_1,L_2)=\dcl(K,a,b)$ (over $K$). Set $J^*_{\cap}(J_1, J_2):= J_1^{\smallfrown}J_2$.
\end{proof}
\noindent
If $T$ is one-sorted, then instead of working in $T^{\eq}$ one may consider working in $T$, since the above remark trivially holds for one-sorted theories even without assuming any variant of elimination of imaginaries.

\begin{notation}
For $J=(S_1,\ldots,S_n)\in \CJ^{<\omega}$, let $||J||:=\{S_1,\ldots,S_n\}$. We define 
$$\sqrt{J}:=\{J'\in \CJ^m:\ ||J'||\supseteq||J||,\ m\geqslant 1\}.$$
\end{notation}

\begin{definition}\label{def:sorted_profinite_gp}
Let $G$ be a profinite group and 
let $\bar{\CF}=\{\CF(N)\subseteq\mathcal{J}^{<\omega}\;|\; N\in \CN(G)\}$
(for some choice of $\CF(N)$'s).
We say that $(G,\bar{\CF})$ is a \emph{sorted profinite group} if
for $N, N_1,N_2\in \CN(G)$,
\begin{enumerate}
\item $J\in \CF(N)\Leftrightarrow \sqrt{J}\subseteq \CF(N)$.
\item for $J:=(S_{j_1},\ldots,S_{j_n})\in\CF(N)$ and $\sigma\in \Sym(n)$ we have that 
$\sigma(J)\in\CF(N)$.

\item if $N_1\subseteq N_2$, $[G:N_1]\leqslant k$ and $J\in\CF(N_1)$, then $J^*_{\subset}(k,J)\in\CF(N_2)$.

\item  for any $J_1\in \CF(N_1)$ and $J_2\in \CF(N_2)$ we have that
	$$J^*_{\cap}(J_1,J_2)\in \CF(N_1\cap N_2).$$
	
\item for $N_1,N_2\in \CN(G)$ and $g\in G$, if $g^{-1}N_1g=N_2$, that is, $\Phi_g[N_1]=N_2$ for the inner automorphism $\Phi_g:x\mapsto g^{-1}xg$ on $G$, then $$\CF(N_1)=\CF(N_2).$$
\end{enumerate}
\end{definition}

\begin{example}
Let $\CL=\{+,\,-,\,\cdot,\,0,\,1\}$ be the language of rings with one sort $S_{=}$ so $\CJ=\{S_{=}\}$. Let $T=ACF_p$ be the complete theory of algebraically closed fields of characteristic $p$. 
Assume that $\FC$ is a saturated algebraically closed field of characteristic $p$. 
Take a perfect subfield $K\subseteq \FC$ (so it is definably closed) and set $G:=\CG(K)$. We define
$\CF(N):=\mathcal J^{<\omega}$ for each $N\in \CN(G)$. 
Then $(G,\bar{\CF})$ forms a sorted profinite group.
\end{example}

In accordance with Definition \ref{def:sorted_map1}, we introduce the following notion.

\begin{definition}\label{def:sorted_map2}
Assume that $(G_1,\bar{\CF}_1)$ and $(G_2,\bar{\CF}_2)$ are sorted profinite groups. A \emph{morphism of sorted profinite groups} $\pi: (G_1,\bar{\CF}_1)\to(G_2,\bar{\CF}_2)$ is a continuous epimorphism $\pi:G_1\to G_2$ such that
for each $N\in\CN(G_2)$ we have $\CF_2(N)\subseteq\CF_1(\pi^{-1}[N])$.
\end{definition}

Then, the family of sorted profinite groups with morphisms of sorted profinite groups forms a category. Note that being a sorted isomorphism as in Definition \ref{def:sorted_map1} corresponds to being an isomorphism of sorted profinite groups. Now, we will define a functor taking a category of regular extensions of small substructures of $\FC$ into the category of sorted profinite groups. For a small definably closed substructure $F$ of $\FC$, recall that for every $N\in\CN(\CG(F))$ (i.e. open normal subgroup):
$$\CF_F(N):=\{J\in \mathcal{J}^{<\omega}\;|\;	|\aut_J(\acl(F)^{N}/F)|=|\CG(\acl(F)^N/F)| \}.$$ Define $\bar{\CF}(F):=\big(\CF_F(N)\big)_{N\in\CN(\CG(F))}$.


\begin{remark}	
We consider a category of small definably closed substructures of $\mathfrak{C}$ whose morphism between $E$ and $F$ is an $\CL$-embedding $\phi:\acl(E)\rightarrow \acl(F)$ such that $F$ is a regular extension of $\phi[E]$, which is an elementary map by quantifier elimination of $T$. Then, we can define a functor $\CG$ from this category into the category of sorted profinite groups. The functor $\CG$ is given by 
$$F\mapsto (\CG(F),\bar{\CF}(F)),$$
$$\phi:E\to F\quad\mapsto\quad \CG(\phi):(\CG(F),\bar{\CF}(F))\to(\CG(E),\bar{\CF}(E))$$
where $\CG(\phi)$ is the composition of the restriction map $\res:\CG(F)\to\CG(\phi[E])$ and the isomorphism $\Phi:\CG(\phi[E])\to\CG(E),\sigma\mapsto \phi^{-1}\circ\sigma\circ \phi$ (which is a morphism of sorted profinite groups by Fact \ref{fact:regular_to_sorted}). The map $\CG(\phi)$ is called the {\em dual} of $\phi$.

\end{remark}

\subsection{Sorted complete systems}\label{sec:sorted_systems}
There is a standard way to study profinite groups in model theory (e.g \cite{cherlindriesmacintyre}, 
\cite{zoeIP}). The point is to avoid arguments based on ``infinite topology", by formulating everything in terms of finite quotients (from which this topology arises) of a given profinite group.
The same scheme works for sorted profinite groups, although we need to consider a different collection of sorts on which we set our first-order structure corresponding to a sorted profinite group.

We introduce language $\mathcal{L}_G(\mathcal{J})$ over sorts $m(k,J)$ where $k<\omega$ and $J\in\mathcal{J}^{<\omega}$ as follows. The language $\mathcal{L}_G(\mathcal{J})$ consists:
\begin{itemize}
\item a family of binary relations $\leqslant_{k,k',J,J'}$, $C_{k,k',J,J'}$ ``evaluated" on elements of $m(k,J)\times m(k',J')$,

\item a family of ternary relations $P_{k,J}$ ``evaluated" on elements of $m(k,J)^{\times 3}$.
\end{itemize}

Usually, if there is no confusion,
we will skip the subscripts and 
write only ``$\leqslant$", ``$C$" and ``$P$". 
The same with elements of a $\mathcal{L}_G(\mathcal{J})$-structure: we will use ``$a$" and ``$(a,k,J)$" to denote the same element $a\in m(k,J)$.

\begin{definition}\label{def:sorted complete system}
We call an $\mathcal{L}_G(\mathcal{J})$-structure $(S,\leqslant,C,P)$ a \emph{sorted complete system} if
the following (first order) axioms and axiom schemes are satisfied:
\begin{enumerate}
\item \begin{itemize}
\item (order): $\leqslant$ is reflexive and transitive on $S$.
\item (maximal elements 1): $|m(1,J)|=1$, where $J\in\mathcal{J}^{<\omega}$.
\item (maximal elements 2): $(\forall\, x\in m(1,J),\, y\in m(k',J'))\,(\,y\leqslant x\,)$, where $J,J'\in\mathcal{J}^{<\omega}$ and $0<k'<\omega$.
\end{itemize}

\item
Define $x\sim y$ as $x\leqslant y\,\wedge\,y\leqslant x$.
Denote the $\sim$-class of $a$ by $[a]$ for $a\in m(k,J)$ and set $[a]_{k,J}:=[a]\cap m(k,J)$ (which is definable).
\begin{itemize}
\item (extending tuples): $(\forall\,a\in m(k,J))\,(\exists\,a'\in m(k',J'))\,(a~\sim a')$, where $k\leqslant k'$ and $J\leqslant J'$.

\item (permutations): $(\forall\,a\in m(k,J))\,(\exists\,a'\in m(k,\sigma(J))\,(a\sim a')$, where $k<\omega$, $J\in\mathcal{J}^{<\omega}$ and $\sigma$ is a permutation on the tuple $J$.

\item (finiteness): $(\forall\,a\in m(k,J))(\,|[a]_{k,J}|\leqslant k\,)$

\item (reducing degree): $(\forall\,a\in m(k,J))\,(\,|[a]_{k,J}|\leqslant n\,\rightarrow\,(\exists\,a'\in m(n,J))\,(a\sim a'))$, where $n\leqslant k<\omega$ and $J\in\mathcal{J}^{<\omega}$.
\end{itemize}

\item \begin{itemize}
\item (intersection $H\cap H'$): 
$$\big(\forall\,x\in m(k,J),\,y\in m(k',J'),\,z\in m(k'',J'')\big)\,\big(z\leqslant x\,\wedge\,z\leqslant y\,\rightarrow$$
$$(\exists\,w\in m(kk',J^*_{\cap}(J,J')))\,(z\leqslant w\,\wedge\,w\leqslant x\,\wedge\,w\leqslant y)\big)$$

\item (subgroup $H\subseteq H'$):
$$\big(\forall\,x\in m(k,J),\,y\in m(k',J')\big)\,\big(x\leqslant y\,\rightarrow$$
$$(\exists\,y'\in m(k,J^*_{\subset}(J,k)))\,(y\sim y')\big)$$

\item (inf):
Suppose that $a\in m(k,J)$ and $b\in m(k',J')$, we define an $\mathcal{L}_G(\mathcal{J})$-formula
$\varphi^{\inf}_{a,b}(x)$ as follows
$$x\leqslant a\,\wedge\,x\leqslant b\,\wedge$$
$$\big(\forall\,y\in m(kk',J^*_{\cap}(J,J'))\big)\,(y\leqslant a\,\wedge\,y\leqslant b\,\rightarrow\,y\leqslant x).$$
We require that the following holds in $S$
$$\big(\forall\,a\in m(k,J),\,b\in m(k',J'))\,(\exists\,x\in m(kk',J^*_{\cap}(J,J'))\big)\, \big(\,\varphi^{\inf}_{a,b}(x)\,\big).$$

\item (sup):
Suppose that $a\in m(k,J)$ and $b\in m(k',J')$, we define an $\mathcal{L}_G(\mathcal{J})$-formula
$\varphi^{\sup}_{a,b}(x)$ as follows
$$a\leqslant x\,\wedge\,b\leqslant x\,\wedge$$
$$\big(\forall y\in m(kk',J^*_{\subset}(J,k))\big)\,(a\leqslant y\,\wedge\,b\leqslant y\,\rightarrow\,x\leqslant y)).$$
We require that the following holds in $S$
$$\big(\forall\,a\in m(k,J),\,b\in m(k',J'))\,(\exists\,x\in m(kk',J^*_{\subset}(J,k))\big)\, \big(\,\varphi^{\sup}_{a,b}(x)\,\big).$$
\end{itemize}

\item
For each $a\in m(k,J)$ and $b\in m(k',J')$ 
we define 
$[a]\wedge[b]:=[c]$, where $c\in m(kk',J^*_{\cap}(J,J')$ is such that $\varphi^{\inf}_{a,b}(c)$ holds,
and
$[a]\vee[b]:=[d]$, where $d\in m(kk',J^*_{\subset}(J,k))$ is such that $\varphi^{\sup}_{a,b}(d)$ holds .
\begin{itemize}
\item (lattice): Note that $(S/\sim,\leqslant,\vee,\wedge)$ forms a lattice.

\item (modular law): We require that $[a]\leqslant[b]$ implies that $[a]\vee ([c]\wedge[b])=([a]\vee[c])\wedge{b}$ which can be expressed as a first order axiom scheme.
\end{itemize}

\item (group structure): $P\subseteq \bigcup\limits_{k,J}\bigcup\limits_{a\in m(k,J)}\big(\,[a]_{k,J}\,\big)^{\times 3}$ and $P$ is the graph of a binary operation making $[a]_{k,J}$ into a finite group of order at most $k$.

\item
\begin{itemize}
\item $C(x,y)\;\rightarrow\;x\leqslant y$
\item (projections): For all $a\in m(k,J)$ and $b\in m (k',J')$, if $a\leqslant b$ then
$C\cap ([a]_{k,J}\times [b]_{k',J'})$ is the graph of a group epimorphism $\pi_{a,b}:[a]_{k,J}\to[b]_{k',J'}$.
\item (compatible system 1): $\pi_{a,a}=\id_{[a]_{k,J}}$ for all $a\in m(k,J)$, and all $k<\omega$ and $J\in\mathcal{J}^{<\omega}$.
\item (compatible system 2): If $a\leqslant b\leqslant c$ then $\pi_{b,c}\circ\pi_{a,b}=\pi_{a,c}$.
\end{itemize}

\item (normal subgroups): $(\forall\, a\in m(k,J))\,(\forall\; N\trianglelefteq [a]_{k,J})\,(\exists !\,b\in m(k,J^*_{\subset}(J,k)))$
$(C(a,b)\,\wedge\,N=\{a^{-1}c\;|\;c\in[a]_{k,J}\,\wedge\,C(c,b)\})$.

\item (hidden axiom): $(\forall\,a\in m(k,J),\,b\in m(k',J'),\,c\in m(k'',J''))\,(a\leqslant b\,\wedge\,a\leqslant c\,\wedge\,\ker\pi_{a,b}=\ker\pi_{a,c}\,\rightarrow\,b\sim c)$.

\item (invariant under the inner automorphisms) For $a\in m(k,J),b\in m(k',J')$ and for $c\in m(kk',J_{\cap}^*((k,J),(k',J'))$ with $[a]\wedge [b]=[c]$, if there is $d\in [c]\cap m(kk',J_{\cap}^*((k,J),(k',J'))$ such that $\Phi_d[N_a^c]=N_b^c$ for the inner automorphism $\Phi_d$ on $[c]_{kk',J_{\cap}^*((k,J),(k',J')}$, where $N_a^c$ is the kernel of $\pi_{c,a}:[c]_{kk',J_{\cap}^*((k,J),(k',J')}\rightarrow [a]_{k,J}$, then for any $(k'',J'')\in \omega\times \CJ^{<\omega}$, $$[a]\cap m(k'',J'')\neq \emptyset\Leftrightarrow [b]\cap m(k'',J'')\neq \emptyset.$$
\end{enumerate}
The set of consequences of the above axioms and axiom schemes will be denoted by $\scs$ (i.e. the theory of Sorted Complete Systems).
\end{definition}

The axiom scheme 8. in the above definition is needed also in the case corresponding to (one-sorted) fields, but (to our knowledge) it was not stated explicitly up to this point, hence we call it the ``hidden axiom". Example \ref{ex:hidden.axiom} shows that axiom scheme from point 8. does not follow from the previous axioms.

\begin{example}\label{ex:hidden.axiom}
Put $X_0:=\{x_0^0,x_1^0,x_2^0,x_3^0\}$, $X_1:=\{x_0^1,x_1^1\}$, $X_2:=\{x_0^2,x_1^2\}$, and $X_3:=\{x_0^3\}$.

\begin{enumerate}
	\item Define a binary relation $\le_{i,j}'\subseteq X_i\times X_j$ for $i,j\le 3$ as follows: Put $\le_{i,j}'=\emptyset$ if $i>j$ or $\{i,j\}=\{1,2\}$, and put $\le_{i,j}'=X_i\times X_j$ otherwise. Set $\le':=\bigcup_{i,j\le 3} \le_{i,j}'$.

	\item Define a binary relation $C_{i,j}'\subseteq X_i\times X_j$ for $i,j\le 3$ as follows: Put $C_{i,j}'=\emptyset$ if $i>j$ or $\{i,j\}=\{1,2\}$, $C_{0,j}'=\{(x^0_p,x^i_q)|\ p=q (\mod 2)\}$ if $j\in \{1,2\}$, and put $C_{i,j}'=X_i\times X_j$ otherwise. Set $C':=\bigcup_{i,j\le 3}C'_{i,j}$.

	\item Define a ternary relation $P'_i\subseteq X_i\times X_i\times X_i$ for $i\le 3$ as follows: $P'_0:=\{(x^0_p,x^0_q,x^0_r)|\ p+q=r(\mod 4)\}$, $P'_i:=\{(x^i_p,x^i_q,x^i_r)|\ p+q=r(\mod 2)\}$ if $i\in \{1,2\}$, and $P_3':=X_3\times X_3\times X_3$. Set $P':=\bigcup_{i\le 3}P'_i$.
\end{enumerate}

For a set $X$, define $\epsilon_k: X\rightarrow X^k,x\mapsto (x,\ldots,x)$, and $\delta_k:X^k\rightarrow X,(x_0,\ldots, x_{k-1})\mapsto x_0$, where $k\ge 1$. Note that the functions $\epsilon_k$'s and $\delta_k$'s are serving a superficial coding role rather than a substantive role in the argument (to make $X_i$'s disjoint to fit into the formalism). If there is no risk for a confusion, we skip $k$. Set $m(1):=\epsilon_1(X_3)$, $m(2):=\epsilon_2(X_1)\sqcup \epsilon_2(X_2)\sqcup \epsilon_2(X_3)$, $m(3):=\epsilon_3(X_1)\sqcup \epsilon_3(X_2)\sqcup \epsilon_3(X_3)$, and $m(k):=\sqcup_{i\ge 0} \epsilon_k(X_i)$ for $k\ge 4$.

\begin{enumerate}
	\item For $k_1,k_2\ge 1$ and for $\alpha\in m(k_1)$ and $\beta \in m(k_2)$, define $\le_{k_1,k_2}$ and $C_{k_1,k_2}$ as follows:
\begin{itemize}
	\item $\le_{k_1,k_2}(\alpha,\beta)$ if and only if $\le'(\delta(\alpha),\delta(\beta))$, and
	\item $C_{k_1,k_2}(\alpha,\beta)$ if and only if $C'(\delta(\alpha),\delta(\beta))$.
\end{itemize}

	\item For $k\ge 1$ and for $\alpha,\beta,\gamma \in m(k)$, define $P_k$ as follows: $P_k(\alpha,\beta,\gamma)$ if and only if $P'(\delta(\alpha),\delta(\beta),\delta(\gamma))$.
\end{enumerate}

Now, we consider an $\mathcal{L}_G(\mathcal{J})$-structure (where $|\mathcal{J}|=1$) $S=(m(k),\le_{k_1,k_2},C_{k_1,k_2},P_k)$ and we can check that $S$ satisfies axioms on  \cite[page 979.]{chatzidakis2002}, but $S$ does not satisfy our new additional axiom scheme (``hidden axiom").
To see the last thing, note that:
\begin{enumerate}
	\item $[x^i]=X_i$ for each $x^i\in X_i$, and so $[x^1]\neq [x^2]$.
	\item $[x^0]\cong \BZ/4\BZ$, $[x^1]\cong \BZ/2\BZ\cong [x^2]$, and $[x^3]=0$. 
	\item $\pi_{0,i}:[x^0]\rightarrow [x^i]$, $x^0_p\mapsto x^i_q$ such that $p=q(\mod 2)$ for $i\in \{1,2\}$.
\end{enumerate}

\noindent
From the above $(1),(3)$, we have that $\ker(\pi_{0,1})=\ker(\pi_{0,2})=\{x^0_0,x^0_2\}$, but $x^1_i\not\sim x^2_j$ for any $i,j\in \{1,2\}$. 
\end{example}

Now, we will establish the following correspondence between categories:
$$\xymatrix{\Big\{\text{sorted profinite groups}\Big\} \ar@/^/[r]^{S}  &  \Big\{\text{sorted complete systems} \Big\} \ar@/^/[l]^{G}}$$
We only define desired maps and leave checking details to the reader, since precise arguments will significantly increase the number of pages of this paper and most of these arguments are just standard ``diagram chasing".

If we start with a sorted profinite group $(G,\bar{\CF})$, a functor $S$ attaches to $(G,\bar{\CF})$ the sorted complete system $S(G)$ defined in the following way:
\begin{itemize}
\item $m(k,J)(S(G)):=\{gH\;|\;g\in G, H\in\CN(G), [G:H]\leqslant k, J\in\CF(H)\}$.
\item if $gH\in m(k,J)(S(G))$ and $g'H'\in m(k',J')(S(G))$, then we set 
$$gH\,\leqslant_{k,k',J,J'}\, g'H'\qquad\iff\qquad H\subseteq H'.$$
\item similarly
$$C_{k,k',J,J'}(gH,g'H')\quad\iff\quad H\subseteq H'\text{ and }gH'=g'H'.$$
\item if $g_1H_1,g_2H_2,g_3H_3\in m(k,J)$ then we set
$$P_{k,J}(g_1H_1,g_2H_2,g_3H_3)\quad\iff\quad H_1=H_2=H_3\text{ and }g_1g_2H_1=g_3H_1.$$
\end{itemize}
Any morphism of sorted profinite groups $\pi:(G_1,\bar \CF_1)\to (G_2,\bar \CF_2)$ leads to an $\mathcal{L}_G(\mathcal{J})$-embedding $S(\pi):S(G_2)\to S(G_1)$ given by
$$S(\pi)(gH):=g'\pi^{-1}[H]\in m(k,J)(S(G_1))$$
where $gH\in m(k,J)(S(G_2))$ and $g'\in G_1$ is any element such that $\pi(g')=g$.

If we start with a sorted complete system $(S,\leqslant,C,P)$, then the
collection of $\pi_{a,b}:[a]_{k,J}\to [b]_{k',J'}$, where $a\in m(k,J)(S)$ and $b\in m(k',J')(S)$,
forms a projective system of finite groups. Therefore we define a functor $G$ on $S$ as
$$G(S):=\lim\limits_{\leftarrow}\;[a]_{k,J}.$$
From the axioms of a sorted complete system, it follows that for each open normal subgroup $N$ of $G(S)$ there is some $a\in m(k,J)(S)$ such that $N=N_{a,k,J}$, where $N_{a,k,J}$ is the kernel of the epimorphism $G(S)\to [a]_{k,J}$ coming from the definition of a projective limit. Therefore we can define $\CF(N)$ for $N\in\CN(G(S))$ in the following way:
$$\CF(N):=\{J\in\mathcal{J}^{<\omega}\;|\;N=N_{a,k,J} \text{ for some }a\in m(k,J)(S)\}.$$
If $f:S\to S'$ is an $\mathcal{L}_G(\mathcal{J})$-embedding between sorted complete systems, then,
since $f:[a]_{k,J}\cong [f(a)]_{k,J}$, the embedding $f$ induces an epimorphism $G(f):G(S')\to G(S)$
(it is not necessarily an isomorphism, since projective systems corresponding to $G(S)$ and $G(S')$ are indexed by different sets of elements, we use here e.g. \cite[Lemma 1.1.5]{ribzal}).

Let us now describe the canonical isomorphisms $\beta: S\to SG(S):=S(G(S))$ and $\alpha:G\to GS(G):=G(S(G))$ needed to obtain the aforementioned equivalence of categories.
Suppose that $S$ is a sorted complete system, and $a\in m(k,J)(S)$. We define 
$$\beta(a):=gN_{a,k,J}\in m(k,J)(SG(S)),$$
where $N_{a,k,J}:=\ker\big(\pi_{a,k,J}:G(S)\to [a]_{k,J}\big)$ and $\pi_{a,k,J}(g)=a$ (it does not depend on the choice of such element $g$).

We treat $G(S)$ as a subset of $\prod\limits_{a\in m(k,J)(S)} [a]_{k,J}$ containing compatible sequences. Assume now that $(G,\bar{\CF})$ is a sorted profinite group and $g\in G$. We define $\alpha:G\to GS(G)$
by
$$\alpha(g):=(gN_{a,k,J})_{a\in m(k,J)(S(G)}.$$
\begin{remark}\label{rem.alpha_beta}
Take a small definably closed substructure $F$ of $\FC$.
Consider $\alpha:\CG(F)\to GS\CG(F)$, $\beta:S\CG(F)\to SGS\CG(F)$ and $S(\alpha):SGS\CG(F)\to S\CG(F)$.
Then we have that $S(\alpha)\circ\beta=\id_{S\CG(F)}$, which will be useful at the end of the proof of Theorem \ref{thm:Zoe}.
\end{remark}

\begin{definition}\label{def:dual_double_dual}
Let $\Phi:(G_1,\bar{\mathcal{F}}_1)\to (G_1,\bar{\mathcal{F}}_1)$ be a morphism between sorted profinite groups. We call $S\Phi:=S(\Phi)$ and $\Phi$ the {\em dual} of $\Phi$ and $S\Phi$ respectively. Let $\phi:\acl(E)\rightarrow \acl(F)$ be an embedding such that $F$ is a regular extension of $\phi[E]$. We call the dual $S\CG(\phi)$ of $\CG(\phi)$ the {\em double dual} of $\phi$.
\end{definition}

\begin{example}
Let us come back for a moment to Example \ref{ex:hidden.axiom}
to show the actual purpose for introducing the ``hidden axiom".
Assume that $S$ is the $\mathcal{L}_G(\mathcal{J})$-structure (where $|\mathcal{J}|=1$) given in Example \ref{ex:hidden.axiom}.
There is no embedding from $S$ to $SG(S)$.
To see this, note that by $(2),(3)$ from the end of Example \ref{ex:hidden.axiom}, 
we have that $G(S)\cong \BZ/4\BZ$ and so
$$SG(S)/\sim\cong \{\BZ/4\BZ,\;2\BZ/4\BZ,\;4\BZ/4\BZ\},$$
and we have 
$$S/\sim\cong \{X_0,X_1,X_2,X_3\}.$$
So we have that $|SG(S)/\sim|=3\neq 4=|S/\sim|$.
\end{example}

\subsection{Encoding Galois groups}\label{sec:encoding}
Let us recall that we are working with
a complete theory $T$ which has the uniform elimination of imaginaries (in the sense of point b) from \cite[Lemma 8.4.7]{tentzieg}) in the language $\CL$ with sorts $\CJ$. 
Moreover, $\mathfrak{C}\models T$ is a monster model of $T$.
Consider a small definably closed  subset $K$ of some $M\preceq\FC$.

\begin{definition}\label{def:conjugation}
Let $n\ge 1$, $J\in\CJ^{<\omega}$ and $a_1,\ldots, a_n\in S_J(M)$. We say that $a_1,\ldots,a_n$ are {\em conjugated over $K$} if 
\begin{itemize}
	\item $\bigwedge\limits_{i\neq j}a_i\neq a_j$,
	\item $\ulcorner \{a_1,\ldots, a_n\} \urcorner\in K$, and
	\item $\ulcorner A \urcorner\not\in K$ for any proper nonempty subset $A$ of $\{a_1,\ldots, a_n\}$.
\end{itemize}
We write $\conj_{J,K,M}^n(a_1,\ldots,a_n)$ to indicate that 
$a_1,\ldots,a_n$ are conjugated over $K$ (in $M$).
If $J$, $n$, and $K$ or $M$ are obvious, we skip them. 
\end{definition}

Note that $\conj_{J,K}^n(a_1,\ldots,a_n)$ if and only if $a_1\in \acl(K)$, $\CG(K)a_1=\{a_1,\ldots,a_n\}$, and $|\CG(K)a_1|=n$. Hence ``being conjugated" does not depend on the choice of $M$.

\begin{remark}
Because we assume that $T$ has the uniform elimination of imaginaries, 
conditions from Definition \ref{def:conjugation} can be written down as a formula in the language $\mathcal{L}_P$, where $P$ is a predicate corresponding to $K$ (i.e. we consider the $\mathcal{L}_P$-structure $(M,K)$), for example:
$$(M,K)\models\conj(a_1,\ldots,a_n).$$
It is the only place in this subsection, where we require the uniform elimination of imaginaries.
Moreover, it is even enough to assume that $T$ has the uniform elimination of imaginaries only for finite sets.
\end{remark}

\begin{definition}\label{def:n-primitive element}
Let $n\ge 1$ and let $J\in\CJ^{<\omega}$. We say that $a\in S_J(M)$ is {\em an $n$-primitive element of $S_J(M)$ over $K$} if there are $a_2,\ldots,a_n\in S_J(M)$ such that
\begin{enumerate}
	\item $(M,K)\models\conj(a,a_2,\ldots,a_n)$, and
	\item $(M,K)\models\conj(\alpha,\alpha_2,\ldots,\alpha_n)$ for
	$\alpha:=(a,a_2,\ldots,a_n)$ and some $\alpha_2,\ldots,\alpha_n\in S_J^n(M)$.
\end{enumerate} 
We write $\pr_{J,K,M}^n$ ($\subseteq S_J(M)$) for the set of all $n$-primitive elements of $S_J(M)$ over $K$.
\end{definition}

\begin{remark}\label{rmk:example_n-primitivelement}
\begin{enumerate}
	\item The set $\pr_{J,K,M}^n$ is a $\emptyset$-definable set in the language $\CL_P$, that is, there is a formula $\varphi(x)\in \CL_P$ such that 
	for each $M'\preceq\FC$ with $K\subseteq M'$, we have $\varphi(M')= \pr_{J,K,M'}^n$ (here, we consider the $\mathcal{L}_P$-structure $(M',K)$).
	
	\item Let $L$ be a Galois extension of $K$ such that $[L:K]=n$. Any primitive element of $L$ (i.e. an element $a\in L$ such that $\dcl(K,a)=L$) is an $n$-primitive element of $S_J(M)$ over $K$ for an appropriate $J\in\mathcal{J}^{<\omega}$.
	
	\item Let $a\in S_J(M)$ be an $n$-primitive element over $K$. Then, $L=\dcl(a,K)$ is a Galois extension of $K$ with $[L:K]=n$. 
\end{enumerate}
\end{remark}

\begin{proof}
Proofs of points $(1)$ and $(2)$ are clear. We proceed to the proof of point $(3)$. Let $\CG(K)a:=\{a_1(:=a),a_2,\ldots,a_n\}$ and let $\alpha:=(a_1,\ldots,a_n)$.
Let $L':=\dcl(K,\alpha)$, which is a Galois extension of $K$ with $[L':K]=n$. 
It is enough to show
$L'= L$.

Suppose that $\{\sigma_1(a_1),\ldots,\sigma_n(a_1)\}=\CG(K)\cdot a_1$ for some $\sigma_1,\ldots,\sigma_n\in \CG(K)$, say $\sigma_1=\id$. We have that $\sigma_i(\alpha)\neq\sigma_j(\alpha)$ for all $i\neq j$, hence
$\{\sigma_1(\alpha),\ldots,\sigma_n(\alpha)\}=\CG(K)\cdot\alpha$ and $\sigma_1(\alpha)=\alpha$.
Take $\sigma\in \CG(L'/L)$, since $\sigma(\alpha)\in \CG(K)\cdot\alpha$ and $\sigma(a_1)=a_1$ it must be $\sigma(\alpha)=\sigma_1(\alpha)=\alpha$. The last thing implies that $\sigma=\id_{L'}$. By the Galois correspondence, $\CG(L'/L)=\{\id_{L'}\}$ turns into $L=L'$.
\end{proof}

An $\mathcal{L}_P$-formula from the first point of Remark \ref{rmk:example_n-primitivelement} will be denoted by $\pr^n_{J,K}$ (or $\pr^n_J$ when the choice of $K$ is obvious), so $\pr^n_{J,K}(M)=\pr^n_{J,K,M}$.

\begin{lemma}\label{lem:definability_dcl}
Let $n\ge 1$, $J_1,J_2\in\CJ^{<\omega}$,
and $a\in S_{J_1}(M)$ and $b\in S_{J_2}(M)$.
Suppose that $a\in \pr^n_{J_1,K}(M)$. The following are equivalent:
\begin{enumerate}
	\item $b\in \dcl(a,K)$.
	\item $(a,b)\in\pr^n_{J_1^{\smallfrown}J_2,K}(M)$.
\end{enumerate}
\end{lemma}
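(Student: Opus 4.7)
The plan is to translate both conditions into Galois-theoretic statements using Remark \ref{rmk:example_n-primitivelement}(3), which says that any $n$-primitive element over $K$ generates a Galois extension of $K$ of degree $n$. Under this translation, $(1)$ becomes $\dcl(K,a,b) = \dcl(K,a)$, while $(2)$ becomes the statement that $\dcl(K,a,b)$ is Galois over $K$ of degree $n$; since by hypothesis $a$ is already $n$-primitive (so $\dcl(K,a)$ is Galois of degree $n$ over $K$), these two assertions turn out to be equivalent.

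For $(2) \Rightarrow (1)$ the argument should be immediate from this translation: Remark \ref{rmk:example_n-primitivelement}(3) gives that both $L := \dcl(K,a)$ and $L' := \dcl(K,a,b)$ are Galois over $K$ of the same finite degree $n$; since $L \subseteq L'$, this forces $L = L'$, and in particular $b \in \dcl(K,a)$.

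For $(1) \Rightarrow (2)$ I would set $L := \dcl(K,a) = \dcl(K,a,b)$ and argue directly from the definition of $\pr^n_{J_1^\smallfrown J_2, K}$. Let $H \leqslant G(K)$ denote the pointwise stabilizer of $L$, which is normal of index $n$ because $L$ is Galois over $K$. Fix coset representatives $\sigma_1 = \id, \sigma_2, \ldots, \sigma_n$ of $H$ in $G(K)$ and set $c_i := \sigma_i(a,b)$. The first coordinates $\sigma_i(a)$ exhaust the $n$-element $G(K)$-orbit of $a$ (by the $n$-primitivity of $a$), so the $c_i$ are pairwise distinct and form the $G(K)$-orbit of $(a,b)$. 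To verify $\conj((a,b), c_2, \ldots, c_n)$, I would note that the code of this orbit lies in $\dcl(K) = K$ by $G(K)$-invariance, and conversely any $G(K)$-invariant nonempty subset of a transitive orbit is the whole orbit, so no nonempty proper subset has code in $K$.

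For the second conjugation condition on $\alpha' := ((a,b), c_2, \ldots, c_n)$, normality of $H$ should give $\stab(c_i) = \sigma_i H \sigma_i^{-1} = H$ for every $i$, hence $\stab(\alpha') = H$ and the $G(K)$-orbit of $\alpha'$ has size exactly $n$. Setting $\alpha'_j := \sigma_j(\alpha')$ for $j \geqslant 2$ and reusing the transitivity argument then yields $\conj(\alpha', \alpha'_2, \ldots, \alpha'_n)$, completing the verification. The main step requiring care is the minimality condition in the definition of $\conj$, which reduces to the elementary fact that transitive group actions admit no proper nonempty invariant subsets.
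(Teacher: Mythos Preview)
Your proof is correct. The $(2)\Rightarrow(1)$ direction matches the paper's argument essentially verbatim.

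For $(1)\Rightarrow(2)$ you take a different route from the paper. The paper observes that under hypothesis (1) the pair $(a,b)$ is a primitive element of the Galois extension $L=\dcl(K,a)$ with $[L:K]=n$, and then simply invokes Remark~\ref{rmk:example_n-primitivelement}(2) to conclude that $(a,b)$ is $n$-primitive. You instead unwind the definition of $\pr^n$ directly: you compute the $G(K)$-orbit of $(a,b)$ explicitly via coset representatives of the stabilizer $H$, verify the two $\conj$ conditions by hand, and use normality of $H$ to control the stabilizer of the tuple $\alpha'$. This is longer but self-contained; in effect you are reproving the relevant instance of Remark~\ref{rmk:example_n-primitivelement}(2) rather than citing it. The paper's approach is more economical given the remarks already in place, while yours has the advantage of making the Galois-theoretic mechanism fully explicit.
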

\begin{proof}
Let $L:=\dcl(a,K)$ and let $\alpha:=(a,b)\in S_{J_1^{\smallfrown}J_2}(M)$.
\
\\
$(1)\Rightarrow (2)$ 
By Remark \ref{rmk:example_n-primitivelement}.(3), $L=\dcl(K,\alpha)$ is a Galois extension of $K$ with $[L:K]=n$, hence by Remark \ref{rmk:example_n-primitivelement}.(2), $\alpha$ is an $n$-primitive element over $K$.
\
\\
$(2)\Rightarrow (1)$ 
By Remark \ref{rmk:example_n-primitivelement}.(3) for elements $a$ and $\alpha$, we have that both
$L$ and $L':=\dcl(K,\alpha)$ are Galois extensions of $K$ such that
$[L:K]=n=[L':K]$. Since $L\subseteq L'$ and $\res:\CG(L'/K)\to \CG(L/K)$ is a bijection, the Galois correspondence implies that $L'=L$.
\end{proof}

\begin{cor}\label{cor:definability_dcl}
Let $n\ge 1$ and let $J_1,J_2\in\CJ^{<\omega}$. 
There exists an $\mathcal{L}_P$-formula $\varphi(x,y)$, where $x\in S_{J_1}$ and $y\in S_{J_2}$, such that
for any $a\in\pr^n_{J_1,K}(M)$
$$\dcl(K,a)\cap S_{J_2}(M)=\varphi(a,M).$$
In other words: $\dcl(K,a)\cap S_{J_2}(M)$ is uniformly definable over $a$ in $\CL_P$.
\end{cor}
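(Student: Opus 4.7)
The plan is to extract the desired formula directly from the one defining $n$-primitive elements on the concatenated sort, using Lemma \ref{lem:definability_dcl} as the bridge. By Remark \ref{rmk:example_n-primitivelement}.(1), there exists an $\mathcal{L}_P$-formula $\pr^n_{J_1{}^\smallfrown J_2,K}(z)$ whose realizations in any $M' \preceq \FC$ containing $K$ are exactly the $n$-primitive elements of $S_{J_1{}^\smallfrown J_2}(M')$ over $K$. Since the sort $S_{J_1{}^\smallfrown J_2}$ is naturally identified with $S_{J_1} \times S_{J_2}$, we may split the variable $z$ accordingly and set
\[
\phi(x,y) \;:=\; \pr^n_{J_1{}^\smallfrown J_2,K}(x,y),
\]
with $x$ of sort $S_{J_1}$ and $y$ of sort $S_{J_2}$.

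Next, I would verify the claimed equality for any fixed $a \in \pr^n_{J_1,K}(M)$. By Lemma \ref{lem:definability_dcl}, for any $b \in S_{J_2}(M)$ we have
\[
b \in \dcl(K,a) \;\iff\; (a,b) \in \pr^n_{J_1{}^\smallfrown J_2,K}(M) \;\iff\; (M,K) \models \phi(a,b).
\]
Taking the set of all such $b$ gives $\dcl(K,a) \cap S_{J_2}(M) = \phi(a,M)$, which is exactly the uniform definability in $a$ that the corollary asserts.

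There is essentially no obstacle here, since the substantive content was handled in Lemma \ref{lem:definability_dcl}; the one subtlety worth noting is that $\phi$ does not depend on the particular $a$ but only on the parameters $n$, $J_1$, $J_2$, and on the predicate $P$ interpreting $K$, so the displayed formula $\phi(x,y)$ witnesses uniform $\mathcal{L}_P$-definability of the intersection $\dcl(K,a)\cap S_{J_2}(M)$ across all $n$-primitive $a$'s. The use of uniform elimination of imaginaries from $T$ enters only via Remark \ref{rmk:example_n-primitivelement}.(1), where it was already invoked to express the ``conjugation'' condition as an $\mathcal{L}_P$-formula.
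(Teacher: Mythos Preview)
Your proof is correct and is exactly the argument the paper has in mind: the corollary is stated without proof precisely because it follows immediately from Lemma~\ref{lem:definability_dcl} by taking $\phi(x,y):=\pr^n_{J_1{}^\smallfrown J_2,K}(x,y)$, just as you do.
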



\subsection{Interpretability of $SG(K)$ in $\CL_P$}
We are still working with a small definably closed $K$ contained in some $M\preceq\FC$.

\begin{definition}\label{def:interpret_galoisgroup}
For $(k,J)\in \omega\times \CJ^{<\omega}$  
define $\U_{J}^k(M)$ as the 
set of pairs $(a,b)\in (S_{J}(M))^2$ such that
\begin{itemize}
	\item $a,b\in\pr^k_{J,K}(M)$, and
	\item $a$ and $b$ are conjugated over $K$ (i.e. there exists $c_3,\ldots,c_k\in S_J(M)$ such that $(M,K)\models\conj(a,b,c_3,\ldots,c_k)\;$).
\end{itemize}
\end{definition}

Note that for $a,b\in S_J(M)$, $(a,b)\in U_J^k(M)$ if and only if $L:=\dcl(K,a)$ is a Galois extension of $K$ such that 
$[L:k]=k$ and $\sigma(a)=b$ for some $\sigma\in \CG(L/K)$. Note also that the set $U^k_J(M)$ is definable by a formula in the language $\mathcal{L}_P$, which will be denoted by ``$U^k_J$".

Define an equivalence relation $\approx$ on $\U_J^k(M)$ as follows: for $(a_1,b_1)$ and $(a_2,b_2)$ in $\U_J^k(M)$, $(a_1,b_1)\approx (a_2,b_2)$ if and only if
\begin{IEEEeqnarray*}{rCl}
(M,K) &\models & \pr^n_{J^{\smallfrown}J,K}(a_1,a_2) \\
(M,K) &\models & \conj(c,d,e_3,\ldots,e_n)
\end{IEEEeqnarray*}
for some $e_3,\ldots,e_n\in (S_{J}(M))^2$, where $c=(a_1,a_2)$, $d=(b_1,b_2)$.

Suppose that $L=\dcl(K,a)$ is a Galois extension of $K$ with $[L:K]=k$ and $a\in S_J(M)$.
Consider map $U:\CG(L/K)\to U^k_J(M)/\approx$ given by $\sigma\mapsto (a,\sigma(a))/\approx$.
The map $U$ is injective. More generally:


\begin{remark}\label{rmk:unique_approx}
Let $(k,J)\in\omega\times \CJ^{<\omega}$ and $(a,b),(a,b')\in U_J^k(M)$. 
If $(a,b)\approx (a,b')$, then $b=b'$.
\end{remark}

\begin{proof}
Suppose that $(a,b)\approx (a,b')$. Then $(a,a)$ and $(b,b')$ are conjugated over $K$: there is $\sigma\in \CG(K)$ such that $\sigma((a,a))=(b,b')$. Therefore $b=\sigma(a)=b'$.
\end{proof}

\begin{definition}\label{def:basicoperations_on_SG}
Let $(k,J),(k_1,J_1),(k_2,J_2)\in \omega\times \CJ^{<\omega}$.
\begin{enumerate}
	\item Define a binary relation $\leqslant'_{k_1,k_2,J_1,J_2}$ on $\U_{J_1}^{k_1}(M)\times \U_{J_2}^{k_2}(M)$ as follows: for $\alpha_i:=(a_i,b_i)\in \U_{J_i}^{k_i}(M)$, where $i=1,2$, we have $			\le'_{k_1,k_2,J_1,J_2}(\alpha_1,\alpha_2)$ if
	\begin{itemize}
		\item $k_1\geqslant k_2$, and
		\item $a_2\in\dcl(K,a_1)$.
	\end{itemize}

	\item Define a binary relation $C'_{k_1,k_2,J_1,J_2}$ on $\U_{J_1}^{k_1}(M)\times \U_{J_2}^{k_2}		(M)$ as follows: for $\alpha_i:=(a_i,b_i)\in \U_{J_i}^{k_i}(M)$, where $i=1,2$, we have 				$C'_{k_1,k_2,J_1,J_2}(\alpha_1,\alpha_2)$ if
	\begin{itemize}
		\item $k_1\geqslant k_2$, and
		\item $(\alpha,\beta)\in U^{k_1}_{J_1 ^{\smallfrown} J_2}(M)$ for
		$\alpha:=(a_1,a_2)$ and $\beta:=(b_1,b_2)$.
	\end{itemize}
	[i.e. for $\sigma_i\in \CG(a_i/K)$ such that $\sigma_i(a_i)=b_i$, where $i=1,2$, we have 
	$\sigma_1(a_2)=\sigma_2(a_2)$ and $a_2\in \dcl(a_1,K)$]

	\item Define a ternary relation $P'_{k,J}$ on $U_J^k(M)$ as follows: for $\alpha_i:=(a_i,b_i)\in U_J^k(M)$, where $i=1,2,3$, we have $P'_{k,J}(\alpha_1,\alpha_2,\alpha_3)$ if
	\begin{itemize}
		\item $a_2,a_3\in\dcl(K,a_1)$, and
		\item there is $c\in S_{J}(M)$ (which is unique by Remark \ref{rmk:unique_approx}) such that $(a_1,b_1)\approx (b_2,c)$ and $(a_3,b_3)\approx (a_2,c)$.
	\end{itemize}
	[i.e. for $\sigma_i\in \CG(a_i/K)$ corresponding to $\alpha_i$, where $i=1,2,3$, 
	we have $\sigma_1\sigma_2(a_2)=c=\sigma_3(a_2)$ and $a_2,a_3\in\dcl(K,a_1)$, hence $\sigma_1\sigma_2=\sigma_3$]
\end{enumerate}
If there is no confusion, we write $C'$, $\le'$, and $P'$ for $C'_{k_1,k_2,J_1,J_2}$, $\le'_{k_1,k_2,J_1,J_2}$, and $P'_{k,J}$ respectively.
\end{definition}

\begin{remark}\label{rmk:basicoperations_on_galoisgroup}
Let $(k,J),(k_1,J_1),(k_2,J_2)\in \omega\times \CJ^{<\omega}$.
\begin{enumerate}
	\item Assume that $\alpha_i:=(a_i,b_i),\, \alpha_i':=(a_i',b_i') \in U_{J_i}^{k_i}(M)$, where $i=1,2$. If $\alpha_i\approx\alpha_i'$, then
	$$\leqslant'(\alpha_1,\alpha_2)\quad\Leftrightarrow \quad\leqslant'(\alpha_1',\alpha_2').$$

	\item Assume that $\alpha_i:=(a_i,b_i),\, \alpha_i':=(a_i',b_i') \in U_{J_i}^{k_i}(M)$, where $i=1,2$. If $\alpha_i\approx\alpha_i'$, then
	$$C'(\alpha_1,\alpha_2)\quad\Leftrightarrow\quad C'(\alpha_1',\alpha_2').$$

	\item Assume that $\alpha_i:=(a_i,b_i),\, \alpha_i':=(a_i',b_i') \in U_J^k(M)$, where $i=1,2,3$.
	If $\alpha_i\approx\alpha_i'$, then
	$$P'(\alpha_1,\alpha_2,\alpha_3)\quad\Leftrightarrow\quad P'(\alpha_1',\alpha_2',\alpha_3').$$
\end{enumerate}
\end{remark}

\begin{proof}
It is enough to use the equivalent formulations provided in square brackets in Definition \ref{def:basicoperations_on_SG} and we leave the proof to the reader.
\end{proof}

Therefore $\leqslant'$, $C'$ and $P'$ induce well-defined relations (also denoted by $\leqslant'$, $C'$ and $P'$) on the classes of the relation $\approx$.

Before reaching the main theorem of this subsection (Theorem \ref{thm:interpretability_SG(K)}), we provide a result interesting on its own, namely Proposition \ref{prop:interpretability_galoisaction}. 
We use a standard definition of the notion of \emph{$A$-interpretability} coming from Definition 1.1 in \cite[Chapter 3]{anandgeometric}.
Although, let us start with auxiliary lemmas.

We fix a finite Galois extension $L$ of $K$. 

\begin{lemma}\label{lemma:GLK.interpretability01}
Suppose that $L$ is given by some $a\in\pr^n_J(M)$, i.e. $L=\dcl(K,a)$.
Then the group $\CG(L/K)$ is $\{a\}$-interpretable in $(M,K)$.
\end{lemma}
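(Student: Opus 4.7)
The plan is to identify $G(L/K)$ with a concrete $\{a\}$-definable set in $(M,K)$, using the machinery of $n$-primitive elements and the relation $U^n_J$ already at hand. Set
$$V_a \;:=\; \{\, b \in S_J(M) \,:\, (M,K)\models U^n_J(a,b)\,\}.$$
Since $U^n_J$ is an $\mathcal{L}_P$-formula, $V_a$ is $\{a\}$-definable in $(M,K)$. By Remark \ref{rmk:example_n-primitivelement}(3), the assumption $a\in \pr^n_J(M)$ means $L=\dcl(K,a)$ is a Galois extension of $K$ with $[L:K]=n$, so the $G(K)$-orbit of $a$ has cardinality $n$, coincides with the set of conjugates of $a$ over $K$, and equals $\{\sigma(a) : \sigma\in G(L/K)\}$. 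Thus the evaluation map
$$\Phi: G(L/K) \longrightarrow V_a,\qquad \sigma\longmapsto \sigma(a),$$
is well-defined, and it is bijective because $a$ is a primitive element of $L/K$.

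The next step is to transport the group law along $\Phi$ and check that the resulting graph on $V_a^3$ is $\{a\}$-definable. The natural candidate is the restriction of the relation $P'_{n,J}$ from Definition \ref{def:basicoperations_on_SG}(3) to triples $((a,b_1),(a,b_2),(a,b_3))$. Explicitly, define
$$R(b_1,b_2,b_3) \quad\Longleftrightarrow\quad P'_{n,J}\bigl((a,b_1),(a,b_2),(a,b_3)\bigr).$$
To see that $R$ is $\{a\}$-definable in $(M,K)$, note that the clauses defining $P'$ involve: (i) the $\dcl$-relations $a_2,a_3\in\dcl(K,a_1)$, which when $a_1=a$ are $\{a\}$-definable by Corollary \ref{cor:definability_dcl}; (ii) the $\mathcal{L}_P$-formula $U^n_{J^{\smallfrown}J}$ used in the relation $\approx$; and (iii) existential quantification over an auxiliary element $c\in S_J$. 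All of these are available in $\mathcal{L}_P$ with only $a$ as a parameter.

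It remains to verify that $R$ is the graph of multiplication on $V_a$ transferred via $\Phi$; that is, for $\sigma_i\in G(L/K)$ with $b_i = \sigma_i(a)$, we have $R(b_1,b_2,b_3)$ iff $\sigma_2\sigma_1 = \sigma_3$ (up to the chosen convention). Unwinding the definition of $P'$: the containments $a_2,a_3\in\dcl(K,a_1)=L$ are automatic; the clause $(a,b_3)\approx (a,c)$ forces $c=b_3=\sigma_3(a)$ by Remark \ref{rmk:unique_approx}; and the clause $(a,b_1)\approx (b_2,c)$ says that some $\tau\in G(K)$ sends $a\mapsto b_2=\sigma_2(a)$ and $b_1=\sigma_1(a)\mapsto c=\sigma_3(a)$. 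Because $a$ is a primitive element, $\tau|_L = \sigma_2$, and then the second equation becomes $\sigma_2\sigma_1(a)=\sigma_3(a)$, i.e.\ $\sigma_2\sigma_1=\sigma_3$. Thus $(V_a,R)$ is $\{a\}$-interpretable in $(M,K)$ and isomorphic to $G(L/K)$ as a group.

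The only delicate point is the bookkeeping in the last paragraph, ensuring that the definability of the $\dcl$-conditions and of the relation $\approx$ is \emph{uniform} over the single parameter $a$; this is where Corollary \ref{cor:definability_dcl} (together with the uniform elimination of imaginaries for finite sets) does the real work. Everything else is a straightforward transcription of the already defined relations $U^n_J$, $\approx$ and $P'$ into an interpretation in the sense of \cite{anandgeometric}.
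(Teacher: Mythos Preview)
Your proof is correct and follows the same strategy as the paper --- using the relation $P'$ from Definition~\ref{def:basicoperations_on_SG}(3) to encode the group law --- but with a pleasant simplification: you work directly with the orbit $V_a=\{b:(a,b)\in U^n_J(M)\}\subseteq M$, whereas the paper passes to the quotient $W_a=\{(a,b)/\!\approx\}\subseteq (M,K)^{\eq}$. For fixed first coordinate $a$, Remark~\ref{rmk:unique_approx} makes the map $b\mapsto (a,b)/\!\approx$ a bijection $V_a\to W_a$, so no information is lost by avoiding the quotient here. The reason the paper prefers $W_a$ is the next step (Lemma~\ref{lemma:GLK.interpretability02}): there one needs $W_a=W_{a'}$ for any two primitive elements $a,a'$ of $L/K$ in order to drop the parameter $a$ and obtain $K$-interpretability in Corollary~\ref{cor:GLK.interpretability03}; the sets $V_a$ and $V_{a'}$ coincide as sets but the group laws on them are indexed by different parameters, so the quotient packaging is what makes the independence-of-choice transparent.

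One small slip in your unwinding of $\approx$: by definition, $(a,b_1)\approx(b_2,c)$ means $(a,b_2)$ and $(b_1,c)$ are $G(K)$-conjugate, so the witnessing $\tau$ sends $a\mapsto b_1$ and $b_2\mapsto c$ (not $a\mapsto b_2$, $b_1\mapsto c$). This swaps the order and gives $\sigma_1\sigma_2=\sigma_3$ rather than $\sigma_2\sigma_1=\sigma_3$, matching the bracketed comment in Definition~\ref{def:basicoperations_on_SG}(3). Either way the conclusion stands, since the opposite group is isomorphic to $G(L/K)$.
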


\begin{proof}
Consider a subset of $U^n_J(M)/\approx$ given by:
$$W_a:=\{(a,b)/\approx\;|\; b\in\pr^n_J(M)\text{ and }(a,b)\in U^n_J(M)\}.$$
One could write $W_a$ as the set $\{(a,\sigma(a))/\approx\;|\;\sigma\in \CG(L/K)\}$.
Note that $W_a$ is $\{a\}$-definable in $(M,K)^{\eq}$. Consider group structure on $W_a$ induced by the relation $P'$ (which is well defined by Remark \ref{rmk:basicoperations_on_galoisgroup}.(3)):
$$\alpha_1/\approx\;\cdot\;\alpha_2/\approx=\alpha_3/\approx\qquad\iff\qquad (M,K)\models P'(\alpha_1,\alpha_2,\alpha_3),$$
where $\alpha_1,\alpha_2,\alpha_3\in U^n_J(M)$. To finish the proof we need to find a group isomorphism between the group $\CG(L/K)$ and the set $W_a$ equipped with the above ``multiplication".

Consider $\Phi_a:\CG(L/K)\to W_a$, $\sigma\mapsto (a,\sigma(a))/\approx$. Since $L=\dcl(K,a)$, Remark \ref{rmk:unique_approx} implies that $\Phi_a$ is injective. By the note under Definition \ref{def:interpret_galoisgroup}, it is clear that $\Phi_a$ is onto. To see that it preserves the ``multiplication" it is enough to combine Remark \ref{rmk:basicoperations_on_galoisgroup} with the explanation provided in the square brackets in Definition \ref{def:basicoperations_on_SG}.
\end{proof}

\begin{lemma}\label{lemma:GLK.interpretability02}
Suppose that $L$ is given by some $a\in\pr^n_J(M)$, i.e. $L=\dcl(K,a)$ and
that $a'\in\pr^n_J(M)$ is such that $\dcl(K,a)=\dcl(K,a')$ (i.e. $(M,K)\models\pr^n_{J^{\smallfrown}J}(a,a')$). Then (using the notation from the previous proof)
$W_a=W_{a'}$ and $\Phi_a=\Phi_{a'}$.
\end{lemma}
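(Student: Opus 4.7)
The plan is to reduce everything to showing that $\phi_a = \phi_{a'}$ as functions $G(L/K) \to U^n_J(M)/{\approx}$. Once that equality of maps is established, the equality $W_a = W_{a'}$ is automatic: by Lemma \ref{lemma:GLK.interpretability01}, both $\phi_a$ and $\phi_{a'}$ are bijections onto $W_a$ and $W_{a'}$ respectively, and equal functions have equal images.

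To prove $\phi_a(\sigma) = \phi_{a'}(\sigma)$ for an arbitrary $\sigma \in G(L/K)$, I need to show $(a, \sigma(a)) \approx (a', \sigma(a'))$. Unwinding the definition of $\approx$, this amounts to two verifications. First, $(a, a') \in \pr^n_{J^{\smallfrown}J, K}(M)$: this follows directly from Lemma \ref{lem:definability_dcl}, since $a \in \pr^n_{J,K}(M)$ and $a' \in \dcl(K, a) = L$. Second, writing $c := (a, a')$ and $d := (\sigma(a), \sigma(a'))$, I need elements $e_3, \ldots, e_n$ witnessing $\conj(c, d, e_3, \ldots, e_n)$. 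For this, lift $\sigma$ to some $\hat{\sigma} \in G(K)$; then $d = \hat{\sigma}(c)$, so $d$ lies in the $G(K)$-orbit of $c$. Because $c$ is $n$-primitive, Definition \ref{def:n-primitive element}(2) supplies an enumeration $(c, c_2, \ldots, c_n)$ of that orbit together with companion tuples witnessing the conjugation of $(c, c_2, \ldots, c_n)$. Picking any $\hat{\tau} \in G(K)$ with $\hat{\tau}(c) = d$ and setting $e_i := \hat{\tau}(c_i)$ produces the required witnesses.

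The only point of care is to keep the two layers of primitivity in Definition \ref{def:n-primitive element} straight (conjugation of the $a$'s on the one hand, and of the tuples $\alpha$ enumerating their orbits on the other), and to distinguish between the action of an element of $G(L/K)$ on $L$ and that of a lift in $G(K)$ on $\acl(K)$. Neither is a genuine obstacle, so I expect the lemma to follow almost immediately from the definition of $\approx$, together with Lemma \ref{lem:definability_dcl} and the bijectivity part of Lemma \ref{lemma:GLK.interpretability01}.
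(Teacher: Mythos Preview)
Your argument is correct and coincides with the paper's approach: the paper's proof is the single line ``Follows from $(a,\sigma(a))\approx(a',\sigma(a'))$ for each $\sigma\in G(L/K)$,'' and you have unpacked exactly this claim, together with the observation that $W_a=\im(\phi_a)$ so that $\phi_a=\phi_{a'}$ yields $W_a=W_{a'}$. One small simplification: to produce the witnesses $e_3,\ldots,e_n$ you do not need the auxiliary $\hat\tau$; since $c=(a,a')$ is $n$-primitive, its $G(K)$-orbit has exactly $n$ elements and already contains $d=(\sigma(a),\sigma(a'))$, so (for $\sigma\neq\id$) you may simply list the remaining $n-2$ orbit elements as $e_3,\ldots,e_n$.
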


\begin{proof}
Follows from $(a,\sigma(a))\approx(a',\sigma(a'))$ for each $\sigma\in \CG(L/K)$.
\end{proof}

\begin{cor}\label{cor:GLK.interpretability03}
The group $\CG(L/K)$ is $K$-interpretable in $(M,K)$.
\end{cor}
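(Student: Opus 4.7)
The plan is to eliminate the parameter $a_0$ of Lemma \ref{lemma:GLK.interpretability01} by replacing it with its $G(K)$-orbit, which is a $K$-definable finite set carrying all the information needed to run Lemma \ref{lemma:GLK.interpretability01}.

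First I would invoke the Primitive Element Theorem to fix some $a_0 \in \pr^n_{J,K}(M)$ with $L = \dcl(K, a_0)$, for a suitable $J \in \mathcal{J}^{<\omega}$, where $n = [L:K]$. Set $O := G(K) \cdot a_0 \subseteq \acl(K) \cap S_J(M)$. The set $O$ is finite of size $n$ and fixed setwise by $G(K)$; since every automorphism of $\mathfrak{C}$ fixing $K$ pointwise restricts to an element of $G(K) = \aut(\acl(K)/K)$, the set $O$ is also fixed by $\aut(\mathfrak{C}/K)$, so its canonical code lies in $\dcl(K) = K$. Hence $O$ is a $K$-definable $n$-element subset of $S_J(M)$. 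Moreover, since $L$ is normal over $K$, every element $a \in O$ satisfies $\dcl(K, a) = L$ and is therefore a primitive element of $L$ over $K$.

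Next I would put
$$V_O := \big\{\,(a, b)/{\approx} \;:\; (a,b) \in U_J^n(M),\; a \in O\,\big\}.$$
By Lemma \ref{lemma:GLK.interpretability02}, applied to any two primitive elements $a, a_0 \in O$, we have $W_a = W_{a_0}$ for every $a \in O$, so $V_O = \bigcup_{a \in O} W_a = W_{a_0}$. The set $V_O$ is $K$-definable in $(M,K)^{\eq}$ since $O$ is $K$-definable while $U_J^n$ and $\approx$ are both $\emptyset$-definable. The $\emptyset$-definable ternary relation $P'$ from Definition \ref{def:basicoperations_on_SG} restricts to a well-defined group operation on $V_O$ (well-defined by Remark \ref{rmk:basicoperations_on_galoisgroup}), and the isomorphism $\phi_{a_0}: G(L/K) \to W_{a_0} = V_O$ of Lemma \ref{lemma:GLK.interpretability01} transports the group structure of $G(L/K)$ to $V_O$.

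Thus $(V_O, P')$ is a $K$-definable group in $(M,K)^{\eq}$ isomorphic to $G(L/K)$, which is exactly a $K$-interpretation. The only substantive step is verifying that the $G(K)$-orbit of a fixed primitive element has its code in $K$; this rests on uniform elimination of imaginaries and on the identification of $G(K)$-orbits with $\aut(\mathfrak{C}/K)$-orbits inside $\acl(K)$. Everything else is bookkeeping inherited from Lemmas \ref{lemma:GLK.interpretability01} and \ref{lemma:GLK.interpretability02}; note in particular that the isomorphism $\phi_{a_0}$ itself need not be $K$-definable for interpretability.
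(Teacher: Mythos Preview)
Your proof is correct and follows essentially the same approach as the paper's: both fix a primitive element $a_0$, replace it by its $G(K)$-orbit $O$, observe via Lemma~\ref{lemma:GLK.interpretability02} that $\bigcup_{a\in O} W_a = W_{a_0}$, and use the parameter-free relation $P'$ as the group law. The only cosmetic difference is that the paper obtains $K$-definability of $O$ by noting that $\tp(a_0/K)$ is isolated by an $\mathcal{L}$-formula $\psi(y)$ over $K$ (since $a_0\in\acl(K)$) and then writing the universe as $(\exists y)(\psi(y)\wedge W_y)$, whereas you argue via the code of the finite set $O$ lying in $\dcl(K)=K$; these two justifications define the same set and are interchangeable in this context.
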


\begin{proof}
By \cite[Theorem 4.3]{DHL1} (The Primitive Element Theorem), there exists $J\in\mathcal{J}^{<\omega}$, $n\in\omega$ and $a\in\pr^n_J(M)$ such that $L=\dcl(K,a)$. Because $a\in\acl(K)$, there exists an $\mathcal{L}$-formula $\psi(y)$ with parameters from $K$ which isolates $\tp(a/K)$ (in the sense of $\FC$). Consider the following $\mathcal{L}_P$-formula $W$: $(\exists\,y)\,(\psi(y)\,\wedge\,W_y)$, where $W_y$ corresponds to the definable set introduced in the proof of Lemma \ref{lemma:GLK.interpretability01} for the case of $y=a$.
Note that, by Lemma \ref{lemma:GLK.interpretability02}, realizations of formula $W$ form exactly set $W_a$. Moreover, the definition of the relation $P'$ is parameter-free, hence our interpretation of group $\CG(L/K)$ involves only parameters which occur in the formula $\psi$.
\end{proof}

\begin{prop}\label{prop:interpretability_galoisaction}\cite[Proposition 5.5]{chatzidakis2002}
Assume that $L$ is a finite Galois extension of $K$ and and $J\in\CJ^{<\omega}$.
Then the group action $\cdot: \CG(L/K)\times S_J(L)\rightarrow S_J(L)$ is $K$-interpretable in $(M,K)$.
\end{prop}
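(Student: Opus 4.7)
The plan is to promote the $K$-interpretation of $G(L/K)$ produced by Corollary~\ref{cor:GLK.interpretability03} to a $K$-interpretation of the whole action $G(L/K) \times S_J(L) \to S_J(L)$. First I would fix a primitive element $a \in \pr^n_{J_1,K}(M)$ with $L = \dcl(K,a)$ (existing by the Primitive Element Theorem), an $\CL$-formula $\psi(y)$ over $K$ isolating $\tp(a/K)$, and recall that $G(L/K)$ is then $K$-interpreted by the set $W$ of $\approx$-classes of pairs $(y,z) \in U^n_{J_1}(M)$ with $\psi(y)$. The underlying set of the action, $S_J(L) = \dcl(K,a) \cap S_J(M)$, is $K$-definable in $(M,K)$ by Corollary~\ref{cor:definability_dcl} combined with existential quantification over~$\psi$.

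The key observation guiding the construction is: for any $b \in S_J(L) = \dcl(K,a) \cap S_J(M)$, Lemma~\ref{lem:definability_dcl} gives $(a,b) \in \pr^n_{J_1^{\smallfrown}J,\,K}(M)$, so $(a,b)$ is itself an $n$-primitive element of $L$ over $K$; and any $\sigma \in G(L/K)$ acts by $\sigma(a,b) = (\sigma(a),\sigma(b))$. Following this, I would define the graph of the action via the $\CL_P$-formula
\[
R(\alpha,b,b') \;:\equiv\; (\exists y,z)\bigl(\psi(y)\,\wedge\,(y,z)\in U^n_{J_1}\,\wedge\,[(y,z)]_{\approx} = \alpha\,\wedge\,((y,b),(z,b'))\in U^n_{J_1^{\smallfrown}J}\bigr).
\]

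The next step is to verify that $R$ is the graph of a group action compatible with the structure on $W$ from Lemma~\ref{lemma:GLK.interpretability01}. Functionality (uniqueness of $b'$ given $\alpha$ and $b$) together with correctness (the action agrees with the Galois action) both reduce to a single claim: for any $(y,z) \in U^n_{J_1}(M)$ and any $b \in \dcl(K,y) \cap S_J(M)$, there is a unique $b' \in S_J(M)$ with $((y,b),(z,b')) \in U^n_{J_1^{\smallfrown}J}(M)$, namely $b' = \sigma(b)$ for the Galois automorphism $\sigma$ with $\sigma(y) = z$. Uniqueness is immediate since automorphisms of $L$ over $K$ are determined by their values on the primitive element $y$; independence of the chosen representative $(y,z)$ of $\alpha$ follows because two such representatives encode the same $\sigma \in G(L/K)$ and thus both yield the same $b' = \sigma(b)$. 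Compatibility with the ternary relation $P'$ defining the group law on $W$ is then a direct unpacking mirroring Lemma~\ref{lemma:GLK.interpretability01}.

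The main obstacle will be the $\approx$-bookkeeping when changing representatives: one must check that replacing $(y,z)$ by a $\approx$-equivalent pair does not disturb membership in $U^n_{J_1^{\smallfrown}J}$ of the extended pair $((y,b),(z,b'))$. This amounts to showing that the $\approx$-class of $((y,b),(z,b'))$ on the enlarged sort tuple $J_1^{\smallfrown}J$ is determined by $(y,z)/{\approx}$ together with the element $b$, which holds because $b \in \dcl(K,y)$ and a Galois automorphism is already pinned down by its action on $y$. In spirit this is a ``sort-extension'' analogue of Lemma~\ref{lemma:GLK.interpretability02}, and once it is in place the remaining verifications for $R$ are routine.
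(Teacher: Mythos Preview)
Your proposal is correct and follows essentially the same approach as the paper: fix a primitive element, interpret $G(L/K)$ via $W$, define $S_J(L)$ using Corollary~\ref{cor:definability_dcl}, and encode the action by augmenting the pair $(y,z)$ to $((y,b),(z,b'))$ and requiring conjugacy on the extended sort $J_1^{\smallfrown}J$. The paper writes this last condition directly as $\conj((a,c),(\sigma(a),d),e_3,\ldots,e_n)$ rather than packaging it into $U^n_{J_1^{\smallfrown}J}$, but the verification of correctness and uniqueness (via ``$\tau(y)=\sigma(y)$ forces $\tau=\sigma$ on $L$'') is the same in both.
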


\begin{proof}
By Corollary \ref{cor:GLK.interpretability03}, group $\CG(L/K)$ is $K$-interpretable in $(M,K)$.
By Lemma \ref{lem:definability_dcl} and a similar argument as in the proof of Corollary \ref{cor:GLK.interpretability03}, we see that the set $S_J(L)$ is also $K$-interpretable in $(M,K)$ (even $K$-definable in $(M,K)$). 

Suppose that $[L:K]=n$.
Consider $a\in \pr^n_{J_1}(M)$, such that $L=\dcl(K,a)$, and set $W_a$ (as in the proof of Lemma \ref{lemma:GLK.interpretability01}). If $\alpha=(a,\sigma(a))/\approx\in W_a$, where $\sigma\in \CG(L/K)$, and $c\in S_J(L)$, then set $\alpha\;\bullet\;c:=d$, where $d\in S_J(L)$ is the unique element which satisfies
$$(M,K)\models\conj((a,c),(\sigma(a),d),e_3,\ldots,e_{n})$$ 
for some $e_3,\ldots,e_{n}\in S_{J_1}(L)\times S_J(L)$.

We need to show that the group action is $K$-interpretable, in other words the bijections between $\CG(L/K)$, $S_J(L)$ and their interpretations in $(M,K)^{\eq}$ commute with group actions $\cdot$ and $\bullet$ (we do not show that $\bullet$ defines a group action, since it will follow from the fact that bijections commute with $\cdot$ and $\bullet$).

Suppose that $\sigma\,\cdot\,c=\sigma(c)=d$ for some $c,d\in S_J(L)$ and $\sigma\in \CG(L/K)$.
It means that $\sigma$ moves $(a,c)$ into $(\sigma(a),d)$, and so $\Phi_a(\sigma)\,\bullet\,c=d$ (here $\Phi_a$ is the bijection coming from the proof of Lemma \ref{lemma:GLK.interpretability01}).
Conversely, if $\Phi_a(\sigma)\,\bullet\,c=d$, then there exists $\tau\in \CG(L/K)$ such that
$\tau(a)=\sigma(a)$ and $\tau(c)=d$. Since $L=\dcl(K,a)$ and $\tau(a)=\sigma(a)$, we have that $\tau=\sigma$ and so $d=\sigma(c)$.
\end{proof}

\begin{theorem}\label{thm:interpretability_SG(K)}
The sorted complete system $S\CG(K)$ is interpretable (without parameters) in $(M,K)$.
\end{theorem}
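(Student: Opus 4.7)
The plan is to interpret the sorted complete system $SG(K)$ in $(M,K)$ by pushing forward the correspondence
$$ m(k,J)(SG(K)) \;\longleftrightarrow\; U_J^k(M)/\approx $$
assembled in the preceding lemmas. Recall that $m(k,J)(SG(K))$ enumerates cosets $gN$ with $N\in\CN(G(K))$ of index at most $k$ and $J\in\CF(N)$, and such a coset is uniquely recorded by the pair $(a,g(a))\in U_J^k(M)$, where $a$ is a primitive element of $\acl(K)^N$ in a suitable power of sort $J$ (existence of which is supplied by $J\in\CF(N)$ combined with the remark at the start of Subsection 3.1 and the Primitive Element Theorem). Concretely, I would interpret $m(k,J)$ by the $\emptyset$-definable (in $(M,K)^{\eq}$) disjoint union $\bigsqcup_{n\leqslant k} U_J^n(M)/{\approx}$; that this gives a sortwise bijection with $m(k,J)(SG(K))$ follows from the Primitive Element Theorem (for surjectivity) and from Remark \ref{rmk:unique_approx} (for injectivity on each stratum).

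Next I would verify that the structural relations transfer. The group operation $P$ on each fibre $[a]_{k,J}$ is realised by $P'$: the argument of Lemma \ref{lemma:GLK.interpretability01} already shows that $W_a$ equipped with the $P'$-induced multiplication is isomorphic to $G(L/K)$, and the same reasoning applies to every coset of every open normal subgroup of index at most $k$. The order $\leqslant_{k_1,k_2,J_1,J_2}$ matches $\leqslant'$ because $N_1\subseteq N_2$ iff the corresponding Galois extensions satisfy $L_2\subseteq L_1$ iff a primitive element of $L_2$ lies in $\dcl(K,a_1)$. The compatibility relation $C_{k_1,k_2,J_1,J_2}$ matches $C'$ by the equivalent formulation supplied in square brackets in Definition \ref{def:basicoperations_on_SG}, namely $N_1\subseteq N_2$ together with $g_1N_2=g_2N_2$. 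Parameter-freeness is automatic because the defining formulas of $U_J^k$, of $\approx$, and of $\leqslant'$, $C'$, $P'$ all lie in $\CL_P$ without parameters.

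The main obstacle I anticipate is the bookkeeping caused by two slight mismatches: the definition of $m(k,J)(SG(K))$ permits any index $n\leqslant k$, whereas $U_J^k$ uses exactly $k$-primitive elements; and the condition $J\in\CF(N)$ expresses faithful action of $G(L/K)$ on $S_J$, not the existence of a primitive element in sort $J$ as such. Both are resolved by first interpreting the ``exact-degree-$n$'' strata via $U_J^n/{\approx}$ and then invoking the sorted-system axioms ``extending tuples'' and ``reducing degree'' to glue the strata into the sort $m(k,J)$, while the remark at the start of Subsection 3.1 promotes $\aut_J$-faithfulness to a primitive element in a sort of the form $J^{\smallfrown n}$ that is $\sqrt{\cdot}$-equivalent to $J$. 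Once these identifications are in place, the axioms of $\scs$ listed in Definition \ref{def:sorted complete system} are inherited directly from the group-theoretic description of $SG(K)$, and the interpretation is complete.
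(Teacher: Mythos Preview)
Your approach mirrors the paper's, but there is a concrete slip in the interpreting set for the sort $m(k,J)$. You propose $\bigsqcup_{n\leqslant k} U_J^n(M)/{\approx}$; however, a class in $U_J^n/{\approx}$ records a Galois extension $L/K$ of degree $n$ that has a primitive element in the sort $S_J$ (i.e.\ $J\in\mathcal{PE}(N)$), whereas $gH\in m(k,J)(SG(K))$ only asks that $J\in\CF(H)$, i.e.\ that $G(L/K)$ acts \emph{faithfully} on $S_J(L)$. These are not the same condition: a faithful action need not have a point with trivial stabiliser (think of $S_3$ acting on three points, where every stabiliser has order $2$), so $\CF(N)$ can strictly contain $\mathcal{PE}(N)$. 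Hence the assignment $gH\mapsto(a,\sigma(a))/{\approx}$ into $U_J^n/{\approx}$ is not in general defined for all cosets in $m(k,J)(SG(K))$, and the Primitive Element Theorem does not rescue this---it supplies a primitive element \emph{somewhere}, not in the prescribed sort $J$.

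You yourself note that $J\in\CF(N)$ yields a primitive element in $S_{J^{\smallfrown n}}$, and that $J^{\smallfrown n}$ is $\sqrt{\,\cdot\,}$-equivalent to $J$; but this observation tells you to \emph{change} the interpreting set, not to keep $U_J^n$. The paper does exactly that: it interprets $m(k,J)$ by
\[
W_{k,J}\;=\;\bigcup_{i\leqslant k}\epsilon_k\big[U^i_{J^{\smallfrown i}}(M)/{\approx}\big],
\]
using $i$-primitive elements in the sort $J^{\smallfrown i}$ (the diagonal $\epsilon_k$ is just a device to make the strata formally disjoint). With this correction the bijection $F_{k,J}:m(k,J)(SG(K))\to W_{k,J}$ is well-defined and surjective, and your verification that $\leqslant',C',P'$ realise $\leqslant,C,P$ then goes through as you describe, in agreement with the paper.
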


\begin{proof}
Similarly as in Example \ref{ex:hidden.axiom}, we consider ``diagonal" map
$\epsilon_{X,k}:\ X\rightarrow X^k$, $x\mapsto (x,\ldots,x)$
and ``projection" map $\delta_{X}:\ X^k\rightarrow X$, $(x_1,\ldots,x_k)\mapsto x_1$, where
$X$ is a set and $k$ is a positive integer (usually we skip ``$X$" in ``$\epsilon_{k,X}$" and ``$\delta_X$").

First, for each sort in $S\CG(K)$ we need to provide a definable set in $(M,K)^{\eq}$.
Let $k\in\omega$ and $J\in\mathcal{J}^{<\omega}$.
If we would define the sort $m(k,J)(S\CG(K))$ as the set of cosets of open normal subgroups of index equal to $k$, then the corresponding sort of our interpretation will be the set $U^k_{J^{\smallfrown k}}(M)/\approx$. 
Since we defined $m(k,J)(S\CG(K))$ as the set of cosets of open normal subgroups of index at most $k$, and different sorts intersect trivially, it is not enough to consider $\bigcup_{i\le k} \U_{J^{\smallfrown i}}^i(M)/\approx$ but
the set 
$$W_{k,J}:=\bigcup_{i\le k} \epsilon_k[\U_{J^{\smallfrown i}}^i(M)/\approx].$$
Let us explain why sets of the form $W_{k,J}$ have something to do with sorts $m(k,J)(S\CG(K))$ and how we can define the desired bijection.

Suppose that $gH\in m(k,J)(S\CG(K))$. It means that $H\in\CN(\CG(K))$, $g\in \CG(K)$, $[\CG(K):H]=i\leqslant k$, and $|\aut_J(L/K)|=|\CG(L/K)|$ and $[L:K]=i\leqslant k$ for the Galois extension $L:=\acl(K)^{H}$.
There is a unique $\sigma\in \CG(L/K)$ corresponding to $gH$. Moreover, because $|\aut_J(L/K)|=|\CG(L/K)|$, we can (after repeating a standard argument from the proof of \cite[Fact A.10]{DHL1}) find an element $a\in S_{J^{\smallfrown i}}(M)$ such that $L=\dcl(K,a)$. 
By Remark \ref{rmk:example_n-primitivelement}, $a\in\pr^i_{J^{\smallfrown i}}(M)$. We define a map 
$F_{k,J}:gH\mapsto \epsilon_k[(a,\sigma(a))/\approx]\in W_{k,J}$. It is well defined, since $(a,\sigma(a))\approx(a',\sigma(a'))$ for any $a'\in\pr^i_{J^{\smallfrown i}}(M)$ such that $\dcl(K,a)=\dcl(K,a')$.

To show that $F_{k,J}$ is injective, suppose that $gH,g'H'\in m(k,J)(S\CG(K))$ and $F_{k,J}(gH)=F_{k,J}(g'H')$. Let $[\CG(K):H]=i$ and $[\CG(K):H']=i'$, $L:=\acl(K)^H$ and $L'=:\acl(K)^{H'}$, $a\in\pr^i_{J^{\smallfrown i}}(M)$, $a'\in\pr^{i'}_{J^{\smallfrown i'}}(M)$, and $L=\dcl(K,a)$ and $L'=\dcl(K,a')$. If $i\neq i'$, then for formal reasons $F_{k,J}(gH)\neq F_{k,J}(g'H')$, hence assume that $i=i'$. Since $F_{k,J}(gH)=F_{k,J}(g'H')$, we have that $(a,\sigma(a))\approx(a',\sigma'(a'))$ which gives us $\sigma=\sigma'$.
By a similar, straightforward, argument one can show that $F_{k,J}$ is onto.

After defining sorts of the universe of our interpretation, we need to define relations corresponding to symbols $\leqslant$, $C$, and $P$ from language $\mathcal{L}_G(\mathcal{J})$.
\begin{itemize}
\item For $\alpha_1 \in W_{k_1,J_1}$, $\alpha_2 \in W_{k_2,J_2}$ we set
$$\leqslant^W_{k_1,k_2,J_1,J_2}(\alpha_1,\alpha_2)\quad\iff\quad \leqslant^{\bigcup}_{k_1,k_2,J_1,J_2}(\delta(\alpha_1),\delta(\alpha_2)),$$ 
where
$$\leqslant^{\bigcup}_{k_1,k_2,J_1,J_2}
:=\bigcup\limits_{i\leqslant k_1,j\leqslant k_2}\leqslant_{i,j,J_1^{\smallfrown i},J_2^{\smallfrown j}}'$$

\item For $\alpha_1 \in W_{k_1,J_1}$, $\alpha_2 \in W_{k_2,J_2}$ we set
$$C^W_{k_1,k_2,J_1,J_2}(\alpha_1,\alpha_2)\quad\iff\quad C^{\bigcup}_{k_1,k_2, J_1,J_2}(\delta(\alpha_1),\delta(\alpha_2)),$$ 
where 
$$C^{\bigcup}_{k_1,k_2, J_1,J_2}:=\bigcup\limits_{i\leqslant k_1,j\leqslant k_2}C_{i,j,J_1^{\smallfrown i},J_2^{\smallfrown j}}'$$

\item For $\alpha_1,\alpha_2,\alpha_3\in W_{k,J}$ we set
$$P^W_{k,J}(\alpha_1,\alpha_2,\alpha_3)\quad\iff\quad 
P^{\bigcup}_{k,J}(\delta(\alpha_1),\delta(\alpha_2),\delta(\alpha_3))$$
where
$$P^{\bigcup}_{k,J}:=\bigcup\limits_{i\leqslant k}P'_{i,J^{\smallfrown i}}$$
\end{itemize}
Now, we need to show that the family of bijections $F_{k,J}$ translates $\leqslant$, $C$ and $P$ into $\leqslant^W$, $C^W$ and $P^W$ respectively, e.g.
$$C(gH, g'H')\qquad\iff\qquad C^W(F_{k,J}(gH),F_{k',J'}(g'H'))$$
for any $gH\in m(k,J)(S\CG(K))$ and $g'H'\in m(k',J')(S\CG(K))$.
Comments in the square brackets in Definition \ref{def:basicoperations_on_SG} are here a guideline and we leave this part of the proof to the reader.
\end{proof}

\begin{cor}\label{cor:SG.saturated}
If $(M,K)$ is $\kappa$-saturated, then $S\CG(K)$ is $\kappa$-saturated.
\end{cor}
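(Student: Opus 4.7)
The plan is to invoke the standard fact that saturation is inherited through parameter-free interpretations, applied to the interpretation of $SG(K)$ in $(M,K)$ given by Theorem \ref{thm:interpretability_SG(K)}. Writing $\Sigma$ for that interpretation, each sort $m(k,J)$ of $SG(K)$ corresponds (via the bijection $F_{k,J}$) to a parameter-free definable set $W_{k,J}$ in $(M,K)^{\eq}$, and each of the symbols $\leqslant$, $C$, $P$ of $\mathcal{L}_G(\mathcal{J})$ is translated into a parameter-free $\mathcal{L}_P$-formula. In particular, every $\mathcal{L}_G(\mathcal{J})$-formula $\varphi(x,\bar{z})$ has a pullback $\varphi^{\Sigma}(y,\bar{w})$ which is an $\mathcal{L}_P$-formula with no extra parameters.

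Fix $A\subseteq SG(K)$ with $|A|<\kappa$ and a type $p(x)\in S(A)$ (one may assume $x$ ranges over a single sort $m(k,J)$). Using $\Sigma$, pull $A$ back to a set $A'\subseteq (M,K)^{\eq}$ with $|A'|=|A|<\kappa$, and for each formula $\varphi(x,\bar{a})\in p$ with $\bar{a}\in A$ form its pullback $\varphi^{\Sigma}(y,\bar{a}')$, where $\bar{a}'\in A'$ corresponds to $\bar{a}$. Together with the sort-formula $y\in W_{k,J}$ this produces a partial $\mathcal{L}_P$-type $p'(y)$ over $A'$. Finite satisfiability of $p'(y)$ is immediate: any finite fragment of $p(x)$ is realized by some $c\in m(k,J)(SG(K))$, and then $F_{k,J}(c)\in W_{k,J}$ realizes the corresponding fragment of $p'(y)$ by the fact, established in the proof of Theorem \ref{thm:interpretability_SG(K)}, that the bijections $F_{k,J}$ commute with the three relations.

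Since $(M,K)$ is $\kappa$-saturated, so is $(M,K)^{\eq}$; therefore $p'(y)$ is realized by some $b\in (M,K)^{\eq}$. The sort condition forces $b\in W_{k,J}$, so $F_{k,J}^{-1}(b)\in m(k,J)(SG(K))$ is defined, and by commutativity of $\Sigma$ with the relations this element realizes $p(x)$. I expect no real obstacle here; the only point that requires any thought is to confirm that the interpretation of Theorem \ref{thm:interpretability_SG(K)} really is parameter-free (so that $|A'|<\kappa$ suffices and no auxiliary parameters inflate the size), but this is exactly what was established there.
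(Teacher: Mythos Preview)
Your proposal is correct and is precisely the argument the paper has in mind: the paper simply states that the corollary follows immediately from Theorem~\ref{thm:interpretability_SG(K)} (parameter-free interpretability of $SG(K)$ in $(M,K)$), and you have just unpacked the standard saturation-transfer-through-interpretations argument.
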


The above corollary follows immediately by Theorem \ref{thm:interpretability_SG(K)}. It is not difficult to show that ``if $(M,E)\preceq (N,F)$, then $S\CG(E)\preceq S\CG(F)$", but we want to write it more precisely and introduce \emph{choice functions}, because such an approach produces a good way of translating formulas between structure $K$ and $S\CG(K)$ (and we will use this translation later).

\begin{remark}\label{rem:translation.B}
Take $k<\omega$ and $J\in\mathcal{J}^{<\omega}$ and consider the bijection between $S\CG(K)$ and $W_{k,J}\subseteq (M,K)^{\eq}$ given in the proof of Theorem \ref{thm:interpretability_SG(K)}, $F_{k,J}:S\CG(K)\to W_{k,J}(M)$. 
Suppose that $B=\dcl(B)\subseteq K$ is regular (as previously, $K$ is a small substructure of $M$, where $M\preceq\FC$).
Assume that $gH\in m(k,J)(S\CG(B))$, $L:=\acl(B)^H=\dcl(B,a)$ for some
$a\in\pr^i_{J^{\smallfrown i},B}(M)\subseteq\acl(B)$ and $gH$ corresponds to $\sigma\in \CG(L/B)$.
Suppose that for $gH\in m(k,J)(S\CG(B))$ we have chosen such a primitive element $a$ and an automorphism $\sigma$.
Consider the following \emph{choice function}
$$c_B:S\CG(B)\to\acl(B)^{\eq}\subseteq(M,K)^{\eq}$$
where $c_B(gH):=(a,\sigma(a))$ for $gH$ as above (``$^{\eq}$" in ``$\acl(B)^{eq}$" indicates only that we are dealing with tuples of elements from $\acl(B)$). 
Similarly we define a choice function for any other regular substructure in $K$, in particular $c_K$.

Assume that
the restriction map $\pi:\CG(K)\to \CG(B)$ is onto 
(e.g. if $B\subseteq K$ is regular)
and the corresponding dual map $S(\pi)$ is an embedding.
Usually we identify $S\CG(B)$ with its image $S(\pi)(S\CG(B))$ in $S\CG(K)$ and
(by Fact \ref{fact:regular_to_sorted}) 
we have
$$F_{k,J}(S(\pi)(gH))=\epsilon_k[c_B(gH)/\approx],$$
hence $F_{k,J}(S(\pi)(gH))\in\acl(B)^{\eq}$ (here ``$^{\eq}$" really stands for imaginary elements in $(M,K)$).


Suppose that $\Theta(X,Y)$ is an $\mathcal{L}_G(\mathcal{J})$-formula,
$gH\in m(k,J)(S\CG(B))\subseteq S\CG(K)$ and $g'H'\in m(k',J')(S\CG(K))$ for appropriate $k,K',J,J'$ corresponding to variables $X$ and $Y$.
We have that $S\CG(K)\models\Theta(gH,g'H')$ if and only if $(M,K)^{\eq}\models\Theta'(F^K_{k,J}(gH),F^K_{k',J'}(g'H'))$, where $\Theta'$ corresponds to the interpretation of $\Theta$ in $(M,K)$.
On the other hand, the $(\mathcal{L}_P)^{\eq}$-formula $\Theta'$ is equivalent to an $\mathcal{L}_P$-formula $\theta$ (e.g. Lemma 1.4.(iii) in \cite[Chapter 1]{anandgeometric}) and we have
$$S\CG(K)\models\Theta(gH,g'H')\quad\iff\quad (M,K)\models\theta(c_B(gH),c_K(g'H')).$$
\end{remark}

\begin{cor}\label{cor:SG.elementary1}
If $(M,E)\preceq (N,F)$ for some $M\preceq N\preceq\FC$, then $S\CG(E)\preceq S\CG(F)$ (after embedding of $S\CG(E)$ into $S\CG(F)$).
\end{cor}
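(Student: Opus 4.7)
The plan is to exploit the parameter-free interpretability of $SG(-)$ in the pair $(-,-)$ provided by Theorem \ref{thm:interpretability_SG(K)}, and to invoke the standard model-theoretic principle that a parameter-free interpretation transports an elementary inclusion of the interpreting structures to an elementary inclusion of the interpreted ones.

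First I would observe that the universes $m(k,J)(SG(K))$, the equivalence $\approx$, and the relations $\leqslant^W, C^W, P^W$ on $W_{k,J}(M,K)$ constructed in the proof of Theorem \ref{thm:interpretability_SG(K)} are defined by a fixed collection of $\mathcal{L}_P$-formulas, independent of the particular pair $(M,K)$. Since $(M,E) \preceq (N,F)$ gives $(M,E)^{\eq} \preceq (N,F)^{\eq}$, the interpreted sets satisfy $W_{k,J}(M,E) \subseteq W_{k,J}(N,F)$ and all interpreted relations are both preserved and reflected across this inclusion. Transporting through the bijections $F_{k,J}^{(M,E)}$ and $F_{k,J}^{(N,F)}$ from the proof of Theorem \ref{thm:interpretability_SG(K)} yields an embedding $\iota:SG(E) \hookrightarrow SG(F)$ of $\mathcal{L}_G(\mathcal{J})$-structures, which is the embedding referenced in the statement.

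Second, to see that $\iota$ is elementary, I would use the translation recipe from Remark \ref{rem:translation.B}: for any $\mathcal{L}_G(\mathcal{J})$-formula $\Theta(\bar X)$, let $\theta(\bar y)$ be the corresponding $\mathcal{L}_P$-formula produced by the interpretation. Choose representatives $\bar c_E(\bar\alpha) \in (M,E)^{\eq}$ for a tuple $\bar\alpha$ from $SG(E)$ by means of a choice function $c_E$. Then the chain of equivalences
$$SG(E) \models \Theta(\bar\alpha) \iff (M,E) \models \theta(\bar c_E(\bar\alpha)) \iff (N,F) \models \theta(\bar c_E(\bar\alpha)) \iff SG(F) \models \Theta(\iota(\bar\alpha))$$
holds: the outer equivalences are Remark \ref{rem:translation.B} applied to $(M,E)$ and to $(N,F)$ respectively, and the middle equivalence uses $(M,E) \preceq (N,F)$. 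The last equivalence requires that $\bar c_E(\bar\alpha)$ is a valid representative of $\iota(\bar\alpha)$ with respect to some choice function $c_F$ for $SG(F)$; this can be arranged by extending $c_E$ to a choice function $c_F$, which is possible because $E \subseteq F$ and $\acl(E) \subseteq \acl(F)$ (using $M \preceq N$).

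The main obstacle I expect is precisely the last compatibility point: ensuring that the embedding $\iota$ coming from the interpretation is compatible with the choice-function mechanism from Remark \ref{rem:translation.B}, so that a representative of $\bar\alpha$ in $(M,E)^{\eq}$ also serves as a representative of $\iota(\bar\alpha)$ in $(N,F)^{\eq}$. Unwinding the definition of $F_{k,J}$, a primitive element $a \in \pr^i_{J^{\smallfrown i}, E}(M)$ representing some $gH \in m(k,J)(SG(E))$ continues to be a primitive element of the corresponding Galois extension over $F$ inside $N$ (by $M \preceq N$), so the compatibility holds and the elementarity follows directly.
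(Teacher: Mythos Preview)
Your proposal is correct and follows exactly the route the paper intends: the corollary is an immediate consequence of the parameter-free interpretation (Theorem~\ref{thm:interpretability_SG(K)}) together with the translation mechanism of Remark~\ref{rem:translation.B}, and the paper does not give a separate argument. One small correction: in your final paragraph the compatibility of representatives is not justified by ``$M\preceq N$'' in $\mathcal{L}$ but by $(M,E)\preceq (N,F)$ in $\mathcal{L}_P$ (since $\pr^i_J$, $U^k_J$, and $\approx$ are $\mathcal{L}_P$-formulas), or equivalently by the regularity of $E\subseteq F$ together with the ``moreover'' clause of Fact~\ref{fact:regular_to_sorted}.
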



\section{Elementary vs co-elementary}\label{sec:coelementary}
\begin{lemma}\label{lemma:CZ.preceq}
Assume that $T$ has nfcp.
Let $E\preceq F$ be some small substructures of $\FC$ 
and let $M\preceq N\preceq\FC$ be such that $E\subseteq M$, $F\subseteq N$, $M$ is $|E|^+$-saturated and $N$ is $|F|^+$-saturated, and $M\ind_E F$.
Then $(M,E)\preceq (N,F)$.
\end{lemma}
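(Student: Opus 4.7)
The plan is to establish $(M,E)\preceq(N,F)$ by a formula-complexity induction on $\mathcal{L}_P$-formulas, reducing everything to the case of a single existential quantifier over the predicate $P$. Since $M\preceq N$ already handles pure $\mathcal{L}$-formulas, the essential content of the statement concerns the interaction of tuples from $M$ with the predicate $P$ (interpreted as $E$ on one side and $F$ on the other). The key ingredients are stability, nfcp, the independence $M\ind_E F$, the assumption $E\preceq F$, and the two saturation hypotheses (the latter being what makes the outer back-and-forth run).

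Using nfcp, I would perform a Poizat-style reduction of $\mathcal{L}_P$-formulas showing that it suffices to verify, for each $\mathcal{L}$-formula $\phi(\bar{x},\bar{y})$ and each tuple $\bar{a}$ from $M$,
\[
(M,E)\models \exists \bar{y}\,\Bigl(\textstyle\bigwedge_i P(y_i)\wedge\phi(\bar{a},\bar{y})\Bigr) \iff (N,F)\models \exists \bar{y}\,\Bigl(\textstyle\bigwedge_i P(y_i)\wedge\phi(\bar{a},\bar{y})\Bigr).
\]
The forward implication is immediate: a witness $\bar{e}\in E$ is also in $F$ (since $E\subseteq F$), and $M\preceq N$ transfers $\phi(\bar{a},\bar{e})$ from $\mathfrak{C}$ to $N$. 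For the backward implication, suppose $\bar{f}\in F$ is such that $\mathfrak{C}\models\phi(\bar{a},\bar{f})$. By stability of $T$ together with $M\ind_E F$, the type $p:=\tp(\bar{a}/F)$ does not fork over $E$ and is hence definable over $E$ (using elimination of imaginaries, which $T$ enjoys). Let $d_p\phi(\bar{y},\bar{c})$ be the $\phi$-definition of $p$, with parameter $\bar{c}$ from $E$. Then $F\models d_p\phi(\bar{f},\bar{c})$, hence $F\models \exists \bar{y}\,d_p\phi(\bar{y},\bar{c})$. Applying $E\preceq F$ yields $\bar{e}\in E$ with $E\models d_p\phi(\bar{e},\bar{c})$, and by the defining property of $d_p$ we conclude $\mathfrak{C}\models\phi(\bar{a},\bar{e})$, whence $M\models\phi(\bar{a},\bar{e})$ because $M\preceq\mathfrak{C}$.

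The main obstacle, and where nfcp is used decisively, is to pin down the parameter $\bar{c}$ of the defining scheme inside $E$ itself (a priori it sits only in $\acl^{\eq}(E)$) and to legitimise the reduction to the single-$P$-quantifier form. The nfcp of $T$ provides uniform bounds on defining schemes, independent of the particular type $p$; combined with $E\preceq F$ and the uniform elimination of imaginaries, this lets us arrange $\bar{c}\subseteq E$, and it also enables the pair-language quantifier manipulations that collapse nested $P$-quantifiers into Boolean combinations of the basic form above. The saturation of $M$ over $E$ and of $N$ over $F$ then supports the outer back-and-forth: when we enlarge $\bar{a}$ by a new element, the saturation guarantees that the types we need to realise in order to maintain the partial elementary correspondence between the two pairs are indeed realised, closing the induction.
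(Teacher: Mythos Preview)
Your approach has the right core ingredients --- definability of stable types and the transfer $E\preceq F$ --- but there is a genuine gap, and some parts are unnecessarily complicated compared to the paper's argument.

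\textbf{The gap.} You claim the $\phi$-definition of $p=\tp(\bar a/F)$ can be taken with parameters $\bar c$ in $E$. Non-forking over $E$ only gives definability over $\acl^{\eq}(E)$; to land in $\dcl^{\eq}(E)$ you would need $\tp(\bar a/E)$ to be stationary as a type in $\FC$. But $E$ is just a substructure of $\FC$ with $E\preceq F$ (not $E\preceq\FC$), so types over $E$ have no reason to be stationary. Your invocation of nfcp and uniform elimination of imaginaries does not fix this: uniformity controls the \emph{shape} of the defining scheme, not the location of its parameters. The paper resolves this with a specific trick (attributed to Ziegler): the global non-forking extensions of $\tp(m/E)$ are conjugate over $E$, they have only finitely many distinct $\phi$-parts with definitions $\theta_1,\ldots,\theta_n$, and $\psi:=\theta_1\wedge\cdots\wedge\theta_n$ is $E$-invariant, hence may be taken over $E$. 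Crucially, since $\tp(m/F)$ is contained in every such global extension, for $\bar b\in F$ one has $\phi(\bar x,\bar b)\in\tp(m/F)$ iff $\models\psi(\bar b)$; and the same $\psi$ defines $\tp_\phi(m/E)$.

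\textbf{The simplification.} Once you have $\psi$ over $E$, you can replace $\phi(m,\bar\alpha)$ by $\psi(\bar\alpha)$ wholesale, so an arbitrary bounded formula $Q\bar\alpha\in P\;\phi(m,\bar\alpha)$ passes in one step from $(M,E)$ to $E$ to $F$ (via $E\preceq F$) to $(N,F)$. There is no need to reduce to a single existential $P$-quantifier or to run any back-and-forth. The paper simply quotes Casanovas--Ziegler to get that every $\mathcal{L}_P$-formula is, in both pairs, equivalent to a bounded formula $Q\bar\alpha\in P\;\phi(\bar x,\bar\alpha)$; this is exactly where nfcp and the saturation hypotheses on $M$ and $N$ are used (to place both pairs in the Casanovas--Ziegler setting), not for any induction on formula complexity.
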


\begin{proof}
By \cite[Proposition 2.1]{CasanovasZiegler} and the proof of \cite[Corollary 2.2]{CasanovasZiegler}, each $\mathcal{L}_P$-formula $\Phi(\bar{x})$ is equivalent in $(M,E)$ and in $(N,F)$ to an $\mathcal{L}_P$-formula of the form
$$Q\bar{\alpha}\in P\;\varphi(\bar{x},\bar{\alpha}),$$
where $\varphi(\bar{x},\bar{\alpha})$ is an $\mathcal{L}$-formula and $Q$ is a tuple of quantifiers.
Since $T$ has quantifier elimination, we may assume that $\varphi(\bar{x},\bar{\alpha})$ is quantifier free.

Suppose that $(M,E)\models \Phi(m)$ for some finite tuple $m$ from $M$. By the above lines, it means that $(M,E)\models Q\bar{\alpha}\in P \;\varphi(m,\bar{\alpha})$. 

We want to code $\varphi(m,\bar{\alpha})$ by some $\mathcal{L}$-formula without ``$m$" and to do this we will use a definition of the $\varphi$-type $\tp_{\varphi}(m/F)$. However, we need to show that our definition works also for the $\varphi$-type $\tp_{\varphi}(m/E)$.

Because $m\ind_E F$ and $E\subseteq F$ is regular, \cite[Corollary 3.38]{Hoff3} implies that $\tp(m/F)$ is the unique non-forking extension of $\tp(m/E)$. Therefore the set of all non-forking global extensions of $\tp(m/E)$ and $\tp(m/F)$ coincide.

(The following paragraph is based on an argument pointed to us by Martin Ziegler.)
By \cite[Theorem 8.5.6.(1)]{tentzieg}, all these global extensions conjugate over $E$ (and over $F$). There are only finitely many different $\varphi$-parts of these global extensions, say $p_1(\bar{x}),\ldots,\p_n(\bar{x})$, and let $\theta_1(\bar{y}),\ldots,\theta_n(\bar{y})$ be their definitions
(over some parameters from $\FC$).
If $\bar{b}\in F$ then $\varphi(\bar{x},\bar{b})\in tp(m/F)$ if and only if $\varphi(\bar{x},\bar{b})\in p_1(\bar{x}) \cap\ldots\cap p_n(\bar{x})$, which holds if and only if
$\models \theta_1(\bar{b})\;\wedge\ldots\wedge\;\theta_n(\bar{b})$.
Set $\psi(\bar{y}):=\theta_1(\bar{y})\;\wedge\ldots\wedge\;\theta_n(\bar{y})$ and note that
$\psi(\bar{y})$ is $E$-invariant, so we may assume that $\psi(\bar{y})$ is quantifier free and is over $E$, and note that
$\psi(\bar{y})$ defines $\tp_{\varphi}(m/F)$ and $\tp_{\varphi}(m/E)$.

We have that
$$(M,E)\models \forall\bar{\alpha}\in P\;(\varphi(m,\bar{\alpha})\,\leftrightarrow\,\psi(\bar{\alpha})\,),$$
and
$$(N,F)\models \forall\bar{\alpha}\in P\;(\varphi(m,\bar{\alpha})\,\leftrightarrow\,\psi(\bar{\alpha})\,).$$

Since $(M,E)\models Q\bar{\alpha}\in P \;\varphi(m,\bar{\alpha})$, we have $(M,E)\models Q\bar{\alpha}\in P\; \psi(\bar{\alpha})$ hence $E\models Q\bar{\alpha}\;\psi(\bar{\alpha})$.
Because $E\preceq F$, we obtain that $F\models Q\bar{\alpha}\;\psi(\bar{\alpha})$.
The last item gives us $(N,F)\models Q\bar{\alpha}\in P\;\psi(\bar{\alpha})$, hence we have
$(N,F)\models Q\bar{\alpha}\in P\;\varphi(m,\bar{\alpha})$ which ends the proof.
\end{proof}

\begin{prop}\label{prop:SG.elementary2}
Assume that $T$ has nfcp (hence $T$ is stable). If $E\preceq F$ are small substructures of $\FC$, then $S\CG(E)\preceq S\CG(F)$.
\end{prop}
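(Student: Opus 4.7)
The plan is to reduce to the already-established Corollary \ref{cor:SG.elementary1} by producing models $M \preceq N \preceq \FC$ with $E \subseteq M$ and $F \subseteq N$ for which $(M,E) \preceq (N,F)$ holds. The bridge between the elementary embedding $E \preceq F$ at the level of substructures and the elementary embedding of pairs is provided by Lemma \ref{lemma:CZ.preceq}, whose hypotheses require suitable saturation of $M$ and $N$ together with $M \ind_E F$.

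First, I would build $M$. Since $T$ has nfcp, $T$ is stable, so non-forking extensions exist. Start with any $|E|^+$-saturated $M_0 \preceq \FC$ containing $E$; such $M_0$ exists inside $\FC$ because $\FC$ is very saturated. By stability, realize $\tp(F/E)$ in $\FC$ by some tuple $F^*$ satisfying $F^* \ind_E M_0$, and pick $\sigma \in \aut(\FC/E)$ with $\sigma(F^*) = F$. Set $M := \sigma(M_0)$. Then $M$ is still $|E|^+$-saturated, $E \subseteq M \preceq \FC$, and by invariance and symmetry of non-forking, $M \ind_E F$.

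Next, choose $N \preceq \FC$ with $M \cup F \subseteq N$ and $N$ being $|F|^+$-saturated; this is possible by saturation of $\FC$, and $M \subseteq N$ together with $M \preceq \FC$ and $N \preceq \FC$ forces $M \preceq N$. All hypotheses of Lemma \ref{lemma:CZ.preceq} are now in place, so we obtain $(M,E) \preceq (N,F)$, and Corollary \ref{cor:SG.elementary1} delivers $SG(E) \preceq SG(F)$.

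The only delicate point is the first step: simultaneously securing $|E|^+$-saturation of $M$ and the independence $M \ind_E F$. This is a routine use of stability, since the $\aut(\FC/E)$-image of a saturated model is again saturated. Apart from this, the argument is pure assembly of previously established results, so I do not anticipate a substantive obstacle beyond making sure the cardinal bookkeeping for saturation of both $M$ and $N$ stays inside the saturation of $\FC$.
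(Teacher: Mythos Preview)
Your proposal is correct and takes essentially the same approach as the paper: the paper's proof simply says ``By combining Corollary \ref{cor:SG.elementary1} with Lemma \ref{lemma:CZ.preceq},'' and you have spelled out precisely how to produce the models $M\preceq N\preceq\FC$ satisfying the hypotheses of Lemma \ref{lemma:CZ.preceq}. Your automorphism trick to secure both $|E|^+$-saturation of $M$ and $M\ind_E F$ is a clean way to fill in the one detail the paper leaves implicit.
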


\begin{proof}
By combining Corollary \ref{cor:SG.elementary1} with Lemma \ref{lemma:CZ.preceq}.
\end{proof}

It turns out that for PAC substructures the converse is also true, i.e.
``if $S\CG(E)\preceq S\CG(F)$ then $E\preceq F$".
Let us proceed to this fact.

\begin{theorem}\label{thm:ultraproduct_sorted_completesystem}
Let $\CU$ be an ultrafilter on an infinite index set $I$. Let $K_i\subseteq \FC$ be a definably closed substructure for each $i$.
Then, we have that
$$\prod_i S\CG(K_i)/\CU\cong S\CG(\prod_i(K_i)/\CU).$$
\end{theorem}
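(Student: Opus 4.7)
The plan is to exploit the uniform parameter-free interpretability of $SG(K)$ in the pair $(\mathfrak{C}, K)$ provided by Theorem \ref{thm:interpretability_SG(K)}, and then to commute this interpretation with the ultraproduct by means of \L o\'s's theorem.

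More concretely, the proof of Theorem \ref{thm:interpretability_SG(K)} produces a fixed $\mathcal{L}_P$-interpretation $W$ -- with sorts $W_{k,J}$, the quotient by $\approx$, and the relations $\leqslant^W$, $C^W$, $P^W$ all defined by $\mathcal{L}_P$-formulas that do not depend on $K$ -- such that for every definably closed substructure $K$ of $\mathfrak{C}$ one has $SG(K) \cong W(\mathfrak{C}, K)$ via the bijections $F_{k,J}$. Applying this to each $K_i$ and taking the ultraproduct then gives
\[
\prod_i SG(K_i)/\mathcal{U} \;\cong\; \prod_i W(\mathfrak{C}, K_i)/\mathcal{U} \;\cong\; W\!\left(\prod_i (\mathfrak{C}, K_i)/\mathcal{U}\right),
\]
where the second isomorphism is the standard commutation of a parameter-free interpretation with ultraproducts. \L o\'s's theorem identifies $\prod_i (\mathfrak{C}, K_i)/\mathcal{U}$ with the pair $\bigl(\prod_i \mathfrak{C}/\mathcal{U},\, \prod_i K_i/\mathcal{U}\bigr)$. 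Since $\prod_i \mathfrak{C}/\mathcal{U}$ elementarily embeds into a sufficiently saturated monster $\mathfrak{C}^\ast \models T$, and since ``$K$ is definably closed'' is a first-order property of $(\mathfrak{C}, K)$ in $\mathcal{L}_P$ (using quantifier elimination of $T$), the substructure $\prod_i K_i/\mathcal{U}$ is definably closed in $\mathfrak{C}^\ast$. The construction $SG$ depends only on $K$ (its algebraic and definable closures are intrinsic), so applying $W$ to this new pair computes exactly $SG\bigl(\prod_i K_i/\mathcal{U}\bigr)$.

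The step that requires the most care is the commutation $\prod_i W(\mathfrak{C}, K_i)/\mathcal{U} \cong W\bigl(\prod_i (\mathfrak{C}, K_i)/\mathcal{U}\bigr)$. Although this is the standard behaviour of parameter-free interpretations, $W$ is many-sorted, each sort $W_{k,J}$ is itself a union $\bigcup_{i \le k}\epsilon_k[\U_{J^{\smallfrown i}}^i(\mathfrak{C})/\approx]$ quotiented by the definable equivalence relation $\approx$, and the relations $\leqslant^W$, $C^W$, $P^W$ are given by finite disjunctions indexed by pairs from $\omega \times \mathcal{J}^{<\omega}$. The verification amounts to a sort-by-sort application of \L o\'s, checking that $\approx$-classes in the ultraproduct correspond to ultraproducts of $\approx$-classes and that each defining formula of $\leqslant^W$, $C^W$, $P^W$ holds in the ultraproduct iff it holds on $\mathcal{U}$-many factors. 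This bookkeeping is routine once the uniform, parameter-free nature of $W$ from Theorem \ref{thm:interpretability_SG(K)} is invoked, but it is the only non-trivial verification in the argument.
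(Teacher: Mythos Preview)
Your proof is correct and takes a genuinely different route from the paper's.

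The paper builds the isomorphism by hand at the level of cosets: for each sort $m(k,J)$ it defines $\phi_{k,J}\big((g_iN_i)/\mathcal{U}\big) = gN$, where $g_iN_i$ corresponds to $\sigma_i\in G(L_i/K_i)$ with $L_i=\overline{K_i}^{N_i}$, and $gN$ corresponds to $(\sigma_i)/\mathcal{U}\in G(\prod_i L_i/\mathcal{U}\,/\,K^*)$. It then checks well-definedness, bijectivity, and preservation of $\leqslant$, $C$, $P$ directly, using that every finite Galois extension of $K^*=\prod_i K_i/\mathcal{U}$ arises as $\prod_i L_i/\mathcal{U}$ with $[L_i:K_i]=[L:K^*]$, and that $\aut_J(L/K^*)\cong\prod_i\aut_J(L_i/K_i)/\mathcal{U}$ (the latter borrowed from \cite[Lemma~5.5]{DHL1}). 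In other words, the paper reproves, inside the concrete description of $SG$, the Galois-theoretic content that your argument packages into ``parameter-free interpretations commute with ultraproducts.''

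Your approach is shorter and more conceptual: once Theorem~\ref{thm:interpretability_SG(K)} is in place, the result is a formal consequence. The paper's approach, by contrast, makes the isomorphism explicit at the Galois-group level, which is convenient in the proof of Theorem~\ref{thm:PAC_type_by_completesystem} where one immediately dualises to obtain a homeomorphism $G(K_1^{\mathcal{U}})\to G(K_2^{\mathcal{U}})$; your abstract isomorphism yields the same thing after one application of the functor $G$, so nothing is lost. The only minor point worth making explicit in your write-up is that the interpretation $W$ is applied to $(M,K)$ with $M\preceq\mathfrak{C}$ arbitrary containing $K$, and that the resulting $W(M,K)\cong SG(K)$ does not depend on the choice of such $M$ --- this is what licenses replacing $\prod_i\mathfrak{C}/\mathcal{U}$ by a larger monster when needed.
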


\begin{proof}
It comes from the uniform interpretation of the sorted complete system in Theorem \ref{thm:interpretability_SG(K)}.
\end{proof}

The following result generalizes \cite[Theorem 5.3]{chatzidakis2002}, and \cite[Theorem 2.6, Theorem 2.7]{chatzidakis2017}.

\begin{theorem}\label{thm:PAC_type_by_completesystem}
Suppose that $T$ has nfcp and PAC is a first order property (in the sense of \cite[Definition 2.6]{DHL1}). Let $K_1$ and $K_2$ are PAC and let $E\subseteq K_1\cap K_2$ be definably closed. Let $a$ and $b$ be tuples (of possibly infinite length) of $K_1$ and $K_2$ respectively. The following are equivalent:
\begin{enumerate}
	\item $\tp_{K_1}(a/E)=\tp_{K_2}(b/E)$ (so $K_1\equiv K_2$).
	\item There are $K_1 \preceq K_1^*$ and $K_2\preceq K_2^*$, and there are $A\subseteq K_1^*$ and $B\subseteq K_2^*$ being regular extensions of $E$ which contain $a$ and $b$ respectively, and which are definably closed in $\FC$, and there is an $\CL(E)$-embedding $\phi:\acl(A)\rightarrow \acl(B)$ such that
	\begin{enumerate}
		\item $K_1^*$ and $K_2^*$ are regular extensions of $A$ and $B$ respectively,
		\item $\phi(a)=b$ and $\phi[A]=B$, and
		\item $S\Phi:S\CG(A)\rightarrow S\CG(B)$ is a partial elementary map from $S\CG(K_1^*)$ to $S\CG(K_2^*)$, where $S\Phi:=S\CG(\phi)$.
	\end{enumerate}	  
\end{enumerate}
\end{theorem}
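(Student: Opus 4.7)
The proof splits into two directions. For \textbf{(2) $\Rightarrow$ (1)} the plan is to upgrade the partial elementarity of $S\Phi$ to a sorted isomorphism of absolute Galois groups between sufficiently saturated elementary extensions of $K_1^*$ and $K_2^*$, and then invoke Theorem \ref{thm:elementary_invariance}. Pick $\kappa$-saturated elementary extensions $K_1^*\preceq K_1^{**}$ and $K_2^*\preceq K_2^{**}$ with $\kappa$ sufficiently large; $K_i^{**}$ remains regular over $A$ and over $B$ respectively, so Fact \ref{fact:regular_to_sorted} produces sorted restriction epimorphisms $\CG(K_i^{**})\to \CG(A)$ and $\CG(K_i^{**})\to \CG(B)$. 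A back-and-forth argument using $\kappa$-saturation of $SG(K_i^{**})$ (Corollary \ref{cor:SG.saturated}), Proposition \ref{prop:SG.elementary2}, and the compatibility with ultraproducts in Theorem \ref{thm:ultraproduct_sorted_completesystem}, extends $S\Phi$ to an $\CL_G(\CJ)$-isomorphism $SG(K_1^{**})\cong SG(K_2^{**})$. Applying the functor $G$ turns this into a sorted isomorphism $\CG(K_2^{**})\to \CG(K_1^{**})$ that commutes with the restrictions onto $\CG(A)$ and $\CG(B)$ identified via $\phi$. Theorem \ref{thm:elementary_invariance} then gives $K_1^{**}\equiv_E K_2^{**}$ via $\phi$, and since $\phi(a)=b$ and $K_i\preceq K_i^{**}$, condition (1) follows.

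For \textbf{(1) $\Rightarrow$ (2)} fix $\kappa$-saturated elementary extensions $K_i\preceq K_i^*$ with $\kappa$ large. Since (1) forces $K_1\equiv_E K_2$, and both $K_i^*$ are saturated of the same cardinality, saturation produces an $\CL(E)$-isomorphism $\tau:K_2^*\to K_1^*$. Set $b':=\tau(b)\in K_1^*$; then $\tp_{K_1^*}(a/E)=\tp_{K_1^*}(b'/E)$ by (1), so fix $\sigma\in\aut(K_1^*/E)$ with $\sigma(a)=b'$. Next, construct $A\subseteq K_1^*$ containing $E\cup\{a\}$, definably closed in $\FC$, regular over $E$, and with $K_1^*$ regular over $A$ (see below), and set $B:=\tau^{-1}\sigma(A)\subseteq K_2^*$. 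Transport along $\tau^{-1}\sigma$ shows that $B$ satisfies the symmetric properties and contains $b$. The map $\phi:=\tau^{-1}\sigma|_A:A\to B$ extends, by composing the $\FC$-automorphisms realizing $\tau^{-1}$ and $\sigma$, to an $\CL(E)$-embedding $\phi:\acl(A)\to \acl(B)$ with $\phi(a)=b$ and $\phi(A)=B$.

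Partial elementarity of $S\Phi:SG(A)\to SG(B)$ from $SG(K_1^*)$ to $SG(K_2^*)$ then follows by functoriality: $\tau$ induces an $\CL_G(\CJ)$-isomorphism $SG(\tau)$ between $SG(K_2^*)$ and $SG(K_1^*)$ (this uses interpretability, Theorem \ref{thm:interpretability_SG(K)}), $\sigma$ induces an $\CL_G(\CJ)$-automorphism $SG(\sigma)$ of $SG(K_1^*)$, and one has $S\Phi=SG(\tau)^{-1}\circ SG(\sigma)|_{SG(A)}$. Consequently, for every $\CL_G(\CJ)$-formula $\psi$ and tuple $\bar c$ from $SG(A)$,
\begin{align*}
SG(K_2^*)\models\psi(S\Phi(\bar c)) &\iff SG(K_1^*)\models\psi(SG(\tau)(S\Phi(\bar c))) \\
&\iff SG(K_1^*)\models\psi(SG(\sigma)(\bar c)) \\
&\iff SG(K_1^*)\models\psi(\bar c),
\end{align*}
the last step because $SG(\sigma)$ is an automorphism. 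This yields the required partial elementarity.

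The \textbf{main obstacle} is the simultaneous construction of $A$ with all three properties: $A$ definably closed in $\FC$, regular over $E$, and $K_1^*$ regular over $A$. Starting from $A_0:=\dcl(E,a)$, neither property need hold; the plan is to iterate two operations: (i) enlarge the chosen tuple $a$ to enumerate $A\cap\acl(E)$, forcing regularity of $A$ over $E$; (ii) close by $A\mapsto \acl(A)\cap K_1^*$, forcing $K_1^*$ to be regular over $A$. By $\kappa$-saturation of $K_1^*$ the resulting transfinite chain stabilizes at a small $A$ of size $<\kappa$, and one must check that each closure preserves what was gained by the other. The edge case $a\in\acl(E)\setminus\dcl(E)$ (where no regular $A\supseteq\{a\}$ exists) is handled separately: in that case $\tp(a/E)$ is algebraic, $b$ is an $E$-conjugate of $a$, and the construction degenerates to a finite Galois extension of $E$ on each side with $\phi$ a Galois element.
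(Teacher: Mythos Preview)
Your overall strategy matches the paper's, but there are two real discrepancies, one of which is a genuine gap.

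\textbf{For $(2)\Rightarrow(1)$}: invoking Theorem~\ref{thm:elementary_invariance} as a black box is not enough. Its conclusion is only $K_1^{**}\equiv_E K_2^{**}$, a statement about theories over $E$; it says nothing about the specific tuples $a$ and $b$. Writing ``via $\phi$'' and ``since $\phi(a)=b$'' does not close this gap: from $K_1^{**}\equiv_E K_2^{**}$ and the existence of $\phi$ you cannot deduce $\tp_{K_1^{**}}(a/E)=\tp_{K_2^{**}}(b/E)$. The paper avoids this by going one level deeper: after dualising the isomorphism $SG(K_1^{\CU})\cong SG(K_2^{\CU})$ to a group homeomorphism $G(K_1^{\CU})\to G(K_2^{\CU})$ compatible with $\phi$, it invokes the back-and-forth \emph{construction} inside the proof of \cite[Proposition~3.6]{DHL1} to produce an actual isomorphism $\phi':K_1'\to K_2'$ extending $\phi\restriction_A$ for some $K_i'\preceq K_i^{\CU}$. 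Since $\phi'(a)=b$, equality of types follows. You need the constructive content, not the abstract statement of Theorem~\ref{thm:elementary_invariance}.

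\textbf{For $(1)\Rightarrow(2)$}: your ``main obstacle'' is largely self-inflicted. The paper applies Keisler--Shelah to $(K_1,a)\equiv_E(K_2,b)$, obtaining $\psi:K_1^{\CU}\to K_2^{\CU}$ with $\psi(a)=b$, and then simply sets $A:=K_1^{\CU}\cap\acl(aE)$ and $B:=K_2^{\CU}\cap\acl(bE)$. This single relative-algebraic-closure step already makes $A$ definably closed and $K_1^{\CU}$ regular over $A$; no iteration is needed. Your step~(i), ``enlarge $a$ to enumerate $A\cap\acl(E)$, forcing regularity of $A$ over $E$'', is not a coherent operation: enlarging $a$ enlarges $\dcl(E,a)$ and hence can only increase $A\cap\acl(E)$, never shrink it. The paper's $\psi$ (an honest isomorphism of the ultrapowers) then yields $\phi:=\psi\restriction_{\acl(A)}$ and $S\Phi$ is automatically partial elementary because it is the restriction of the global isomorphism $S\Psi:SG(K_1^{\CU})\to SG(K_2^{\CU})$. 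Your route through separate $\tau$ and $\sigma$ works but is a detour.
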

\begin{proof}
Fix $M\preceq \FC$ containing $K_1$ and $K_2$.\\

$(1)\Rightarrow (2)$ Since $\tp_{K_1}(a/E)=\tp_{K_2}(b/E)$, we have that $(K_1,a)\equiv_E (K_2,b)$. By the Keisler-Shelah theorem, there is an ultrafilter $\CU$ and an $E$-isomorphism $\psi:K_1^{\CU}\rightarrow K_2^{\CU}$ with $\psi(a^{\CU})=b^{\CU}$. Set $K_i^*:=K_i^{\CU}$ for $i=1,2$, and set $A:= K_1^{\CU}\cap \acl(aE)$ and $B:= K_2^{\CU}\cap \acl(bE)$. Note that 
\begin{itemize}
	\item $a^{\CU}=a$ and $b^{\CU}=b$, and
	\item $\psi[A]=B$
\end{itemize}
We extend $\psi$ to an $E$-isomorphism from $\acl(K_1^{\CU})$ to $\acl(K_2^{\CU})$, still denoted by $\psi$. Let $\phi:=\psi\restriction_{\acl(A)}:\acl(A)\rightarrow \acl(B)$. Let $S\Phi:S\CG(A)\rightarrow S\CG(B)$ be the dual of $\phi$.
\
\\
\textbf{Claim}
The dual map $S\Phi$ gives a partial elementary map from $S\CG(K_1)$ to $S\CG(K_2)$.
\
\\ Proof of the claim: 
For $i=1,2$, we have that $S\CG(K_i)\preceq S\CG(K_i^{\CU})$ because $(M,K_1,K_2)\preceq (M^{\CU},K_1^{\CU},K_2^{\CU})$. Also we have that $S\CG(A)\subseteq S\CG(K_1)$ and $S\CG(B)\subseteq S\CG(K_2)$ because $K_1$ and $K_2$ are regular extensions of $A$ and $B$ respectively. Let $S\Psi:S\CG(K_1^{\CU})\rightarrow S\CG(K_2^{\CU})$ be the double dual of the isomorphism $\psi$. The restriction of $S\Psi$ to $S\CG(A)$ is exactly same as $S\Phi$ so $(S\CG(K_1^{\CU}),S\CG(A))\equiv (S\CG(K_2^{\CU}),S\CG(B))$. Thus we have that $(S\CG(K_1),S\CG(A))\equiv (S\CG(K_2),S\CG(B))$, and we are done.
Here ends the proof of Claim.

$(2)\Rightarrow (1)$ Since $\tp_{K_1^*}(a/E)=\tp_{K_1}(a/E)$ and $\tp_{K_2^*}(b/E)=\tp_{K_2}(b/E)$, we may assume that $K_i^*=K_i$ by replacing $K_i$ by $K_i^*$ for $i=1,2$. Since $S\Phi:S\CG(A)\rightarrow S\CG(B)$ is a partial elementary map, by \cite[Theorem 10.3, Theorem 10.5]{ultraKeisler}, there is a non-principal ultrafilter $\CU$ and an isomorphism $S F:S\CG(K_1)^{\CU}\rightarrow S\CG(K_2)^{\CU}$ such that
\begin{itemize}
	\item $S\CG(K_1)^{\CU}\equiv_{S\CG(A)} S\CG(K_1)$ and $S\CG(K_2)^{\CU}\equiv_{S\CG(B)}S\CG(K_2)$, and
	\item $S F[S\CG(A)]=S\CG(B)$,
	\item $K_1^{\CU}$ and $K_2^{\CU}$ are $\kappa$-saturated for some infinite cardinal $\kappa\geqslant (\max\{|A|,|B|,|T|\})^+$.
\end{itemize}
Using Theorem \ref{thm:ultraproduct_sorted_completesystem} and dualising $S F$, we have a group homeomorphism $F:\CG(K_1^{\CU})\rightarrow \CG(K_2^{\CU})$ such that for $\sigma \in \CG(K_1^{\CU})$, $F(\sigma)\restriction_{\acl(B)}=\Phi(\sigma\restriction_{\acl(A)})$. From the proof of \cite[Proposition 3.6]{DHL1}, we have an isomorphism $\phi:K_1'\rightarrow K_2'$ extending $\phi\restriction_A:A\rightarrow B$ for some $K_i'\preceq K_i^{\CU}$ with $A\subseteq K_1'$ and $B\subseteq K_2'$. We conclude that
$$\tp_{K_1^{\CU}}(A/E)=\tp_{K_1'}(A/E)=\tp_{K_2'}(B/E)=\tp_{K_2^{\CU}}(B/E).$$ Since $K_i\preceq K_i^{\CU}$, we have that $\tp_{K_i^{\CU}}(A/E)=\tp_{K_i}(A/E)$ for $i=1,2$. Therefore we have that $\tp_{K_1}(A/E)=\tp_{K_2}(B/E)$.
\end{proof}

\begin{cor}\label{cor:sortedcompletesystem_elementaryequiv}
Suppose that $T$ has nfcp and
PAC is a first order property.
Assume that $E\subseteq F$ are PAC.
We have that
$S\CG(E)\preceq S\CG(F)$ if and only if $E\preceq F$.
\end{cor}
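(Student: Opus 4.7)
The ``if'' direction is immediate from Proposition \ref{prop:SG.elementary2}. For the converse, my plan is to apply Theorem \ref{thm:PAC_type_by_completesystem} to an enumeration of $E$, choosing the data so that condition $(2)$ of that theorem becomes a direct consequence of the hypothesis $SG(E) \preceq SG(F)$.

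First I would observe that for $SG(E) \preceq SG(F)$ to make sense, the canonical embedding $SG(E) \hookrightarrow SG(F)$ must exist as an embedding of sorted complete systems; by the contravariant equivalence between sorted profinite groups and sorted complete systems, this forces the restriction $G(F) \to G(E)$ to be a sorted epimorphism, so by Fact \ref{fact:regular_to_sorted}, $F$ is a regular extension of $E$. Next I set $E_0 := E \cap \acl(\emptyset)$, which is a definably closed subset of $E$ (and hence of $F$), and over which $E$ is automatically regular by construction. Fixing any ultrafilter $\mathcal{U}$, I apply Theorem \ref{thm:PAC_type_by_completesystem} with $K_1 = E$, $K_2 = F$, parameter set $E_0$, tuples $a = b$ equal to an enumeration of $E$, $K_1^* = E^{\mathcal{U}}$, $K_2^* = F^{\mathcal{U}}$, $A = B = E$, and $\phi = \operatorname{id}_{\acl(E)}$.

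The regularity clauses in condition $(2)$ should then hold automatically: $E^{\mathcal{U}}$ is regular over $E$ since elementary extensions introduce no new algebraic elements over the smaller model, and the same reasoning combined with the first paragraph gives $F^{\mathcal{U}}$ regular over $E$. The central clause to verify is that $S\Phi = \operatorname{id}_{SG(E)}$ is a partial elementary map from $SG(E^{\mathcal{U}})$ to $SG(F^{\mathcal{U}})$; for this I would chain
\[
SG(E^{\mathcal{U}}) \;\succeq\; SG(E) \;\preceq\; SG(F) \;\preceq\; SG(F^{\mathcal{U}}),
\]
where the outer elementary extensions come from Proposition \ref{prop:SG.elementary2} applied to $E \preceq E^{\mathcal{U}}$ and $F \preceq F^{\mathcal{U}}$. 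Theorem \ref{thm:PAC_type_by_completesystem} then yields $\tp_{E}(E/E_0) = \tp_{F}(E/E_0)$, which is equivalent to $E \preceq F$ since any $\mathcal{L}$-formula with parameters from $E$ factors through the enumeration type.

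The main obstacle I anticipate is the bookkeeping around identifying the abstract embedding $SG(E) \hookrightarrow SG(F)$ with the one dual to the restriction $G(F) \to G(E)$ (so that Fact \ref{fact:regular_to_sorted} can be invoked to conclude $F$ is regular over $E$) and choosing $E_0$ so that $A = B = E$ is automatically regular over it; once the choice $E_0 = E \cap \acl(\emptyset)$ is made, the remaining content reduces to a four-term chain of elementary extensions of sorted complete systems fed into Theorem \ref{thm:PAC_type_by_completesystem}.
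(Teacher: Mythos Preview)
Your proof is correct and follows the approach the paper intends: the corollary is meant to be read off from Theorem \ref{thm:PAC_type_by_completesystem} exactly as you do, by instantiating condition (2) with $A=B=E$ and $\phi=\operatorname{id}_{\acl(E)}$ so that clause (c) becomes precisely the hypothesis $SG(E)\preceq SG(F)$.

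Two simplifications are available. First, the ultrapowers are unnecessary: since Theorem \ref{thm:PAC_type_by_completesystem} only asks for \emph{some} $K_1^*\succeq K_1$ and $K_2^*\succeq K_2$, you may take $K_1^*=E$ and $K_2^*=F$ directly, and then clause (c) is literally $SG(E)\preceq SG(F)$ without the four-term chain. Second, there is no need to drop down to $E_0=E\cap\acl(\emptyset)$: once you know $E\subseteq F$, the parameter set in Theorem \ref{thm:PAC_type_by_completesystem} can simply be $E$ itself (and $a=b$ the empty tuple, or an enumeration of $E$ if you prefer); then $A=B=E$ is trivially regular over the parameter set, and the conclusion $\tp_E(\emptyset/E)=\tp_F(\emptyset/E)$ is exactly $E\preceq F$. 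Your more elaborate choices work, but they obscure that the corollary is an immediate specialization.
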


\section{Generalization of Chatzidakis' Theorem}\label{sec:zoe.theorem}
From now on we assume that $T$ is \textbf{stable}.
We write $A,B,C,\ldots$ for small subsets of $\mathfrak{C}$ and $a,b,c,\ldots$ for tuples of $\mathfrak{C}$ of bounded length. We write $a\in A$ if $a$ is a tuple consisting of elements of $A$. For $A,B\subseteq \mathfrak{C}$, we write $AB$ for $A\cup B$. For a subset $A$, we denote $\ov A:=\acl(A)$.
We recall a notion of the boundary property.
The original definition (\cite[Definition 3.1]{GKK}) has a typo and therefore we refer to 
\cite[Remark 2.3]{Kim16}.

\begin{notation}
Let $\{a_0,\ldots, a_{n-1}\}$ be an $A$-independent set. For $u\subseteq \{0,\ldots,n-1\}$, we write $\a_u:=\ov{\{a_i:i\in u\}A}$. And we write $\{0,\ldots, \hat i,\ldots, n-1\}:=\{0,\ldots,i-1,i+1,\ldots,n-1\}$.
\end{notation}

\begin{definition}\label{def:boundary_property}\cite[Remark 2.3]{Kim16}
Let $n\ge 2$.
\begin{enumerate}
	\item Let $A$ be a small subset of $\FC$. We say that {\em the property $B(n)$ holds over $A$} if for every $A$-independent set $\{a_0,\ldots, a_{n-1}\}$, $$\dcl(\a_{\{\hat 0,\ldots,n-1\}},\ldots,\a_{\{0,\ldots,\widehat{n-2},n-1\}})\cap \a_{\{0,\ldots,n-2\}}=\dcl(\a_{\{\hat 0,\ldots,n-2\}},\ldots,\a_{\{0,\ldots,\widehat{n-2}\}}).$$
	\item We say that {\em $B(n)$ holds for $T$} if $B(n)$ holds over every small subset of $\mathfrak{C}$.
\end{enumerate}
\end{definition}

\begin{fact}\label{fact:equivalent_conditions_B(n)}\cite[Lemma 3.3]{GKK}
For any set $A$ and any $n\ge 3$, the following are equivalent:
\begin{enumerate}
	\item $T$ has $B(n)$ over $A$.
	\item For any $A$-independent set $\{a_0,\ldots,a_{n-1}\}$ and any $c\in \a_{\{0,\ldots,n-2\}}$, $$\tp(c/\a_{\{\hat 0,\ldots, n-2\}}\cdots \a_{ \{0,\ldots,\widehat {n-2} \} })\models \tp(c/\a_{\{\hat 0,\ldots,n-1\}} \cdots \a_{\{0,\ldots,\widehat{n-2},n-1\}} ).$$
	\item For any $A$-independent set $\{a_0,\ldots,a_{n-1}\}$ and any map $\sigma$ such that $$\sigma\in \aut( \a_{\{0,\ldots, n-2\}}/\a_{\{\hat 0,\ldots, n-2\}}\cdots \a_{\{0,\ldots,\widehat{n-2}\}} ),$$ $\sigma$ can be extended to $\ov \sigma \in \aut(\FC)$ which fixes $$\a_{\{\hat 0,\ldots, n-1\}}\cdots \a_{\{0,\ldots,\widehat{n-2},n-1\}}$$ pointwise.
\end{enumerate}
\end{fact}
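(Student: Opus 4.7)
Plan: Introduce shorthand to reduce clutter. Set
$$E:=\a_{\{0,\ldots,n-2\}}, \quad D:=\dcl\bigl(\textstyle\bigcup_{i\le n-2}\a_{\{0,\ldots,n-2\}\setminus\{i\}}\bigr), \quad D':=\dcl\bigl(\textstyle\bigcup_{i\le n-2}\a_{\{0,\ldots,n-1\}\setminus\{i\}}\bigr).$$
Then $D\subseteq E$ and $D\subseteq D'$, and the three conditions become: (1) $E\cap D'=D$; (2) for every $c\in E$, $\tp(c/D)\vdash \tp(c/D')$; (3) every $\sigma\in\aut(E/D)$ extends to some $\bar\sigma\in\aut(\FC/D')$. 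I will prove the cycle $(3)\Rightarrow(2)\Rightarrow(1)\Rightarrow(3)$.

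For $(3)\Rightarrow(2)$: given $c\in E$ and $c'\models\tp(c/D)$, saturation of $\FC$ combined with the fact that $E$ is (algebraically) closed over $A$ lets me realize $c'$ inside $E$ as $\tau(c)$ for some $\tau\in\aut(E/D)$; applying (3) to $\tau$ produces an extension $\bar\tau\in\aut(\FC/D')$, and $c'=\bar\tau(c)\models\tp(c/D')$. For $(2)\Rightarrow(1)$: if $c\in E\cap D'$ then the formula $x=c$ isolates $\tp(c/D')$, so by (2) it is implied by $\tp(c/D)$; hence $c\in\dcl(D)=D$. The reverse inclusion is immediate.

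The substantive direction is $(1)\Rightarrow(3)$, and here I use stability and the $A$-independence of $\{a_0,\ldots,a_{n-1}\}$ in an essential way. First, $A$-independence gives $E\ind_A \a_{\{n-1\}}$. Next, each set $\a_{\{0,\ldots,n-1\}\setminus\{i\}}$ with $i\le n-2$ lies in $\acl\bigl(\a_{\{0,\ldots,n-2\}\setminus\{i\}}\cup\a_{\{n-1\}}\bigr)$, so $D'\subseteq \acl(D\cup\a_{\{n-1\}})$; by transitivity and the independence above, $E\ind_D D'$. Because $T$ is stable and $E\cap D'=D$ by (1), the restriction map
$$\mathrm{res}: \aut(E\cdot D'/D')\longrightarrow \aut(E/D)$$
is surjective: the kernel of any type of an element of $E$ over $D'$ is determined by its restriction to $D$ (non-forking plus $E\cap D'=D$ gives stationarity of $\tp(\cdot/D)$ over $D'$ for elements of $E$). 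Given $\sigma\in\aut(E/D)$, lift it to $\hat\sigma\in\aut(E\cdot D'/D')$, and then extend $\hat\sigma$ to $\bar\sigma\in\aut(\FC/D')$ by saturation of $\FC$.

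The main obstacle is the surjectivity of $\mathrm{res}$. It is where the $A$-independence (to obtain $E\ind_D D'$), the stability of $T$ (to pass from independence plus matching intersection to stationarity of types of elements of $E$ over $D'$), and hypothesis (1) (to identify the intersection with $D$) must be combined. Once surjectivity is established, the extension step is routine back-and-forth, and the rest of the cycle is formal.
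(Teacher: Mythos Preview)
The paper does not give a proof of this statement; it is quoted as a fact from \cite{GKK}, so there is no argument in the paper to compare your proposal against. I assess your proof on its own merits.

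Your cycle $(3)\Rightarrow(2)\Rightarrow(1)\Rightarrow(3)$ is the right strategy, and $(3)\Rightarrow(2)$ and $(2)\Rightarrow(1)$ are fine. The observation doing all the work, which you use implicitly but never isolate, is that for $n\ge 3$ every $a_j$ with $j\le n-2$ lies in some $\a_{\{0,\ldots,n-2\}\setminus\{i\}}$ (choose $i\ne j$), so $A\cup\{a_0,\ldots,a_{n-2}\}\subseteq D$ and hence $E=\acl(D)$. This is precisely why, in $(3)\Rightarrow(2)$, any $c'\equiv_D c$ with $c\in E$ can be realised as $\tau(c)$ for some $\tau\in\aut(E/D)$.

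The problem is your justification of $(1)\Rightarrow(3)$. First, the derivation of $E\ind_D D'$ from the $A$-independence of $\{a_0,\ldots,a_{n-1}\}$ is correct but vacuous: since $E=\acl(D)$, one has $E\ind_D X$ for \emph{every} $X$, so neither the $A$-independence nor stability is doing anything here. Second, and more seriously, your claim that ``non-forking plus $E\cap D'=D$ gives stationarity of $\tp(\cdot/D)$ over $D'$ for elements of $E$'' is not justified. Non-forking is irrelevant: for $c\in\acl(D)$ every extension of $\tp(c/D)$ is non-forking, so the hypothesis $c\ind_D D'$ carries no information. And the conclusion you are asserting --- that $\tp(c/D)\vdash\tp(c/D')$ for all $c\in E$ --- is exactly condition~(2), so as written you are assuming what you need to prove in order to get surjectivity of the restriction map.

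The clean argument for $(1)\Rightarrow(3)$ is this. Since $E=\acl(D)$, condition~(1) reads $\acl(D)\cap\dcl(D')=\dcl(D)$, i.e.\ $D\subseteq D'$ is a regular extension. By Fact~\ref{fact:regular_to_sorted} (whose proof uses only elimination of imaginaries, via codes of finite orbits), the restriction map $G(D')\to G(D)=\aut(E/D)$ is surjective. Hence any $\sigma\in\aut(E/D)$ lifts to $\aut(\acl(D')/D')$ and then extends to $\bar\sigma\in\aut(\FC/D')$ by strong homogeneity of $\FC$. So you have misidentified the ``main obstacle'': neither stability nor the ambient $A$-independence is needed for this implication beyond setting up the notation.
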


\begin{lemma}\label{lemma:galois_times}\cite[Lemma 2.14]{chatzidakis2017}
Assume that $B(3)$ holds for $T$.
Let $E\subseteq A,B,C$ be definably closed. Suppose that $\{A,B,C\}$ is an $E$-independent set and
consider the map 
$$\rho:\CG(\dcl(\ov{AB},\ov{AC},\ov{BC})/ABC)\rightarrow\CG(AB)\times \CG(AC)\times \CG(BC)$$
defined by $\sigma\mapsto(\sigma \restriction_{\ov{AB}}, \sigma \restriction_{\ov{AC}}, \sigma \restriction_{\ov{BC}})$. Then we have that 
$$\im(\rho)=\{(\sigma_1,\sigma_2,\sigma_3)\;|\; \sigma_1\restriction_{\ov A}=\sigma_2\restriction_{\ov A},\sigma_1\restriction_{\ov B}=\sigma_3\restriction_{\ov B}, \sigma_2\restriction_{\ov C}=\sigma_3\restriction_{\ov C}\}.$$
\end{lemma}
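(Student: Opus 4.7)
The inclusion $\im(\rho)\subseteq\{(\sigma_1,\sigma_2,\sigma_3)\mid \sigma_1|_{\ov A}=\sigma_2|_{\ov A},\ \sigma_1|_{\ov B}=\sigma_3|_{\ov B},\ \sigma_2|_{\ov C}=\sigma_3|_{\ov C}\}$ is immediate, since $\ov A\subseteq \ov{AB}\cap\ov{AC}$ (and analogously for $\ov B$, $\ov C$), so for any $\sigma\in\aut(\dcl(\ov{AB},\ov{AC},\ov{BC})/ABC)$ the three pairwise restrictions automatically agree on $\ov A$, $\ov B$, $\ov C$. The substantive direction is the reverse inclusion, which I would prove by assembling $\sigma$ from a compatible triple $(\sigma_1,\sigma_2,\sigma_3)$ in two merging stages, invoking $B(3)$ at the crucial junction.

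\emph{Step 1: merge $\sigma_2$ and $\sigma_3$.} Since $\{A,B,C\}$ is $E$-independent and $T$ is stable, $A\ind_{C}B$, which yields $\ov{AC}\ind_{\ov C}\ov{BC}$ and in particular $\ov{AC}\cap\ov{BC}=\ov C$. The compatibility hypothesis $\sigma_2|_{\ov C}=\sigma_3|_{\ov C}$ therefore says that two partial elementary maps, defined on algebraically closed sets and fixing $AC$ respectively $BC$, coincide on the common algebraically closed intersection $\ov C$. Using the standard amalgamation of elementary maps in a stable theory over an independent pair of algebraically closed sets, they combine into a single elementary map $\tau_{23}\in\aut(\dcl(\ov{AC},\ov{BC})/ABC)$ with $\tau_{23}|_{\ov{AC}}=\sigma_2$ and $\tau_{23}|_{\ov{BC}}=\sigma_3$.

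\emph{Step 2: merge $\tau_{23}$ with $\sigma_1$ via $B(3)$.} Applying property $B(3)$ (Definition \ref{def:boundary_property}) with $(a_0,a_1,a_2)=(A,B,C)$ over the base $E$ gives exactly
\[
\dcl(\ov{AC},\ov{BC})\cap\ov{AB}=\dcl(\ov A,\ov B).
\]
On this intersection, $\tau_{23}$ and $\sigma_1$ agree: both restrict to $\sigma_1|_{\ov A}=\sigma_2|_{\ov A}$ on $\ov A$, and both restrict to $\sigma_1|_{\ov B}=\sigma_3|_{\ov B}$ on $\ov B$, hence they coincide on $\dcl(\ov A,\ov B)$. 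A second application of the merging principle (again justified by the independence packaged in $B(3)$) yields $\sigma\in\aut(\dcl(\ov{AB},\ov{AC},\ov{BC})/ABC)$ whose restriction to $\ov{AB}$, $\ov{AC}$, $\ov{BC}$ are $\sigma_1$, $\sigma_2$, $\sigma_3$ respectively, i.e.\ $\rho(\sigma)=(\sigma_1,\sigma_2,\sigma_3)$.

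\emph{Main obstacle.} The delicate point is the amalgamation step in each of the two merges: given two elementary maps on algebraically closed sets that agree on the intersection of their domains, one needs to combine them into a single elementary map on the definable closure of their union. In Step 1 the necessary independence is the trivial consequence $\ov{AC}\ind_{\ov C}\ov{BC}$ of the assumption. In Step 2 the analogous statement is exactly what $B(3)$ supplies -- without it, $\dcl(\ov{AC},\ov{BC})\cap\ov{AB}$ could strictly contain $\dcl(\ov A,\ov B)$, imposing extra compatibility conditions not furnished by the hypotheses on $(\sigma_1,\sigma_2,\sigma_3)$, and the amalgamation would fail. Thus $B(3)$ is the feature of $T$ that makes the lemma work.
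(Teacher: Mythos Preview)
Your two-merge strategy is a legitimate route, but Step~2 is not justified as written. The intersection equality $\dcl(\ov{AC},\ov{BC})\cap\ov{AB}=\dcl(\ov A,\ov B)$ you extract from $B(3)$ only ensures that $\tau_{23}$ and $\sigma_1$ agree on their common domain; it does \emph{not} supply the amalgamation. The base $\dcl(\ov A,\ov B)$ is not algebraically closed, so the stationarity argument that made Step~1 work (where the base $\ov C$ \emph{is} algebraically closed) is unavailable here, and the independence $\dcl(\ov{AC},\ov{BC})\ind_{\dcl(\ov A,\ov B)}\ov{AB}$ alone is not enough. What you actually need is the equivalent form of $B(3)$ recorded in Fact~\ref{fact:equivalent_conditions_B(n)}(3): every element of $\aut(\ov{AB}/\dcl(\ov A,\ov B))$ extends to an automorphism of $\FC$ fixing $\ov{AC}\cup\ov{BC}$ pointwise. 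With this, your merge succeeds: extend $\tau_{23}$ to $\tilde\tau\in\aut(\FC/ABC)$, observe that $\sigma_1\circ(\tilde\tau|_{\ov{AB}})^{-1}\in\aut(\ov{AB}/\dcl(\ov A,\ov B))$, extend it by Fact~\ref{fact:equivalent_conditions_B(n)}(3) to $\eta\in\aut(\FC/\ov{AC}\,\ov{BC})$, and set $\sigma:=(\eta\circ\tilde\tau)|_{\dcl(\ov{AB},\ov{AC},\ov{BC})}$.

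The paper organises the proof differently. It places $\rho$ in the middle of a short exact sequence, with a quotient map $\rho_1$ handling automorphisms of $\dcl(\ov A,\ov B,\ov C)$ over $ABC$ (shown surjective via the stationarity argument of your Step~1, i.e.\ \cite[Corollary~3.38]{Hoff3}) and a kernel map $\rho_0$ handling automorphisms of $\ov{AB}$, $\ov{AC}$, $\ov{BC}$ over $\dcl(\ov A,\ov B)$, $\dcl(\ov A,\ov C)$, $\dcl(\ov B,\ov C)$ (shown surjective via Fact~\ref{fact:equivalent_conditions_B(n)}(3)); the Short Five Lemma then forces $\rho$ to be surjective. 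The ingredients are identical to yours---one stationarity merge, one $B(3)$-extension---but decoupled into two independent surjectivity claims rather than two sequential merges. The paper's decomposition makes transparent exactly which half uses stability and which half uses $B(3)$; your sequential version is perhaps more intuitive but obscures that the second merge rests on the extension property of Fact~\ref{fact:equivalent_conditions_B(n)}(3) rather than on the intersection formula.
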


\begin{proof}
Consider the following property $(\ast)$:
$$\sigma_1\restriction_{\ov A}=\sigma_2\restriction_{\ov A},\quad
\sigma_1\restriction_{\ov B}=\sigma_3\restriction_{\ov B}, \quad
\sigma_2\restriction_{\ov C}=\sigma_3\restriction_{\ov C}$$
given for any triple $(\sigma_1,\sigma_2,\sigma_3)$ of mappings with proper domains.
Consider also the following diagram
$$\xymatrix{
\CG(\ov A\,\ov B\,\ov C/ABC) \ar[r]^-{\rho_1} & \big[\CG(\ov A\,\ov B/AB)\times \CG(\ov A\,\ov C/AC)\times \CG(\ov B\,\ov C/BC)\big]^*\\
\CG( \ov{AB}\,\ov{AC}\,\ov{BC}/ABC) \ar[u]^{\res} \ar[r]^-{\rho}& \big[\CG(AB)\times \CG(AC)\times \CG(BC)\big]^*
\ar[u]_{\res\times\res\times\res}\\
\CG( \ov{AB}\,\ov{AC}\,\ov{BC}/\ov A\,\ov B\,\ov C) \ar[u]^{\subseteq} \ar[r]^-{\rho_0=\rho}& \CG(\ov A\,\ov B)\times \CG(\ov A\,\ov C)\times \CG(\ov B\,\ov C) \ar[u]_{\subseteq}
}$$
where $\big[\ldots\big]^*$ denotes a adequate subgroup consisting exactly triplets satisfying $(\ast)$,
and 
$$\rho_1:\sigma\mapsto (\sigma\restriction_{\dcl(\ov A,\ov B)},\, \sigma\restriction_{\dcl(\ov A,\ov C)},\, \sigma\restriction_{\dcl(\ov B,\ov C)}).$$
It commutes, the columns form short exact sequences, and maps $\rho_0$, $\rho$ and $\rho_1$ are monomorphisms.
\
\\
\textbf{Claim 1}
The map $\rho_0$ is onto.
\
\\ Proof of the claim: 
Let $(\sigma_1,\sigma_2,\sigma_3)\in \CG(\ov A\,\ov B)\times \CG(\ov A\,\ov C)\times \CG(\ov B\,\ov C)$, we need to find a common extension to an element $\sigma\in\CG( \ov{AB}\,\ov{AC}\,\ov{BC}/\ov A\,\ov B\,\ov C)$.
By Fact \ref{fact:equivalent_conditions_B(n)}.(3), we extend $\sigma_1$ to $\tilde{\sigma}_1\in\aut(\FC/\ov{AC}\,\ov{BC})$. Similarly for $\sigma_2$ and $\sigma_3$:
\begin{table}[h]\centering
\begin{tabular}{r|c|c|c}
  & $\ov{AB}$ & $\ov{AC}$ & $\ov{BC}$ \\ \hline
 $\tilde{\sigma}_1$ & $\sigma_1$ & $\id$ & $\id$ \\
 $\tilde{\sigma}_2$ & $\id$ & $\sigma_2$ & $\id$ \\
 $\tilde{\sigma}_3$ & $\id$ & $\id$ & $\sigma_3$
\end{tabular}
\end{table}
\\
Set $\sigma:=\tilde{\sigma}_1\circ\tilde{\sigma}_2\circ\tilde{\sigma}_3\restriction_{\dcl(\ov{AB},\,\ov{AC},\,\ov{BC})}$.
Here ends the proof of the first claim.
\
\\
\textbf{Claim 2}
The map $\rho_1$ is onto.
\
\\ Proof of the claim: 
Let $(\sigma_1,\sigma_2,\sigma_3)\in \big[\CG(\ov A\,\ov B/AB)\times \CG(\ov A\,\ov C/AC)\times \CG(\ov B\,\ov C/BC)\big]^*$. Again, our goal is to find a common extension 
$\sigma\in \CG(\ov A\,\ov B\,\ov C/ABC)$. Since $\{A,B,C\}$ is $E$-independent, we have that
$\ov A\,\ov B \ind_{\ov E} \ov C$. Of course $\sigma_1$ and $\sigma_2\restriction_{\ov C}$ agree on $\ov E$ and $\ov E\subseteq \ov C$ is regular, therefore, by \cite[Corollary 3.38]{Hoff3}, there exists $\tau\in\aut(\FC)$ such that $\tau\restriction_{\ov A\,\ov B}=\sigma_1\restriction_{\ov A\,\ov B}$ and $\tau\restriction_{\ov C}=\sigma_2\restriction_{\ov C}$. By the property $(\ast)$, we have also
$\tau\restriction_{\ov A\,\ov C}=\sigma_2\restriction_{\ov A\,\ov C}$
and 
$\tau\restriction_{\ov B\,\ov C}=\sigma_3\restriction_{\ov B\,\ov C}$, hence $\sigma:=\tau\restriction_{\dcl(\ov A,\,\ov B,\, \ov C)}$
does the job.
Here ends the proof of the second claim.

It follows that $\rho_1$ and $\rho_0$ are isomorphisms, hence, by the Short Five lemma,
also $\rho$ is an isomorphism.
\end{proof}

We consider a relative algebraic closure $\acl^r_F(A):=\acl(A)\cap F$ for every small subsets $A$ and $F$ of $\FC$.

\begin{fact}\label{fact:model_wikipedia}
\begin{enumerate}
\item Let $M_0\preceq M_1\preceq M_2\preceq\ldots$ be an elementary chain of some structures, of length $\omega$. If each $M_{n+1}$ is $|M_n|^+$-saturated, then every partial elementary map $f:A\to B$, where $A,B\subseteq M_0$, extends to an automorphism of $\bigcup_nM_n$.

\item Let $M_0\preceq M_1\preceq M_2\preceq\ldots$ be an elementary chain of some structures, of length $\kappa^+$. 
If each $M_{i+1}$ is $\max\{|M_i|^+,\kappa\}$-saturated, then every partial elementary map $f:A\to B$, where $A,B\subseteq M_0$, extends to an automorphism of $\bigcup_{i<\kappa^+}M_i$ and
$\bigcup_{i<\kappa^+}M_i$ is $\kappa$-saturated.
\end{enumerate}
\end{fact}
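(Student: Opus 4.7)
Both parts will follow by the standard back-and-forth construction along the chain, using the saturation hypotheses to extend the partial map at each successor stage. The plan is: given a starting partial elementary $f : A \to B$ with $A, B \subseteq M_0$, build an increasing chain $f_0 \subseteq f_1 \subseteq \cdots$ of partial elementary maps whose union is the desired automorphism of $\bigcup_i M_i$.

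For part (1), I would set $f_0 := f$ and, given $f_n : A_n \to B_n$ with $A_n, B_n \subseteq M_n$, produce $f_{n+1}$ such that $M_n \subseteq \dom(f_{n+1}) \cap \im(f_{n+1}) \subseteq M_{n+1}$. This is achieved by an internal back-and-forth of length $|M_n|$ alternating through enumerations of $M_n \setminus A_n$ and $M_n \setminus B_n$. At each internal step, if $g : A' \to B'$ is the current partial elementary map (with $|A'|, |B'| \leq |M_n|$ and $A', B' \subseteq M_{n+1}$) and $a \in M_n$ is the next domain element to hit, one forms the pushforward $g_*(\tp(a/A'))$, a complete type over $B' \subseteq M_{n+1}$ of size $\leq |M_n|$; by $|M_n|^+$-saturation of $M_{n+1}$ it is realized in $M_{n+1}$, giving a coherent extension $a \mapsto b$. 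The symmetric ``back'' step is identical. Taking the union over $n$ yields a bijection of $\bigcup_n M_n$ onto itself which is $\mathcal{L}$-elementary by the Tarski--Vaught chain test, hence an automorphism extending $f$.

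For part (2), I would carry out the same back-and-forth indexed by $\kappa^+$, taking unions at limit ordinals, and using at each successor stage $i \mapsto i+1$ that $M_{i+1}$ is $|M_i|^+$-saturated (which is part of the hypothesis). The union is an automorphism of $M := \bigcup_{i < \kappa^+} M_i$ extending $f$. For the $\kappa$-saturation of $M$: any partial type $p$ over a set $C \subseteq M$ with $|C| < \kappa$ has $C$ contained in some $M_i$ with $i < \kappa^+$, by regularity of $\kappa^+$ applied to the increasing chain of the $M_i$'s; then $M_{i+1}$, being $\kappa$-saturated since $\max(|M_i|^+, \kappa) \geq \kappa$, realizes $p$, so $M$ does as well.

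The only delicate point is the cardinal bookkeeping in part (2): one must check that $\max(|M_i|^+, \kappa)$-saturation of $M_{i+1}$ is strong enough both for the back-and-forth step over a set of size $|M_i|$ (needing $|M_i|^+$-saturation) and for realizing $< \kappa$ parameter types in the final saturation argument (needing $\kappa$-saturation). This is exactly why the hypothesis takes the maximum of the two cardinals; otherwise everything is routine.
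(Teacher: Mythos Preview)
Your proposal is correct and is precisely the standard back-and-forth argument the paper has in mind; the paper's own proof merely says ``The first part is standard. The second part follows by repeating argument from the first part for all limit ordinals below $\kappa^+$,'' so you have supplied exactly the details the authors omitted.
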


\begin{proof}
The first part is standard. The second part follows by repeating argument from the first part for all limit ordinals below $\kappa^+$.
\end{proof}

\begin{prop}[Generalization of Chatzidakis' Theorem]\label{thm:Zoe}
Suppose that $B(3)$ holds for $T$.
Fix a (very) saturated $\mathcal{L}_P$-structure $(M^\ast,F^\ast)$ such that 
$F^\ast\subseteq M^\ast\preceq\FC$ and $F^\ast$ is PAC.

Assume that $E\subseteq A,B,C_1,C_2\subseteq F^\ast$ are regular extensions of small definably closed subsets such that
$$A\ind_E B,\quad C_1\ind_E A,\quad C_2\ind_E B.$$
Assume that
$\phi\in\aut(\FC/\bar{E})$ satisfies 
$\phi[C_1]=C_2$.
If
there exists $S\subseteq S\CG(F^\ast)$ such that $S\models\tp_{S\CG(F^\ast)}(S\CG(C_1)/S\CG(A))\cup\tp_{S\CG(F^\ast)}(S\CG(C_2)/S\CG(B))$ (where variables are identified via $S\CG(\phi)$), then
there
exists  $C'\subseteq F^\ast$ such that $C'\ind_E AB$, $C'\models \tp_{F^\ast}(C_1/A)\cup\tp_{F^\ast}(C_2/B)$ (the variables for $\tp_F(C_1/A)$ and
$\tp_F(C_2/B)$ are identified via $\phi$) and $S\CG(C')\equiv_{S\CG(\acl_{F}^r(AB))}S$.
\end{prop}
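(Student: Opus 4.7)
I would closely follow the strategy of Chatzidakis' original argument (Theorem 3.1 in \cite{chatzidakis2017}), but replace fields by $\mathcal{L}$-structures and use Lemma \ref{lemma:galois_times} as the Galois-theoretic amalgamation tool (which is where $B(3)$ enters).

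\emph{Step 1: Amalgamating the algebraic data in $\FC$.} First I would use stability of $T$ together with the Independence Theorem for nonforking to produce $C_0 \subseteq \FC$ such that $C_0 \models \tp_{\FC}(C_1/A) \cup \tp_{\FC}(C_2/B)$ (variables identified via $\varphi$) and $C_0 \ind_E AB$. By regularity of $E\subseteq A,B,C_i$ together with $A\ind_E B$, $C_i\ind_E (\text{the right side})$, the three sets $\{A,B,C_0\}$ form an $E$-independent triple, so Lemma \ref{lemma:galois_times} describes $\CG(\dcl(\bar{AB},\bar{AC_0},\bar{BC_0})/ABC_0)$ as a fibre product over the pairwise restrictions.

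\emph{Step 2: Building the prescribed Galois structure.} Next I would use the hypothesis on $S$: via $SG(\varphi)$ we have a sorted embedding $SG(A) \hookrightarrow S$ agreeing with the one into $SG(C_1)$, and similarly for $SG(B)$ through $SG(C_2)$. Dualising through the functor $G$ of Section \ref{sec:sorted}, this furnishes a sorted profinite group $\widetilde{\CG}$ together with sorted epimorphisms $\widetilde{\CG} \twoheadrightarrow \CG(A)$, $\widetilde{\CG} \twoheadrightarrow \CG(B)$ that agree on $\CG(E)$. Using Fact \ref{fact:regular_to_sorted}, Lemma \ref{lemma:galois_times} and the $B(3)$ amalgamation, I would enlarge $\acl(ABC_0)$ to an algebraic extension $D \supseteq \dcl(ABC_0)$ inside some larger monster so that $\CG(D)\to\CG(AB)$ (restriction) realises $S$ as the type of $SG(D)$ over $SG(AB)$ (equivalently: $D$ is regular over $AB$ and $SG(D)\equiv_{SG(\acl^r_F(AB))} S$ after the appropriate identification). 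Property $B(3)$ is exactly what allows the Galois data on $\bar{AB}, \bar{AC_0}, \bar{BC_0}$ to be realised simultaneously.

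\emph{Step 3: Pushing the construction into $F^*$ via the PAC property.} Since $F^*$ is PAC and $D$ is a regular extension of the definably closed set $\dcl(AB)\subseteq F^*$, the PAC property (Definition \ref{def:PAC}) applied inside a suitable ambient $M'\succeq M^*$ lets us realise the quantifier-free type of $C_0$ over $\dcl(AB)$ inside $F^*$. To simultaneously enforce the Galois-theoretic content $SG(C')\equiv_{SG(\acl^r_F(AB))} S$, I would work in the pair language $\mathcal{L}_P$: by Theorem \ref{thm:interpretability_SG(K)}, the sorted system $SG(F^*)$ is interpretable in $(M^*,F^*)$, so the full requirement becomes an $\mathcal{L}_P$-type over $AB$. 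Using Lemma \ref{lemma:CZ.preceq}, Proposition \ref{prop:SG.elementary2} and the saturation of $(M^*,F^*)$, the finite satisfiability of this $\mathcal{L}_P$-type (guaranteed by the construction of $D$ inside $\FC$ and the Elementary Equivalence Theorem \ref{thm:elementary_invariance}) produces the desired realisation $C'\subseteq F^*$. Independence $C'\ind_E AB$ is preserved because nonforking depends only on the $\mathcal{L}$-type.

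\emph{Main obstacle.} The delicate point is Step 2/3: encoding ``$SG(C')\equiv S$'' as a part of an $\mathcal{L}_P$-type over $AB$ and checking its consistency. One must verify that the sorted-profinite-group amalgam produced in Step 2 actually corresponds to a first-order consistent set of $\mathcal{L}_P$-formulas asserting the existence of primitive elements with the prescribed Galois action, and that the PAC property together with saturation is strong enough to realise it. This is where the machinery of Section \ref{sec:encoding} (in particular Proposition \ref{prop:interpretability_galoisaction} and Remark \ref{rem:translation.B}) does the work — it translates sorted group statements into $\mathcal{L}_P$-formulas whose quantifier-free witnesses can be supplied through PAC-ness.
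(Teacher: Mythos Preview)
Your outline captures the right shape of Step~1 and correctly identifies Lemma~\ref{lemma:galois_times} as the place where $B(3)$ enters, but Step~3 has a real gap, and the tools you cite there are not the ones the paper actually uses.

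First, a minor but concrete issue: you invoke Lemma~\ref{lemma:CZ.preceq} and Proposition~\ref{prop:SG.elementary2}, both of which require nfcp. Proposition~\ref{thm:Zoe} assumes only stability and $B(3)$, so those results are not available here.

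The deeper problem is your proposed mechanism for getting $C'$ into $F^\ast$. You want to encode the full requirement (including $SG(C')\equiv_{SG(\acl^r_F(AB))}S$ and $C'\models\tp_{F^\ast}(C_1/A)\cup\tp_{F^\ast}(C_2/B)$) as an $\mathcal{L}_P$-type and realise it by saturation. But to invoke saturation you must first show this type is finitely satisfiable in $(M^\ast,F^\ast)$, and your witness $D$ lives only in $\FC$, not in any model of $\theo_{\mathcal{L}_P}(M^\ast,F^\ast)$. The Elementary Equivalence Theorem~\ref{thm:elementary_invariance} gives $\equiv$ between two PAC structures with the same sorted Galois data; it does not by itself produce realisations of specified $F^\ast$-types inside $F^\ast$. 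Likewise, the PAC property gives existential closedness in regular extensions, hence control of quantifier-free types, but $\tp_{F^\ast}(C_1/A)$ is a full $\mathcal{L}$-type in $F^\ast$.

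The paper's proof does \emph{not} go through an $\mathcal{L}_P$-type-plus-saturation argument. Instead it (i) builds an auxiliary $\kappa$-PAC $F\preceq F^\ast$ with strongly homogeneous $SG(F)$, (ii) dualises the partial elementary maps $S\Psi_i$ to automorphisms of $SG(F)$ and hence to Galois extensions $L_0,L_1,L_2$ of $F$, (iii) uses Lemma~\ref{lemma:galois_times} to assemble a map $\Theta$ and take $D=\dcl(\ov{AB},\ov{AC},\ov{BC})^{\im\Theta}$, (iv) applies the \emph{Embedding Lemma} (Lemma~3.3 of \cite{DHL1}) to find $\sigma\in\aut(\FC/\ov{AB})$ with $\sigma(D)\subseteq F$ and a prescribed conjugation action, and (v) uses the back-and-forth construction from the proof of Proposition~3.6 of \cite{DHL1} to extend $\varphi_1'\sigma^{-1}|_{\sigma(D_1)}$ to an automorphism of $F$ over $A$, which is exactly what certifies $\sigma(C)\models\tp_F(C_1/A)$ (and similarly over $B$). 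The Embedding Lemma and the back-and-forth from \cite{DHL1} are the missing ingredients in your sketch; they replace the saturation-of-$(M^\ast,F^\ast)$ step you propose.
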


\begin{proof} 
The proof is a generalization of the proof of \cite[Theorem 3.1]{chatzidakis2017}.
In our proof, we are using tools related to stationarity and forking independence (i.e. \cite[Corollary 3.38]{Hoff3}) instead of linear disjointness. We include a detailed proof, since we found some gaps in the exposition of the original proof
(e.g. in the proof of \cite[Theorem 3.1]{chatzidakis2017}, $F^*$ does not need to be an elementary extension of $F$: it is elementarily equivalent to $F$ and there is an automorphism $\sigma$ fixing $(AB)^s$ such that $F^*\cap \sigma((AB)^s(AC)^s(BC)^s)=D$, but $\sigma$ does not necessarily fix $F$)
and we would like to provide a more transparent exposition of this very nice argument. To be in accordance with the proof of \cite[Theorem 3.1]{chatzidakis2017}, we preserve its notations.

Let $S_0^\ast\models\tp_{S\CG(F^\ast)}(S\CG(C_1)/S\CG(A))\cup\tp_{S\CG(F^\ast)}(S\CG(C_2)/S\CG(B))$ (where variables are identified via $S\Phi(:=S\CG(\phi))$), then there exist partial elementary maps (in $S\CG(F^\ast)$) $S\Psi_1:S\CG(C_1)\to S_0^\ast$ (over $S\CG(A)$) and $S\Psi_2:S\CG(C_2)\to S_0^\ast$ (over $S\CG(B)$) such that
$$S\Psi_2\circ S\Phi\restriction_{S\CG(C_1)}=S\Psi_1\restriction_{S\CG(C_1)}.$$
Let $S_1^*$ realize the pushforward of the type $\tp_{S\CG(F^\ast)}\big(S\CG(\acl^r_{F^\ast}(AC_1))/S\CG(A)S\CG(C_1)\big)$ along $S\Psi_1$ (there is such a realization in $S\CG(F^\ast)$ by Corollary \ref{cor:SG.saturated}). 
Similarly, let
$S_2^*$ be a realization of the pushforward of the type $\tp_{S\CG(F^\ast)}\big(S\CG(\acl^r_{F^\ast}(BC_2))/S\CG(B)S\CG(C_1)\big)$ along $S\Psi_2$. 
Without loss of generality we may denote partial elementary maps extending $S\Psi_1$ and $S\Psi_2$ again by $S\Psi_1$ and $S\Psi_2$:
$$S\Psi_1:S\CG(\acl^r_F(AC_1))\xrightarrow[S\CG(A)]{\equiv} S_1^\ast,\qquad S\Psi_2:S\CG(\acl^r_F(BC_2))\xrightarrow[S\CG(B)]{\equiv} S_2^\ast$$

We do not know whether $S\CG(F^\ast)$ is homogeneous and therefore, in a moment, we will construct an auxiliary PAC substructure $F$ such that $S\CG(F)$ is properly homogeneous.

Take a cardinal $\kappa> |T|^+, |A|^+,\,|B|^+,\,|C_1|^+,|\,S_1^\ast|^+,\,|S_2^\ast|^+$ and a chain of elementary extensions $(M_{i},F_i)$ (in $(M^\ast,F^\ast)$) of length $\kappa^+$ such that $A,B,C_1,C_2\subseteq F_0$ and each $(M_{i+1},F_{i+1})$ is $|M_i|^+$-saturated. 
Set $(M,F):=\bigcup_{i<\kappa^+}(M_i,F_i)$. Since $F\preceq F^\ast$, we obtain that $F$ is PAC (by Fact \ref{fact:PAC_elemsubst}). Note that $F$ is $\kappa$-saturated, hence also a $\kappa$-PAC substructure in $\FC$ (by \cite[Proposition 3.9]{Hoff3}).

Because $S\CG(F)$ is interpretable in $(M,F)$ and $(M,F)$ is $\kappa$-saturated, we deduce that also $S\CG(F)$ is $\kappa$-saturated (see Corollary \ref{cor:SG.saturated}).
Let $S_0S_1S_2\subseteq S\CG(F)$ be such that $S_0S_1S_2\equiv_{S\CG(\acl_{F}^r(AB))}S_0^\ast S_1^\ast S_2^\ast$. Say that $S\Gamma$ is a partial elementary map (over $S\CG(\acl_{F}^r(AB))$) such
that $S\Gamma(S_0^\ast S_1^\ast S_2^\ast)=S_0S_1S_2$. 
By Corollary \ref{cor:SG.elementary1}, we have that
$S\CG(F_i)\preceq S\CG(F_{i+1})$ and each $S\CG(F_{i+1})$ is $|S\CG(F_i)|^+$-saturated, so we may use Fact \ref{fact:model_wikipedia} and find $S\Psi_1',S\Psi_2'\in\aut(S\CG(F))$ which extend 
$S\Gamma\circ S\Psi_1$ and $S\Gamma\circ S\Psi_2$ respectively.

The following diagram illustrates our situation on the level of the sorted complete system $S\CG(F)$:
$$\xymatrix{
& S\CG(F) \ar[r]^{S\Psi_1'} & S\CG(F) & S\CG(F) \ar[l]_{S\Psi_2'} & \\
S\CG(\acl^r_F(AC_1)) \ar@[blue][r]_-{S\Psi_1'} \ar@{^{(}->}[ur] & S_1 \ar@{^{(}->}[ur] & & S_2 \ar@{_{(}->}[ul] & S\CG(\acl^r_F(BC_2)) \ar@[blue][l]^-{S\Psi_2'} \ar@{_{(}->}[ul]\\
& S\CG(C_1) \ar@/_2pc/@[yellow][rr]_{S\Phi} \ar@[blue][r]_-{S\Psi_1'} \ar@{_{(}->}[ul]& S_0 \ar@{_{(}->}[ul] \ar@{^{(}->}[ur]& S\CG(C_2) \ar@[blue][l]^-{S\Psi_2'} \ar@{^{(}->}[ur] &}$$
Recall that we have the canonical isomorphisms $\alpha:G\to GS(G)$ and $\beta:S\to SG(S)$ for a sorted profinite group $G$ and a sorted complete system $S$. For $i=1,2,3$, consider the kernel $N_i$ of the following map $\CG(F)\xrightarrow[]{\cong} GS\CG(F)\to GS_i$, which is the composition of the dual of $S_i\to S\CG(F)$ and the canonical isomorphism $\CG(F)\cong GS\CG(F)$. Let $L_i=\ov{F}^{N_i}$. We transfer the previous diagram by the functor $G$, add canonical isomorphisms (wavy lines) and place $\CG(L_i/F)$ with proper restriction maps (red arrows):
\begin{center}
\resizebox{12.5cm}{!}{
$$\xymatrix{
& & \CG(F) \ar@{~>}[dd]_{\alpha} \ar@[green][dddll]_-{\res} & \CG(F) 
\ar@[green][l]_{\alpha^{-1}\circ GS\Psi_1'\circ\alpha}
\ar@[green][r]^{\alpha^{-1}\circ GS\Psi_2'\circ\alpha}
\ar@{~>}[dd]_{\alpha}  
\ar@[red][dddl]_-{\res} \ar@[red][dddr]^-{\res} 
& \CG(F) \ar@{~>}[dd]^{\alpha} \ar@[green][dddrr]^-{\res}& & \\
&&&&&& \\
& & GS\CG(F) \ar@{>>}[ddl]& GS\CG(F) \ar[l]^{GS\Psi_1'} \ar[r]_{GS\Psi_2'} \ar@{>>}[ddl] \ar@{>>}[ddr]   & GS\CG(F) \ar@{>>}[ddr] & & \\
\CG(\acl^r_F(AC_1)) \ar@[green][dd]_-{\res} \ar@{~}@[blue][dr] & & \CG(L_1/F) \ar@{-}@[blue][d]_-{\cong} \ar@[red][dr]^-{\res} & & \CG(L_2/F) \ar@{-}@[blue][d]^-{\cong} \ar@[red][dl]_-{\res} & & \CG(\acl^r_F(BC_2)) \ar@[green][dd]^-{\res}\ar@{~}@[blue][dl] \\
& GS\CG(\acl^r_F(AC_1)) \ar@{>>}[dr] & GS_1 \ar@{>>}@[blue][l]^-{GS\Psi_1'} \ar@{>>}[dr]& \CG(L_0/F) \ar@{-}@[blue][d]_-{\cong} & GS_2 \ar@{>>}@[blue][r]_-{GS\Psi_2'} \ar@{>>}[dl]& GS\CG(\acl^r_F(BC_2)) \ar@{>>}[dl] & \\
\CG(C_1) \ar@{~}@[blue][rr] & & GS\CG(C_1) & GS_0 \ar@{>>}@[blue][l] \ar@{>>}@[blue][r] & GS\CG(C_2) & & \CG(C_2) \ar@{~}@[blue][ll]
\ar@/^2pc/@[yellow][llllll]^-{\Phi}
}$$}
\end{center}
Let us denote the shortest path (in blue) between:
\begin{itemize}
\item $\CG(L_1/F)$ and $\CG(\acl^r_F(AC_1))$ by $\Psi_1'$,
\item $\CG(L_2/F)$ and $\CG(\acl^r_F(BC_2))$ by $\Psi_2'$,
\item $\CG(L_0/F)$ and $\CG(C_1)$ by $\Psi_{01}'$,
\item $\CG(L_0/F)$ and $\CG(C_2)$ by $\Psi_{02}'$.
\end{itemize}
	Take $\phi_0\in\aut(\FC/\overline{E})$ such that $C\ind_{\overline{E}}F$ for $C:=\phi_0^{-1}[C_1]$. Since $\overline{C}\ind_{\overline{E}} \overline{A}$, $\overline{C}_1\ind_{\overline{E}}\overline{A}$ and $\overline{E}\subseteq \overline{A}$ is regular, we may extend $\phi_0:\overline{C}\to \overline{C_1}$ and $\id_{\overline{A}}$ to an automorphism $\phi_1\in\aut(\FC/\overline{A})$ (by \cite[Corollary 3.38]{Hoff3}). Set $\phi_2:=\phi\circ \phi_1$ and $\phi_1':=\phi_1$.

Now, we refine $\phi_2$. Since $\ov{C}\ind_{\ov{E}}\ov{B}$, $\ov{C_2}\ind_{\ov{E}}\ov{B}$ and $\ov{E}\subseteq\ov{B}$ is regular, we may extend $\phi_2:\ov{C}\to\ov{C_2}$ and $\id_{\ov{B}}$ to an automorphism $\phi_2'\in\aut(\FC/\ov{B})$. Note that $\phi_1'[C]=C_1$, $\phi_2'[C]=C_2$,
$\phi_2'\restriction_{\ov{C}}=\phi\circ\phi_1'\restriction_{\ov{C}}$ and we have
$$\phi_1':\ov{AC}\xrightarrow[\ov{A}]{}\ov{AC_1},\qquad
\phi_2':\ov{BC}\xrightarrow[\ov{B}]{}\ov{BC_2}.$$
Set $D_1:=(\phi_1')^{-1}[\acl^r_F(AC_1)]$, $D_2:=(\phi_2')^{-1}[\acl^r_F(BC_2)]$,
$\Phi_1':=\CG(\phi_1'):\CG(\acl^r_F(AC_1))\to \CG(D_1)$ 
and $\Phi_2':=\CG(\phi_2'):\CG(\acl^r_F(BC_2))\to \CG(D_2)$.

The previous diagram simplifies to the following one extended by $\Phi_1'$ and $\Phi_2'$ (in orange):
$$\xymatrix{
& & \CG(F) \ar@/_2.1pc/@[green][ddll]_-{\res\circ \alpha^{-1}\circ GS\Psi_1'\circ\alpha} \ar@/^2.1pc/@[green][ddrr]^-{\res\circ \alpha^{-1}\circ GS\Psi_2'\circ\alpha}
\ar@[red][dl]_-{\res} \ar@[red][dr]^-{\res}
& & \\
& \CG(L_1/F) \ar@[red][dr]^-{\res} \ar@[blue][dl]^-{\Psi_1'} & & \CG(L_2/F) \ar@[red][dl]_-{\res} \ar@[blue][dr]_-{\Psi_2'}& \\
\CG(\acl^r_F(AC_1)) \ar@[orange][dd]_-{\Phi_1'} \ar@[green][dr]_-{\res} & & \CG(L_0/F) \ar@[blue][dl]_-{\Psi_{01}'} \ar@[blue][dr]^-{\Psi_{02}'} & & \CG(\acl^r_F(BC_2)) \ar@[orange][dd]^-{\Phi_2'} \ar@[green][dl]^-{\res}\\
& \CG(C_1) \ar[dr]_-{\Phi_1'} & & \CG(C_2) \ar[dl]^-{\Phi_2'} \ar@[yellow][ll]_-{\Phi}& \\
\CG(D_1)  \ar[rr]^-{\res} && \CG(C)  && \CG(D_2)  \ar[ll]_-{\res}
}$$
Set $\Theta_1:=\Phi_1'\Psi_1'$ and $\Theta_2:=\Phi_2'\Psi_2'$. 
Consider the following map
$$\Theta':\CG(\ov{AB}L_1L_2/F)\to \CG(AB)\times \CG(AC)\times \CG(BC)$$
given by $\Theta'(f):=(f\restriction_{\ov{AB}},\Theta_1(f\restriction_{L_1}),\Theta_2(f\restriction_{L_2}))$.

Note that for any $f\in \CG(F)$ we have
$(\alpha^{-1}\circ GS\Psi_1'\circ \alpha)(f)\restriction_{\ov{A}}=f\restriction_{\ov{A}}$ and so 
$\Theta_1(f\restriction_{L_1})\restriction_{\ov{A}}=f\restriction_{\ov{A}}$.
In the same manner we show that
$\Theta_2(f\restriction_{L_2})\restriction_{\ov{B}}=f\restriction_{\ov{B}}$ .
By the commutativity of the last diagram we see that also 
$\Theta_1(f\restriction_{L_1})\restriction_{\ov{C}}=\Theta_2(f\restriction_{L_2})\restriction_{\ov{C}}$
Therefore we can use Lemma \ref{lemma:galois_times} to conclude that the image of $\Theta'$ is contained in the image of $\rho$. So we extend $\Theta'$ to a map 
$\Theta:\CG(\ov{AB}L_1L_2/F)\to \CG(\ov{AB}\,\ov{AC}\,\ov{BC}/ABC)$ and define $D:=\dcl(\ov{AB}, \ov{AC}, \ov{BC})^{\im\Theta}$.

Note that the following diagram commutes
$$\xymatrix{
\CG(F) \ar[rr]^-{\res} \ar[drr]_-{\res} & & \CG(\ov{AB}L_1L_2/F) \ar[r]^-{\Theta} & \CG(\ov{AB}\,\ov{AC}\,\ov{BC}/D) \ar[dl]^-{\res}\\
&& \CG(\acl^r_F(AB))
}$$

By \cite[Lemma 3.3]{DHL1}, there exists $\sigma\in\aut(\FC/\ov{AB})$ such that $\sigma[D]\subseteq F$ and for each $f\in \CG(F)$ we have that $\Theta(f\restriction_{\dcl(\ov{AB},L_1,L_2)})=\sigma^{-1}f\sigma\restriction_{\dcl(\ov{AB}, \ov{AC}, \ov{BC})}$.

Because compositions $\CG(F)\xrightarrow{\res} \CG(\ov{\sigma[D_1]}F/F)\xrightarrow[\cong]{\res} \CG(\sigma[D_1])\xrightarrow[\cong]{\CG(\sigma)} \CG(D_1)$ and $\CG(F)\xrightarrow{\res} \CG(L_1/F)\xrightarrow[\cong]{\Theta_1} \CG(D_1)$ are equal,
we obtain that their kernels are also equal, hence, by Galois correspondence, $L_1=\dcl(\ov{\sigma[D_1]},F)$.
Similarly $L_2=\dcl(\ov{\sigma[D_2]},F)$.

Note that for any $f\in \CG(F)$ we have
\begin{IEEEeqnarray*}{rCl}
\CG(\sigma^{-1})\Theta_1(f\restriction_{L_1}) &=& \sigma\Theta_1(f\restriction_{L_1})\sigma^{-1} \\
&=& \sigma \Theta(f\restriction_{\dcl(\ov{AB},L_1,L_2)})\restriction_{\ov{AC}}\sigma^{-1} \\
&=& \sigma \Theta(f\restriction_{\dcl(\ov{AB},L_1,L_2)})\sigma^{-1}\restriction_{\ov{\sigma[D_1]}} \\
&=& \sigma\sigma^{-1}f\sigma\restriction_{\dcl(\ov{AB}, \ov{AC}, \ov{BC})}\sigma^{-1}\restriction_{\ov{\sigma[D_1]}} \\
&=& f\restriction_{\ov{\sigma[D_1]}}
\end{IEEEeqnarray*}
Therefore the following diagram commutes
$$\xymatrixcolsep{25mm}\xymatrix{
\CG(F) \ar[d]_-{\res} \ar[r]^-{\big(\alpha^{-1}\circ GS\Psi_1'\circ\alpha \big)^{-1}} & \CG(F) \ar[r]^-{\id} \ar[d]_-{\res} & \CG(F) \ar[d]^-{\res} \\
\CG(\acl_F^r(AC_1)) \ar[dr]_{\res}
 \ar@/_1pc/[rr]_{\CG(\phi_1'\sigma^{-1})} \ar[r]^-{(\Psi_1')^{-1}}& \CG(L_1/F) \ar[r]^-{\CG(\sigma^{-1})\Theta_1} & \CG(\sigma[D_1]) \ar[dl]^{\res}\\
& \CG(A) &
}$$
By the proof of \cite[Proposition 3.6]{DHL1} (i.e. by a back-and-forth construction of an isomorphism of small elementary substructures of $F$) and Fact \ref{fact:model_wikipedia},
$\phi_1'\sigma^{-1}\restriction_{\sigma[D_1]}$ extends to an automorphism $\delta_1\in\aut(F/A)$ sending $\sigma[C]$ to $C_1$, hence $\sigma[C]\models\tp_F(C_1/A)$. In a similar manner we show that $\sigma[C]\models\tp_F(C_2/B)$.

Because $C\ind_E AB$ and $\sigma\in\aut(\FC/\ov{AB})$ we have that $\sigma[C]\ind_E AB$. We put $C':=\sigma[C]$.
It remains to show the moreover part. We have the following commuting diagram
$$\xymatrix{
GS\CG(F) \ar[r]^-{\alpha^{-1}} \ar@{->>}[d] & \CG(F) \ar[rr]^-{\id} \ar@[red][d]_-{\res} & & \CG(F) \ar[d]^-{\res} &\\
GS_1 \ar@[blue]@{-}[r]^-{\cong} \ar@{->>}[d] & \CG(L_1/F) \ar@[red][d]_-{\res}
\ar[rr]^{\CG(\sigma^{-1})\Phi_1'\Psi_1'} \ar@[blue][dr]^-{\Psi_1'} & & \CG(\sigma[D_1]) \ar[dr]^-{\res} &\\
GS_0 \ar@[blue]@{-}[r]^-{\cong} \ar@{=}[d] & \CG(L_0/F) \ar@[blue]@{-}[dl]_-{\cong} \ar@[blue][dr]_-{\Psi_{01}'} & \CG(\acl_F^r(AC_1)) \ar@[green][d]^-{\res} \ar@[orange][r]^-{\Phi_1'} & \CG(D_1) \ar[d]^-{\res} \ar[u]_-{\CG(\sigma^{-1})} & \CG(\sigma[C]) \\
GS_0 \ar@[blue]@{->>}[r] & GS\CG(C_1) \ar@[blue]@{~}[r]& \CG(C_1) \ar@[orange][r]^-{\Phi_1'} & \CG(C) \ar[ur]_-{\CG(\sigma^{-1})} &
}$$
where the external frame can be presented as
$$\xymatrix{
GS\CG(F) \ar@{->>}[d] & \CG(F) \ar[l]_-{\alpha} \ar[d]^-{\res} \\
GS_0 & \CG(\sigma[C]) \ar[l]_-{\text{sorted}}^-{\text{iso.}}
}$$
After taking functor $S$ and extending the part related to the canonical isomorphism $\beta:S\CG(F)\to SGS\CG(F)$, we obtain
$$\xymatrix{
S\CG(F) \ar[r]^-{\beta} & SGS\CG(F) \ar[r]^-{S\alpha} & S\CG(F) \\
S_0 \ar@{^{(}->}[u] \ar[r]^-{\cong} & SGS_0 \ar@{^{(}->}[u] \ar[r]^-{\cong} & S\CG(\sigma[C]) \ar@{^{(}->}[u]
}$$
Since $S\alpha\circ\beta=\id_{S\CG(F)}$ (see Remark \ref{rem.alpha_beta}), we get that $S_0$ and $S\CG(\sigma[C])$ coincide after embedding into $S\CG(F)$, so $S\CG(\sigma[C])\equiv_{S\CG(\acl_{F}^r(AB))}S_0^\ast$.
\end{proof}

\section{Weak Independence Theorem and NSOP$_1$}\label{sec:wit.nsop}
In the following section we will study the relation between the Kim-independence in a PAC substructure $F$ and the Kim-independence in $S\CG(F)$ combined with the forking independence in $\FC$. Our results in this section depend on the previous results about the interpretability of $S\CG(F)$ in $(M,F)$ for some $M\preceq\FC$ containing $F$ (see Theorem \ref{thm:interpretability_SG(K)}). 
As we saw in Proposition \ref{prop:SG.elementary2},
if we want to deal with 
model-theoretic properties of the structure $F$ alone rather than properties of the pair $(M,F)$,
then it is better to assume that $T$ has nfcp. Moreover nfcp is related to the notion of \emph{saturation over $F$} (see \cite[Definition 3.1, Remark 3.6]{PilPol}), which will used in the proof of Lemma \ref{lemma:CZ.equiv_E}. 

In short, if we start with some saturated PAC structure $F^\ast$ and want to prove the independence theorem over a model for $F^\ast$, then we need to pick up some $F\preceq F^\ast$. However to use our methods we need also that $S\CG(F)\preceq S\CG(F^\ast)$ which would follow if we will be able to find $M\preceq M^\ast\preceq\FC$ such that $(M,F)\preceq (M^\ast,F^\ast)$.
We use exactly Lemma \ref{lemma:CZ.equiv_E} to get a proper $M$ in this set-up.

\begin{remark}\label{remark:nfcp.saturation}
Suppose that $T$ has nfcp.
Consider a small substructure $F$ of $\FC$ and any small $M\preceq\FC$ which contains $F$ and which is $|F|^+$-saturated. If we pass to $(M',F')\succeq(M,F)$, then, by \cite[Remark 3.6]{PilPol}, $M'$ will be saturated over $F'$, hence also 
small over $F'$ (for the definition check first lines of \cite{CasanovasZiegler}).
\end{remark}

\begin{fact}\label{fact:PAC_monstermodel}
Let $F\subseteq M\preceq \mathfrak{C}$. Suppose $F$ is PAC in $M$ and $M$ is saturated over $F$. Then $F$ is PAC in $\mathfrak{C}$.
\end{fact}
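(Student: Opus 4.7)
The plan is to show that $F$ is existentially closed in every regular extension $N \subseteq \mathfrak{C}$ of $F$, by pushing an arbitrary witness from $\mathfrak{C}$ into $M$ via the saturation of $M$ over $F$, and then invoking the PAC hypothesis on $F$ inside $M$. The argument closely parallels the proof of Fact \ref{fact:PAC_elemsubst}, with ``saturated over $F$'' playing the role of the forking-independent conjugate.

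Concretely, fix a regular extension $F \subseteq N \subseteq \mathfrak{C}$, a quantifier-free $\mathcal{L}$-formula $\varphi(x,y)$, and a tuple $e \in F^{|y|}$ with $N \models \exists x\,\varphi(x,e)$. I would pick a witness $n \in N^{|x|}$; since $\varphi$ is a single formula, $n$ may be taken to be a finite tuple. Set $N_0 := \dcl(F,n) \subseteq N$. Because
$$\dcl(N_0) \cap \acl(F) \subseteq \dcl(N) \cap \acl(F) = \dcl(F),$$
the small substructure $N_0$ is itself a regular extension of $F$ in $\mathfrak{C}$.

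Next I would realize $\tp_{\mathfrak{C}}(n/F)$ inside $M$, producing $n' \in M^{|x|}$ with $n' \equiv_F n$; this is exactly what saturation of $M$ over $F$ (in the sense of \cite{PilPol}) delivers for small, in particular finite, tuples. Since regularity of $\dcl(F,n)$ over $F$ depends only on $\tp(n/F)$, the definable closure $N_0' := \dcl(F,n')$ is also a regular extension of $F$ in $\mathfrak{C}$. Using $M \preceq \mathfrak{C}$, the operators $\dcl$ and $\acl$ computed in $M$ agree with those in $\mathfrak{C}$, so $N_0' \subseteq M$ is a regular extension of $F$ inside $M$ as well.

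Finally, from $\mathfrak{C} \models \varphi(n', e)$ and the quantifier-freeness of $\varphi$ we get $N_0' \models \varphi(n', e)$, hence $N_0' \models \exists x\,\varphi(x, e)$; applying the hypothesis that $F$ is PAC in $M$ to the regular extension $N_0' \subseteq M$ yields $F \models \exists x\,\varphi(x, e)$, as required. The only delicate point is the appeal to saturation of $M$ over $F$ to realize $\tp_{\mathfrak{C}}(n/F)$ in $M$, but this is the content of the relevant definition from \cite{PilPol}, and since $n$ is finite, no genuine cardinal-arithmetic obstruction arises.
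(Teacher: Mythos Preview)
Your proof is correct and follows essentially the same route as the paper's: both realize $\tp(n/F)$ inside $M$ using saturation over $F$, observe that the definable closure of $F$ and the new realization is again regular over $F$, and then apply the PAC hypothesis inside $M$. The paper phrases the regularity step via stationarity of $\tp(n/F)$ rather than via invariance of regularity under change of realization, but this is the same content.
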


\begin{proof}
Let $N\subseteq \mathfrak{C}$ be a regular extension of $F$. Suppose $N\models \exists x\,\varphi(x,e)$ for a quantifier free $\varphi(x,y)$ and $e\in F^{|y|}$. Take $n\in N$ such that $\models \varphi(n,e)$. Since $N$ is a regular extension of $F$, the type $p(x):=\tp(n/F)$ is stationary, and it is realized in $M$ (by saturation over $F$). Let $m\in M$ be a realization of $p$. Since $\tp(m/F)=p$ is stationary, $\dcl(F,m)\subseteq M$ is a regular extension of $F$. Since $F$ is PAC and $\varphi(x,e)\in \tp(m/F)$, $\varphi(x,e)$ is realized in $F$.
\end{proof}

\noindent
Let $\bar{\kappa}$ be a cardinal bigger than the size of any interesting set (although still smaller than the saturation of $\FC$).
It is convenient to work with $\bar{\kappa}$-saturated $(M^\ast,F^\ast)$ such that 
$M^\ast\preceq\FC$ is saturated over $F^\ast$ and $F^\ast$ is PAC in $M^\ast$ (and so, by Fact \ref{fact:PAC_monstermodel}, also in $\FC$).
However to obtain such $(M^\ast,F^\ast)$ some assumptions are needed. Suppose that we start with $F$ which is a PAC substructure of $\FC$. Take small $M\preceq\FC$ which contains $F$ and which is $|F|^+$-saturated. In the next step we pick up some $\bar{\kappa}$-saturated $(M^\ast,F^\ast)\succeq (M,F)$.
If we assume that $T$ has nfcp, then it follows that $M^\ast$ is saturated over $F^\ast$ (by Remark \ref{remark:nfcp.saturation}). If we assume that \emph{PAC is a first order property} (\cite[Definition 2.6]{DHL1}), then we obtain that also $F^\ast$ is PAC. 

Therefore under assumptions that $T$ has nfcp and that PAC is a first order property we can obtain the desired structure $(M^\ast,F^\ast)$.
Furthermore,
these assumptions allow us to obtain even a little bit better variant of ``saturation" (preserved after passing to $S\CG(F)$):

\begin{definition}
We say that a structure $M$ is {\em weakly $\kappa$-special} (compare to
the definition of special structure given just before \cite[Proposition 5.1.6]{ChungKeisler})
if there exists ordered set $I$, of size $\kappa^+$, and an increasing chain of elementary extensions $(M_i)_{i\in I}$ such that $M_{i+1}$ is $|M_i|^+$-saturated, where $i\in I$, and $M=\bigcup\limits_{i\in I}M_i$.
\end{definition} 

A standard argument (e.g. Fact \ref{fact:model_wikipedia}) gives us the following remark.

\begin{remark}
\begin{enumerate}
\item If $M$ is weakly $\kappa$-special, then $M$ is $\kappa$-saturated.
\item If $M$ is weakly $\kappa$-special, then $M$ is strongly $\kappa$-homogeneous.
\item If $(M,F)$ is weakly $\kappa$-special, then $S\CG(F)$ is also weakly $\kappa$-special.
\end{enumerate}
\end{remark}

Instead of assuming now that $T$ has nfcp (however the assumption about the finite cover property will still appear in the upcoming results) and that PAC is a first order property,
which can be used to obtain a desired $(M^\ast,F^\ast)$,
we will only assume now that we work in the desired $(M^\ast,F^\ast)$.
More precisely,
from now on, we \textbf{assume} that
$(M^\ast,F^\ast)$ is weakly $\bar{\kappa}$-special (for some big $\bar{\kappa}$) such that $M^\ast\preceq\FC$, $M^\ast$ is saturated over $F^\ast$
and $F^\ast$ is PAC in $M^\ast$ (so also in $\FC$).

\begin{lemma}\label{lemma:CZ.equiv_E}
Suppose that $T$ has nfcp. If $E\preceq F^\ast$, then there exists small $M_0\preceq M^\ast$ such that $(M_0,E)\preceq (M^\ast,F^\ast)$.
\end{lemma}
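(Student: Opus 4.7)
The plan is to apply Lemma \ref{lemma:CZ.preceq} with $F := F^\ast$ and $N := M^\ast$, so the task reduces to producing a small elementary substructure $M_0 \preceq M^\ast$ containing $E$ that is $|E|^+$-saturated and satisfies $M_0 \ind_E F^\ast$. The remaining saturation hypothesis of Lemma \ref{lemma:CZ.preceq} (that $N$ be $|F|^+$-saturated) is automatic since $(M^\ast, F^\ast)$ is weakly $\bar\kappa$-special and $M^\ast$ is saturated over $F^\ast$.

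I would construct $M_0$ as the union of a continuous increasing chain $(M_0^{(\alpha)})_{\alpha<|E|^+}$ of small subsets of $M^\ast$ starting from $M_0^{(0)} := E$, maintaining the two inductive invariants $E \subseteq M_0^{(\alpha)} \subseteq M^\ast$ and $M_0^{(\alpha)} \ind_E F^\ast$. At a successor stage $\alpha+1$, for each complete $\CL$-type $p$ over a subset $A \subseteq M_0^{(\alpha)}$ of size at most $|E|$, stability of $T$ provides a non-forking extension $q \in S(M_0^{(\alpha)} F^\ast)$ of $p$, and saturation of $M^\ast$ over $F^\ast$ furnishes a realization $b_p \in M^\ast$ of $q$. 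I would then let $M_0^{(\alpha+1)}$ be the substructure generated by $M_0^{(\alpha)}$ together with all such $b_p$. Stability, together with the regularity of $|E|^+$, keeps every $M_0^{(\alpha)}$ small.

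The verification then has three ingredients. Tarski--Vaught for $M_0 \preceq M^\ast$ is satisfied because any $\CL$-formula $\varphi(x,\bar a)$ with $\bar a \in M_0^{(\alpha)}$ having a solution in $M^\ast$ is contained in some complete type over $\bar a$ realized in $M_0^{(\alpha+1)}$; $|E|^+$-saturation of $M_0$ follows from the regularity of $|E|^+$ (any parameter set of size $\leq |E|$ sits inside some $M_0^{(\alpha)}$, so the corresponding types are realized in $M_0^{(\alpha+1)}$); and the independence invariant propagates through successors because $q$ being non-forking over $A \subseteq M_0^{(\alpha)}$ gives $b_p \ind_A M_0^{(\alpha)} F^\ast$, hence $b_p \ind_{M_0^{(\alpha)}} F^\ast$ by transitivity, which combined with the inductive hypothesis $M_0^{(\alpha)} \ind_E F^\ast$ and left-transitivity yields $M_0^{(\alpha)} b_p \ind_E F^\ast$. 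Finite character then passes this to $M_0 \ind_E F^\ast$ in the limit, and Lemma \ref{lemma:CZ.preceq} concludes the argument.

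The main obstacle I anticipate is ensuring that each realization $b_p$ actually lies inside $M^\ast$ rather than merely in $\FC$: this is precisely what the hypothesis ``$M^\ast$ is saturated over $F^\ast$'' is designed to guarantee, and it is at this step that the nfcp assumption enters the picture (via Remark \ref{remark:nfcp.saturation}), so one must only check that the flavour of ``saturation over $F^\ast$'' in use is strong enough to realize complete types over $M_0^{(\alpha)} F^\ast$ for $|M_0^{(\alpha)}| \leq |E|$.
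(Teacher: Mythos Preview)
Your approach is sound, but it takes a considerably more laborious route than the paper. You reduce to Lemma~\ref{lemma:CZ.preceq} and therefore have to manufacture an $M_0$ with $M_0\ind_E F^\ast$, which forces a chain construction and the bookkeeping you describe. The paper avoids all of this: it simply picks \emph{any} small $|E|^+$-saturated $M\preceq M^\ast$ containing $E$, proves directly that $(M,E)\equiv_E (M^\ast,F^\ast)$, and then embeds $(M,E)$ elementarily over $E$ into $(M^\ast,F^\ast)$ to get $(M_0,E)$. The direct elementary-equivalence argument uses only the Casanovas--Ziegler reduction to bounded formulas $Q\bar\alpha\in P\;\varphi(\bar a,\bar\alpha)$ (valid in both pairs under nfcp) together with $E\preceq F^\ast$: since $\varphi$ may be taken quantifier-free, the bounded formula evaluated in $(M,E)$ reduces to $E\models Q\bar\alpha\;\varphi(\bar a,\bar\alpha)$, which by $E\preceq F^\ast$ agrees with $F^\ast\models Q\bar\alpha\;\varphi(\bar a,\bar\alpha)$, i.e.\ with the bounded formula evaluated in $(M^\ast,F^\ast)$. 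No independence between $M$ and $F^\ast$ is ever invoked.

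Two technical cautions about your version. First, the literal hypothesis of Lemma~\ref{lemma:CZ.preceq} asks that $N$ be $|F|^+$-saturated, and you do \emph{not} know $M^\ast$ is $|F^\ast|^+$-saturated (only that it is saturated over $F^\ast$ and $\bar\kappa$-saturated); you are implicitly using that the proof of Lemma~\ref{lemma:CZ.preceq} only needs the Casanovas--Ziegler ``small over $P$'' condition, which does follow from $M^\ast$ being saturated over $F^\ast$---but this should be said. Second, your claim that $|M_0^{(\alpha)}|\le |E|$ is not correct: each successor step can add up to $2^{|E|+|T|}$ realizations, so the sizes grow. This is harmless as long as everything stays below $\bar\kappa$, but the realization of the non-forking extension $q$ inside $M^\ast$ then really rests on $\bar\kappa$-saturation of the pair $(M^\ast,F^\ast)$ together with definability of non-forking (expressing ``$x$ realizes the $\acl(A)$-definable extension over $P$'' as an $\CL_P$-type over a small set), rather than on ``saturation over $F^\ast$'' alone. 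The paper's argument sidesteps both issues entirely.
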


\begin{proof}
Let $M\preceq M^\ast$ be $|E|^+$-saturated, but small in $M^\ast$, such that $E\subseteq M$.
\
\\
\textbf{Claim}
$(M,E)\equiv_E(M^\ast,F^\ast)$
\
\\ Proof of the claim: Similarly as in the proof of Lemma \ref{lemma:CZ.preceq}, we are using \cite[Proposition 2.1]{CasanovasZiegler} and the proof of \cite[Corollary 2.2]{CasanovasZiegler}, to state each $\mathcal{L}_P$-formula $\Phi(\bar{x})$ is equivalent in $(M,E)$ and in $(M^\ast,F^\ast)$ to $\mathcal{L}_P$-formula of the form
$$Q\bar{\alpha}\in P\;\varphi(\bar{x},\bar{\alpha}),$$
where $\varphi(\bar{x},\bar{\alpha})$ is an $\mathcal{L}$-formula and $Q$ is a tuple of quantifiers.
Let $\bar{a}\in E$. We have the following sequence of equivalences
\begin{IEEEeqnarray*}{rCl}
(M,E)\models\Phi(\bar{a}) &\Leftrightarrow & (M,E)\models Q\bar{\alpha}\in P\;\varphi(\bar{a},\bar{\alpha}) \\
&\Leftrightarrow & E\models Q\bar{\alpha}\;\varphi(\bar{a},\bar{\alpha}) \\
&\Leftrightarrow & F^\ast\models Q\bar{\alpha}\;\varphi(\bar{a},\bar{\alpha}) \\
(M^\ast,F^\ast)\models\Phi(\bar{a}) &\Leftrightarrow & (M^\ast,F^\ast)\models Q\bar{\alpha}\in P\;\varphi(\bar{a},\bar{\alpha})
\end{IEEEeqnarray*}
Here ends the proof of the claim.

We embed $(M,E)$ over $E$ into $(M^\ast,F^\ast)$ and so obtain $(M_0,E)$ as the image of this embedding.
\end{proof}

\begin{definition}\label{def:ind.thm}
Suppose that $\mathfrak{M}$ is a somewhat saturated $\mathcal{L}$-structure and $\ind^{\circ}$ is a ternary relation on all small subsets of $\mathfrak{M}$.
We say that $\ind^{\circ}$ satisfies \emph{the Independence Theorem over a model} if the following holds:
\begin{center}
For every small $M\preceq\mathfrak{M}$, small subsets $A,B\subseteq\mathfrak{M}$ and tuples $c_1,c_2\subseteq\mathfrak{M}$ \\ such that $A\ind^{\circ}_M B$, $c_1\ind^{\circ}_MA$, $c_2\ind^{\circ}_MB$ and $c_1\equiv_M c_2$, \\there exists
\\a tuple $c\subseteq\mathfrak{M}$ such that $c\equiv_{MA}c_1$, $c\equiv_{MB}c_2$ and $c\ind^{\circ}_MAB$.
\end{center}
\end{definition}

\begin{definition}\label{def:extensioin.over.model}
Suppose that $\mathfrak{M}$ is a somewhat saturated $\mathcal{L}$-structure and $\ind^{\circ}$ is a ternary relation on all small subsets of $\mathfrak{M}$.
We say that $\ind^{\circ}$ satisfies \emph{the extension over a model axiom} if the following holds:
\begin{center}
For every small $M\preceq\mathfrak{M}$, tuples $a,b,c\subseteq\mathfrak{M}$ such that $a\ind^{\circ}_M b$, \\ 
 there exists $a'\equiv_{Mb}a$ such that $a'\ind_M^{\circ}bc$.
\end{center}
\end{definition}

Assume that $F^\ast$ is a substructure of $\FC$ and $\ind^{S\CG}$ is a ternary relation on small subsets of $S\CG(F^\ast)$ (more precisely: we treat $S_1$ in $S_1\ind^{S\CG}_{S_0} S_2$ as a tuple) such that
$$S_1\ind_{S_0}^{S\CG} S_2\qquad\text{if and only if}\qquad S_1'\ind_{S_0}^{S\CG}S_2',$$
whenever $S_1S_2\equiv_{S_0}S_1'S_2'$. Define a ternary relation $\ind^{\Game}$ on small subsets of $F^\ast$ in the following way:
$$A\ind_B^\Game C\quad\text{if and only if}\quad A\ind_B C\;\text{ and }\;S\CG(\acl^r_{F^\ast}(AB))\ind_{S\CG(\acl^r_{F^\ast}(B))}^{S\CG}S\CG(\acl^r_{F^\ast}(BC))$$

\begin{theorem}[Weak Independence Theorem]\label{thm:weak.ind.thm}
Suppose that $T$ has nfcp and B(3) holds for $T$.
If $\ind^{S\CG}$ satisfies the Independence Theorem over a model axiom and the extension over a model axiom, then $\ind^\Game$ satisfies the Independence Theorem over a model (in $\theo(F^\ast)$).
\end{theorem}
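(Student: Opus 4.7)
The plan is to reduce the Weak Independence Theorem to Chatzidakis' Theorem (Proposition~\ref{thm:Zoe}) in three stages: first, pass to relative algebraic closures so that all data become regular extensions of $E$ inside $F^\ast$; next, produce an alignment automorphism $\varphi \in \aut(\FC/\bar E)$ together with an element $S \subseteq SG(F^\ast)$ realizing the required joint $SG$-type; finally, apply Proposition~\ref{thm:Zoe} to pull $S$ back to an actual tuple $c \in F^\ast$ witnessing the Independence Theorem for $\ind^\Game$.

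Set $A^r := \acl^r_{F^\ast}(EA)$, $B^r := \acl^r_{F^\ast}(EB)$, $C_i^r := \acl^r_{F^\ast}(Ec_i)$ for $i = 1, 2$. Since $E \preceq F^\ast$ we have $\acl^r_{F^\ast}(E) = E$, and the inclusions $E \subseteq A^r, B^r, C_i^r$ are regular extensions of $E$ inside $F^\ast$. Algebraic invariance of forking in the stable theory $T$ transfers $A \ind_E^\Game B$ into $A^r \ind_E B^r$ together with $SG(A^r) \ind_{SG(E)}^{SG} SG(B^r)$; similarly $c_i \ind_E^\Game A$ (respectively $c_i \ind_E^\Game B$) transfers into $C_1^r \ind_E A^r$ and $SG(C_1^r) \ind_{SG(E)}^{SG} SG(A^r)$ (respectively $C_2^r \ind_E B^r$ and $SG(C_2^r) \ind_{SG(E)}^{SG} SG(B^r)$).

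The core construction is the alignment. From $c_1 \equiv_E c_2$ in $F^\ast$ and the PAC nature of $F^\ast$, Theorem~\ref{thm:PAC_type_by_completesystem} yields (inside suitable PAC elementary extensions) an isomorphism matching $C_1^r$ to $C_2^r$ whose $SG$-dual is partial elementary between the pertinent sorted complete systems; the strong $\bar\kappa$-homogeneity of the weakly $\bar\kappa$-special pair $(M^\ast, F^\ast)$ then lets one realize this matching inside $(M^\ast, F^\ast)$ itself, producing $\varphi \in \aut(\FC/\bar E)$ with $\varphi(C_1^r) = C_2^r$ and $SG(\varphi) : SG(C_1^r) \to SG(C_2^r)$ an $SG(E)$-partial-elementary map inside $SG(F^\ast)$. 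Since $SG(E) \preceq SG(F^\ast)$ by Proposition~\ref{prop:SG.elementary2}, the four relations $SG(A^r) \ind_{SG(E)}^{SG} SG(B^r)$, $SG(C_1^r) \ind_{SG(E)}^{SG} SG(A^r)$, $SG(C_2^r) \ind_{SG(E)}^{SG} SG(B^r)$, and $SG(C_1^r) \equiv_{SG(E)} SG(C_2^r)$ (via $SG(\varphi)$) activate the hypothesized Independence Theorem for $\ind^{SG}$ over the model $SG(E)$, yielding $S \subseteq SG(F^\ast)$ with $S \equiv_{SG(A^r)} SG(C_1^r)$, $S \equiv_{SG(B^r)} SG(C_2^r)$, and $S \ind_{SG(E)}^{SG} SG(\acl^r_{F^\ast}(A^r B^r))$.

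Proposition~\ref{thm:Zoe}, applied to $(E, A^r, B^r, C_1^r, C_2^r, \varphi, S)$, delivers $C' \subseteq F^\ast$ with $C' \ind_E A^r B^r$, $C' \models \tp_{F^\ast}(C_1^r / A^r) \cup \tp_{F^\ast}(C_2^r / B^r)$ (variables identified via $\varphi$), and $SG(C') \equiv_{SG(\acl^r_{F^\ast}(A^r B^r))} S$. Picking the sub-tuple $c \subseteq C'$ corresponding to $c_1$ under $\varphi$ yields $c \equiv_{EA} c_1$, $c \equiv_{EB} c_2$ in $F^\ast$ and $c \ind_E AB$; identifying $C'$ with $\acl^r_{F^\ast}(Ec)$ (which holds for the $C'$ produced in the proof of Proposition~\ref{thm:Zoe}), the relation $SG(C') \equiv_{SG(\acl^r_{F^\ast}(AB))} S$ combined with $S \ind_{SG(E)}^{SG} SG(\acl^r_{F^\ast}(AB))$ and the $\equiv$-invariance of $\ind^{SG}$ gives $SG(\acl^r_{F^\ast}(Ec)) \ind_{SG(E)}^{SG} SG(\acl^r_{F^\ast}(AB))$, i.e.~$c \ind_E^\Game AB$. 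The main obstacle is the alignment step: the hypothesis $c_1 \equiv_E c_2$ only asserts $\mathcal{L}$-type equality in $F^\ast$, whereas Proposition~\ref{thm:Zoe} requires $\varphi$ with a strong $SG(E)$-partial-elementary trace inside $SG(F^\ast)$, so one must lift the comparison from inside $F^\ast$ to the pair $(M^\ast, F^\ast)$ and thence to $SG(F^\ast)$ via the PAC-coding machinery of Section~\ref{sec:coelementary}.
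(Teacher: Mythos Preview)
Your overall strategy is the same as the paper's: pass to relative algebraic closures, align $C_1^r$ and $C_2^r$ by an automorphism $\varphi\in\aut(\FC/\bar E)$, apply the Independence Theorem plus extension for $\ind^{SG}$ over the model $SG(E)$ to produce $S$, and then invoke Proposition~\ref{thm:Zoe}. The skeleton is correct.

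Where you diverge is the alignment step, and here you have misdiagnosed the difficulty. You write that ``the hypothesis $c_1\equiv_E c_2$ only asserts $\mathcal{L}$-type equality in $F^\ast$'' and that one must therefore lift through the pair $(M^\ast,F^\ast)$ and Theorem~\ref{thm:PAC_type_by_completesystem}. This detour is unnecessary and, as written, unclear (Theorem~\ref{thm:PAC_type_by_completesystem} lands you in ultrapowers $K_i^\ast$, and your sentence about pulling back via homogeneity of $(M^\ast,F^\ast)$ is not an argument). The paper's route is direct: since $(M^\ast,F^\ast)$ is weakly $\bar\kappa$-special, $F^\ast$ itself is strongly $\bar\kappa$-homogeneous, so $c_1\equiv_E c_2$ in $F^\ast$ immediately yields $\varphi_0\in\aut(F^\ast/E)$ with $\varphi_0(c_1)=c_2$. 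Because $E\subseteq F^\ast$ is regular, Fact~3.33 of \cite{Hoff3} extends $\varphi_0$ to $\varphi\in\aut(\FC/\bar E)$ with $\varphi(F^\ast)=F^\ast$. Then $SG(\varphi)$ is automatically an automorphism of $SG(F^\ast)$ fixing $SG(E)$, which is exactly what is needed to run the Independence Theorem for $\ind^{SG}$ over $SG(E)$; no separate ``$SG(E)$-partial-elementary trace'' needs to be manufactured.

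Two smaller points. First, you jump straight to $S\ind^{SG}_{SG(E)} SG(\acl^r_{F^\ast}(A^rB^r))$; the Independence Theorem only gives $S\ind^{SG}_{SG(E)} SG(A^r)SG(B^r)$, and the upgrade to $SG(\acl^r_{F^\ast}(A^rB^r))$ is a separate application of the extension-over-a-model axiom (as the paper does explicitly). Second, your citation of Proposition~\ref{prop:SG.elementary2} for $SG(E)\preceq SG(F^\ast)$ is fine; the paper reaches the same conclusion via Lemma~\ref{lemma:CZ.equiv_E} and Corollary~\ref{cor:SG.elementary1}, which amounts to the same thing.
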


\begin{proof}
Assume that $E\preceq F^\ast$, $A,B\subseteq F^\ast$, $c_1,c_2\subseteq F^\ast$, $\tp_{F^\ast}(c_1/E)=\tp_{F^\ast}(c_2/E)$,
$$A\ind^{\Game}_E B,\quad c_1\ind^{\Game}_EA,\quad c_2\ind^{\Game}_EB.$$
We want to use Proposition \ref{thm:Zoe}, let us start with some preparations. 

Without loss of generality we may assume that $A=\acl_{F^\ast}^r(AE)$ and $B=\acl_{F^\ast}^r(BE)$, and let us define $C_1:=\acl_{F^\ast}^r(Ec_1)$ and $C_2:=\acl_{F^\ast}^r(Ec_2)$. Note that we have
$$C_1\ind^{\Game}_EA,\quad C_2\ind^{\Game}_EB.$$
Since $\tp_{F^\ast}(c_1/E)=\tp_{F^\ast}(c_2/E)$, there exists an automorphism $\phi_0\in\aut(F^\ast/E)$ sending $c_1$ to $c_2$.
Because $E\subseteq F^\ast$ is regular, we may extend $\phi_0$, by \cite[Fact 3.33]{Hoff3}, to $\phi\in\aut(\FC/\bar{E})$ such that $\phi[F^\ast]=F^\ast$ and $\phi(c_1)=c_2$ and $\phi[C_1]=C_2$. Note that $S\CG(\phi)\in\aut(S\CG(F^\ast)/S\CG(E))$ and $S\CG(\phi)[S\CG(C_1)]=S\CG(C_2)$. Moreover, by Lemma \ref{lemma:CZ.equiv_E} we find $M_0\preceq\FC$ such that $(M_0,E)\preceq(M^\ast,F^\ast)$, hence (by Corollary \ref{cor:SG.elementary1}) we conclude that $S\CG(E)\preceq S\CG(F^\ast)$.

Since $\ind^{S\CG}$ satisfies the Independence Theorem over a model, we obtain
$S\subseteq S\CG(F^\ast)$ such that $S\models\tp_{S\CG(F^\ast)}(S\CG(C_1)/S\CG(A))\cup\tp_{S\CG(F^\ast)}(S\CG(C_2)/S\CG(B))$ (where variables are identified via $S\CG(\phi)$) and $S\ind_{S\CG(E)}^{S\CG}S\CG(A)S\CG(B)$. By the extension over a model axiom (and since $S\CG(A),S\CG(B)\subseteq S\CG(\acl^r_{F^\ast}(AB))$), we may assume that $S\ind_{S\CG(E)}^{S\CG}S\CG(\acl_{F^\ast}^r(AB))$.

By Proposition \ref{thm:Zoe}, there exists  $C'\subseteq F^\ast$ such that $C'\ind_E AB$, $C'\models \tp_{F^\ast}(C_1/A)\cup\tp_{F^\ast}(C_2/B)$ (the variables for $\tp_F(C_1/A)$ and $\tp_F(C_2/B)$ are identified via $\phi$) and $S\CG(C')\equiv_{S\CG(\acl_{F^\ast}^r(AB))}S$.
It follows that $S\CG(C')\ind_{S\CG(E)}^{S\CG}S\CG(\acl_{F^\ast}^r(AB))$ so also $C'\ind_E^\Game AB$.
\end{proof}

From now on and until the end of this section, we denote Kim-independence in $S\CG(F^\ast)$ (see \cite[Definition 3.13]{kaplanramsey2017}) by $\ind^{S\CG}$. 
Our goal is to show that if $S\CG(F^\ast)$ is NSOP$_1$, then $F^\ast$ is NSOP$_1$.
To show that $F^\ast$ is NSOP$_1$ we will use the criterion \cite[Theorem 5.7(2)]{ArtemNick}, where $A\ind^u_C B$ indicates that $\tp(A/BC)$ is finitely satisfiable in $C$.
This idea is different from the original idea from the proof of \cite[Theorem 7.2.6]{NickThesis}, since we noted some gap in the proof of \cite[Theorem 7.2.6]{NickThesis}. After communicating with Nick Ramsey about the gap he suggested to use \cite[Theorem 5.7]{ArtemNick}, what we do.

Suppose that $a,b\in F^\ast\succeq F$ and set $A:=\acl^r_{F^\ast}(Fa)$, $B:=\acl^r_{F^\ast}(Fb)$.

\begin{lemma}\label{lemma:u.LP}
If $a\ind^u_F b$, then $\tp_{\mathcal{L}_P}(\bar{A}/\bar{B})$ is finitely satisfiable in $\bar{F}$.
\end{lemma}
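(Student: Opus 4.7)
My plan is to upgrade the $\mathcal{L}$-finite satisfiability (in $\mathfrak{C}$) of $\tp(\bar{A}/\bar{F}\bar{B})$ in $\bar{F}$ to $\mathcal{L}_P$-finite satisfiability of $\tp(\bar{A}/\bar{B})$ in $\bar{F}$, via the Casanovas--Ziegler style reduction used in the proof of Lemma~\ref{lemma:CZ.preceq}.

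First, I would transfer the hypothesis $a\ind^u_F b$ from $F^\ast$ to $\mathfrak{C}$. Because $T$ has quantifier elimination, each $\mathcal{L}$-formula is absolute between $F^\ast$ and the ambient $\mathfrak{C}$, so the assumed finite satisfiability of $\tp_{F^\ast}(a/Fb)$ in $F$ yields finite satisfiability of $\tp_{\mathcal{L},\mathfrak{C}}(a/Fb)$ in $F$. Since $T$ is stable, this implies $a\ind_F b$ in $\mathfrak{C}$, and passing to algebraic closures gives $\bar{A}\ind_{\bar{F}}\bar{B}$. As $\bar{F}$ is algebraically closed, non-forking over $\bar{F}$ coincides with finite satisfiability in $\bar{F}$, so $\tp_{\mathcal{L},\mathfrak{C}}(\bar{A}/\bar{F}\bar{B})$ is finitely satisfiable in $\bar{F}$.

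Second, given an $\mathcal{L}_P$-formula $\Phi(\bar{x},\bar{b}')$ with $\bar{b}'\subseteq \bar{B}$ satisfied in $(M^\ast,F^\ast)$ by some finite tuple $\bar{a}'\subseteq \bar{A}$, I would apply \cite[Proposition~2.1]{CasanovasZiegler} to rewrite $\Phi$ as $Q\bar{\alpha}\in P\,\varphi(\bar{x},\bar{y},\bar{\alpha})$ for a quantifier-free $\mathcal{L}$-formula $\varphi$ and a block of quantifiers $Q$. Mirroring the argument in the proof of Lemma~\ref{lemma:CZ.preceq}, stability of $T$ together with $\bar{a}'\ind_{\bar{F}}\bar{B}$ (stationarity over the algebraically closed base $\bar{F}$) produces an $\mathcal{L}$-formula $\psi(\bar{y},\bar{z})$ over $\bar{F}$ defining the $\varphi$-type of $\bar{a}'$, so that $\varphi(\bar{a}',\bar{b}'',\bar{\alpha}'') \leftrightarrow \psi(\bar{b}'',\bar{\alpha}'')$ holds whenever $(\bar{b}'',\bar{\alpha}'')$ lies in the appropriate position independent from $\bar{a}'$ over $\bar{F}$. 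The aim is then to establish, in the pair, the equivalence
\[
Q\bar{\alpha}\in P\,\varphi(\bar{a}',\bar{b}',\bar{\alpha})\;\Longleftrightarrow\; Q\bar{\alpha}\in P\,\psi(\bar{b}',\bar{\alpha}),
\]
which removes the dependence on $\bar{a}'$ altogether.

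Finally, using $\mathcal{L}$-finite satisfiability of $\tp(\bar{A}/\bar{F}\bar{B})$ in $\bar{F}$ from the first step, I would select $\bar{f}\in\bar{F}$ realising the same $\varphi$-type as $\bar{a}'$ over $\bar{F}\bar{b}'$, so that $\varphi(\bar{f},\bar{b}'',\bar{\alpha}'')\leftrightarrow\psi(\bar{b}'',\bar{\alpha}'')$ on the same parameters. Reapplying the equivalence of the second step symmetrically yields $\Phi(\bar{f},\bar{b}')\Leftrightarrow Q\bar{\alpha}\in P\,\psi(\bar{b}',\bar{\alpha})\Leftrightarrow \Phi(\bar{a}',\bar{b}')$, and the right-hand side holds by hypothesis; so $\bar{f}$ witnesses finite satisfiability of this formula in $\bar{F}$. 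The main obstacle is precisely the passage performed inside the second step: while the stable $\varphi$-definition $\psi$ over $\bar{F}$ is immediate for $\bar{\alpha}$ ranging over $\bar{F}\bar{B}$, one must extend it to control evaluations for $\bar{\alpha}\in F^\ast$, even though $\bar{a}'\in\acl(Fa)\subseteq \acl(F^\ast)$ is \emph{not} independent from $F^\ast$ over $\bar{F}$. This requires exploiting the algebraicity of $\bar{a}'$ over $Fa$ together with PAC-ness of $F^\ast$ to replace an arbitrary witness $\bar{\alpha}\in F^\ast$ by a suitably independent one without leaving $F^\ast$, adapting the strategy from the proof of Lemma~\ref{lemma:CZ.preceq}.
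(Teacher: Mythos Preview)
Your approach via definability of types (mirroring Lemma~\ref{lemma:CZ.preceq}) does not go through, and the obstacle you identify in the second step is fatal rather than a technicality to be patched. In Lemma~\ref{lemma:CZ.preceq} the element $m$ lies in $M$ with $M\ind_E F$, so the $\varphi$-definition $\psi$ over $E$ applies uniformly to all parameters $\bar{\alpha}\in F$ (because $m\ind_E\bar{\alpha}$ automatically). Here the situation is reversed: $\bar{a}'\in\acl(Fa)\subseteq\acl(F^\ast)$ lies essentially \emph{inside} the predicate, so there is no blanket independence of $\bar{a}'$ from the quantified $\bar{\alpha}\in F^\ast$, and the $\varphi$-definition of $\tp_\varphi(\bar{a}'/\bar{F}\bar{B})$ need not govern $\varphi(\bar{a}',\bar{b}',\bar{\alpha})$ for such $\bar{\alpha}$. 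Your suggested fix---using PAC-ness to replace $\bar{\alpha}$ by an independent realisation inside $F^\ast$---cannot handle a universal quantifier in $Q$ (you must control \emph{all} $\bar{\alpha}\in F^\ast$, not just generic ones), and even for existential quantifiers there is no reason an independent replacement satisfies the same instance of $\varphi$.

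The paper's argument avoids definability of types entirely. It exploits that $a'\in\bar{A}=\acl(Fa)$ and $b'\in\bar{B}=\acl(Fb)$ are \emph{algebraic}: it picks a quantifier-free $\mathcal{L}$-formula $\zeta(f_1,a,z_1)$ isolating $\tp(a'/Fa)$ with exactly $n$ realisations, and an $\mathcal{L}_P$-formula $\xi(f_2,b,z_2)$ isolating $\tp_{\mathcal{L}_P}(b'/Fb)$, and packages $\varphi(a',b')$ into the single $\mathcal{L}_P$-statement
\[
(\exists z_1,z_2)\big(\varphi(z_1,z_2)\wedge\zeta(f_1,a,z_1)\wedge(\exists^{=n}z)\,\zeta(f_1,a,z)\wedge\xi(f_2,b,z_2)\big)
\]
about $(a,b)$ with parameters in $F$. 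By Casanovas--Ziegler this is equivalent to a bounded formula, hence to an $\mathcal{L}$-formula evaluated in $F^\ast$; now the hypothesis $a\ind^u_F b$ applies directly to produce $f\in F$. Unpacking gives $m^0\in\zeta(f_1,f,M^\ast)\subseteq\bar{F}$ and some $b^0\equiv^{\mathcal{L}_P}_{Fb}b'$; an $\mathcal{L}_P$-automorphism over $Fb$ sending $b^0$ to $b'$ moves $m^0$ to an element of $\bar{F}$ witnessing $\varphi(\cdot,b')$. The idea you are missing is to fold the algebraic witnesses $a',b'$ back into a formula about $(a,b)$ so that the original hypothesis on $\tp_{F^\ast}(a/Fb)$ can be invoked directly, rather than trying to extend a $\varphi$-definition across parameters in $F^\ast$.
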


\begin{proof}
Assume that $\tp_F(a/Fb)$ is finitely satisfiable in $F$. Let $a'\in\acl(Fa)=\bar{A}$, $b'\in\acl(Fb)=\bar{B}$ and $(M^\ast,F^\ast)\models\varphi(a',b')$. Consider a quantifier-free $\mathcal{L}$-formula $\zeta$ and $f_1\in F$ such that $\zeta(f_1,a,z_1)\vdash\tp(a'/Fa)$ and $|\zeta(f_1,a,M^\ast)|=n<\omega$. Moreover, consider an $\mathcal{L}_P$-formula $\xi$ and $f_2\in F$ such that $\xi(f_2,b,z_2)\vdash\tp_{\mathcal{L}_P}(b'/Fb)$.
We have that
\begin{equation}\label{eq:CS.u.1}
(M^\ast,F^\ast)\models (\exists\,z_1,\,z_2)\,\big(\varphi(z_1,z_2)\;\wedge\;\zeta(f_1,a,z_1)\;\wedge
\end{equation}
$$\wedge\;(\exists^{=n} z)\,(\zeta(f_1,a,z)\;\wedge\;\xi(f_2,b,z_2) \big).$$
By \cite[Proposition 2.1]{CasanovasZiegler}, there exists a quantifier-free $\mathcal{L}$-formula
$\psi$ and tuple of quantifiers ``$Q$" such that (\ref{eq:CS.u.1}) is equivalent to
$$(M^\ast,F^\ast)\models Q\alpha\in P\;\psi(f_1,f_2,a,b).$$
Therefore $F^\ast\models Q\alpha\;\psi(f_1,f_2,a,b)$ and so $Q\alpha\;\psi(f_1,f_2,x,b)\in\tp_F(a/Fb)$. By finite satisfiability, there exists $f\in F$ such that $F^\ast\models Q\alpha\;\psi(f_1,f_2,f,b)$. Hence $(M^\ast,F^\ast)\models Q\alpha\in P\;\psi(f_1,f_2,f,b)$ and so
$$(M^\ast,F^\ast)\models (\exists\,z_1,\,z_2)\,\big(\varphi(z_1,z_2)\;\wedge\;\zeta(f_1,f,z_1)\;\wedge$$
$$\wedge\;(\exists^{=n} z)\,(\zeta(f_1,f,z)\;\wedge\;\xi(f_2,b,z_2) \big).$$
Let $m^0,b^0\in M^\ast$ be such that
$$(M^\ast,F^\ast)\models \varphi(m^0,b^0)\;\wedge\;\zeta(f_1,f,m^0)\;\wedge$$
$$\wedge\;(\exists^{=n} z)\,(\zeta(f_1,f,z)\;\wedge\;\xi(f_2,b,b^0).$$
Note that $m^0\in\bar{F}$ and there is $h\in\aut_{\mathcal{L}_P}(M^\ast/Fb)$ such that $h(b^0)=b'$.
After ``acting" by $h$ on $(M^\ast,F^\ast)\models \varphi(m^0,b^0)$, we obtain that
$(M^\ast,F^\ast)\models\varphi(h(m^0),b')$. To finish the proof, observe that $h(m^0)\in\bar{F}$.
\end{proof}

\begin{lemma}\label{lemma:u.game}
If $a\ind^u_F b$, then $a\ind^{\Game}_F b$.
\end{lemma}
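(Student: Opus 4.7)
The strategy is to verify each conjunct of $a\ind_F^\Game b$ separately: with $A:=\acl^r_{F^\ast}(Fa)$ and $B:=\acl^r_{F^\ast}(Fb)$, I need $a\ind_F b$ in $\FC$ and $SG(A)\ind^{SG}_{SG(F)}SG(B)$. The forking conjunct is essentially automatic. Since $T$ has quantifier elimination and $F,a,b\subseteq F^\ast\subseteq\FC$, the quantifier-free contents of $\tp_{F^\ast}(a/Fb)$ and of $\tp_{\FC}(a/Fb)$ agree, so the hypothesis $a\ind_F^u b$ transfers to say that $\tp_{\FC}(a/Fb)$ is finitely satisfiable in $F$; and finite satisfiability always implies non-forking.

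For the $SG$-conjunct, my plan is to first upgrade Lemma~\ref{lemma:u.LP} to the statement that $\tp_{SG(F^\ast)}(SG(A)/SG(B))$ is finitely satisfiable in $SG(F)$, and then invoke the standard fact that finite satisfiability in a model implies Kim-non-dividing; since $T$ has nfcp, Proposition~\ref{prop:SG.elementary2} gives $SG(F)\preceq SG(F^\ast)$, so $SG(F)$ is indeed a model of $\Th(SG(F^\ast))$. For the upgrade, take an $\mathcal{L}_G(\mathcal{J})$-formula $\Theta(\bar X,\bar t)$ with $\bar t\in SG(B)$ satisfied by some $\bar s\in SG(A)$. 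Via the interpretation of Theorem~\ref{thm:interpretability_SG(K)} together with the choice-function translation of Remark~\ref{rem:translation.B}, this rewrites as an $\mathcal{L}_P$-formula $\theta(\bar X',c_B(\bar t))$ -- with $\bar X'$ ranging over the definable set $W_{k,J}\subseteq(M^\ast,F^\ast)^{\eq}$ -- satisfied by $c_A(\bar s)\in\bar A^{\eq}$. Applying Lemma~\ref{lemma:u.LP} at the level of imaginaries ($\ind^u$ is preserved under passage to imaginaries, since imaginaries are interdefinable with real tuples) produces a witness $\bar\gamma\in\bar F^{\eq}\cap W_{k,J}$, which by the construction of $SG(F)\hookrightarrow SG(F^\ast)$ through Fact~\ref{fact:regular_to_sorted} applied to the regular extension $F\subseteq F^\ast$ is the image under $F_{k,J}$ of a unique $\bar s'\in SG(F)$; this gives $SG(F^\ast)\models\Theta(\bar s',\bar t)$.

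The main technical obstacle is precisely this last identification of $W_{k,J}\cap\bar F^{\eq}$ with the image of $SG(F)$ inside $W_{k,J}(F^\ast)$ under the interpretation. Elements of $W_{k,J}\cap\bar F^{\eq}$ are $\approx$-classes of conjugate primitive elements over $F^\ast$ whose code is algebraic over $F$; one must argue that regularity of $F\subseteq F^\ast$ -- equivalently, sorted surjectivity of the restriction $G(F^\ast)\to G(F)$ -- forces such classes to pull back to Galois extensions of $F$, i.e., to elements of $SG(F)$.
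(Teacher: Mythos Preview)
Your proposal is correct and follows essentially the same route as the paper: deduce $a\ind_F b$ from finite satisfiability plus quantifier elimination, then translate the $\mathcal L_G(\mathcal J)$-formula $\Theta$ to an $\mathcal L_P$-formula via Remark~\ref{rem:translation.B}, invoke Lemma~\ref{lemma:u.LP} to find a witness in $\bar F$ lying in the relevant $U^{k_i}_{J_i}$-sets, and use regularity of $F\subseteq F^\ast$ to see that such a witness names an element of $SG(F)$ inside $SG(F^\ast)$. One small cleanup: the choice functions of Remark~\ref{rem:translation.B} already output \emph{real} tuples (the paper explicitly disclaims the ``$\eq$'' there as meaning only ``tuple''), so $\theta$ is a genuine $\mathcal L_P$-formula with real variables ranging over $U^{k_1}_{J_1}\times\cdots\times U^{k_n}_{J_n}$ and Lemma~\ref{lemma:u.LP} applies directly---there is no need to detour through imaginaries as you suggest.
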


\begin{proof}
Finite satisfiability of $\tp_F(a/Fb)$ in $F$ implies finite satisfiability of $\qftp(a/Fb)$ in $F$, hence, by quantifier elimination in $T$, finite satisfiability of $\tp(a/Fb)$ in $F$.
Therefore $a\ind_F b$. It remains to show that $S\CG(A)\ind_{S\CG(F)}^{S\CG}S\CG(B)$.

We are done if we show that $\tp_{S\CG(F^\ast)}(S\CG(A)/S\CG(F)S\CG(B))$ is finitely satisfiable in $S\CG(F)$.
By Lemma \ref{lemma:u.LP}, we know that $q:=\tp_{\mathcal{L}_P}(\bar{A}/\bar{B})$ is finitely satisfiable in $\bar{F}$.

Suppose $S\CG(F^\ast)\models\Theta(a,b,f)$ for some $a\in S\CG(A), b\in S\CG(B), f\in S\CG(F)$, 
where $a=(a_1,\ldots,a_n)\in m(k_1,J_1)\times\ldots\times m(k_n,J_n)$.
Let $\theta(x,y,z)$ be an $\mathcal{L}_P$-formula which is translation of $\Theta(X,Y,Z)$ and 
let $c_A,c_B,c_F$ be choice functions such that $\im(c_A)\subseteq\bar{A}$, $\im(c_B)\subseteq\bar{B}$ and $\im(c_F)\subseteq\bar{F}$ (see Remark \ref{rem:translation.B}). We have that $(M^\ast,F^\ast)\models\theta(c_A(a),c_B(b),c_F(f))$ and so $\theta(x',c_B(b),c_F(f))\;\wedge\;x'\in U^{k_1}_{J_1}\times\ldots\times U^{k_n}_{J_n}$ belongs to $q$.

There exists $d'=(d^1_1,d^2_1,\ldots,d^1_n,d^2_n)\in\bar{F}$, where $(d^1_i,d^2_i)\in U^{k_i}_{J_i}(M^\ast)$, such that $(M^\ast,F^\ast)\models \theta(d',c_B(b),c_F(f))$. 
Note that $d_i:=F_{k_i,J_i}^{-1}\big(\epsilon_{k_i}[(d^1_i,d^2_i)/\approx]\big)\in S\CG(F)$ and for $d:=(d_1,\ldots,d_n)\in S\CG(F)$ it follows that $S\CG(F^\ast)\models \Theta(d,b,f)$.
\end{proof}

\begin{theorem}\label{thm:SG.NSOP1}
(Suppose $T$ has nfcp and $B(3)$ holds for $T$.)
If $S\CG(F^\ast)$ is NSOP$_1$, then $F^\ast$ is NSOP$_1$.
\end{theorem}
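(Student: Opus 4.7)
The plan is to verify the NSOP$_1$ criterion (2) of \cite[Theorem 5.7]{ArtemNick} for $\theo(F^\ast)$: it suffices to exhibit an $\aut(F^\ast)$-invariant ternary relation on small subsets of $F^\ast$ that is coarser than the finite-satisfiability relation $\ind^u$ (i.e.\ $\ind^u$ refines it) and satisfies the independence theorem over models, together with the accompanying standard axioms (monotonicity, symmetry, existence over a model, strong finite character). The natural candidate in our setting is precisely the relation $\ind^{\Game}$ introduced before Theorem \ref{thm:weak.ind.thm}, which couples forking independence in $\FC$ with Kim-independence $\ind^{SG}$ on the sorted complete system $SG(F^\ast)$.

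The refinement $\ind^u \Rightarrow \ind^{\Game}$ is exactly the content of Lemma \ref{lemma:u.game}. For the independence theorem over a model, I would directly invoke the Weak Independence Theorem \ref{thm:weak.ind.thm}; its two hypotheses are the independence theorem over models and the extension axiom over models for $\ind^{SG}$, and both follow from the assumption that $SG(F^\ast)$ is NSOP$_1$ via the Kaplan--Ramsey characterisation of NSOP$_1$ by Kim-independence in \cite{kaplanramsey2017} (the independence theorem for Kim-independence over models is the defining feature of NSOP$_1$ there, and the extension axiom is part of the same package).

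The remaining axioms decompose into the corresponding properties for forking independence in $\FC$ (which are available since $T$ is stable) and for Kim-independence in $SG(F^\ast)$, combined with the observation that the assignment $A\mapsto SG(\acl^r_{F^\ast}(A))$ is $\aut(F^\ast)$-equivariant and respects inclusions, so invariance, monotonicity and symmetry of $\ind^{\Game}$ are inherited componentwise; existence over a model likewise reduces to existence for the two constituent relations. The step I expect to be most technically delicate is the verification of strong finite character: a formula in $\theo(SG(F^\ast))$ witnessing Kim-dividing has to be pulled back to an $\mathcal{L}_P$-formula over $F^\ast$, using the interpretation of $SG(K)$ in $(M,K)$ from Theorem \ref{thm:interpretability_SG(K)} together with the choice functions from Remark \ref{rem:translation.B}, following the translation scheme already employed in the proofs of Lemmas \ref{lemma:u.LP} and \ref{lemma:u.game}. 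Once these essentially routine but notation-heavy verifications are carried out, an application of \cite[Theorem 5.7]{ArtemNick} yields that $F^\ast$ is NSOP$_1$, proving the theorem.
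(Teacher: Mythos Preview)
Your core ingredients match the paper's: Lemma \ref{lemma:u.game} (coarsening $\ind^u$ to $\ind^{\Game}$) and the Weak Independence Theorem \ref{thm:weak.ind.thm}. But you have misread criterion (2) of \cite[Theorem~5.7]{ArtemNick}. That criterion is \emph{not} a ternary-relation axiom scheme; it is the concrete weak amalgamation statement: given $F\preceq F^\ast$ and tuples with $c_1a\equiv_F c_2b$, $b\ind^u_F a$, $a\ind^u_F c_1$, $b\ind^u_F c_2$, one must produce $d$ with $da\equiv_F db\equiv_F c_1a$. The paper verifies this directly in three lines: coarsen the three $\ind^u$-relations to $\ind^{\Game}$ via Lemma \ref{lemma:u.game}, flip by symmetry of $\ind^{\Game}$ (both constituents are symmetric), and apply Theorem \ref{thm:weak.ind.thm} to obtain $d$. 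No monotonicity, existence, or strong finite character is invoked.

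This matters because strong finite character for $\ind^{\Game}$ in the language $\mathcal{L}$ of $F^\ast$ is not merely ``delicate'' --- it is not clear it holds, and you do not actually verify it. A Kim-forking witness in $SG(F^\ast)$ translates, via Theorem \ref{thm:interpretability_SG(K)} and Remark \ref{rem:translation.B}, to an $\mathcal{L}_P$-formula in the pair $(M^\ast,F^\ast)$, with parameters produced by the choice functions landing in $\acl(Fb)$ rather than in $F^\ast$. There is no evident mechanism for turning this into an $\mathcal{L}$-formula over $Fb$, which is what strong finite character in $\theo(F^\ast)$ demands; the Casanovas--Ziegler reduction to bounded formulas does not help, since the bounded formula still quantifies over the predicate $P$. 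So the step you flag as the delicate one is in fact a genuine gap in your plan as stated.

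Fortunately the gap is irrelevant: once you use the correct form of criterion (2), the extraneous axioms disappear and your argument collapses to the paper's three-line proof.
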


\begin{proof}
We will use the criterion \cite[Theorem 5.7(2)]{ArtemNick}. In order to show that $T$ is NSOP$_1$, we must show that given $c_1,c_2,a,b$ and $F$ satisfying $$c_1a\equiv_F c_2b,\ b\ind^u_F a,\ a\ind^u_F c_1,\ b\ind^u_F c_2,$$ there is $d\in F^\ast$ such that $da\equiv_F db\equiv_F c_1a$.

By Lemma \ref{lemma:u.game} and since $\ind^{\Game}$ is symmetric, we have that
$$A\ind^{\Game}_F B,\quad c_1\ind^{\Game}_F A,\quad c_2\ind^{\Game}_F B.$$
Since $c_1\equiv_F c_2$, we can use Theorem \ref{thm:weak.ind.thm} to get $d\in F^\ast$ such that
$c_2\equiv_{FB}d\equiv_{FA}c_1$, hence $da\equiv_Fc_1a\equiv_F c_2b\equiv_F db$.
\end{proof}

\subsection{Description of independence}
Suppose that $T$ has nfcp and
assume that $S\CG(F^\ast)$ is NSOP$_1$ and that $F^\ast$ is NSOP$_1$.
Recall that $a,b\in F^\ast\succeq F$ and set $A:=\acl^r_{F^\ast}(Fa)$, $B:=\acl^r_{F^\ast}(Fb)$.
We want to show that $a\ind^K_F b$ if  and only if $a\ind_F^{\Game} b$, but is is enough to show that
$A\ind^K_F B$ if and only if $a\ind_F^{\Game} b$ (by symmetry, monotonicity and \cite[Corollary 5.17]{kaplanramsey2017} it follows that:
$a\ind^K_F b$ iff $A\ind_F^K B$).

By Lemma \ref{lemma:CZ.equiv_E}, there exists small $M\preceq M^\ast$ such that $F\subseteq M$ and $(M,F)\preceq (M^\ast,F^\ast)$. Let $(M^{\ast\ast},F^{\ast\ast})\succeq(M^\ast,F^\ast)$ be special and at least $|M^\ast|^+$-saturated (it will play the role of a ``monster monster" for global types in $M^\ast$, we do not require that $F^{\ast\ast}$ is PAC).

Consider $\bar{B}=acl(B)$ as tuple, where $\bar{B}=B^{\smallfrown}(\bar{B}\setminus B)$ (i.e. elements from $B$ occupy first positions in the tuple $\bar{B}$). Note that $\tp_{\mathcal{L}_P}(\bar{B}/M)$ is finitely satisfiable in $\bar{F}$ (hence also in $M$). We define $q$ as a maximal set of $\mathcal{L}_P(M^\ast)$-formulas in variables corresponding to $\bar{B}$ which is finitely satisfiable in $\bar{F}$ and which contains $\tp_{\mathcal{L}_P}(\bar{B}/M)$ (in other words: we take a coheir extension). Note that $q\in S_{\mathcal{L}_P}(M^\ast)$, $q$ is finitely satisfiable in $\bar{F}$ and $\bar{F}$-invariant (hence finitely satisfiable in $M$ and also $M$-invariant).
There exists $B^\ast\subseteq \Fs$ such that $q=\tp_{\mathcal{L}_P}(\bar{B^\ast}/M^\ast)$.
There also exists $(B_i)_{i<\omega}\subseteq F^\ast$ such that $\bar{B_i}\models q\restriction_{M\bar{B}_{<i}}$ and $\bar{B}_0=\bar{B}$ (as a tuple).

The type $q^F:=\tp_F(B^\ast/F^\ast)$ is finitely satisfiable in $F$ and hence also $F$-invariant.
We define $q^F_0$ as the quantifier-free part of $q^F$ and we choose $q^M_0$ to be a maximal set of quantifier free $\mathcal{L}(M^\ast)$-formulas in variables corresponding to $B^\ast$ which contains $q^F_0$ and which is finitely satisfiable in $F$. By the assumption that $T$ has quantifier elimination, $q_0^M$ determines the unique complete type $q^M$ in $S_{\mathcal{L}}(M^\ast)$, which is finitely satisfiable in $F$ and $F$-invariant.

\begin{remark}\label{remark:q^F.morley}
The sequence $(B_i)_{i<\omega}$ is a Morley sequence in $q^F=\tp_F(B^\ast/F^\ast)$ over $F$.
\end{remark}

\begin{proof}
For each $f\in\aut_{\mathcal{L}_P}(\Ms)$, we have that $f\restriction_{\Fs}\in\aut_{\mathcal{L}}(\Fs)$.
Because $\bar{B_i}\models\tp_{\mathcal{L}_P}(\bar{B^\ast}/M\bar{B_{<i}})$, there exists $f\in\aut_{\mathcal{L}_P}(\Ms/M\bar{B_{<i}})$ such that $f[\bar{B_i}]=\bar{B^\ast}$ (as tuples).
Hence $f[B_i]=B^\ast$ and $f\restriction_{\Fs}\in\aut_{\mathcal{L}}(\Fs/FB_{<i})$.
\end{proof}

\begin{remark}\label{remark:q^M.morley}
$(B_i)_{i<\omega}$ is a Morley sequence in $q^M$ over $F$.
\end{remark}

\begin{proof}
Because $q^F_0\restriction_{FB_{<i}}=q^M_0\restriction_{FB_{<i}}$ and $B_i\models q^F\restriction_{FB_{<i}}$, we obtain that
$B_i\models q^M_0\restriction_{FB_{<i}}$ and so $B_i\models q^M\restriction_{FB_{<i}}$. 
\end{proof}

Note that $\bar{B^\ast}\cap\Fs=B^\ast$, $\bar{B_i}\cap\Fs=B_i$ for each $i<\omega$ 
(all these tuples are $\mathcal{L}_P$-equivalent to $B=\bar{B}\cap\Fs$).
Consider
$$r:=\tp_{S\CG(\Fs)}\Big( S\CG(B^\ast)/S\CG(F^\ast)\Big).$$

\begin{remark}\label{remark:r.is.fs}
The type $r$ is finitely satisfiable in $S\CG(F)$ hence also $S\CG(F)$-invariant.
\end{remark}

\begin{proof}
Since $q=\tp_{\mathcal{L}_P}(\bar{B}^{\ast}/M^{\ast})$ is finitely satisfiable in $\bar{F}$, we can repeat the argument from the proof of Lemma \ref{lemma:u.game}.
\end{proof}

\begin{remark}\label{remark:r.morley}
$(S\CG(B_i))_{i<\omega}$ is a Morley sequence in $r$ over $S\CG(F)$.
\end{remark}

\begin{proof}
For each $i<\omega$, we will find an automorphism
$$F\in\aut\big(S\CG(\Fs)/S\CG(F)S\CG(B_0)\ldots S\CG(B_{i-1})\big)$$
such that $F(S\CG(B_i))=S\CG(B^\ast)$.
Because $\bar{B_i}\models q\restriction_{M\bar{B}_{<i}}$, there exists $f\in\aut(\Ms/M\bar{B}_{<i})$ such that
$f[\bar{B}_i]=\bar{B^\ast}$ with respect to orderings of tuples $\bar{B}_i$ and $\bar{B^\ast}$. We see that $f[B_i]=B^\ast$,
hence $S\CG(f)[S\CG(B_i)]
=S\CG(B^\ast)$. 
We set $F:=S\CG(f)$ and easily check the remaining properties.
\end{proof}

\begin{lemma}\label{lemma:LP.equiv}
Suppose that $C,D_0,D_1$ are small in $F^\ast$, $C\subseteq F^\ast$ is regular and $D_0\equiv_C D_1$ (in $\theo(F^\ast)$). Then $D_0\equiv_{\acl_{\mathcal{L}}(C)}^{\mathcal{L}_P}D_1$.
\end{lemma}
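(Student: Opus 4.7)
The plan is to combine the Casanovas--Ziegler normal form for $\mathcal{L}_P$-formulas with an $\mathcal{L}$-automorphism $\sigma\in\aut(\mathfrak{C})$ that sends $D_0$ to $D_1$, fixes $\acl_\mathcal{L}(C)$ pointwise, and preserves $F^\ast$ setwise; such a $\sigma$ will witness the desired equivalence of $\mathcal{L}_P$-types. First, by Proposition 2.1 in \cite{CasanovasZiegler} (as used in Lemmas \ref{lemma:CZ.preceq} and \ref{lemma:CZ.equiv_E}), every $\mathcal{L}_P$-formula is equivalent in $(M^\ast,F^\ast)$ to one of the form $Q\bar\alpha\in P\,\varphi(\bar x,\bar y,\bar\alpha)$ with $\varphi$ a quantifier-free $\mathcal{L}$-formula, so it suffices to show
\[
(M^\ast,F^\ast)\models Q\bar\alpha\in P\,\varphi(D_0,\bar c',\bar\alpha)\iff(M^\ast,F^\ast)\models Q\bar\alpha\in P\,\varphi(D_1,\bar c',\bar\alpha)
\]
for every such $Q,\varphi$ and every $\bar c'\in\acl_\mathcal{L}(C)$ (noting that $\acl_\mathcal{L}(C)\subseteq M^\ast$ by $M^\ast\preceq\mathfrak{C}$).

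Now I would construct $\sigma$ in two moves. By $\mathcal{L}$-homogeneity of $F^\ast$ (inherited from $(M^\ast,F^\ast)$ being weakly $\bar\kappa$-special), the hypothesis $D_0\equiv_C D_1$ in $\theo(F^\ast)$ produces an $\mathcal{L}$-automorphism $\sigma_0\in\aut(F^\ast/C)$ with $\sigma_0(D_0)=D_1$. Since $T$ has quantifier elimination, quantifier-free $\mathcal{L}$-formulas are absolute between $F^\ast$ and $\mathfrak{C}$, so $\sigma_0$ is partial elementary in $\mathfrak{C}$ and extends to some $\hat\sigma_0\in\aut(\mathfrak{C})$; this $\hat\sigma_0$ already satisfies $\hat\sigma_0(F^\ast)=F^\ast$ and fixes $C$ pointwise, but its restriction $\bar\sigma:=\hat\sigma_0|_{\acl_\mathcal{L}(C)}$ may be a non-trivial element of $G(C)$. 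This is where regularity of $C\subseteq F^\ast$ enters: by Fact \ref{fact:regular_to_sorted}, the restriction map $G(F^\ast)\twoheadrightarrow G(C)$ is surjective, so I would lift $\bar\sigma$ to some $\tau\in G(F^\ast)$, extend $\tau$ to $\tilde\tau\in\aut(\mathfrak{C}/F^\ast)$, and set $\sigma:=\tilde\tau^{-1}\circ\hat\sigma_0$. Then $\sigma|_{F^\ast}=\sigma_0$ (so $\sigma(D_0)=D_1$ and $\sigma(F^\ast)=F^\ast$) while $\sigma|_{\acl_\mathcal{L}(C)}=\bar\sigma^{-1}\circ\bar\sigma=\mathrm{id}$, as required.

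Finally, I would transfer truth values via $\sigma$. For each $\bar\beta\in F^\ast$, since $\sigma\in\aut(\mathfrak{C})$ fixes $\bar c'$ and sends $D_0$ to $D_1$, we have $\mathfrak{C}\models\varphi(D_0,\bar c',\bar\beta)\Leftrightarrow\mathfrak{C}\models\varphi(D_1,\bar c',\sigma(\bar\beta))$, and by $M^\ast\preceq\mathfrak{C}$ the same equivalence holds in $M^\ast$. An induction on the length of $Q$, with each quantifier bounded to $F^\ast$ and translated via the bijection $\sigma|_{F^\ast}$, completes the proof. The main obstacle is the construction of $\sigma$ in the second paragraph: producing an $\mathcal{L}$-automorphism of $\mathfrak{C}$ which \emph{simultaneously} fixes $\acl_\mathcal{L}(C)$ pointwise and preserves $F^\ast$ setwise relies essentially on regularity of $C\subseteq F^\ast$, through the surjectivity $G(F^\ast)\twoheadrightarrow G(C)$ of Fact \ref{fact:regular_to_sorted}, which lets one cancel the residual Galois action of $\hat\sigma_0$ on $\acl_\mathcal{L}(C)$ by a correction trivial on $F^\ast$.
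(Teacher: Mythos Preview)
Your proof is correct and follows essentially the same approach as the paper's. The paper obtains the automorphism $h\in\aut_{\mathcal{L}}(M^{\ast\ast}/\acl(C))$ with $h|_{F^\ast}=g$ by a direct citation of Fact 3.33 from \cite{Hoff3}, whereas you reconstruct this step by hand: extending $\sigma_0$ arbitrarily to $\hat\sigma_0\in\aut(\mathfrak{C})$ and then correcting its residual action on $\acl_{\mathcal{L}}(C)$ via a lift through the surjection $G(F^\ast)\twoheadrightarrow G(C)$ from Fact \ref{fact:regular_to_sorted}. The remainder of the argument (Casanovas--Ziegler normal form, transfer of bounded quantifiers through the bijection $\sigma|_{F^\ast}$) is identical to the paper's.
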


\begin{proof}
Since $D_0\equiv_C D_1$, there exists $g\in\aut(F^\ast/C)$ such that $g[D_0]=D_1$.
Because $C\subseteq F^\ast$ is regular, we can use \cite[Fact 3.33]{Hoff3} to conclude that
there exists $h\in\aut_{\mathcal{L}}(\Ms/\acl(C))$ such that $h\restriction_{F^\ast}=g$.

By \cite[Proposition 2.1]{CasanovasZiegler}, we can restrict our attention to \emph{bounded} formulas: let
$$(M^\ast,F^\ast)\models Q\alpha\in P\;\varphi(d_0,a,\alpha),$$
where $d_0\subseteq D_0$, $a\in\acl(C)$ and $\varphi$ is a quantifier-free $\mathcal{L}$-formula. Our goal is to show that $(M^\ast,F^\ast)\models Q\alpha\in P\;\varphi(d_1,a,\alpha)$, where $d_1:=g(d_0)$.

For all $\alpha\in F^\ast$,
$M^\ast\models\varphi(d_0,a,\alpha)$ is equivalent to $M^\ast\models\varphi(d_1,a,g(\alpha))$ (by passing to $\Ms$ and using $h$). Because $g$ is a bijection on $F^\ast$,
if $(M^\ast,F^\ast)\models Q\alpha\in P\;\varphi(d_0,a,\alpha)$, then
$(M^\ast,F^\ast)\models Q\alpha\in P\;\varphi(d_1,a,\alpha)$.
\end{proof}

The phrase ``respecting the enumeration of tuples $\bar{B}_i$'s" refers to the previously chosen enumeration of tuples $\bar{B}_i$'s as realizations of type $q$.

\begin{prop}\label{prop:K.game}
If $A\ind^K_F B$, then $a\ind^{\Game}_F b$.
\end{prop}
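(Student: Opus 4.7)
The plan is to upgrade the hypothesis $A\ind^K_F B$ (Kim-independence in the NSOP$_1$ theory $\theo(F^\ast)$) to the conjunction $a\ind_F b$ and $SG(A)\ind^{SG}_{SG(F)}SG(B)$ by transferring indiscernibility of the Morley sequences prepared above along the chain $\mathcal{L}\rightsquigarrow\mathcal{L}_P\rightsquigarrow SG$.

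First I would invoke the Morley-sequence criterion for Kim-independence in NSOP$_1$: the sequence $(B_i)_{i<\omega}$ is a Morley sequence starting at $B_0=B$ in the $F$-finitely-satisfiable, hence $F$-invariant, type $q^F$ (Remark~\ref{remark:q^F.morley}), so $A\ind^K_F B$ provides $A'\equiv_{FB}A$ (inside $F^{\ast\ast}$) making $(B_i)_{i<\omega}$ indiscernible over $A'$. Quantifier elimination in $T$ transfers this $\mathcal{L}$-indiscernibility to $\FC$. Since the same sequence is a Morley sequence in the stable theory $T$ over $F$ (Remark~\ref{remark:q^M.morley}), Kim's lemma in stable $T$ converts indiscernibility into independence, giving $A'\ind_F\{B_i:i<\omega\}$, and the fact that forking in a stable theory is controlled by algebraic closure then yields $\bar{A'}\ind_{\bar F}\bar I$, where $\bar I:=(\bar{B_i})$.

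Next I would cross into $\mathcal{L}_P$. By construction $\bar I$ is a Morley sequence in the $\mathcal{L}_P$-type $q$ over $M$, hence $M$-indiscernible; since $\bar F\subseteq M$ by the saturation of $M$ over $F$, $\bar I$ is also $\bar F$-indiscernible in $\mathcal{L}_P$. Combined with $\bar{A'}\ind_{\bar F}\bar I$, stationarity of $\mathcal{L}$-types over the algebraically closed set $\bar F$ yields $\bar{A'}$-indiscernibility of $\bar I$ in $\mathcal{L}$, and then Lemma~\ref{lemma:LP.equiv}, applied with the regular base $A'=\acl^r_{F^{\ast\ast}}(FA')$, promotes this to $\mathcal{L}_P$-indiscernibility over $\bar{A'}$. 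The uniform interpretation provided by Theorem~\ref{thm:interpretability_SG(K)} then carries this down to $SG(A')$-indiscernibility of $(SG(B_i))_{i<\omega}$ in $SG(F^{\ast\ast})$.

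Finally, Remarks~\ref{remark:r.is.fs} and~\ref{remark:r.morley} show that $(SG(B_i))$ is a Morley sequence in the $SG(F)$-invariant type $r$ over $SG(F)$, so feeding its $SG(A')$-indiscernibility into the Morley-sequence criterion for Kim-non-dividing in NSOP$_1$ theories (see e.g.\ Lemma~3.3.18 in \cite{NickThesis} or the corresponding characterisation in \cite{kaplanramsey2017}) delivers $SG(A')\ind^{SG}_{SG(F)}SG(B)$; invariance under $A'\equiv_{FB}A$ then upgrades this to $SG(A)\ind^{SG}_{SG(F)}SG(B)$. Together with $A\ind_F B$ (immediate from $A'\ind_F B$ and $A\equiv_{FB}A'$) this is exactly $a\ind^\Game_F b$. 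The most delicate step is the $\mathcal{L}\to\mathcal{L}_P$ upgrade of indiscernibility over $\bar{A'}$, which requires a careful synthesis of stability of $T$, regularity of the extension $F\subseteq A'$, and Lemma~\ref{lemma:LP.equiv}; this is where I expect to concentrate most of the bookkeeping.
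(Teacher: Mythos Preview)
Your outline follows essentially the strategy the authors themselves first attempted (traces of it remain in a commented-out passage), and the step you flag as ``most delicate'' is exactly where it breaks down.

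The invocation of Lemma~\ref{lemma:LP.equiv} is not correct as you state it. That lemma takes as input $D_0\equiv_{C}D_1$ \emph{in $\theo(F^{\ast\ast})$} with $D_0,D_1\subseteq F^{\ast\ast}$; it does not upgrade $\mathcal{L}$-equivalence in $\FC$ to $\mathcal{L}_P$-equivalence, and the tuples $\bar B_i$ you feed in are not in $F^{\ast\ast}$. What you \emph{do} have from the chain condition is $(B_i)$ being $A'$-indiscernible in $F^{\ast\ast}$, and with $C=A'$ the lemma yields $\mathcal{L}_P$-indiscernibility of $(B_i)$ over $\bar{A'}$. But this is still not enough: the interpretation of $SG(B_i)$ in the pair uses primitive elements in $\bar B_i$, and the enumerations of the sequence $(SG(B_i))$ that make it a Morley sequence in $r$ (Remark~\ref{remark:r.morley}) come from the $\mathcal{L}_P$-automorphisms $f_i$ acting on the full $\bar B_0$. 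The $\mathcal{L}_P$-automorphism $h$ over $\bar{A'}$ you obtain matches $f_{j}\circ f_{i}^{-1}$ on $B_{i}$ only; on $\bar B_i\setminus B_i$ they differ by an element of $G(B_0)$, which in general acts non-trivially on $SG(B_0)$ (conjugation by a non-central element moves cosets). Hence $SG(h)$ need not respect the enumerations of $(SG(B_i))$, and you cannot conclude $SG(A')$-indiscernibility of the Morley sequence used in Remark~\ref{remark:r.morley}.

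The paper sidesteps this by abandoning the search for a single $A'$ with full indiscernibility. Instead it builds recursively (using the Independence Theorem in $F^\ast$) a sequence $(A_n)$ where at each step Lemma~\ref{lemma:LP.equiv} is applied with base $C=C_{n,j}=\acl^r_{F^\ast}(B_{i_0},\ldots)$ \emph{containing} the relevant $B_i$'s, so the output $\mathcal{L}_P$-equivalence is over $\bar C_{n,j}\supseteq\bar B_i$ and thus automatically respects the fixed enumerations of the $\bar B_i$'s. This yields the Claim (consistency of $\mathcal{L}_P$-formulas along an infinite subsequence), which is then translated in two directions: to $T$ via Kim's lemma to get $A\ind_F B$, and to $SG(F^\ast)$ by contradiction with the generalized Kim's lemma for Kim-dividing. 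So the paper's route differs from yours precisely at the point you correctly sensed was fragile, and for a genuine reason.
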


\begin{proof}
The proof reuses a nice argument from the proof of \cite[Theorem 7.2.6]{NickThesis} (the argument in the proof of \cite[Theorem 7.2.6]{NickThesis} is used to show something different, but it is enough general to be adapted to show that some independence relation holds).
Let $C_{n,0}:=\acl^r_{F^\ast}(B_0,\ldots,B_{2^n-1})$, $C_{n,1}:=\acl^r_{F^\ast}(B_{2^n},\ldots, B_{2^{n+1}-1})$. 
Note that $\bar{B}_0\ldots\bar{B}_{2^n-1}\equiv^{\mathcal{L}_P}_F\bar{B}_{2^n}\ldots\bar{B}_{2^{n+1}-1}$ (respecting the enumeration of tuples $\bar{B}_i$'s).
Note also that
$C_{n,1}\ind^u_F C_{n,0}$.

We will recursively construct a sequence $(A_n)_{n<\omega}$ such that
\begin{itemize}
\item $A_{n+1}\equiv_{C_{n+1,0}}A_n$,
\item $A_n\ind^K_F C_{n+1,0}$,
\item $A_n \bar{B}_0\ldots\bar{B}_{2^n-1}\equiv^{\mathcal{L}_P}_F A_n\bar{B}_{2^n}\ldots\bar{B}_{2^{n+1}-1}$ (respecting the enumeration of tuples $\bar{B}_i$'s).
\end{itemize}
\underline{Case of $A_0$:}
\
\\
Since $\bar{B}_0\equiv^{\mathcal{L}_P}_F\bar{B}_1$, there exists $A'\subseteq F^\ast$ such that
$A\bar{B}_0\equiv^{\mathcal{L}_P}_F A'\bar{B}_1$ (respecting the enumeration of tuples $\bar{B}_i$'s). It follows that $A\ind^K_F B_0$, $A'\ind^K_F B_1$ and $B_0\ind^K_F B_1$. By the Independence Theorem, there exists $A''\subseteq F^\ast$ such that $A''B_0\equiv_F AB_0$, $A''B_1\equiv_F A'B_1$ and $A''\ind^K_F B_0B_1$.
Set $A_0:=A''$. 
Because
$A_0\ind^K_F B_0B_1$, we have that $A_0\ind^K_F C_{1,0}$. 
By Lemma \ref{lemma:LP.equiv}, it follows that $A''\bar{B}_0\equiv^{\mathcal{L}_P}_F A\bar{B}_0$ and $A'\bar{B}_1\equiv^{\mathcal{L}_P}_F A''\bar{B}_1$,
hence $A_0\bar{B}_0\equiv^{\mathcal{L}_P}_F A_0\bar{B}_1$ (respecting enumeration of tuples $\bar{B}_i$'s).
\
\\
\underline{Recursion step:}
\
\\
Suppose that $n\geqslant 0$ and we have defined $A_n$ satisfying our demands.
Since $\bar{B}_0\ldots\bar{B}_{2^{n+1}-1}\equiv^{\mathcal{L}_P}_F\bar{B}_{2^{n+1}}\ldots\bar{B}_{2^{n+2}-1}$,
we can find $A'_n\subseteq F^\ast$ such that 
$$A_n\bar{B}_0\ldots\bar{B}_{2^{n+1}-1}\equiv^{\mathcal{L}_P}_F A'_n\bar{B}_{2^{n+1}}\ldots\bar{B}_{2^{n+2}-1}$$
(respecting enumeration of tuples $\bar{B}_i$'s).
We have that $A_n\ind^K_F B_0\ldots B_{2^{n+1}-1}$, hence also
$A'_n\ind^K_F B_{2^{n+1}}\ldots B_{2^{n+2}-1}$ and so
$A_n\ind^K_F C_{n+1,0}$, $A'_n\ind^K_F C_{n+1,1}$ and $C_{n+1,0}\ind^K_F C_{n+1,1}$.

By the Independence Theorem, there exists $A_{n+1}\models\tp_F(A_n/C_{n+1,0})\cup\tp_F(A'_n/C_{n+1,1})$ such that $A_{n+1}\ind^K_F C_{n+1,0}C_{n+1,1}$.
Therefore $A_{n+1}\equiv_{C_{n+1,0}} A_n$, $A_{n+1}\ind^K_F C_{n+2,0}$.
Using Lemma \ref{lemma:LP.equiv} for $A_{n+1}\equiv_{C_{n+1,0}} A_n$ and $A_{n+1}\equiv_{C_{n+1,1}} A'_n$, we obtain
$$A_{n+1}\bar{B}_0\ldots\bar{B}_{2^{n+1}-1}\equiv^{\mathcal{L}_P}_F
A_n\bar{B}_0\ldots\bar{B}_{2^{n+1}-1},$$
$$ A_{n+1}\bar{B}_{2^{n+1}}\ldots\bar{B}_{2^{n+2}-1}\equiv^{\mathcal{L}_P}_F A'_n\bar{B}_{2^{n+1}}\ldots\bar{B}_{2^{n+2}-1}.$$
Therefore
$$A_{n+1}\bar{B}_0\ldots\bar{B}_{2^{n+1}-1}\equiv^{\mathcal{L}_P}_F
A_{n+1}\bar{B}_{2^{n+1}}\ldots\bar{B}_{2^{n+2}-1}$$
(respecting enumeration of tuples $\bar{B}_i$'s). Here ends our recursive construction.

Note that $A_{n+1}\equiv_{C_{n+1,0}} A_n$ leads to $A_{n+1}\equiv^{\mathcal{L}_P}_{\acl_{\mathcal{L}}(C_{n+1,0})} A_n$. Suppose that $b_0\subseteq\bar{B}_0$, by $b_i$ we denote element of $\bar{B}_i$ corresponding to variables given by $b_0\subseteq\bar{B}_0$.
\
\\
\\
\textbf{Claim}
If $\varphi(x',b_0)\in\tp_{\mathcal{L}_P}(\bar{A}/\bar{B}_0)$, then there exists an infinite $I\subseteq\mathbb{N}$ such that $\{\varphi(x',b_i)\;|\;i\in I\}$ is consistent.
\
\\ Proof of the claim: 
We start with $(M^\ast,F^\ast)\models\varphi(a',b_0)$, where $a'\in\bar{A}$.
Let $\zeta$ be a quantifier-free $\mathcal{L}$-formula satisfying $\zeta(x',a)\vdash\tp(a'/A)$ for some $a\in A$,
say $|\zeta(M^\ast,a)|=n$.
Consider $\psi(a,b_0)$ given as
$$(\exists\,x')\,\big(\varphi(x',b_0)\;\wedge\;\zeta(x',a)\;\wedge\;(\exists^{=n}y')\,(\zeta(y',a))\big).$$
We have $\psi(x,b_0)\in\tp_{\mathcal{L}_P}(A/\bar{B}_0)=\tp_{\mathcal{L}_P}(A_0/\bar{B}_0)$,
so $(M^\ast,F^\ast)\models \varphi(a_0,b_0)$ for some $a_0\in A_0$.
Using $A_n$'s, we can show that the set $\{\psi(x,b_i)\;|\;i<\omega\}$ is consistent.
Let $a^\ast\models\{\psi(x,b_i)\;|\;i<\omega\}$ for some $a^\ast\in M^\ast$.
It means that for each $i<\omega$ we have
$$(M^\ast,F^\ast)\models 
(\exists\,x')\,\big(\varphi(x',b_i)\;\wedge\;\zeta(x',a^\ast)\;
\wedge\;(\exists^{=n}y')\,(\zeta(y',a^\ast))\big).$$
Because $|\zeta(M^\ast,a^\ast)|=n$,
there is $m\in\zeta(M^\ast,a^\ast)$ such that for infinitely many $i<\omega$ we have
$(M^\ast,F^\ast)\models\varphi(m,b_i)$.
Here ends the proof of the claim.
\
\\
\\
By Kim's lemma (\cite[Proposition 2.2.6]{kim1}), Remark \ref{remark:q^M.morley} and Claim (subsequence of a Morley sequence is a Morley sequence), we obtain $A\ind_F B$.

Suppose that $S\CG(A)\nind_{S\CG(F)}^{S\CG}S\CG(B)$. By a generalization of Kim's lemma (\cite[Theorem 3.16.(3)]{kaplanramsey2017}), there exists $\Psi(X,b')\in\tp_{S\CG(\Fs)}(S\CG(A)/S\CG(B))$,
where $b'\in S\CG(B)$,
which $q'$-divides (c.f.  \cite[Definition 3.11]{kaplanramsey2017}) for every global $S\CG(F)$-invariant $q'\supseteq\tp_{S\CG(\Fs)}(b'/S\CG(F))$.

Because $\bar{B_i}\models q\restriction_{M\bar{B}_{<i}}$ for each $i<\omega$, there exist $(f_i)_{i<\omega}\subseteq\aut_{\mathcal{L}_P}(M^\ast/M)$ such that $\bar{B}_i=f_i[\bar{B}_0]$.
Moreover $f_i\restriction_{F^\ast}\in\aut(F^\ast/F)$ for each $i<\omega$.

Let $S\CG(\Fs)\models \Psi(a',b')$ for some $a'\in S\CG(A)$, where $a'=(a'_1,\ldots,a'_n)\in m(k_1,J_1)\times\ldots\times m(k_n,J_n)$. Let $\psi(x,y)$ be an $\mathcal{L}_P$-formula which is a translation of
$\Psi(X,Y)$ and let $c_A$ and $c_B$ be choice functions such that $\im(c_A)\subseteq\bar{A}$ and
$\im(c_B)\subseteq\bar{B}$ (see Remark \ref{rem:translation.B}).
We have that $(M^\ast,F^\ast)\models \psi(c_A(a'),c_B(b'))$
and so $\psi(x,b_0)\;\wedge\;x\in U^{k_1}_{J_1}\times\ldots\times U^{k_n}_{J_n}$, where $b_0:=c_B(b')$, belongs to $\tp_{\mathcal{L}_P}(\bar{A}/\bar{B}_0)$.

By Claim, there exists an infinite $I\subseteq\mathbb{N}$ and $a^\ast\in U^{k_1}_{J_1}(M^\ast)\times\ldots\times U^{k_n}_{J_n}(M^\ast)$ such that for each $i\in I$ we have $(M^\ast,F^\ast)\models \psi(a^\ast,b_i)$ (note that $b_i=f_i(b_0)$).

Consider $b'_i:=S\CG(f_i)(b')$ for each $i<\omega$. By Remark \ref{remark:r.morley}, $(b'_i)_{i<\omega}$ is a Morley sequence over $S\CG(F)$, so also $(b'_i)_{i\in I}$ is a Morley sequence over $S\CG(F)$.
We will finish the proof of the Proposition if we show that the set $\{\Psi(X,b'_i)\;|\;i\in I\}$ is consistent.
This can be done by translating $\psi(a^\ast,b_i)$ into $\Psi(a^{\ast\prime},b_i')$, for one $a^{\ast\prime}\in S\CG(F^\ast)$ proper for each $i\in I$.
\end{proof}

\begin{prop}\label{prop:game.K}
If $a\ind^{\Game}_F b$, then $A\ind^K_F B$.
\end{prop}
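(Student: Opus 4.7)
The plan is to deduce $A \ind^K_F B$ by invoking Kim's lemma for the NSOP$_1$ theory $\theo(F^*)$: it suffices to show that no formula in $\tp_{F^*}(A/FB)$ divides along the specific Morley sequence $(B_i)_{i<\omega}$ constructed preceding Remark~\ref{remark:q^F.morley}, which is Morley over $F$ in the global $F$-invariant (in fact $F$-finitely satisfiable) type $q^F$. By saturation of $F^*$, it is enough to produce, for each $n$, a single tuple $A_n \in F^*$ satisfying $A_n B_i \equiv_F AB$ for every $i<2^n$: compactness will then yield $A^* \in F^*$ realising the whole partial type $\{\varphi(x,B_i) : i<\omega,\ \varphi(x,y) \in \tp_{F^*}(A/FB)\}$, witnessing non-dividing.

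The $A_n$'s are built by a dyadic recursion modelled on the proof of Proposition~\ref{prop:K.game}, but with $\ind^K$ and its Independence Theorem replaced by $\ind^{\Game}$ and the Weak Independence Theorem~\ref{thm:weak.ind.thm}. Put $C_{n,0} := \acl^r_{F^*}(FB_0\cdots B_{2^n-1})$ and $C_{n,1} := \acl^r_{F^*}(FB_{2^n}\cdots B_{2^{n+1}-1})$; the invariants to maintain are $A_n \ind^{\Game}_F C_{n,0}$ and $A_nB_i \equiv_F AB$ for all $i<2^n$, with the base case $A_0 := A$ given directly by the hypothesis $a \ind^{\Game}_F b$. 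For the inductive step, finite satisfiability of $q^F$ in $F$ gives $C_{n,1} \ind^u_F C_{n,0}$, whence $C_{n,0} \ind^{\Game}_F C_{n,1}$ by Lemma~\ref{lemma:u.game}; by saturation and $F$-homogeneity of $F^*$, pick $\sigma \in \aut(F^*/F)$ sending the enumeration $B_0,\ldots,B_{2^n-1}$ to $B_{2^n},\ldots,B_{2^{n+1}-1}$ and set $A'_n := \sigma(A_n)$. Invariance of $\ind^{\Game}$ then gives $A'_n \ind^{\Game}_F C_{n,1}$, and $\sigma$ yields $A_n \equiv_F A'_n$ together with $A'_n B_i \equiv_F AB$ for $i \in [2^n,2^{n+1})$. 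Applying Weak IT over $F$ to the data $(C_{n,0},C_{n,1},A_n,A'_n)$ produces $A_{n+1} \subseteq F^*$ matching $A_n$ over $FC_{n,0}$ and $A'_n$ over $FC_{n,1}$, with $A_{n+1} \ind^{\Game}_F C_{n,0}C_{n,1}$; since $\acl^r_{F^*}(FC_{n,0}C_{n,1})=C_{n+1,0}$, both invariants transfer to stage $n+1$.

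Once $(A_n)_{n<\omega}$ is in hand, saturation of $F^*$ delivers a single $A^* \in F^*$ with $A^*B_i \equiv_F AB$ for all $i<\omega$, so no formula in $\tp_{F^*}(A/FB)$ divides along $(B_i)$. Because $F^*$ is NSOP$_1$ and $(B_i)$ is a Morley sequence in a global $F$-invariant type extending $\tp_{F^*}(B/F)$, Kim's lemma (Theorem~3.16 of~\cite{kaplanramsey2017}) gives that no such formula Kim-divides over $F$, hence $A \ind^K_F B$.

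The principal technical hurdle is maintaining the inductive invariant $A_n \ind^{\Game}_F C_{n,0}$ while verifying the three independence hypotheses required by Weak IT at each stage, in particular the transfer from $A_{n+1} \ind^{\Game}_F C_{n,0}C_{n,1}$ to $A_{n+1} \ind^{\Game}_F C_{n+1,0}$ via relative algebraic closure inside $F^*$; any delicate point in combining $\ind$ on $\FC$ with $\ind^{SG}$ on $SG(F^*)$ along the construction will manifest there.
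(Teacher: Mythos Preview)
Your proposal is correct and follows essentially the same dyadic recursion as the paper's proof. The only difference is cosmetic: at each amalgamation step you invoke the Weak Independence Theorem (Theorem~\ref{thm:weak.ind.thm}) as a black box, whereas the paper unpacks its proof inline, applying the Independence Theorem and extension for $\ind^{SG}$ in $SG(F^\ast)$ and then Proposition~\ref{thm:Zoe} directly to produce $A_{n+1}$ with $SG(A_{n+1})\ind^{SG}_{SG(F)}SG(C_{n+1,0})$. Your packaging is slightly cleaner; the paper's unpacking makes the role of Chatzidakis' Theorem more visible but is otherwise the same argument.
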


\begin{proof}
We follow here the proof of \cite[Theorem 7.2.6]{NickThesis}, but using our generalizations of all necessary facts.

We assume that $a\ind^{\Game}_F b$, so, by definition, $A\ind^{\Game}_F B$.
Similarly as in the proof of Proposition \ref{prop:K.game}, we will construct a sequence $(A_n)_{n<\omega}\subseteq F^\ast$ such that
\begin{itemize}
\item $A_{n+1}\equiv_{C_{n+1,0}} A_n$,
\item $A_n\ind^{\Game}_F C_{n+1,0}$,
\item $A_n B_0\ldots B_{2^n-1}\equiv_F A_n B_{2^n}\ldots B_{2^{n+1}-1}$
(respecting the enumeration of tuples $\bar{B}_i$'s).
\end{itemize}
Instead of repeating the whole proof of \cite[Theorem 7.2.6]{NickThesis}, which is similar to a part of the proof of Proposition \ref{prop:K.game}, we only sketch how to find $A_0$ (which is actually missing in the proof of \cite[Theorem 7.2.6]{NickThesis}).

Since $B_0\equiv_F B_1$, there is $A'\subseteq F^\ast$ such that $AB_0\equiv_F A'B_1$.
We see that $A\ind^{\Game}_F B_0$ implies that $A'\ind^{\Game}_F B_1$. Moreover, $B_0\ind^u_F B_1$ 
leads, by Lemma \ref{lemma:u.game}, to $B_0\ind^{\Game}_F B_1$. Because $S\CG(F^\ast)$ is NSOP$_1$, we 
can use the Independence Theorem for $\ind^{S\CG}$ to obtain $S_0\subseteq S\CG(F^\ast)$ such that 
$S_0\models\tp_{S\CG(F^\ast)}(S\CG(A)/S\CG(B_0))\cup\tp_{S\CG(F^\ast)}(S\CG(A')/S\CG(B_1))$ and $S_0\ind^{S\CG}_{S\CG(F)}S\CG(B_0)S\CG(B_1)$. By extension over a model, there exists $S\equiv_{S\CG(B_0)S\CG(B_1)}S_0$
such that $S\ind^{S\CG}_{S\CG(F)}S\CG(C_{1,0})$. Now, we use Proposition \ref{thm:Zoe} to get 
$A_0\subseteq F^\ast$ such that $S\CG(A_0)\equiv_{S\CG(C_{1,0})}S$ (so $S\CG(A_0)\ind^{S\CG}_{S\CG(F)}S\CG(C_{1,0})$), $A_0\ind_F B_0B_1$ (so $A_0\ind^{\Game}_F C_{1,0}$), and $A_0B_0\equiv_F AB_0$
and $A_0B_1\equiv_F A'B_1$ (so $A_0B_0\equiv_F A_0B_1$ --- respecting the enumeration of tuples $\bar{B}_i$'s).

By \cite[Theorem 3.16]{kaplanramsey2017}, it is enough to show that for each $\varphi(x,b_0)\in\tp_F(A/B_0)$ the set $\{\varphi(x,b_i)\:|\:i<\omega\}$ is consistent (where $b_i$ is an element of $B_i$ corresponding to variables given by $b_0\in B_0$). We have that $\varphi(x,b_0)\in\tp_F(A_0/B_0)$, say $F^\ast\models \varphi(a_0,b_0)$ for some $a_0\in A_0$. By our construction for each $n<\omega$ there is $k<\omega$ such that
$$F^\ast\models \bigwedge\limits_{i\leqslant n}\varphi(a_k,b_i),$$
so the proof ends.
\end{proof}

\begin{cor}\label{cor:Kim.ind.description}
The following are equivalent
\begin{enumerate}
\item $a\ind^K_F b$,
\item $A\ind^K_F B$,
\item $a\ind^{\Game}_F b$,
\item $A\ind^{\Game}_F B$.
\end{enumerate}
\end{cor}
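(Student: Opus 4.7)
The plan is to assemble the four-way equivalence from the two preceding propositions together with standard invariance properties of forking and Kim-independence. Propositions \ref{prop:K.game} and \ref{prop:game.K} directly give $(2)\Leftrightarrow(3)$, so it only remains to verify $(1)\Leftrightarrow(2)$ and $(3)\Leftrightarrow(4)$.

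For $(1)\Leftrightarrow(2)$ I would invoke Corollary 5.17 of \cite{kaplanramsey2017}, which in an NSOP$_1$ theory asserts that Kim-independence over a model is preserved under passing to algebraic closures on each side. Combined with symmetry and monotonicity, this yields $a\ind^K_F b \Leftrightarrow \acl(Fa)\ind^K_F \acl(Fb)$, where $\acl$ is computed in $\theo(F^\ast)$. The remaining point is to identify this model-theoretic algebraic closure inside $\theo(F^\ast)$, restricted to subsets of $F^\ast$, with the relative algebraic closure $\acl^r_{F^\ast}$ taken in the ambient monster $\FC$; this identification for PAC substructures is what Section \ref{sec:acl} is designed to supply.

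For $(3)\Leftrightarrow(4)$ one simply unwinds the definition of $\ind^\Game$. Indeed, $a\ind^\Game_F b$ combines $a\ind_F b$ with $SG(A)\ind^{SG}_{SG(F)} SG(B)$, while $A\ind^\Game_F B$ combines $A\ind_F B$ with $SG(\acl^r_{F^\ast}(FA))\ind^{SG}_{SG(F)}SG(\acl^r_{F^\ast}(FB))$. Since $F\subseteq A$ and $A$ is already a relative algebraic closure, $\acl^r_{F^\ast}(FA)=A$ and similarly $\acl^r_{F^\ast}(FB)=B$, so the sorted-group conditions coincide on both sides. The forking parts $a\ind_F b$ and $A\ind_F B$ agree by the invariance of forking in a stable theory under algebraic closure on each side.

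The main obstacle I anticipate is precisely the algebraic-closure identification in $(1)\Leftrightarrow(2)$: one needs to know that for elements of the PAC substructure $F^\ast$, the model-theoretic algebraic closure computed in $\theo(F^\ast)$ agrees with the relative algebraic closure taken in $\FC$. Without this, the transfer of Corollary 5.17 of \cite{kaplanramsey2017} into a statement about $A=\acl^r_{F^\ast}(Fa)$ and $B=\acl^r_{F^\ast}(Fb)$ is not automatic, and one must cite the description of algebraic closure in PAC structures developed in Section \ref{sec:acl}. Once that identification is in hand, the remaining manipulations are purely formal consequences of symmetry, monotonicity, and the definition of $\ind^\Game$.
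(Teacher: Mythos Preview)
Your overall plan matches the paper's: $(2)\Leftrightarrow(3)$ comes directly from Propositions \ref{prop:K.game} and \ref{prop:game.K}, $(1)\Leftrightarrow(2)$ from symmetry, monotonicity, and Corollary 5.17 of \cite{kaplanramsey2017}, and $(3)\Leftrightarrow(4)$ by unwinding the definition of $\ind^{\Game}$ (the paper simply says these last two equivalences are ``clear'').

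However, the obstacle you flag for $(1)\Leftrightarrow(2)$ is not a real obstacle, and invoking Section~\ref{sec:acl} is both unnecessary and costly. You do not need the equality $\acl_{F^\ast}(Fa)=\acl^r_{F^\ast}(Fa)$; the chain of inclusions $a\subseteq A\subseteq \acl_{F^\ast}(Fa)$ already suffices. The right inclusion holds because $T$ has quantifier elimination: any $c\in A=\acl(Fa)\cap F^\ast$ satisfies a quantifier-free formula over $Fa$ with finitely many realizations in $\FC$, hence a fortiori finitely many in $F^\ast$, so $c\in\acl_{F^\ast}(Fa)$. Then Corollary 5.17 gives $a\ind^K_F b\Leftrightarrow \acl_{F^\ast}(Fa)\ind^K_F\acl_{F^\ast}(Fb)$, and monotonicity plus symmetry sandwich $A\ind^K_F B$ between these (using also $\acl_{F^\ast}(Fa)\subseteq\acl_{F^\ast}(FA)$ and a second application of Corollary 5.17). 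By contrast, Section~\ref{sec:acl} requires $T$ to be $\omega$-stable, an assumption the paper deliberately does \emph{not} impose in Section~\ref{sec:wit.nsop}; routing through Proposition~\ref{prop:acl_F} would needlessly narrow the scope of the corollary.
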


\begin{proof}
The equivalence (2)$\iff$(3) follows by Propositions \ref{prop:K.game} and \ref{prop:game.K}. Other  equivalences are clear.
\end{proof}

\begin{remark}\label{remark:polkowska}
Suppose that $T$ has nfcp and the property $B(3)$.
As previously, we assume
that
$(M^\ast,F^\ast)$ is weakly $\bar{\kappa}$-special (for some big $\bar{\kappa}$) such that $M^\ast\preceq\FC$, $M^\ast$ is saturated over $F^\ast$
and $F^\ast$ is PAC in $M^\ast$ (so also in $\FC$).

We recall here a result of Polkowska from \cite{Polkowska}.
Polkowska shows that bounded PAC substructures --- under the additional assumption that PAC is a first order property --- are simple. Her additional assumption is needed in her paper to fix some saturated PAC structure, which in our case is $F^\ast$ and therefore we do not assume that PAC is a first order property. We have the following:
\begin{center}
if $F^\ast$ is bounded, then $F^\ast$ is simple.
\end{center}
Let us describe another way of getting her result.
Suppose that $F^\ast$ is bounded.
The structure $F^\ast$ is bounded exactly when $\CG(F^\ast)$ is a small profinite group, i.e. it has only finitely many open normal subgroups (of finite index), hence $S\CG(F^\ast)$ is a structure such that each sort is finite. Hence $S\CG(F^\ast)$ is stable
and every element is in the algebraic closure of the empty set.
Thus the forking independence in $S\CG(F^\ast)$ is the trivial independence relation, that is, any triple belongs to the forking independence ternary relation.
In this situation the previously defined ternary relation $\ind^{\Game}$ is given in the following way
$$a\ind^{\Game}_A b\qquad\iff\qquad a\ind_A b$$
for any $a,b\in F^\ast$ and small $A\subseteq F^\ast$ ($\ind$ is the forking independence in $M^\ast$).
By Theorem \ref{thm:weak.ind.thm}, $\ind^{\Game}$ satisfies the Independence Theorem (over a model).
By the proof of \cite[Proposition 3.19]{Polkowska}, the above ternary relation satisfies the Extension axiom --- over sets containing some fixed $F\preceq F^\ast$, and since it is given by the forking independence in $M^\ast$ it also satisfies all the missing conditions from \cite[Theorem 3.3.1]{kim1}, so we can state that $F^\ast$ is simple in the language $\mathcal{L}$ expanded by adding parameters for some fixed $F\preceq F^\ast$ (and so $F^\ast$ is simple as an $\mathcal{L}$-structure) and the above $\ind^{\Game}$ coincides with the forking independence in $F^\ast$ (over sets containing some fixed $F\preceq F^\ast$, compare with \cite[Proposition 3.19]{Polkowska}).
\end{remark}

\subsection{NSOP$_n$ for $n>1$}
In this subsection, we generalize NSOP$_n$ criteria (for $n>1$) of PAC fields (\cite[Theorem 3.9]{chatzidakis2017} and \cite[Proposition 7.2.8]{NickThesis}) to PAC structures. We basically follow the proof schemes of \cite[Theorem 3.9]{chatzidakis2017} and \cite[Proposition 7.2.8]{NickThesis}. Also we give a detailed proof of a Galois theoretic result(\cite[Lemma 2.15]{chatzidakis2017}), used in the proof of \cite[Theorem 3.9]{chatzidakis2017}, in a general model theoretic setting (see Lemma \ref{lem:independenct_realization_sortedcompletesystem}). 
We need to do this detailed work because in \cite{chatzidakis2017}, 
there are minor unclear things. For example, in the proof of \cite[Lemma 2.15]{chatzidakis2017}, $L$ is not necessarily a regular extension of $F$
(we only know that $L\cap M=F$ and not that $L\cap \bar F=F$, see Example \ref{extra_example}). 


Through this subsection, we say a structure is {\em weakly special} if it is weakly $\bar\kappa$-special for a big enough cardinal $\bar \kappa$.

\begin{prop}\label{prop:galois_gps_under_conjugation}
Let $E,F\subseteq \FC$ and let $L$ be a Galois extension of $E$. Let $\phi\in \aut(\FC)$ be such that $\phi[E]\subseteq F$ and $\hat \phi :\CG(F)\rightarrow \CG(L/E)$, given by $\sigma\mapsto (\phi^{-1}\circ \sigma\circ \phi)\restriction_L$, is onto. Let $N=\ker \hat \phi$. Then we have that
\begin{enumerate}
	\item $\bar F^N=\dcl(\phi[L], F)$, and
	\item $\phi[L]\cap F=\phi[E]$. 
\end{enumerate} 
\end{prop}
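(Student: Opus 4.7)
The plan is to first set up two preliminary observations that will be used for both parts. First, $\hat\sigma$ is a group homomorphism, since $\sigma^{-1} \circ f_1 f_2 \circ \sigma = (\sigma^{-1} f_1 \sigma)(\sigma^{-1} f_2 \sigma)$, so $N = \ker\hat\sigma$ is a closed normal subgroup of $\CG(F)$. Second, $\sigma[L] \subseteq \acl(F)$ because $\sigma[L] \subseteq \acl(\sigma[E]) \subseteq \acl(F)$, and every $f \in \CG(F)$ preserves $\sigma[L]$ setwise: extend $f$ to $\tilde f \in \aut(\FC/F)$, then for any $e \in E$ we have $\sigma(e) \in \sigma[E] \subseteq F$, so $\tilde f$ fixes $\sigma(e)$, i.e.\ $\sigma^{-1} \tilde f \sigma \in \aut(\FC/E)$, and by normality of $L$ over $E$ this preserves $L$, hence $\tilde f$ preserves $\sigma[L]$. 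This ensures $\hat\sigma$ is well defined and that $\dcl(\sigma[L], F)$ is a Galois extension of $F$ in the sense of Definition \ref{galois.ext.def}.

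For (2), the inclusion $\sigma[E] \subseteq \sigma[L] \cap F$ is immediate. For the reverse inclusion, take $a = \sigma(b) \in F$ with $b \in L$. For every $\tau \in \CG(L/E)$, surjectivity of $\hat\sigma$ yields $f \in \CG(F)$ with $(\sigma^{-1} f \sigma)|_L = \tau$. Then
\[
\tau(b) \;=\; \sigma^{-1}(f(\sigma(b))) \;=\; \sigma^{-1}(f(a)) \;=\; \sigma^{-1}(a) \;=\; b,
\]
since $a \in F = \dcl(F)$ is fixed by $f$. Thus $b$ is fixed by the whole of $\CG(L/E)$, and as $L$ is Galois over $E$ we obtain $b \in E$, so $a \in \sigma[E]$.

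For (1), the easy inclusion $\dcl(\sigma[L], F) \subseteq \bar F^N$ holds because any $f \in N$ fixes $F$ pointwise (being in $\CG(F)$) and satisfies $(\sigma^{-1} f \sigma)|_L = \id_L$, hence fixes $\sigma[L]$ pointwise, and therefore fixes $\dcl(\sigma[L], F)$ pointwise. For the reverse inclusion I would compute the kernel of the restriction map $\res \colon \CG(F) \to \CG(\dcl(\sigma[L], F)/F)$: an $f \in \CG(F)$ lies in $\ker\res$ iff it fixes $\dcl(\sigma[L], F)$ pointwise iff (since it already fixes $F$) it fixes $\sigma[L]$ pointwise iff $(\sigma^{-1} f \sigma)|_L = \id_L$ iff $f \in N$. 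So $\ker\res = N$, which coincides with the pointwise stabilizer of $\bar F^N$ in $\CG(F)$. Combined with $\dcl(\sigma[L], F) \subseteq \bar F^N$, the Galois correspondence (applied to the Galois extension $F \subseteq \bar F^N$, noting that $\dcl(\sigma[L],F)$ is itself Galois over $F$ by the preliminary observation) forces $\dcl(\sigma[L], F) = \bar F^N$.

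The main obstacle is not deep but needs care: one has to verify that $\dcl(\sigma[L], F)$ is a genuine Galois extension of $F$, which in turn rests on the fact that every $f \in \CG(F)$ preserves $\sigma[L]$ setwise. This is where the hypothesis $\sigma[E] \subseteq F$ is essentially used, transporting normality of $L$ over $E$ to a $\CG(\FC/F)$-invariance of $\sigma[L]$ via conjugation by $\sigma$. Once this is in place, the rest of the argument is a clean application of the kernel computation and Galois correspondence.
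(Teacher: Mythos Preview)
Your proof is correct and follows essentially the same approach as the paper's. Both arguments establish the easy inclusion $\dcl(\sigma[L],F)\subseteq\bar F^N$ directly, then obtain the reverse inclusion by showing that the pointwise stabilizers of $\dcl(\sigma[L],F)$ and of $\bar F^N$ in $\CG(F)$ coincide (namely, both equal $N$) and invoking the Galois correspondence; part (2) is likewise the same argument, yours phrased directly and the paper's by contradiction. Your preliminary care about well-definedness of $\hat\sigma$ and about $\CG(F)$-invariance of $\sigma[L]$ is more explicit than the paper's treatment, but not strictly necessary for the Galois correspondence step, which only requires that both sets be definably closed between $F$ and $\bar F$.
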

\begin{proof}
$(1)$ $(\supseteq)$ Take $\sigma\in N$. Then, we have that $\phi^{-1}\circ \sigma\circ \phi(x)=x$ for all $x\in L$ so $\sigma(\phi(x))=\phi(x)$ for all $x\in L$.  Thus, $\sigma(y)=y$ for all $y\in \phi[L]$, and $\phi[L]\subseteq \bar F^N$.

$(\subseteq)$ Take $\sigma\in \CG(\bar{F}/\dcl(\phi[L],F))$. 
Then, for all $x\in L$, $\phi^{-1}\circ \sigma\circ \phi(x)=\phi^{-1}\circ \phi(x)=x$, that is, $\sigma\in N$ and so $\CG(\bar{F}/\dcl(\phi[L],F))\subseteq N=\CG(\bar{F}/\bar{F}^N)$. By Galois Theory, $\bar F^N\subseteq \dcl(\phi[L],F)$.

$(2)$ It is clear that $\phi[E]\subseteq \phi[L]\cap F$. Suppose $\phi(x)\in (\phi[L]\cap F)\setminus \phi[E]$ for some $x\in L$. Then, $x\notin E$ and there is $\tau\in \gal(L/E)$ such that $\tau(x)\neq x$. 
Take $\sigma\in \gal(F)$ such that $\hat \phi(\sigma)=\tau$. Then $\sigma(\phi(x))=\phi(\tau(x))\neq \phi(x)$, and so $\phi(x)\notin F$, which is a contradiction. Thus, $\phi[L]\cap F\subseteq \phi[E]$.
\end{proof}

Then the next example shows that in Proposition \ref{prop:galois_gps_under_conjugation}, $F$ does not need to be a regular extension of $\phi[E]$ (what was present in the proof of \cite[Lemma 2.15]{chatzidakis2017}).

\begin{example}\label{extra_example}
Let $E:=\mathbf{Q}$. Let $F_0$ be any field elementary equivalent to $E$ and $F:=F_0(\sqrt{2})$. Consider a Galois extension $L:=\mathbf{Q}(\sqrt{3})$ of $E$. Let $\phi:\bar E\rightarrow \bar F$ be an arbitrary embedding. Then, the embedding $\phi$ induces an epimorphism $\hat \phi:\CG(F)\rightarrow \CG(L/E)$ but $F$ is not a regular extension of $E$. Indeed, $F\cap \bar E=\mathbf{Q}(\sqrt{2})$.
\end{example}

\noindent We first prove the following lemma generalizing \cite[Lemma 2.15]{chatzidakis2017}, which with Theorem \ref{thm:Zoe} plays a crucial role in the proof of Theorem \ref{thm:NSOP_n_nge3}. In the proof of Lemma \ref{lem:independenct_realization_sortedcompletesystem}, if we work with $H$ defined in the proof of \cite[Lemma 2.15]{chatzidakis2017}, we do not know whether $L$ is a regular extension of $F$ --- by Example \ref{extra_example}. The new choice of $H$ was suggested to us by Zoé Chatzidakis after communicating with her on the proof of \cite[Lemma 2.15]{chatzidakis2017}.

\begin{lemma}\label{lem:independenct_realization_sortedcompletesystem}\cite[Lemma 2.15]{chatzidakis2017}
Let $F^*$ be a monster PAC structure. Let $F\preceq F^*$ be a small elementary substructure. Let $E\subseteq A$ and $E_1$ be definably closed substructures of $F$ such that $E\subseteq A$ and $E_1\subseteq F$ both be regular extensions, and let $S\subseteq S\CG(F)$ be a sorted complete system (see Definition \ref{def:sorted complete system}). Suppose that
\begin{itemize}
	\item there is an isomorphism $\phi_0:\bar E\rightarrow \bar E_1$ such that $\phi_0\restriction E:E\rightarrow E_1$ is an elementary map in $F$, and
	\item there is a partial $\CL_{G}(\CJ)$-elementary isomorphism $S\Psi:S\CG(A)\rightarrow S$ extending the double dual $S\Phi_0 : S\CG(E)\rightarrow S\CG(E_1)$ of $\phi_0$ (i.e. $S\Phi_0=S\CG(\phi_0)$).
\end{itemize}
Then there is a subset $B\subseteq F^*$ and an isomorphism $\phi:\bar A\rightarrow \bar B$ sending $A$ to $B$, which extends $\phi_0$, such that
\begin{itemize}
	\item $B\ind_{E_1} F$, and
	\item $S\CG(B)=S$ and the double dual $S\Phi=S\CG(\phi)$ of $\phi$ is equal to $S\Psi$. 
\end{itemize}
Moreover, $(B,E_1)$ realizes $\tp_F(A,E)$.
\end{lemma}

\begin{proof}
Let $N_0$ be the kernel of the ``restriction" map $\CG(F)\cong GS\CG(F)\to G(S)$ and let $M=\bar F^{N_0}$. Take $A_1\subseteq \FC$ such that $\bar A_1 A_1\equiv_{\bar E_1} \phi_0[\bar A] \phi_0[A]$ and $A_1\ind_{E_1} F$. Let $\phi_1:\bar A\rightarrow \bar A_1$ be an elementary map sending $A$ to $A_1$ and extending $\phi_0$.

Then, we have that the double dual $S\Phi_1=S\CG(\phi_1)$ of $\phi_1$ extends $S\Phi_0$, and the dual $\Phi_1=\CG(\phi_1)$ of $\phi_1$ defines an isomorphism from $\CG(A_1)$ to $\CG(A)$ which induces the dual $\Phi_0=\CG(\phi_0)$ of $\phi_0$.
Also the dual $\Psi$ of $S\Psi$ defines an isomorphism from $\CG(M/F)$ to $\CG(A)$, which induces $\Phi_0$. So we have the following diagram:
$$
\begin{tikzcd}
\CG(A_1) \ar[r,"\Phi_1"] \ar[d,"\res"]& \CG(A) \ar[r, "\Psi^{-1}"] \ar[d, "\res"]& \CG(M/F) \ar[d, "\res"]\\
\CG(E_1) \ar[r, "\Phi_0"]& \CG(E) \ar[r, "\Phi_0^{-1}"]& \CG(E_1)
\end{tikzcd}
$$
Note that $\CG(\dcl(\bar A_1\bar F)/\dcl(A_1 F))\cong \CG(A_1)\times_{\CG(E_1)} \CG(F)$, via $\sigma\mapsto {({\sigma\restriction_{\bar A_1},} \sigma \restriction_{\bar F})}$, by \cite[Corollary 3.38]{Hoff3}.
Consider the following profinite group 
$$H=\{\big((\Phi_1^{-1}\circ \Psi)(\sigma\restriction_M), \sigma\big):\sigma\in \CG(F)\}$$ which can be identified with a closed subgroup of $\CG(\dcl(\bar A_1\bar F)/\dcl(A_1 F))$. Let $L=\dcl(\bar A_1 \bar F)^H$ and so $\CG(\dcl(\bar A_1 \bar F)/L)=H$. Note that $L$ contains $\dcl(A F)$. Since $H$ projects onto $\CG(A_1)$ and $\CG(F)$, which are isomorphisms, two restriction maps from $\CG(\dcl(\bar A_1\bar F)/L)$ to $\CG(A_1)$ and $\CG(F)$ are onto, and the restriction to $\CG(F)$ is an isomorphism. So we have that $L$ is a regular extension of $A_1$ and $F$.

Consider the following diagram $(*)$:
$$
\begin{tikzcd}
\CG(F^*) \ar[r, "\Phi_2"] \ar[d, "\res"']& \CG(\dcl(\bar A_1 \bar F)/L) \ar[d, "\res"]\\
\CG(F) \ar[ur, dashrightarrow, "\res^{-1}", "\cong"'] \ar[r, "\id"']& \CG(F)
\end{tikzcd}
$$
\noindent Applying the Embedding Lemma \cite[Lemma 3.5]{DHL1} to the diagram $(*)$, we have an automorphism $\phi_2\in \aut(\FC)$ such that
\begin{itemize}
	\item $\phi_2[L]\subseteq F^*$,
	\item $\phi_2\restriction_{\bar F}=\id_{\bar F}$,
	\item $\Phi_2:\CG(F^*)\rightarrow \CG(\dcl(\bar A_1 \bar F)/L), \sigma\mapsto \phi_2^{-1} \circ \sigma\restriction_{\phi_2[\dcl(\bar A_1 \bar F)]}\circ (\phi_2\restriction_{\dcl(\bar A_1 \bar F)})$.
\end{itemize}
If there is no confusion, we write $\phi_2$ for $\phi_2\restriction_{\dcl(\bar A_1 \bar F)}$. Moreover, by Proposition \ref{prop:galois_gps_under_conjugation} (if we set $F^*$, $L$ and $\dcl(\bar{A}_1\bar F)$ in the place of structures $F$, $E$ and $L$ from Proposition \ref{prop:galois_gps_under_conjugation} respectively), 
for $N:=\ker \Phi_2$ we have that
\begin{align*}
\bar F^{*N}&= \dcl(F^* \phi_2[\dcl(\bar A_1 \bar F)])\\
&= \dcl(F^*\bar F \phi_2[\bar A_1])\\
&=\dcl(F^* \bar F).
\end{align*}

Set $B:=\phi_2[A_1]\subseteq \bar F^*$ and $\phi:=\phi_2\circ \phi_1$. It is clear that $\phi$ extends $\phi_0$ and $B\ind_{E_1} F$. It remains to show that $S\CG(B)=S$ and the double dual $S\Phi$ of $\phi$ is equal to $S\Psi$.
\
\\
\textbf{Claim}
$\dcl(F^* M)=\dcl(F^* B)$.
\
\\ Proof of the claim: 
Take $\sigma \in \CG(\bar F^*/\dcl(F^* M))$. Then, we have that
\begin{align*}
\Phi_2(\sigma)&=\res^{-1}(\sigma\restriction_{\bar F})\\
&=((\Phi_1^{-1}\circ \Psi)(\sigma\restriction_{M}),\sigma\restriction_{\bar F})\\
&=((\Phi_1^{-1}\circ \Psi)(\id_M),\sigma\restriction_{\bar F})\\
&=(\id_{\bar A_1}, \sigma\restriction_{\bar F})(\in H).
\end{align*}

\noindent Take $b\in B$ and $a\in A_1$ with $\phi_2(a)=b$. Thus, we have that
\begin{align*}
\sigma(b)&=\sigma\circ \phi_2(a)\\
&=\phi_2\circ (\phi_2^{-1}\circ \sigma\circ \phi_2)(a)\\
&=\phi_2\circ \Phi_2(\sigma)(a)\\
&=\phi_2(a)=b,
\end{align*}  
and so $B\subseteq \dcl(F^* M)$. Take $\tau\in \CG(\bar F^*/\dcl(F^* B))$. Then, by the similar way, we have that $\Phi_2(\tau)=(\id_{\bar A_1},\tau\restriction_{\bar F})\in H$. Since $\id_{\bar A_1}=(\Phi_1^{-1}\circ \Psi)(\tau_M))$, $\tau_M=\id_M$, which implies that $M\subseteq \dcl(F^* B)$.

\
\\
\textbf{Claim}
$S\CG(B)=S$ and $S\Phi=S\Psi$.
\
\\ Proof of the claim: 
Consider the following diagram $(\dagger)$:
$$
\begin{tikzcd}
\CG(\dcl(F^* B)/F^*) \ar[rrr, "\Theta"] \ar[d,"\res"]& & & \CG(\dcl( F^* M)/F^*)(=\CG(\dcl(F^* B)/F^*)) \ar[d, "\res"]\\
\CG(B) \ar[r, "\Phi_2"]& \CG(A_1) \ar[r, "\Phi_1"]& \CG(A) \ar[r, "\Psi^{-1}"]& \CG(M/F)
\end{tikzcd}
$$
where $\Theta=\res^{-1}\circ\Psi^{-1}\circ \Phi_1\circ \Phi_2\circ \res$.
We will show that $\Theta = \id$.

Let $\sigma,\tau \in \CG(F^*)$ be such that $\Theta(\sigma\restriction_{\dcl(F^* M)})=\tau\restriction_{\dcl(F^* M)}$. Then, we have $\Psi(\tau\restriction_M)=\Phi_1(\Phi_2(\sigma \restriction_{\bar B}))$ and so $(\Phi_2(\sigma\restriction_{\bar B}), \tau\restriction_{\bar F})\in H$. Note that $$\CG(\dcl(F^* \bar F/F^*)\cong \CG(\dcl(\bar A_1 \bar F)/L):\sigma_0\mapsto \res^{-1}(\sigma_0\restriction_{\bar F}).$$ Take $\gamma \in \CG(\dcl(F^* \bar F)/F^*)$ such that $\res^{-1}(\gamma\restriction_{\bar F})=(\Phi_2(\sigma\restriction_{\bar B}), \tau\restriction_{\bar F})$. It follows $\res^{-1}(\gamma\restriction_{\bar F})=\phi_2^{-1}\circ \gamma\restriction_{\phi_2[\dcl(\bar A_1 \bar F)]}\circ \phi_2$ and so
\begin{align*}
(\phi_2^{-1}\circ \gamma\restriction_{\phi_2[\dcl(\bar A_1 \bar F)]}\circ \phi_2)\restriction_{\bar A_1}&=(\phi_2^{-1}\circ \gamma\restriction_{\phi_2[\bar A_1]}\circ \phi_2)\restriction_{\bar A_1}\\
&=\Phi_2(\gamma\restriction_{\bar B})
\end{align*}
hence $\Phi_2(\sigma\restriction_{\bar B})=\Phi_2(\gamma\restriction_{\bar B})$. Thus, we have $\sigma\restriction_{\bar B}=\gamma\restriction_{\bar B}$ and $\sigma\restriction_{\dcl(F^* B)}=\gamma\restriction_{\dcl(F^* B)}$. Moreover,
\begin{align*}
(\phi_2^{-1}\circ \gamma\restriction_{\phi_2[\dcl(\bar A_1 \bar F)]}\circ \phi_2)\restriction_{\bar F}&=(\phi_2^{-1}\circ \gamma\restriction_{\bar F}\circ \phi_2)\restriction_{\bar F}\\
&=\gamma\restriction_{\bar F}
\end{align*}
and so $\tau\restriction_{\bar F}=\gamma\restriction_{\bar F}$. Thus, we have that $\tau\restriction_{M}=\gamma \restriction_{M}$ and $\tau\restriction_{\dcl(F^*M)}=\gamma\restriction_{\dcl(F^*M)}$.
Therefore, $\sigma\restriction_{\dcl(F^*M)}=\gamma\restriction_{\dcl(F^*M)}=\tau\restriction_{\dcl(F^*M)}$ and $\Theta$ is the identity map.

Since $\Theta$ is the identity map, the dual $S\Theta : S\CG(\dcl(F^*B)/F^*)\rightarrow S\CG(\dcl(F^*B)/F^*)$ is the identity hence $S\Phi_2\circ S\Phi_1\circ S\Psi^{-1}:S\rightarrow S\CG(B)$ is also the identity map. Therefore, we conclude that $S\CG(B)=S$ and $S\Psi=(S\Phi_2\circ S\Phi_1)=S\Phi$. 
Here ends the proof of the claim.

The moreover part comes from Theorem \ref{thm:PAC_type_by_completesystem} using the map $\phi$.
\end{proof}

\noindent Now, we provide criteria for NSOP$_n$ of PAC structures for $n> 2$.

\begin{theorem}\label{thm:NSOP_n_nge3}\cite[Theorem 3.9]{chatzidakis2017}
Let $F$ be a PAC structure. Suppose $\theo(S\CG(F))$ has NSOP$_n$ for $n>2$. Then $\theo(F)$ has NSOP$_n$.
\end{theorem}

\begin{proof}
Let $(M,F)^{Sk}$ be the Skolemization of $(M,F)$ in the expanded language $\CL_P^{Sk}$. Let $(M^*,F^*)^{Sk}$ be a weakly special model of $\theo((M,F)^{Sk})$ (i.e. $(M^*,F^*)^{Sk}$ is the Skolemization of $(M^*,F^*)$ in the language $\CL_P^{Sk}$ and $(M^*,F^*)$ is weakly special in $\CL_P$). So, we have that for any $A\subseteq M^*$, 
\begin{itemize}
	\item $\acl_{\CL_P^{Sk}}(A)=\dcl_{\CL_P^{Sk}}(A)$ is an elementary substructure, and
	\item $\bar A\subseteq \acl_{\CL_P^{Sk}}(A)$.
\end{itemize}
Let $\theta(x,y)$ be an $\CL$-formula with $|x|=|y|$.  Suppose that there is an infinite sequence $a_i\in F^*$, $i\in \omega$ such that $F^*\models \bigwedge_{i<j\in \omega}\theta(a_i,a_j)$. By Ramsey and compactness, we may assume that there is an indiscernible sequence $a_i\in F^*$, $i\in \omega+\BZ$, such that $F^*\models \bigwedge_{i<j\in \omega+\BZ}\theta(a_i,a_j)$. To show that $\theo(F^*)$ has NSOP$_n$, we need to show that 
$$F^*\models \exists x_0,\ldots,x_{n-1}\left( \bigwedge \theta(x_i,x_{i+1})\wedge \theta(x_{n-1},x_0)\right ).$$
Let $\theta_P(\x,\y)\equiv \bigwedge P(x_j)\wedge \bigwedge P(y_j)\wedge \theta(\x,\y)$. 
We have that $(M^*,F^*)\models \theta_P(a_i,a_{i+1})$ holds for each $i\in\omega$.
Put $E_0:=(a_i)_{\omega\le i<\BZ}$. Then, for each $i\in\omega$, $\tp_{M^*}(a_i/a_{<i}E_0)$ is finitely satifiable in $E_0$ and so $\{a_i:i\in\omega\}$ is a Morely sequence over $E_0$.  Put $E:=\acl_{F^*}^r(E_0)$ $(=\bar E_0\cap F^*)$. We have that $(a_j)_{j>\alpha}$ is $E_0$-indiscernible in $\CL_P^{Sk}$ and $(\ov{Ea_j})_{j>\alpha}$ is $\bar E$-indiscernible in $\CL_P^{Sk}$ after fixing an enumeration of $\ov{Ea_j}$.

So, we may assume that there is an infinite sequence $(a_i)_{i\in\omega}\subseteq F^*$ and $E=\acl_{F^*}^r(E)\subseteq F^*$ such that
\begin{itemize}
	\item $F^*\models\theta(a_i,a_{i+1})$ for all $i\in\omega$ or, equivalently, $(M^*,F^*)\models \theta_P(a_i,a_{i+1})$ for all $i\in\omega$,
	\item $(\ov{Ea_i})_{i\in \omega}$ is $\bar E$-indiscernible in $\CL_P^{Sk}$, and
	\item $\{a_i|\ i\in \omega \}$ is independent over $\bar E$ in $M^*$.
\end{itemize}

\noindent Fix an automorphism $\phi_1\in\aut(M^*/\bar E)$ such that $\phi_1[A_0]=A_1$, and let $S\Phi_1:S\CG(A_0)\rightarrow S\CG(A_1)$ be the double dual of $\phi_1\restriction_{\bar A_0}$. For each $i\in \omega$, put $A_i:=\acl_{F^*}^r(Ea_i)$ and $K_{i,i+1}:=\acl_{F^*}^r(A_i,A_{i+1})$. Then, $(\bar A_i)$ is $\bar E$-indiscernible in $\CL_P^{Sk}$, and so for $q:=\tp_{\CL_P}(\bar K_{0,1},\bar A_0,\bar A_{1}/\bar E)$ we have $$(M^*,F^*)\models \bigwedge_{i\in \omega}q(\bar K_{i,i+1},\bar A_i,\bar A_{i+1}).$$

\noindent For each $i\in\omega$, we have $$(S\CG(K_{i,i+1}),S\CG(A_i),S\CG(A_{i+1}))\equiv_{S\CG(E)}(S\CG(K_{0,1}),S\CG(A_0),S\CG(A_1))$$ and $$S\CG(F^*)\models \bigwedge_{i\in\omega}p(S\CG(A_i),S\CG(A_{i+1}))$$
where $p:=\tp_{\CL_G(\CJ)}(S\CG(A_0),S\CG(A_1)/S\CG(E))$.

Since $\theo(S\CG(F^*))$ has NSOP$_n$, there are $S_0,\ldots,S_{n-1}\subseteq S\CG(F^*)$ such that $(S_i,S_{i+1})$ and $(S_{n-1},S_0)$ realize $p$. Take $S_{i,i+1}$ for $i<n-1$ and $S_{n-1,0}$ such that 
$$(S_{i,j},S_i,S_j)\equiv_{S\CG(E)}(S\CG(K_{0,1}),S\CG(A_0),S\CG(A_1))$$
for $(i,j)\in I:=\{(0,1),(1,2),\ldots, (n-2,n-1), (n-1,0)\}$.
Thus for each $(i,j)\in I$ we have an $\CL_G(\CJ)(S\CG(E))$-elementary isomorphism $S\Psi_{i,j}:S\CG(K_{0,1})\rightarrow S_{i,j}$ such that
\begin{itemize}
	\item $S\Psi_{i,j}[S\CG(A_0)]=S_i$ and $S\Psi_{i,j}[S\CG(A_1)]=S_j$, and
	\item for $i<n-1$, $$S\Psi_{i+1,i+2}\restriction_{S\CG(A_0)}=S\Psi_{i,i+1}\circ S\Phi_1,$$ and $$S\Psi_{0,1}\restriction_{S\CG(A_0)}=S\Psi_{n-1,0}\circ S\Phi_1.$$
\end{itemize}
Moreover, for each $i<n-1$ we have the following diagrams
$$
\begin{tikzcd}
S\CG(A_0)\ar[r,"S\Phi_1"] \ar[dr, "S\Psi_{i+1,i+2}"']& S\CG(A_1) \ar[d, "S\Psi_{i,i+1}"]& S\CG(A_0)\ar[r,"S\Phi_1"] \ar[dr, "S\Psi_{0,1}"']& S\CG(A_1) \ar[d, "S\Psi_{n-1,0}"]\\
& S_{i+1}& &S_0
\end{tikzcd}
$$
\noindent Without loss of generality, we may assume that $(S_{0,1},S_0,S_1)=(S\CG(K_{0,1}),S\CG(A_0),S\CG(A_1))$ and $S\Psi_{0,1}=\id_{S\CG(K_{0,1})}$ is induced from the $\CL(\bar E)$-isomorphism $\psi_{0,1}=\id_{\ov{A_0A_1}}:\ov{A_0A_1}\rightarrow \ov{A_0A_1}$.

\
\\
\textbf{Claim}
There exist a sequence $B_0,\ldots, B_n \subseteq F^*$ and a sequence of $\CL(\bar E)$-isomorphisms $\psi_{i,i+1}:\bar K_{0,1}\rightarrow \ov{B_iB_{i+1}}$, $i<n$, extending the $\CL(E)$-elementary map from $K_{0,1}$ to $\acl_{F^*}^r(B_i,B_{i+1})$ in $F^*$, such that
\begin{itemize}
	\item $\{B_0,\ldots,B_n\}$ is $\ind$-independent over $E$ in $M^*$;,
	\item $B_i B_{i+1}\equiv_E A_0 A_1$ in $F^*$,
	\item $S\CG\big(\acl_{F^*}^r(B_i,B_{i+1})\big)=\begin{cases}
	S_{i,i+1}\mbox{ if } i<n-1\\
	S_{n-1,0}\mbox{ if }i=n-1
	\end{cases}$, and
	\item the double dual of $\psi_{i,i+1}$ is equal to $S\Psi_{i,i+1}'$ where $$S\Psi_{i,i+1}'=\begin{cases}
	S\Psi_{i,i+1}\mbox{ if } i<n-1\\
	S\Psi_{n-1,0}\mbox{ if } i=n-1
	\end{cases}.$$
\end{itemize}
\
\\ Proof of the claim: 
We recursively construct such a sequence.
Put $B_0=A_0$, $B_1=A_1$, and $\psi_{0,1}=\id_{\bar K_{0,1}}$. Suppose we have  $B_0,\ldots, B_i\subseteq F^*$ and an $\CL(\bar E)$-isomorphism $\psi_{j,j+1}:\bar K_{0,1}\rightarrow\ov{B_jB_{j+1}}$ extending the $\CL(E)$-elementary map from $K_{0,1}$ to $\acl_{F^*}^r(B_jB_{j+1})$ in $F^*$ for some  $i<n$ and for each $j<i$ such that
\begin{itemize}
	\item $\{B_0,\ldots,B_i\}$ is $\ind$-independent over $E$ in $M^*$,
	\item $B_j B_{j+1}\equiv_E A_0 A_1$ in $F^*$,
	\item $S\CG\big(\acl_{F^*}^r(B_j,B_{j+1})\big)=S_{j,j+1}$ for $j<i$,
	\item $\psi_{j+1,j+2}\restriction_{\bar A_0}=\psi_{j,j+1}\circ \phi_1\restriction_{\bar A_0}$ for $j<i-1$, and
	\item the double dual of $\psi_{j,j+1}$ is equal to $S\Psi_{j,j+1}'$ for $j<i$.
\end{itemize}
Let $F_i$ be a small elementary substructure of $F^*$ containing $B_0=A_0, B_1=A_1,\ldots,B_i$. We apply Lemma \ref{lem:independenct_realization_sortedcompletesystem} to the case $F:=F_i$, $E:=A_0\subseteq A:=K_{0,1}$, $E_1:=B_i$, $S\Psi:=S\Psi_{i,i+1}'$, and $\phi_0:=\psi_{i-1,i}\circ \phi_1:\bar A_0\rightarrow \bar B_i$, and so, $S\Phi_0:=S\Psi_{i-1,i}\circ \Phi_1:S\CG(A_0)\rightarrow S\CG(B_i)$.
It follows that there are $B_{i+1}\subseteq F^*$ and $\CL$-isomorphisms $\psi_{i,i+1}:\bar K_{0,1}\rightarrow \ov{B_iB_{i+1}}$ extending $\psi_{i-1,i}\circ \phi_1$ such that
\begin{itemize}
	\item $\psi_{i,i+1}\restriction_{K_{0,1}}:K_{0,1}\rightarrow \acl_{F^*}^r(B_iB_{i+1})$ is an $\CL$-elementary map in $F^*$ and $K_{0,1}A_0\equiv \acl_{F^*}^r(B_i B_{i+1})B_i$,
	\item the double dual of $\psi_{i,i+1}$ is equal to $S\Phi_{i,i+1}$, and
	\item $\acl_{F^*}^r(B_i B_{i+1})\ind_{B_i}F_i$.
\end{itemize}
Since $B_iB_{i+1}\equiv A_0A_1$ in $F^*$, we have that $B_{i+1}\ind_{E} B_i$. So, by transitivity, $B_{i+1}\ind_{E} F_i$ and $B_{i+1}\ind_E B_{\le i}$, hence $\{B_0,\ldots,B_i,B_{i+1}\}$ is $\ind$-independent over $E$. 
Here ends the proof of the claim.

We have that $S\Psi_{n-1,0}\circ S\Phi_1=S\Psi_{0,1}\restriction_{S\CG(A_0)}=\id_{S\CG(A_0)}$, thus $S\CG(B_n)=S\CG(A_0)=S_0$. We apply Proposition \ref{thm:Zoe} to the case $E:=E$, $A:=B_1$, $B:=\acl_{F^*}^r(B_2,\ldots,B_{n-1})$, $C_1:=B_0$, $C_2:=B_n$, $\phi:=\psi_{n-1,0}\circ \phi_1$, and $S:=S_0$ 
to obtain
$B_0'\subseteq F^*$ which realizes the type $\tp_{F^*}(B_0/B_1)\cup\tp_{F^*}(B_n/\acl_{F^*}^r(B_2,\ldots,B_{n-1})$. 
Therefore the tuple $(B_0',B_1,\ldots,B_{n-1})$ witnesses $$F^*\models \exists x_0,\ldots,x_{n-1}\left( \bigwedge \theta(x_i,x_{i+1})\wedge \theta(x_{n-1},x_0)\right ).$$
\end{proof}

\noindent
Now, we provide a NSOP$_2$ criterion for PAC structures.

\begin{theorem}\label{thm:NOSP2}\cite[Proposition 7.2.8]{NickThesis}
Let $F$ be a PAC structure. 
Then $\theo(F)$ is NSOP$_2$ provided $\theo(S\CG(F))$ is NSOP$_2$.
\end{theorem}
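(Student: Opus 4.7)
The plan is to argue contrapositively by mirroring the strategy of Theorem \ref{thm:SG.NSOP1}, but adapted to the SOP$_2$ setting, and following the template of Ramsey's \cite[Proposition 7.2.8]{NickThesis}. Recall that $\theo(F^*)$ has SOP$_2$ iff there exists a formula $\theta(x,y)$ and a tree of parameters $(a_\eta)_{\eta \in 2^{<\omega}} \subseteq F^*$ such that for every branch $\beta \in 2^\omega$ the set $\{\theta(x, a_{\beta\upharpoonright n}) : n<\omega\}$ is consistent, while for every pair of incomparable nodes $\eta, \nu$ the conjunction $\theta(x,a_\eta) \wedge \theta(x,a_\nu)$ is inconsistent. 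Assume, toward contradiction, that $\theo(F^*)$ has SOP$_2$ witnessed by such $\theta$ and $(a_\eta)$, while $\theo(SG(F^*))$ is NSOP$_2$.

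First, working inside a Skolemized pair $(M^*,F^*)^{\mathrm{Sk}}$ as in the proof of Theorem \ref{thm:NSOP_n_nge3}, and replacing $(a_\eta)$ by a strongly tree-indiscernible refinement via the tree Ramsey theorem, I would use stability of $\theo(M^*)$ and local character to arrange that the tree is indiscernible in $\CL_P^{\mathrm{Sk}}$ over some small $E = \acl^r_{F^*}(E) \subseteq F^*$ and that distinct branches of $(\bar{a}_\eta)$ are $\ind$-independent over $\bar E$ in $M^*$. Setting $A_\eta := \acl^r_{F^*}(E a_\eta)$ and, for comparable $\eta \trianglelefteq \nu$, $K_{\eta,\nu} := \acl^r_{F^*}(A_\eta A_\nu)$, the tree-indiscernibility transfers to $(SG(A_\eta))_{\eta \in 2^{<\omega}}$ in $SG(F^*)$ by interpretability (Theorem \ref{thm:interpretability_SG(K)}) and Corollary \ref{cor:SG.elementary1}.

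Second, for any two incomparable nodes $\eta \perp \nu$ with common predecessor $\rho$, I would apply the Independence Theorem available in $SG(F^*)$ (which holds over models as $\theo(SG(F^*))$ is NSOP$_2$ combined with the Kim-independence machinery and the extension axiom) to amalgamate $SG(A_\eta)$ and $SG(A_\nu)$ over $SG(A_\rho)$. Then, using Proposition \ref{thm:Zoe} (Chatzidakis' Theorem) together with the Weak Independence Theorem (Theorem \ref{thm:weak.ind.thm}), this SG-amalgamation lifts to an actual $C \subseteq F^*$ realizing both $\tp_{F^*}(A_\eta / A_\rho)$ and $\tp_{F^*}(A_\nu / A_\rho)$. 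In particular, $\theta(x,a_\eta)$ and $\theta(x,a_\nu)$ become jointly consistent — contradicting SOP$_2$ for $\theta$, and thus completing the argument, unless the amalgamation step forces a parallel SOP$_2$ configuration in $SG(F^*)$; in that branch of the proof one exhibits the resulting SG-tree explicitly to contradict NSOP$_2$ of $SG(F^*)$.

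The main obstacle is the second step: establishing that NSOP$_2$ of $SG(F^*)$ alone provides enough amalgamation strength to handle incomparable nodes of a tree (as opposed to the two-element amalgamation used for NSOP$_1$ through criterion (2) of \cite[Theorem 5.7]{ArtemNick}). One has to work either with Ramsey's tree-amalgamation characterization of NSOP$_2$ or iterate a two-sided Independence Theorem along the tree while maintaining sortedness and regularity of the relevant extensions, so that Proposition \ref{thm:Zoe} can be invoked at each step. Tracking the sort data $\mathcal F$ and the choice functions of Remark \ref{rem:translation.B} along the inductive construction, so that consistency on branches and inconsistency on antichains both survive the $F^* \leftrightarrow SG(F^*)$ translation, is the delicate technical core.
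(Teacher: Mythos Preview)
Your proposal has a genuine gap in the second step, and the setup misplaces the amalgamation problem.

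First, the claim that ``the Independence Theorem [is] available in $SG(F^*)$ \ldots as $\theo(SG(F^*))$ is NSOP$_2$'' is false. The Independence Theorem over a model for Kim-independence characterizes NSOP$_1$, not NSOP$_2$; from NSOP$_2$ alone you get no amalgamation theorem of that shape, so you cannot invoke Theorem \ref{thm:weak.ind.thm} either. Second, even granting some amalgamation, what you describe --- producing $C\subseteq F^*$ realizing $\tp_{F^*}(A_\eta/A_\rho)\cup\tp_{F^*}(A_\nu/A_\rho)$ --- amalgamates the \emph{parameter} side of the configuration. It does not manufacture a single element satisfying $\theta(x,a_\eta)\wedge\theta(x,a_\nu)$ for the original incomparable parameters, which is what contradicting SOP$_2$ requires. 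The object you never introduce is a \emph{realization} along a branch.

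The paper's argument fixes exactly this. One passes (after Skolemization and re-indexing over $\omega^{<\omega+\omega}$) to a strongly indiscernible tree and picks a single $a\in F^*$ realizing $\theta(x,b_{0^i})$ for all $i$, then sets $E$ to be an elementary substructure obtained from an initial segment of the spine so that the remaining tree $(B_\eta)$ is strongly indiscernible over $\bar E$ and $(B_{0^i})$ is $\bar A$-indiscernible with $A:=\acl^r_{F^*}(Ea)$. Kim's lemma in the stable ambient theory gives $A\ind_E B_0$ and $B_0\ind_E B_{\langle 1\rangle}$. Now set $q(X;SG(B_\emptyset)):=\tp(SG(A)/SG(B_\emptyset))$: the transferred tree $(SG(B_\eta))$ is strongly indiscernible over $SG(E)$ and $SG(A)$ witnesses consistency of $q$ along the spine. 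If $q(X;SG(B_0))\cup q(X;SG(B_{\langle 1\rangle}))$ were inconsistent, then by strong indiscernibility $q$ would witness SOP$_2$ in $SG(F^*)$ --- so NSOP$_2$ of $SG(F^*)$ is used \emph{directly via the definition}, not through any independence theorem, to obtain $S_0\models q(X;SG(B_0))\cup q(X;SG(B_{\langle 1\rangle}))$. Finally one applies Proposition \ref{thm:Zoe} with $C_1:=A$, $C_2:=A'$ (a copy of $A$ over $B_{\langle 1\rangle}$), $A:=B_0$, $B:=B_{\langle 1\rangle}$ and $S:=S_0$ to produce $A''\subseteq F^*$ with $A''\equiv_{B_0}A$ and $A''\equiv_{B_{\langle 1\rangle}}A'$; the corresponding element realizes $\theta(x,b_{0^\omega\frown 0})\wedge\theta(x,b_{0^\omega\frown\langle 1\rangle})$, contradicting SOP$_2$. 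The missing idea in your outline is precisely this: carry a branch-realization $A$ into $SG$, use failure of SOP$_2$ there at two incomparable nodes, and lift back via Chatzidakis' theorem --- no independence-theorem machinery is needed or available.
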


\begin{proof}
We repeat here some parts of the proof of \cite[Proposition 7.2.8]{NickThesis} but using results generalized for PAC structures.

Let $(M^*,F^*)^{Sk}$ be a weakly special model of $\theo((M,F)^{Sk})$. And $(M^*,F^*)^{Sk}$ is the Skolemization of $(M^*,F^*)$ in the language $\CL_P^{Sk}$ and $(M^*,F^*)$ is weakly special in $\CL_P$.
Suppose that $F^*$ has $SOP_2$ witnessed by an $\CL$-formula $\theta(x;y)$ 
and thus $(M^*,F^*)^{Sk}$ has $SOP_2$ witnessed by the $\CL_P^{Sk}$-formula $\theta_P(x;y)\equiv \theta(x;y)\wedge P(x)\wedge P(y)$. 
By compactness, there is a strongly indiscernible tree $(b_{\eta})_{\eta\in \omega^{<\omega+\omega}}$ witnessing $SOP_2$ for the formula $\theta_P$ in the language $\CL_P^{Sk}$. Take $a\in F^*$ such that $\models \theta(a;b_{0^i})$ for all $i<\omega+\omega$. By Ramsey, compactness, and after shifting by an automorphism, we may assume that $(b_{0^i})_{i<\omega+\omega}$ is $a$-indiscernible in $\CL_P^{Sk}$.

Set $E:=\acl_{\CL_P^{Sk}}((b_{0^j})_{j<\omega})\cap F^*$ and note that $E$ is an elementary substructure of $F^*$. 
Let $A:=\acl_{F^*}^r(Ea)$ and let $B_{\eta}:=\acl_{F^*}^r(Eb_{0^{\omega}\frown\eta})$, where $\eta\in 2^{<\omega}$. Observe that in $\CL_P^{Sk}$ we have that
\begin{itemize}
	\item $(\bar B_{\eta})_{\eta\in 2^{<\omega}}$ is strongly indiscernible over $\bar E$,
	\item $(\bar B_{0^i})_{i<\omega}$ is $\bar A$-indiscernible, and
	\item for each $i_1>i_2>\cdots>i_n$, $\tp_{\CL_P^{Sk}}(B_{0^{i_1}}/E B_{0^{i_2}}\ldots B_{0^{i_n}})$ is finitely satisfiable in $E$ so $(B_{0^i})_{i<\omega}$ is an $E$-finitely satisfiable Morley sequence in $\CL$, enumerated in reverse.
\end{itemize}
By Kim's lemma for stable theories, it follows $A\ind_E B_0$. 
By strong indiscernibility, we have that $(B_{0^i})_{i<\omega}$ is also $B_{<1>}$-indiscernible. By Kim's lemma again, we obtain that $B_{<1>}\ind_E B_0$ and thus $B_0\ind_E B_{<1>}$. Choose $A'$ such that $\bar A'\bar B_{<1>}\equiv_E^{\CL_P^{Sk}} \bar A\bar B_0$.

Let $q(X;S\CG(B_{\emptyset}):=\tp(S\CG(A)/S\CG(B_{\emptyset}))$. Since $(\bar B_{\eta})_{\eta\in 2^{<\omega}}$ is strongly indiscernible over $\bar E$ and $(\bar B_{0^i})_{i<\omega}$ is $\bar A$-indiscernible, we have that
\begin{itemize}
	\item $(S\CG(B_{\eta}))_{\eta\in2^{<\omega}}$ is a strongly indiscernible tree over $S\CG(E)$, and
	\item $(S\CG(B_{0^i}))_{i<\omega}$ is $S\CG(A)$-indiscernible and hence $$S\CG(A)\models \bigcup_{i<\omega}q(X;S\CG(B_{0^i})).$$
\end{itemize}
\noindent As $\theo(S\CG(F^*))$ is NSOP$_2$, there is a realization $S_0\subseteq S\CG(F^*)$ of $q(X;B_0)\cup q(X;B_{<1>})$. We apply Proposition \ref{thm:Zoe} to the case $E:=E$, $A:=B_0$, $B:=B_{<1>}$, $C_1:=A$, $C_2:=A'$, $S:=S_0$, and for $\phi$ being an automorphism of $M^*$ given by the fact that $\bar A'\bar B_{<1>}\equiv_E^{\CL_P^{Sk}} \bar A\bar B_0$.
We get $A''$ such that $A''\equiv_{B_0}^{\CL} A$ and $A''\equiv_{B_{<1>}}A'$. 

Therefore $\{\theta(x;b_{0^{\omega}\frown0}),\theta(x;b_{0^{\omega}\frown<1>})\}$ is consistent, witnessed by $A''$, which contradicts the definition of $SOP_2$.
\end{proof}

\section{Algebraic closure in PAC structures}\label{sec:acl}
The following section is independent from the previous ones and basically generalizes well known facts about the algebraic closure in PAC fields (from \cite{ChaPil}).
In this section, we assume that $T$ is $\omega$-stable. This assumption is used in the proof of Lemma \ref{lemma:reg_to_PAC}, where we need to know that
an algebraic extension of a PAC structure is a PAC structure (in the case of fields such fact is known as the \emph{Ax-Roquette theorem}). Actually, it is enough to have only finitely many nonforking extensions for a given type, see assumptions of \cite[Lemma 4.5]{Hoff4}.


\begin{lemma}\label{lemma:reg_to_PAC}\cite[Lemma 4.4]{ChaPil}
Assume that $T$ is $\omega$-stable,
$F$ is PAC in $\mathfrak{C}$ and $F\subseteq E$ is regular. Then there exists $F'$ such that $E\subseteq F'$ and the restriction map $\res:\CG(F')\to\CG(F)$ is an isomorphism.
\end{lemma}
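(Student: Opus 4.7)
The plan is to realize $F'$ as the fixed substructure of $\acl(E)$ under the image of a continuous section of the restriction map $\res_{E/F}: \CG(E) \to \CG(F)$; the existence of such a section rests on projectivity of $\CG(F)$ as a profinite group, which is the analogue for PAC substructures (in an $\omega$-stable ambient theory) of Ax's classical theorem for PAC fields.

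Granted a continuous section $s: \CG(F) \to \CG(E)$ (so $\res_{E/F} \circ s = \id_{\CG(F)}$), I would set $F' := \acl(E)^{\,s(\CG(F))}$, the fixed substructure under the closed subgroup $s(\CG(F)) \leqslant \CG(E)$. Then $E \subseteq F'$ because $s(\CG(F)) \leqslant \CG(E) = \aut(\acl(E)/E)$ fixes $E$ pointwise, and the sandwich $E \subseteq F' \subseteq \acl(E)$ forces $\acl(F') = \acl(E)$. The Galois correspondence for $\acl(E)/F'$ then gives $\CG(F') = \aut(\acl(E)/F') = s(\CG(F))$, and the restriction map $\res: \CG(F') \to \CG(F)$ equals $\res_{E/F}$ restricted to $s(\CG(F))$; by the section property this restriction is inverse to $s$, hence an isomorphism.

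It remains to produce the section. By Fact \ref{fact:regular_to_sorted}, regularity of $F \subseteq E$ makes $\res_{E/F}$ a continuous (sorted) epimorphism of profinite groups, and projectivity of $\CG(F)$ then yields a continuous homomorphic lift $s$. Projectivity in turn reduces, by standard profinite-group arguments, to solving every finite embedding problem for $\CG(F)$: given a finite surjection $\alpha: H \twoheadrightarrow \CG(F)/N$ for some $N \in \CN(\CG(F))$, one seeks a continuous homomorphism $\CG(F) \to H$ lifting the quotient map. The PAC hypothesis on $F$ (existential closedness in every regular extension) together with $\omega$-stability of $T$ (providing rank-theoretic control on the types involved) allows one to produce such lifts by realizing $H$ as the Galois group of a suitable finite regular extension of $F$ inside $\mathfrak{C}$.

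The main obstacle is establishing projectivity of $\CG(F)$ in the general $\omega$-stable setting. In the field case this uses Lang--Weil estimates to find the required rational points; in the abstract $T$-setting the geometric inputs have to be replaced by model-theoretic ones, exploiting the PAC property to realize generic points of "$T$-definable data" as elements of $\mathfrak{C}$, and $\omega$-stability to ensure that the assembled finite lifts fit together into a continuous map and that the resulting regular extensions of $F$ remain small substructures of $\mathfrak{C}$.
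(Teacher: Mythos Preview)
Your approach is essentially the same as the paper's: use projectivity of $\CG(F)$ (which the paper imports from \cite{Hoff4}, just as you treat it as a black box) to split the restriction epimorphism, then take $F'$ to be the fixed substructure of the algebraic closure under the image of the section, and conclude via the Galois correspondence.

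The only difference is that the paper first enlarges $E$ to a PAC substructure $E_1\supseteq E$ (using \cite{Hoff3}) and takes the section into $\CG(E_1)$ rather than $\CG(E)$; this lets them conclude, via \cite{PilPol}/\cite{Hoff3}, that the resulting $F'$ is itself PAC. That extra conclusion is not part of the stated lemma, so your direct construction inside $\acl(E)$ is perfectly adequate for what is asked, and slightly more economical. The paper's detour buys PAC-ness of $F'$, which is actually used in the iterated construction of Lemma~\ref{lemma:reg_to_sat_PAC} (where the inductive hypothesis ``each $F_\alpha$ is PAC'' is needed to reapply the lemma), so if you intend your version to feed into that next lemma you would want to add the extension-to-PAC step.
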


\begin{proof}
We copy here part of the proof of \cite[Lemma 4.4]{ChaPil}, but using results generalized for PAC structures.

By \cite[Proposition 3.6]{Hoff3}, there exists $E_1\subseteq\mathfrak{C}$ such that $E\subseteq E_1$ is regular and $E_1$ is PAC. By \cite[Theorem 4.4]{Hoff4}, the group $\CG(F)$ is projective.
Since $F\subseteq E_1$ is regular, $\res:\CG(E_1)\to\CG(F)$ is onto. Therefore there exists an embedding $i:\CG(F)\to\CG(E_1)$ such that the following diagram commutes
$$\xymatrix{
\CG(F) \ar[r]^{\id} \ar[dr]_{i}& \CG(F) \\
& \CG(E_1) \ar[u]_{\res}
}$$
Consider $F':=\acl(E_1)^{i[\CG(F)]}$, which is PAC in  $\mathfrak{C}$ by $\omega$-stability and \cite[Proposition 3.9]{PilPol} (or more precisely: by \cite[Lemma 4.5]{Hoff4}). By the Galois correspondence and the commutativity of the above diagram, we see that $\res:\CG(F')\to\CG(F)$ is an isomorphism.
\end{proof}

\begin{lemma}\label{lemma:reg_to_sat_PAC}
Assume that $T$ is $\omega$-stable,
$F$ is PAC in $\mathfrak{C}$, $F\subseteq E$ is regular and $\kappa$ is some cardinal. Then there exists $F^*$ such that $E\subseteq F^*$, $F^*$ is $\kappa$-PAC and the restriction map $\res:\CG(F^*)\to\CG(F)$ is an isomorphism.
\end{lemma}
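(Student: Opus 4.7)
The plan is to iterate Lemma \ref{lemma:reg_to_PAC} along a transfinite chain. First, applying that lemma to the regular extension $F\subseteq E$ produces a PAC structure $F_0\supseteq E$ with $\res:\CG(F_0)\to\CG(F)$ an isomorphism. Next, I will build recursively an increasing chain of PAC substructures $(F_\alpha)_{\alpha<\kappa^+}$ (or another large enough length depending on the precise formulation of Definition~3.1 in \cite{PilPol}) such that each inclusion $F_\alpha\subseteq F_{\alpha+1}$ is regular and the restriction $\res:\CG(F_{\alpha+1})\to\CG(F_\alpha)$ is an isomorphism; composing with the previous isomorphisms, each $\res:\CG(F_\alpha)\to\CG(F)$ will be an isomorphism as well.

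At the successor step, suppose $F_\alpha$ has been constructed. By a standard bookkeeping enumeration I choose a small regular extension $E_{\alpha+1}\supseteq F_\alpha$ inside $\mathfrak{C}$ which realises all the demands (i.e.\ partial types over parameter sets of size $<\kappa$ that are realisable in some regular extension) that have been scheduled for this stage. Since $F_\alpha\subseteq E_{\alpha+1}$ is regular and $F_\alpha$ is PAC, Lemma \ref{lemma:reg_to_PAC} yields a PAC structure $F_{\alpha+1}\supseteq E_{\alpha+1}$ with $\res:\CG(F_{\alpha+1})\to\CG(F_\alpha)$ an isomorphism. At limit ordinals $\delta$, I set $F_\delta:=\bigcup_{\alpha<\delta}F_\alpha$; this is again PAC (any existential formula over $F_\delta$ already has parameters in some $F_\alpha$, where PAC holds), the inclusion $F_\alpha\subseteq F_\delta$ is regular for each $\alpha<\delta$, and $\CG(F_\delta)\to\CG(F)$ is the inverse limit of a directed system of isomorphisms, hence an isomorphism.

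Setting $F^*$ to be the terminal union, the scheduling guarantees that every regular extension of $F^*$ (arising from parameters of size $<\kappa$) has its relevant witnesses realised at some earlier stage, hence in $F^*$; this is exactly what is needed for $\kappa$-PAC in the sense of \cite{PilPol}. By construction $E\subseteq F_0\subseteq F^*$ and $\res:\CG(F^*)\to\CG(F)$ is an isomorphism.

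The main obstacle is the bookkeeping: one has to prepare in advance a reservoir of potential demands (finite tuples from the eventual $F^*$ together with types they support in some regular extension in $\mathfrak{C}$) and schedule them so that every demand that will arise in $F^*$ is handled at a stage strictly earlier than it can be formulated. This is formally analogous to the construction of $\kappa$-saturated models along elementary chains, and the $\omega$-stability of $T$ is what guarantees enough projectivity of $\CG(F)$ (used inside Lemma \ref{lemma:reg_to_PAC}) for the successor step to go through uniformly along the chain.
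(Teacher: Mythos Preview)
Your outline is correct and follows the same architecture as the paper's proof: iterate Lemma~\ref{lemma:reg_to_PAC} along a transfinite chain, taking unions at limits, and verify $\kappa$-PAC at the end. Two points of comparison are worth noting. First, the paper avoids your bookkeeping/scheduling entirely: at each successor stage it simply realizes \emph{all} quantifier-free types in $\ST(F_\alpha,\kappa,\kappa)$ (the stationary types over small parameter sets) at once, rather than distributing demands across stages. Second, the paper is explicit about the one technical step your sketch leaves implicit, namely how to assemble many realizations into a single regular extension of $F_\alpha$: one takes an $F_\alpha$-independent set of realizations and invokes \cite[Lemma~3.40]{Hoff3} to conclude regularity of $\dcl(F_\alpha,\bigcup X)$ over $F_\alpha$. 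With that in hand the verification of $\kappa$-PAC is direct (any stationary $p$ over small $A\subseteq F^*$ lies in some $\ST(F_\alpha,\kappa,\kappa)$ and is realized in $F_{\alpha+1}$), and no scheduling argument is needed.
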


\begin{proof}
For a small substructure $N$ of $\mathfrak{C}$, let us define 
\begin{IEEEeqnarray*}{rCl}
\ST(N,\kappa,\lambda) &:=& \{\qftp(\bar{d}/A)\;\;|\;\;A\subseteq N,\;|A|<\kappa, \\
 & & N\subseteq\dcl(N,\bar{d})\text{ is regular and }|\bar{d}|<\lambda\}.
\end{IEEEeqnarray*}
For a suitably big cardinal $\lambda$, we will recursively construct a tower $(F_{\alpha})_{\alpha<\lambda}$ of substructures of $\mathfrak{C}$ such that
\begin{itemize}
\item $F_0=F$, $E\subseteq F_1$,
\item $F_{\alpha}\subseteq F_{\beta}$ and $\res:\CG(F_{\beta})\to\CG(F_{\alpha})$ is an isomorphism for all $\alpha\leqslant\beta$,
\item each $F_{\alpha}$ is PAC,
\item $F_{\alpha+1}$ realizes each element of $\ST(F_{\alpha},\kappa,\kappa)$, where $\alpha\geqslant 1$.
\end{itemize}
Of course we put $F_0:=F$. Let $F_1$ be the PAC structure given by Lemma \ref{lemma:reg_to_PAC} for $F$ and $E$.

Successor case for $\alpha\geqslant 1$. Assume that we defined $F_{\alpha}$ and we want to show the existence of a proper $F_{\alpha+1}$, where $\alpha\geqslant 1$. Let $X$ be a set containing exactly one realization of each element from $\ST(F_{\alpha},\kappa,\kappa)$. Without loss of generality, we assume that $X$ is $F_{\alpha}$-independent in $\mathfrak{C}$. By \cite[Lemma 3.40]{Hoff3}, we see that $F_{\alpha}\subseteq\dcl(F_{\alpha},\bigcup X)$ is regular. Hence, by Lemma \ref{lemma:reg_to_PAC} (used for ``$F=F_\alpha$" and ``$E=\dcl(F_{\alpha},\bigcup X)$"), we find an appropriate $F_{\alpha+1}$.

Limit case. Now, assume that we defined $F_{\alpha}$ for all $\alpha$ strictly smaller than some $\beta$. If $\beta$ is a limit cardinal, then set $F_{\beta}:=\bigcup_{\alpha<\beta}F_{\alpha}$. To see that $F_{\beta}$ is PAC, we suppose that there is a regular extension $F_{\beta}\subseteq N$ such that $N\models\exists x\,\psi(a,x)$ for some $a\in F_{\beta}$. Naturally $a\in F_{\alpha}$ for some $\alpha<\beta$.
By \cite[Lemma 3.5]{Hoff3}, $F_{\alpha}\subseteq N$ is regular, so $F_{\alpha}\models \exists x\,\psi(a,x)$ and so $\psi(a,x)$ is realized in $F_{\beta}$. We need to show that the restriction map $\res:\CG(F_{\beta})\to\CG(F_{\alpha})$ is an isomorphism. It follows, since for all $\beta>\alpha'\geqslant\alpha$ we have that $\res:\CG(F_{\alpha'})\to\CG(F_{\alpha})$ is an isomorphism and 
$$\acl(F_{\beta})=\bigcup\limits_{\alpha'<\beta}\acl(F_{\alpha'}).$$

Assume that we have our tower of structures $F_{\alpha}$ for all $\alpha<\lambda$. We put
$$F^*:=\bigcup\limits_{\alpha<\lambda}F_{\alpha}.$$
As in the proof of the limit case, we can show that $F^*$ is PAC and that $\res:\CG(F^*)\to\CG(F)$ is an isomorphism. Obviously $E\subseteq F^*$. It is left to show that $F^*$ is $\kappa$-PAC.

Suppose that $A\subseteq F^*$ is such that $|A|<\kappa$, $|\bar{x}|<\kappa$
and $p(\bar{x})$ is a complete stationary type over $A$ (in the sense of $\FC$).
We need to find a realization of $p$ inside $F^\ast$.
Consider $\bar{d}\subseteq\FC$ such that $p(\bar{x})\subseteq\tp(\bar{d}/F^\ast)$ is the unique non-forking extension.
Since $\tp(\bar{d}/A)$ is stationary, also $\tp(\bar{d}/F^\ast)$ is stationary which means that
$F^*\subseteq\dcl(F^*,\bar{d})$ is regular. 
Since $|A|<\kappa$, there exists $\alpha<\lambda$ such that $A\subseteq F_{\alpha}$. By \cite[Lemma 3.5]{Hoff3}, $F_{\alpha}\subseteq F^*$ is regular, hence $F_{\alpha}\subseteq\dcl(F^*,\bar{d})$ is regular. In particular, $F_{\alpha}\subseteq\dcl(F_{\alpha},\bar{d})$ is regular. This means that $\qftp(\bar{d}/A)\in\ST(F_{\alpha},\kappa,\kappa)$ and so there is a realization of $\qftp(\bar{d}/A)$ in $F_{\alpha+1}$. Because the last type is quantifier-free, it is also realized in $F^*$, which ends the proof, since $T$ has quantifier elimination.
\end{proof}

\begin{lemma}\label{lemma:no_regular_substructures_in_PAC}\cite[Proposition 4.5]{ChaPil}
Assume that $T$ is $\omega$-stable.
Let $F$ be a $\kappa$-PAC substructure in $\mathfrak{C}$ (for some $\kappa\geqslant|T|^+$). If $E\subseteq F$ is regular such that $|E|^+\leqslant\kappa$, then $\acl_F(E)=\dcl(E)$.
\end{lemma}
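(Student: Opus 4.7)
The plan is to assume for contradiction that some element $a\in\acl_F(E)\setminus\dcl(E)$ exists, and then exhibit infinitely many distinct realizations in $F$ of a formula that by hypothesis has only finitely many solutions in $F$. First I would observe that $a\notin\acl(E)$: by regularity of $E\subseteq F$ we have $\dcl(F)\cap\acl(E)=\dcl(E)$, and $a\in F\subseteq\dcl(F)$, so $a\in\acl(E)$ would force $a\in\dcl(E)$, a contradiction. On the other hand, $a\in\acl_F(E)$ provides an $\mathcal{L}$-formula $\varphi(x,\bar y)$ and a tuple $\bar e\in E$ such that $F\models\varphi(a,\bar e)$ and $\varphi(F,\bar e)$ is finite.

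The core step is to construct an infinite Morley sequence realizing $\tp(a/\acl(E))$ entirely inside $F$. Since $T$ is $\omega$-stable, the type $p:=\tp(a/\acl(E))$ is stationary, and by the previous paragraph it is non-algebraic. I would inductively produce $b_0,b_1,\ldots\in F$ so that each $b_i$ realizes the unique non-forking extension of $p$ to $\acl\big(E\cup\{b_j:j<i\}\big)$. At each stage the parameter set has cardinality at most $|E|+|T|+\aleph_0<\kappa$ (using $|E|^+\leqslant\kappa$ and $\kappa\geqslant|T|^+$ together with the standard $\omega$-stable bound $|\acl(X)|\leqslant|X|+|T|$), while the type to be realized is stationary and non-algebraic (stationarity and non-algebraicity both being preserved under non-forking extension in stable theories). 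By the $\kappa$-PAC-ness of $F$ (realization in $F$ of stationary types over parameter sets of size less than $\kappa$, precisely the property exploited at the end of the proof of Lemma \ref{lemma:reg_to_sat_PAC}), such a $b_i\in F$ exists. Non-algebraicity of the non-forking extension guarantees $b_i\notin\acl\big(E\cup\{b_j:j<i\}\big)$, so the $b_i$ are pairwise distinct.

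Each $b_i$ realizes $p$, hence in particular $\tp(a/E)$, so $F\models\varphi(b_i,\bar e)$. Therefore $\{b_i:i<\omega\}\subseteq\varphi(F,\bar e)$, contradicting the finiteness of $\varphi(F,\bar e)$. The main obstacle, really the only technical point, is making sure the cardinality and stationarity bookkeeping at the inductive step is clean enough that $\kappa$-PAC-ness can be applied uniformly; the actual model-theoretic content is mild and is handled by $\omega$-stability.
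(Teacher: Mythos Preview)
Your argument has a genuine gap at the final step. You correctly observe that $a\notin\acl(E)$ and that $\kappa$-PAC-ness lets you realize, inside $F$, the non-forking extensions of the stationary type $p=\tp_{\mathfrak{C}}(a/\acl(E))$. But the conclusion ``$F\models\varphi(b_i,\bar e)$'' does not follow from ``$b_i$ realizes $\tp_{\mathfrak{C}}(a/E)$''. The formula $\varphi$ witnesses algebraicity \emph{in the structure $F$}, which is not a model of $T$; quantifier elimination in $T$ is irrelevant here. Two elements of $F$ can have the same type over $E$ in $\mathfrak{C}$ (same quantifier-free type) while having different types over $E$ in $F$. Concretely, in the field case, $a$ and $b_i$ could both be transcendental over $E$ yet be distinguished in $F$ by a formula like $\exists y\,(y^2=x)$. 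So you have produced infinitely many distinct elements of $F$ realizing $\tp_{\mathfrak{C}}(a/E)$, but you have no control over $\tp_F(b_i/E)$, and hence no contradiction with $|\varphi(F,\bar e)|<\omega$.

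This is exactly why the paper's proof takes a more circuitous route. It does not try to realize $\tp_F(m/E)$ directly via $\kappa$-PAC. Instead, it takes an $\acl(E)$-conjugate copy $F'=\Phi(F)$ with $F\ind_E F'$, so that $m':=\Phi(m)$ lies in $F'\setminus F$ but satisfies $F'\models\theta(m')$. It then builds a $\kappa$-PAC structure $F^*\supseteq\dcl(F,F')$ whose Galois group restricts isomorphically to both $\CG(F)$ and $\CG(F')$, and invokes Corollary~\ref{cor2033} to obtain $F^*\equiv_{E,m_1,\ldots,m_k}F$ and $F^*\equiv_{E,m'}F'$. This elementary-equivalence step is precisely what transfers the $F$-formula $\theta$ to $F^*$, yielding $k+1$ realizations there and the desired contradiction. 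The Galois-theoretic machinery is not decoration; it is what bridges the gap between types in $\mathfrak{C}$ and types in $F$ that your argument leaves open.
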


\begin{proof}
The proof is based on the proof of \cite[Proposition 4.5]{ChaPil}, but there are significant changes related to the saturation assumption in Corollary \ref{cor2033} (hidden in the notion of a $\kappa$-PAC substructure). Without loss of generality, let us assume that $E=\dcl(E)$.

We will show that $\acl_F(E)\subseteq E$. Suppose not, so there exists $m\in \acl_F(E)\setminus E$.
Consider $\theta(x)\vdash\tp_F(m/E)$. 
Let $k$ be the number of all realizations of $\tp_F(m/E)$ in $F$, say $\{m_1,\ldots,m_k\}$ are all the realizations of this type.
One has that $F\models\exists^{=k}x\,\theta(x)$. 

Let $\phi\in\aut(\mathfrak{C}/\acl(E))$ be such that $F\ind_{E}F'$, where $F':=\phi[F]$.
Let $m':=\phi(m)\in F'\setminus E$, so $m'\not\in F$ (otherwise $m'\in F\cap\acl(E)=E$). Note that $F\models \theta(m_1)\wedge\ldots\wedge\theta(m_k)$ and $F'\models \theta(m')$.

We see that $\Phi(:=\CG(\phi)):\CG(F)\to\CG(F')$ is 
an isomorphism of profinite groups.
By \cite[Corollary 3.38]{Hoff3}, for each $\sigma\in\CG(F)$ there exists $\tilde{\sigma}\in\aut(\mathfrak{C})$ such that $\tilde{\sigma}\restriction_{\acl(F)}=\sigma$ and $\tilde{\sigma}\restriction_{\acl(F')}=\Phi(\sigma)$.
Note that
$$i:\CG(F)\to\aut\Big(\dcl\big(\acl(F),\acl(F')\big)/\dcl(F,F') \Big)$$
given by $i(\sigma):=\tilde{\sigma}|_{\dcl(\acl(F),\acl(F'))}$ is an embedding. 
We have the following commuting diagram
$$\xymatrix{
& i[\CG(F)] \ar[dr]^{\res} \ar@/^0.5pc/[dl]^{\res} & \\
\CG(F)\ar@/^1pc/[ur]^{i} \ar[rr]_{\Phi} & & \CG(F')
}$$

We define
$$D:=\dcl\big(\acl(F),\acl(F')\big)^{i[\CG(F)]}.$$
One has that $\dcl(F,F')\subseteq D$.
By the Galois correspondence
$$i[\CG(F)]=\aut\Big(\dcl\big(\acl(F),\acl(F')\big)/D\Big).$$
Note that $F\subseteq D$ is regular. To see this, assume that $m\in D\cap\acl(F)$. If $m\not\in F$, then there is $\sigma\in\CG(F)$ such that $\sigma(m)\neq m$. But since $m\in D$, we have that $m=i(\sigma)(m)=\sigma(m)$, a contradiction. 

By Lemma \ref{lemma:reg_to_sat_PAC} for $F\subseteq D$, there exists a $\kappa$-PAC substructure $F^*$ such that $D\subseteq F^*$ and $\res:\CG(F^*)\to\CG(F)$ is an isomorphism. We have the following diagram
$$\xymatrix{
& \CG(F^*) \ar[d]^{\res} \ar@/^1pc/[ddr]^{\res} \ar@/_1pc/[ddl]_{\res}^{\cong}& \\
& i[\CG(F)] \ar[dl]_{\res}^{\cong} \ar[dr]^{\res} & \\
\CG(F) \ar[rr]^{\Phi}_{\cong} & & \CG(F')
}$$
so every arrow in it is an isomorphism.

Now we use Corollary \ref{cor2033} for the following situations:
$$\xymatrix{
\CG(F^*) \ar[rr]^{\res} \ar[dr]_{\res}& & \CG(F) \ar[dl]^{\res}\\
& \CG\big(\dcl(E,m_1,\ldots,m_k)\big) &
}$$
$$\xymatrix{
\CG(F^*) \ar[rr]^{\res} \ar[dr]_{\res}& & \CG(F') \ar[dl]^{\res}\\
& \CG\big(\dcl(E,m')\big) &
}$$
to state that $F^*\equiv_{E,m_1,\ldots,m_k} F$ and $F^*\equiv_{E,m'} F'$. 
Therefore $F^*\models\theta(m_1)\wedge\ldots\wedge\theta(m_k)$, 
$F^*\models\theta(m')$ 
and $F^*\models\exists^{=k}x\,\theta(x)$, a contradiction.
\end{proof}

\begin{prop}[Algebraic closure description]\label{prop:acl_F}
Assume that $T$ is $\omega$-stable.
Let $F$ be a $\kappa$-PAC substructure in $\mathfrak{C}$, where $\kappa\geqslant|T|^+$. Suppose that $A\subseteq F$ is such that $|A|^+\leqslant\kappa$. 
Then $\acl_F(A)=\acl(A)\cap F=\acl^r_F(A)$.
\end{prop}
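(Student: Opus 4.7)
The plan is to reduce the statement to Lemma \ref{lemma:no_regular_substructures_in_PAC}. Set $E := \acl^r_F(A) = \acl(A) \cap F$. The second equality in the statement is the definition of $\acl^r_F$, so the content reduces to the chain $\acl^r_F(A) \subseteq \acl_F(A) \subseteq \acl^r_F(A)$.

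For the inclusion $E \subseteq \acl_F(A)$, I will use quantifier elimination of $T$. Given $m \in \acl(A) \cap F$, some $\mathcal{L}$-formula has $m$ as one of finitely many realizations in $\FC$; by quantifier elimination, this formula can be taken quantifier-free, say $\psi(x,a)$ with $a \in A$. Since $\psi$ is quantifier-free and $F$ is an $\mathcal{L}$-substructure of $\FC$, the set $\{m' \in F : F \models \psi(m',a)\}$ embeds into the finite set $\psi(\FC,a)$, so $m \in \acl_F(A)$.

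For the opposite inclusion $\acl_F(A) \subseteq E$, I will apply Lemma \ref{lemma:no_regular_substructures_in_PAC} to the pair $E \subseteq F$. Three routine verifications are needed. First, $E$ is definably closed: since $F = \dcl(F)$ (because $\mathcal{L}$ comes from Morleyisation, every substructure is definably closed) and $\dcl(\acl(A)) = \acl(A)$, one gets $\dcl(E) \subseteq \dcl(F) \cap \dcl(\acl(A)) = F \cap \acl(A) = E$. Second, the extension $E \subseteq F$ is regular: indeed $F \cap \acl(E) \subseteq F \cap \acl(\acl(A)) = F \cap \acl(A) = E = \dcl(E)$. Third, $|E|^+ \leq \kappa$: $\omega$-stability gives $|\acl(A)| \leq |A| + |T|$, and combining $|A|^+ \leq \kappa$ with $|T|^+ \leq \kappa$ yields $|E|^+ \leq \kappa$.

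Once these are in hand, Lemma \ref{lemma:no_regular_substructures_in_PAC} delivers $\acl_F(E) = \dcl(E) = E$. Since $A \subseteq E$ and $\acl_F$ is monotone, we get $\acl_F(A) \subseteq \acl_F(E) = E$, closing the loop. There is no serious obstacle: the decisive observation is simply that the relative algebraic closure $\acl^r_F(A)$ is itself a definably closed substructure of $F$, regular over $F$, of bounded cardinality, so Lemma \ref{lemma:no_regular_substructures_in_PAC} applies directly and algebraicity in the PAC substructure collapses onto $\mathcal{L}$-algebraicity.
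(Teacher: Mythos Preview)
Your proof is correct and follows the same approach as the paper: both directions use quantifier elimination for $\acl(A)\cap F\subseteq\acl_F(A)$ and then apply Lemma~\ref{lemma:no_regular_substructures_in_PAC} to $E=\acl(A)\cap F$ for the reverse inclusion. You fill in a few details the paper leaves implicit (definable closure of $E$, regularity of $E\subseteq F$, and the cardinality bound $|E|^+\leqslant\kappa$); one tiny remark is that the bound $|\acl(A)|\leqslant |A|+|T|$ holds for any first-order theory, so $\omega$-stability is not what gives it here.
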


\begin{proof}
Because $T$ has quantifier elimination, we have that $\acl(A)\cap F\subseteq\acl_F(A)$. On the other hand $\acl(A)\cap F\subseteq F$ is regular, hence, by Lemma \ref{lemma:no_regular_substructures_in_PAC}, 
$$\acl_F\big(\acl(A)\cap F)\big)=\dcl\big(\acl(A)\cap F\big)=\acl(A)\cap F.$$
One has $\acl_F(A)\subseteq \acl_F\big(\acl(A)\cap F)\big)=\acl(A)\cap F\subseteq \acl_F(A)$.
\end{proof}

\section{Applications}\label{sec:applications}
If we want to apply all of our results, we need to find a first order theory with the following properties
\begin{itemize}
\item elimination of quantifiers
\item elimination of imaginaries
\item the no finite cover property
\item the property $B(3)$
\item \emph{PAC is a first order property}
\item $\omega$-stability (for the description of the algebraic closure)
\end{itemize}
A very natural, but also already studied, example is $\acf_0$. Showing that all the items above hold in $\acf_0$ is related to well known algebraic facts, e.g. \emph{PAC is a first order property} 
is demonstrated in \cite[Section 11.3]{FrJa}.
Therefore we turn our attention to $\dcf_0$ (which eliminates quantifiers and imaginaries, has nfcp and is $\omega$-stable).

PAC substructures in $\dcf_0$ were considered in the last part of \cite{PilPol}, where the final result, \cite[Proposition 5.8]{PilPol}, states that two PAC differential subfields $F_1$ and $F_2$ of some monster model of $\dcf_0$ are elementarily equivalent if and only if their reducts are elementarily equivalent as fields. Let us note here that finding conditions for elementary equivalence between PAC substructures is the main result of \cite{DHL1} (so one could try to obtain \cite[Proposition 5.8]{PilPol} using results of \cite{DHL1} --- see \cite[Remark 3.8]{DHL1}) and, now, we can go further in the description of PAC
substructures in $\dcf_0$, by describing a notion of independence in these structures.

Let $\CL_0$ be the language of rings and let $\CL:=\CL_0\cup\{\partial\}$. 
If $F$ is a differential field, then we write $\Th_{\CL}(F)$ for the $\CL$-theory of $F$ and $\Th_{\CL_0}(F)$ for the $\CL_0$-theory of the $\CL_0$-reduct of $F$. 

Let us pick up some monster model $(\FC,\partial)$ of DCF$_0$.
For $A\subseteq\FC$, ``$\acl(A)$" [``$\dcl(A)$"] denotes the algebraic [definable] closure of $A$ in the sense of $\dcf_0$
and ``$\bar{A}$" denotes the algebraic closure of $A$ in the sense of $\acf_0$.
We recall basic properties of $\dcf_0$ (e.g. consult \cite{MMPfields}):

\begin{fact}\label{fact:basicfacts_ACF_DCF}
\begin{enumerate}

\item 
For a small set $A$, we have that $\dcl(A)=<A>$ and $\acl(A)=\ov{<A>}$, where $<A>$ is the differential field generated by $A$.
	
\item 
For small sets $A,B,C$, we have that $A\ind_C^{0} B$ if and only if 
$$\ov{<A>}\ind_{\ov{<C>}}^{1}\ov{<B>},$$
 where $\ind^{0}$ and $\ind^{1}$ are the forking independence relations in $DCF_0$ and $ACF_0$ respectively.

\item 
For fields $K$, $L$ of characteristic $0$, $KL$ means the field generated by $K$ and $L$ (so $KL=\dcl_{ACF_0}(K,L)$).
If $K$ and $L$ are differential subfields of the monster model of $\dcf_0$, then $<EF>=EF$ 
(which follows by Leibniz's rule).
\end{enumerate}
\end{fact}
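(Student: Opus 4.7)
The plan is to verify the three items using quantifier elimination for $\dcf_0$, the $\omega$-stability of $\dcf_0$, and elementary differential algebra; none of the three is really deep and the paper is citing them as folklore.

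For (1), the inclusion $\langle A\rangle\subseteq \dcl(A)$ is immediate since the field operations and $\partial$ are $\emptyset$-definable. For the reverse, I would argue that any $\mathcal{L}$-embedding of differential subfields of $\FC$ fixing $\langle A\rangle$ extends to an automorphism of $\FC$; this, together with QE, shows that an element outside $\langle A\rangle$ can be moved by some $\sigma\in\aut(\FC/A)$, so $\dcl(A)=\langle A\rangle$. For $\acl(A)=\overline{\langle A\rangle}$: any field-algebraic extension of a differential field carries a unique differential structure extending the given one (by implicit differentiation of the minimal polynomial), hence $\overline{\langle A\rangle}$ is a differential field, and its elements are clearly algebraic in the model-theoretic sense. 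Conversely, if $b\in\acl(A)\setminus\overline{\langle A\rangle}$, then $b$ would be field-transcendental over $\langle A\rangle$; but a transcendental element of a differential field extension has infinitely many $\langle A\rangle$-conjugates (one can realize any compatible differential specialization), contradicting $b\in\acl(A)$.

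For (2), since $\dcf_0$ and $\acf_0$ are both $\omega$-stable, forking is characterized by invariance of Morley rank, which on types of tuples from $\FC$ reduces to transcendence degree computations in the underlying field. By standard properties of forking in stable theories, $A\ind^0_CB$ is equivalent to $\acl(AC)\ind^0_{\acl(C)}\acl(BC)$, i.e.\ $\overline{\langle AC\rangle}\ind^0_{\overline{\langle C\rangle}}\overline{\langle BC\rangle}$. Over an algebraically closed differential base, the type of an algebraically closed differential field extension is determined by its field-theoretic isomorphism type (again by QE), so $\ind^0$ on such extensions coincides with the $\acf_0$-forking $\ind^1$, which is exactly linear disjointness / algebraic independence of transcendence bases.

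For (3), I only need to observe that $EF$ is already a differential field. By Leibniz's rule $\partial(ef)=\partial(e)f+e\partial(f)\in EF$ since $\partial(e)\in E\subseteq EF$ and $\partial(f)\in F\subseteq EF$; this extends to all polynomials in $E\cup F$, and the quotient rule handles inverses. Hence the field $EF$ is $\partial$-closed, so $\langle EF\rangle=EF$.

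I do not see any genuine obstacle here. The only mildly subtle point is the reduction in (2) of $\dcf_0$-forking over an algebraically closed differential base to $\acf_0$-forking on the underlying field; this is what lets one transfer the algebraic-independence criterion across the two theories, and it is precisely the reason (1) is stated first.
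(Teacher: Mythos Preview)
The paper does not give a proof of this fact at all: it is stated as folklore with a reference to \cite{MMPfields}, and only the parenthetical ``what follows by Leibniz's rule'' in (3) hints at an argument. Your sketches are correct and are exactly the standard justifications one would give; in particular your treatment of (3) is precisely what the paper's parenthetical is alluding to. One small remark on (2): as literally written the statement compares $\overline{\langle A\rangle}$ and $\overline{\langle B\rangle}$ rather than $\overline{\langle AC\rangle}$ and $\overline{\langle BC\rangle}$, so your reduction via $\acl(AC)$, $\acl(BC)$ is the right way to read it (and is how the paper uses it later, where the base is already contained in the sets in question).
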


One could ask whether a differential subfield is a PAC-differential subfield of $(\FC,\partial)$
if it is a PAC subfield as a pure field. 
The following example provides a negative answer.

\begin{example}\label{example:pac_as_purefield_but_not_pac_as_differentialfield}
Let $F$ be an algebraically closed subfield of constants of $\BC$, that is, $F$ is equipped with the trivial derivation. It is clear that $F$ is PAC as a pure field. We show that $F$ is not PAC as a differential field. Consider an absolutely irreducible affine curve $C:F(X,Y)=X-Y=0$ over $F$ and so $T_{\partial}(C):F(X,Y)=0\wedge Z-W=0$. Consider a following rational section $s:C\rightarrow T_{\partial}(C),(x,y)\mapsto (x,y,1,1)$. By \cite[Proposition 5.6]{PilPol}, if $F$ is PAC as a differential field, there is a $F$-rational point $(a,b)$ such that $(a,b,\partial(a),\partial(b))=s(a,b)=(a,b,1,1)$. But there is no such $F$-rational point because $F$ is equipped with the trivial derivation.
\end{example}

Now, one could ask whether the converse is true: 
suppose that $(F,\partial)$ is a PAC-differential subfield of $(\FC,\partial)$,
does it follow that its reduct $F$ is a PAC subfield (as pure field)? Here is the answer:

\begin{fact}
Let $(F,\partial)$ be a PAC-differential subfield of $(\FC,\partial)$. Then its reduct $F$ is a PAC field.
\end{fact}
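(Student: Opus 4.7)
The plan is to show that $F$ is existentially closed as a pure field in every regular field extension, which is the standard characterization of PAC fields. Let $L \supseteq F$ be a regular field extension (in the $\CL_0$-sense: $L \cap \bar{F} = F$ inside some fixed algebraic closure). Given an existential $\CL_0$-sentence $\psi$ with parameters in $F$ realized in $L$, we may pass to a finitely generated subfield and therefore assume $L$ is finitely generated over $F$. The goal is to promote $L$ to a regular differential extension of $(F,\partial)$ inside $(\FC,\partial)$ and then invoke the PAC assumption.

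First I would extend the derivation $\partial:F \to F$ to a derivation $\partial':L\to L$. This is always possible in characteristic $0$: pick a transcendence basis $B$ of $L/F$, extend $\partial$ to $F(B)$ by defining $\partial'$ arbitrarily on $B$ (say trivially), and then uniquely lift through the separable algebraic extension $F(B) \subseteq L$. Next I would verify that $(L,\partial')$ is a regular extension of $(F,\partial)$ in the sense of Definition \ref{regular.def}, i.e.\ $\dcl(L) \cap \acl(F) = \dcl(F)$. Since $(F,\partial)$ and $(L,\partial')$ are already differential fields, Fact \ref{fact:basicfacts_ACF_DCF} gives $\dcl(F) = F$, $\dcl(L) = L$, and $\acl(F) = \bar{F}$, so the condition reduces to $L \cap \bar{F} = F$, which is exactly regularity of $L/F$ as a field extension.

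The final step is to embed $(L,\partial')$ into $(\FC,\partial)$ over $(F,\partial)$; this is immediate since $\FC$ is a sufficiently saturated model of DCF$_0$ and $(L,\partial')$ is a small differential field extension of $(F,\partial)$. Let $N \subseteq \FC$ be the image of this embedding, so that $F \subseteq N$ is a regular extension of differential substructures of $\FC$. Since $(F,\partial)$ is PAC in $(\FC,\partial)$, it is existentially closed in $N$. Now every $\CL_0$-formula is in particular an $\CL$-formula, so $N \models \psi$ (transported from $L$) forces $F \models \psi$, proving $F$ is existentially closed in $L$ as a pure field.

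There is no real obstacle here; the only thing to be careful about is the translation of ``regular" between the two languages, but this is resolved cleanly by the observation that $\acl_{\mathrm{DCF}_0}(F)$ coincides with the field-theoretic algebraic closure $\bar{F}$ of the underlying differential field $F$, so regularity of $L/F$ in $\CL_0$ is literally the same condition as regularity of $(L,\partial')/(F,\partial)$ in $\CL$. Example \ref{example:pac_as_purefield_but_not_pac_as_differentialfield} shows the converse direction fails, confirming that the differential PAC property is strictly stronger than the field-theoretic one.
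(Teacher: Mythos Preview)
Your proposal is correct and follows essentially the same approach as the paper's proof: reduce to a finitely generated regular field extension, extend the derivation via a transcendence basis and the separable (0-\'etale) lift, embed the resulting differential field into $(\FC,\partial)$, and observe via Fact~\ref{fact:basicfacts_ACF_DCF} that the two notions of regularity coincide so the differential PAC hypothesis applies. The only cosmetic difference is that the paper phrases the derivation-extension step slightly more explicitly in terms of $F[\bar X]$ and 0-\'etality, but the content is identical.
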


\begin{proof}
We need to check whether $F$ is PAC as a pure field, i.e. suppose that some (pure) field $E$ is a regular extension (as a pure field) of $F$, we want to show that $F$ is existentially closed in $E$. 
Since we need to check that $F$ is existentially closed in $E$, we may assume that $E$ is finitely generated over $F$, say $E=F(b)$.

Let us start with extending the derivation from $F$ onto $E$.
We choose a transcendence basis $B$ of $E$ over $F$.
There is a natural derivation $D$ on $F[\bar{X}]$ ($|\bar{X}|=|B|$)
which extends derivation $\partial$ on $F$, e.g. 
$D(X_1)=0, D(X_2)=0,\ldots, D(X_{|B|})=0$ and $D(a)=\partial(a)$ for $a\in F$.
The derivation $D$ on $F[\bar{X}]$ extends to the field of fractions, so to the $F(\bar{X})\cong F(B)$, so we have a derivation on $F(B)$ which extends derivation $\partial$ on $F$.
Note that $E$ over $F(B)$ is algebraic separable, hence $0$-\'{e}tale. 
A standard argument based on $0$-\'{e}tality (e.g. see the proof of \cite[Theorem 27.2]{mat})
shows that $D$ can be extended onto $E$.
We see that $(F,\partial)\subseteq (E,D)$.

Differential field $(E,D)$ might be embedded into a model of $\dcf_0$ and because of the elimination of quantifiers in $\dcf_0$, we assume (without loss of generality) that $(E,D)$ is embedded into $(\FC,\partial)$.

To finish the proof it is enough to notice that $(F,\partial)\subseteq (E,D)$ is regular in the sense of differential fields and this is straightforward due to Fact \ref{fact:basicfacts_ACF_DCF}.(1).
\end{proof}
\noindent
Let us note what we used in the above proof:

\begin{remark}
Let $(F,\partial)\subseteq(\FC,\partial)$ be a differential subfield, then
\begin{enumerate}\label{rem:regularext_dcf_0}

	\item 
	Let $E$ be a pure field regular extension of $F$. Then, there exists a derivation $D$ on $E$
	extending the derivation $\partial$ on F. 
	Furthermore, we may assume that $(E,D)$ is a differential subfield of $\FC$ by embedding $(E,D)$
	into $\FC$ over $F$.
	
	\item 
	Let $(F,\partial)\subseteq (E,\partial)$ be differential subfields of $\FC$. Then, $(E,\partial)$ is a
	regular extension of $(F,\partial)$
	(i.e. as differential fields) if and only if $E$ is a regular extension of $F$ (i.e. as pure
	fields).
\end{enumerate}
\end{remark}

\noindent
These kinds of things happen because a type $p\in S(F)$ in $DCF_0$ is determined by $\{\tp_{ACF_0}(\partial^{\le n}(a)/F):n\ge 0\}$ for some (any) $a\models p$, 
where $\partial^{\le n}(a)=(a,\partial(a),\ldots, \partial^n(a))$.

By \cite[Lemma 3.35]{Hoff3},
if $F$ is a small subset of $\FC$ and $p$ is a complete type over $F$, then
$p$ is stationary if and only if for any $a\models p$, $\dcl(Fa)$ is a regular extension of $F$.
Combining this with Remark \ref{rem:regularext_dcf_0}, we see the relationship between possible derivations on regular extensions and stationary types.

\begin{remark}
	Let $(F,\partial)\subseteq(\FC,\partial)$,
	and let $(F,\partial)\subseteq (E,D)$ be such that
	$E$ is a regular extension of $F$ (as pure fields).
	Then $\tp((E,D)/(F,\partial))$ is stationary.
\end{remark}

\begin{proof}
If $D$ extends $\partial$, then, 
by Remark \ref{rem:regularext_dcf_0}(1), we may assume that $(E,D)$ is a differential subfield of $\FC$.
By Remark \ref{rem:regularext_dcf_0}(2), $(E,D)$ is a regular extension of $(F,\partial)$ and so
$\tp((E,D)/(F,d))$ is stationary.
\end{proof}

Suppose that $(F,\partial)$ is a PAC substructure (i.e. PAC-differential field) in $(\FC,\partial)$.
By \cite[Proposition 5.6]{PilPol}, we know that PAC is a first order property in $\dcf_0$ (also in the sense of \cite[Definition 2.6]{DHL1}, which is used here), hence
we may assume that $(F,\partial)$ is sufficiently saturated (for our purposes). 
We see that for any small subset $A$ of $F$ we have (by Proposition \ref{prop:acl_F}) that $\acl_{(F,\partial)}(A)=\acl_{\dcf_0}(A)\cap F$.

Now, we move to the notion of independence provided by our results for the theory of $(F,\partial)$.
Before applying what was proven in Section \ref{sec:wit.nsop}, we need only to show that $\dcf_0$ enjoys the property $B(3)$:

\begin{equation}\label{DCF.B3}
\dcl(\acl(Aa_0a_2)\acl(Aa_1a_2))\cap \acl(Aa_0a_1)=\dcl(\acl(Aa_0)\acl(Aa_1)),
\end{equation}
where $\{a_0,a_1,a_2\}$ is some $A$-independent set in a monster model of $\dcf_0$.

\begin{theorem}
$DCF_0$ enjoys the boundary property $B(3)$.
\end{theorem}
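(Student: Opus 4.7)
The plan is to reduce property $B(3)$ for $\dcf_0$ to property $B(3)$ for $\acf_0$, which is a standard fact about free amalgamation of algebraically closed fields.

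First, I would translate every closure operator in (\ref{DCF.B3}) into pure field-theoretic notation. For $u\subseteq\{0,1,2\}$ let $K_u:=\overline{\langle A\cup\{a_i:i\in u\}\rangle}^{\mathrm{alg}}$, where $\langle\cdot\rangle$ denotes the differential subfield generated and $\overline{\cdot}^{\mathrm{alg}}$ the algebraic closure as a pure field. By Fact \ref{fact:basicfacts_ACF_DCF}(1), $K_u=\acl_{\dcf_0}(A,a_i:i\in u)$. Since the compositum of two differential subfields is $\partial$-closed (Leibniz), $\dcl_{\dcf_0}$ of a union of differential subfields is just their compositum as pure fields. A short verification also gives $K_{\{i,j\}}=\overline{K_{\{i\}}K_{\{j\}}}^{\mathrm{alg}}$ (one inclusion follows since $K_{\{i\}}K_{\{j\}}\subseteq K_{\{i,j\}}$ and $K_{\{i,j\}}$ is algebraically closed; the other since $\langle A a_i a_j\rangle\subseteq K_{\{i\}}K_{\{j\}}$). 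Consequently, (\ref{DCF.B3}) rewrites as the purely field-theoretic identity
$$\overline{K_{\{0\}}K_{\{2\}}}^{\mathrm{alg}}\cdot\overline{K_{\{1\}}K_{\{2\}}}^{\mathrm{alg}}\;\cap\;\overline{K_{\{0\}}K_{\{1\}}}^{\mathrm{alg}}\;=\;K_{\{0\}}\cdot K_{\{1\}}$$
inside the $\acf_0$-reduct of $\FC$, where $\cdot$ denotes the field-theoretic compositum.

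Next I would invoke Fact \ref{fact:basicfacts_ACF_DCF}(2): the $A$-independence of $\{a_0,a_1,a_2\}$ in $\dcf_0$ is equivalent to the algebraic independence of $\{K_{\{0\}},K_{\{1\}},K_{\{2\}}\}$ in $\acf_0$ over the algebraically closed base $\overline{\langle A\rangle}^{\mathrm{alg}}$. The displayed identity is then precisely property $B(3)$ for $\acf_0$ applied to three algebraically closed, algebraically independent subfields over an algebraically closed base. Property $B(3)$ for $\acf_0$ is standard: using the criterion of Fact \ref{fact:equivalent_conditions_B(n)}(3), it suffices to show that every $\sigma\in\aut(\overline{K_{\{0\}}K_{\{1\}}}^{\mathrm{alg}}/K_{\{0\}}K_{\{1\}})$ extends to an automorphism of $\FC$ fixing $\overline{K_{\{0\}}K_{\{2\}}}^{\mathrm{alg}}$ and $\overline{K_{\{1\}}K_{\{2\}}}^{\mathrm{alg}}$ pointwise, which follows from stationarity of types over algebraically closed subfields in $\acf_0$ combined with the algebraic independence of $K_{\{2\}}$ from $K_{\{0,1\}}$ over $\overline{\langle A\rangle}^{\mathrm{alg}}$ (compare also with Lemma 2.14 of \cite{chatzidakis2017}, specialised to $T=\acf_0$).

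The main conceptual point is the translation step: one must check that $\acl_{\dcf_0}$ agrees with $\acl_{\acf_0}$ on every differential subfield, and that $\dcl_{\dcf_0}$ of a union of differential subfields matches the compositum in $\acf_0$. Both are immediate from Fact \ref{fact:basicfacts_ACF_DCF}(1) and Leibniz's rule. Once the translation is in place, $B(3)$ for $\dcf_0$ is nothing more than $B(3)$ for $\acf_0$.
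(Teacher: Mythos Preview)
Your proposal is correct and follows essentially the same route as the paper: both translate the $\dcf_0$ closures into pure-field compositums and algebraic closures via Fact~\ref{fact:basicfacts_ACF_DCF}, use Leibniz's rule to identify $\dcl_{\dcf_0}$ of differential subfields with the field compositum, transfer the independence hypothesis to $\acf_0$, and then invoke $B(3)$ for $\acf_0$. The only cosmetic difference is that the paper works with the differential fields $A_i:=\langle Aa_i\rangle$ and their algebraic closures, whereas you package things directly as $K_u$; you also sketch a justification of $B(3)$ for $\acf_0$, which the paper simply takes as known.
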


\begin{proof}
We need to show that equality ($\ref{DCF.B3}$) holds.
We may assume that $A=\dcl(A)$ ($=<A>$).
Put $A_i:=\dcl(Aa_i)=<Aa_i>$ for $i=0,1,2$.  

For $0\le i<j\le 2$, we have that $<Aa_ia_j>=<A_iA_j>=A_iA_j$. Then, $\dcl(Aa_i a_j)=A_iA_j$ and $\acl(Aa_ia_j)=\ov{A_iA_j}$.

Since $\{a_0,a_1,a_2\}$ are $A$-independent in $DCF_0$, by Fact \ref{fact:basicfacts_ACF_DCF}, it is equivalent to say that $\{A_0,A_1,A_2\}$ is $A$-independent in $ACF_0$. 
Because $\acf_0$ enjoys the property $B(3)$, we have that
\begin{equation}\label{ACF.B3}
(\ov{A_0A_2})(\ov{A_1A_2})\cap \ov{A_0A_1}=(\ov{A_0})(\ov{A_1})
\end{equation}
We are done if we can show
\begin{enumerate}
	\item $(\ov{A_0A_2})(\ov{A_1A_2})=\dcl(\acl(Aa_0a_2)\acl(Aa_1a_2))$, and
	\item $\ov{A_0A_1}=\dcl(\acl(Aa_0)\acl(Aa_1))$.
\end{enumerate}
Using Fact \ref{fact:basicfacts_ACF_DCF}, we obtain
\begin{align*}
\dcl(\acl(Aa_0a_2)\acl(Aa_1a_2))&=\dcl( (\ov{A_0A_1}) (\ov{A_1A_2}) )\\
&=<(\ov{A_0A_1})(\ov{A_1A_2})>\\
&=(\ov{A_0A_1})(\ov{A_1A_2})
\end{align*}
(where the last equality comes from the Leibniz's rule). Similarly
\begin{align*}
\dcl(\acl(Aa_0)\acl(Aa_1))&=<(\ov{A_0})(\ov{A_1})>\\
&=(\ov{A_0})(\ov{A_1})
\end{align*}
\end{proof}
\noindent
Note that the above easy argument works also in other theories of fields with operators (it depends on the description of the algebraic and definable closure, the description of the forking independence and the Leibniz's rule).

Let us come back to the PAC differential subfield $(F,\partial)$. If the sorted complete system $S\CG(F)$ is NSOP$_n$, then $(F,\partial)$ is NSOP$_n$, where $n\geqslant 1$ (by Theorem \ref{thm:SG.NSOP1}, Theorem \ref{thm:NOSP2} and Theorem \ref{thm:NSOP_n_nge3}).
Moreover, in the case of $n=1$, Corollary \ref{cor:Kim.ind.description} provides a description of the Kim-independence in $(F,\partial)$, and thus the crucial role played by the absolute Galois group (in the sense of $\dcf_0$) of $F$. The following remark simplifies the situation:

\begin{remark}\label{rem:coincidence_Galoisgroup_DCF_ACF}
For any differential field $K\subseteq \FC$, we have that $\CG_{\acf_0}(K)=\CG_{\dcf_0}(K)$, where
$\CG_{\acf_0}(K)$ is the absolute Galois group of $K$ considered as a pure field
and
$\CG_{\dcf_0}(K):=\aut(\acl(K)/K)$ is the absolute Galois group of $K$ considered in $\dcf_0$ (i.e.
the Shelah-Galois group over $K$ in $\dcf_0$).
\end{remark}
\begin{proof}
Let $(K,\partial)$ be a differential subfield of $\FC$ so $K=<K>=\dcl(K)$. Note that $\partial$ is extended uniquely into $\ov K$ and we also denote such a unique extension by $\partial$.

It is clear that $\CG_{\dcf_0}(K)\subseteq \CG_{\acf_0}(K)$. By definition of $\CG_{\dcf_0}(K)$, we have that $$\CG_{\dcf_0}(K)=\{\sigma\in \CG_{\acf_0}(K):\sigma\circ \partial=\partial\circ \sigma\}.$$ 
Take arbitrary $\sigma\in \CG_{\acf_0}(K)$. Define a map $\partial^{\sigma}:=\sigma\circ \partial \circ \sigma^{-1}:\ov K\rightarrow \ov K$. We show that $\partial^{\sigma}=\partial$. It is clear that $\partial^{\sigma}$ is a differential operator on $\ov K$. Furthermore, $\partial^{\sigma}\restriction K=\partial$. By the uniqueness of such a derivation on $\ov K$, we conclude that $\partial^{\sigma}=\partial$, and $\sigma\in \CG_{\dcf_0}(K)$.
\end{proof}

\begin{example}
The following example comes from a conversation with Minh Chieu Tran.
Consider a model $(K,\partial,\sigma)$ of DCFA (the model companion of the theory of difference-differential fields of characteristic zero) and put $F:=K^{\sigma}$ (i.e. the invariants of the automorphism). Since $(K,\sigma)$ is a model of ACFA, we know that $F$ is a pseudo-finite field (by \cite[Proposition 1.2]{acfa1}) and therefore $\CG_{\dcf_0}(F)=\CG_{\acf_0}(F)=\hat{\mathbb{Z}}$.
On the other hand $(F,\partial)$ is a PAC-differential subfield of $(K,\partial)$ (by \cite[Proposition 3.51]{Hoff3}), where the last structure is a model of $\dcf_0$.

By \cite[Theorem 4.10]{Hoff3}, we know that $(F,\partial)$ is simple. In this case we can also apply \cite{Polkowska} to get that $(F,\partial)$ is simple and to get a characterization of forking in $(F,\partial)$ (see \cite[Proposition 3.19]{Polkowska}). 
Simplicity and a description of the forking independence follow also from Remark \ref{remark:polkowska}.
\end{example}

Now, let us sketch prospective applications in neostability.
Consider the infinite dihedral group
$$G:=D_{\infty}=\langle x,\,y\;|\;xyx^{-1}y,\,y^2\rangle=\langle \sigma_1,\,\sigma_2\;|\;\sigma_1^2,\,\sigma_2^2\rangle$$
(which is infinite, fintely generated, and virtually free, but is not free).
By \cite[Theorem 4.6]{ozlempiotr}, we know that the kernel of the universal Frattini cover of the profinite completion of $G$ is not a small profinite group.

Suppose that we are working with a stable $\mathcal{L}$-theory $T=T^{\eq}$ with quantifier elimination. Consider monster model $\mathfrak{C}\models T$ (as we did in the previous parts of this paper). We define the language $\mathcal{L}_G$ as $\mathcal{L}$ extended by unary function symbols
$(\sigma_g)_{g\in G}$. We say that an $\mathcal{L}$-substructure $M$ of $\FC$ is equipped with a $G$-action if there exists a group homomorphism $G\to\aut_{\mathcal{L}}(M)$, in this case we can talk about an $\mathcal{L}_G$-structure $(M,G)$. Now, suppose that $(M,G)$ is existentially closed (in the sense of the language $\mathcal{L}_G$) among all $\mathcal{L}$-substructures of $\FC$ equipped with a $G$-action.

By \cite[Corollary 5.6]{Hoff4}, we have
$$\xymatrix{\CG(M) \ar[r]^-{\res} & \CG(M\cap\acl^{\FC}_{\mathcal{L}}(M^G))\ar@{-}[r]^{\cong} & \ker\Big(\fratt{\hat{G}}\to\hat{G} \Big)
}$$
where $M^G:=\{m\in M\;|\;(\forall g\in G)(\sigma_g(m)=m) \}$,
and the left arrow is surjective (which follows from the fact that $M\cap\acl^{\FC}_{\mathcal{L}}(M^G)\subseteq M$ is regular and from Fact \ref{fact:regular_to_sorted}).
Hence $\CG(M)$ is not a small profinite group.

We know that $M^G$ and $M$ are PAC substructures in $\FC$ (see \cite[Proposition 3.51, Proposition 3.56]{Hoff3}). In the case of fields (i.e. $T=$ACF), we know that $M$ is simple if and only if $\CG(M)$ is a small profinite group. 
We also know that bounded PAC substructures are simple (\cite{Polkowska}).
Therefore it is reasonable to ask:
\begin{question}\label{final.q1}
Is $P$ not simple, if $\CG(P)$ is unbounded? (for $P$ being a PAC substructure of a stable monster model)
\end{question}

\begin{remark}
Note that Question \ref{final.q1} has positive answer in the case of DCF$_0$, i.e. if $(K,\partial)$ is a simple PAC differential subfield of some monster model of DCF$_0$, then $\CG_{\dcf_0}(K)$ is small. To see this, we note that the reduct $K$ of $(K,\partial)$ is simple PAC field, hence $\CG_{\acf_0}(K)$ is a small profinite group, and we know that $\CG_{\acf_0}(K)=\CG_{\dcf_0}(K)$.
After combining this with Polkowska's result, we obtain that a PAC differential subfield in DCF$_0$ is simple if and only if it is bounded.
\end{remark}

A positive answer to the above question will imply that $M$ is not simple, hence $M$ could be a good candidate for an example of a non-simple NSOP$_1$ structure ($M^G$ is not such a candidate since it is simple in many cases, see \cite[Theorem 4.40]{Hoff3}).

Now, we need to discuss what are the chances for obtaining that $M$ is a NSOP$_1$ structure. We would like to assume here that PAC is a first order property, that $T$ has nfcp and the property $B(3)$.
By \cite[Section 8]{sjogren}, we know that $\ker\big(\fratt{\hat{G}}\to\hat{G} \big)\cong \mathbb{F}_{\omega}(2)$ (the free pro-$2$-group on $\omega$ many generators). This group has the Iwasawa Property (by \cite[Lemma 24.3.3]{FrJa}), so the (non-sorted) complete system $S(\mathbb{F}_{\omega}(2))$ is stable (see \cite{zoeIP}). In our case, we should rather consider a modified Iwasawa Property, i.e. a version involving only sorted profinite groups and sorted epimorphisms, so we can repeat the proof of \cite[Theorem 2.2]{zoeIP} and obtain stability. But even if we do this, there is one problem left. Namely, 
we do not know whether $\CG(M)\to \mathbb{F}_{\omega}(2)$ is an isomrphism (in the sense of pure profinite groups), so we can not easily conclude that $S\CG(M)$ is stable.

\begin{question}
Is the sorted complete system $S\CG(M)$ a NSOP$_1$ structure?
\end{question}
\noindent
If the last two questions have positive answers, then we obtain a quite general algorithm for providing examples of non-simple NSOP$_1$ structures.

\section*{Acknowledgements}
We would like to thank Nick Ramsey for suggesting us a new strategy for the proof of Theorem \ref{thm:SG.NSOP1}.
We also thank Zo\'{e} Chatzidakis for helpful discussions on the proof of Lemma \ref{lem:independenct_realization_sortedcompletesystem}.
We are grateful to the anonymous referee related to our first submission for very careful reading and all the comments and suggestions which helped us to improve the mathematical content and its presentation.

\bibliographystyle{plain}
\bibliography{1nacfa3}

\end{document}